\pgfplotsset{compat=newest}
\def\centerarc[#1](#2)(#3:#4:#5)(#6);%
\theoremstyle{plain}
\newtheorem{lem}{Lemma}[section]
\newtheorem{thm}[lem]{Theorem}
\newtheorem{cor}[lem]{Corollary}
\newtheorem{prop}[lem]{Proposition}
\theoremstyle{definition}
\newtheorem{defn}[lem]{Definition}
\newtheorem{rem}[lem]{Remark}
\newtheorem{ex}[lem]{Example}
\DeclareMathOperator*{\vspan}{span}
\DeclareMathOperator*{\argmin}{arg\,min}
\newcommand{\NN}{\mathbb N}
\newcommand{\ZZ}{\mathbb Z}
\newcommand{\RR}{\mathbb R}
\newcommand{\R}{\mathbb R}
\newcommand{\sE}{\mathcal{E}}
\newcommand{\sA}{\mathcal{A}}
\newcommand{\sG}{\mathcal{G}}
\newcommand{\sU}{\mathcal{U}}
\newcommand{\intd}[1]{\, \mathrm{d}#1}
\numberwithin{equation}{section}
\newcommand{\hl}[1]{\textcolor{black}{#1}}
\title[On evolving heterogeneous elastic wires]{Conservation, convergence, and computation \\ for evolving heterogeneous elastic wires}
\author[A.~Dall'Acqua]{Anna Dall'Acqua}
\address[A.~Dall'Acqua]{Institute of Applied Analysis, Ulm University, Helmholtzstra\ss e 18, 89081 Ulm, Germany.}
\email{anna.dallacqua@uni-ulm.de}
\author[G.~Jankowiak]{Gaspard Jankowiak}
\address[G.~Jankowiak]{Department of Mathematics and Statistics, University of Konstanz, 78457 Konstanz, Germany.}
\email{gaspard.jankowiak@uni-konstanz.de}
\author[L.~Langer]{Leonie Langer}
\address[L.~Langer]{Institute of Applied Analysis, Ulm University, Helmholtzstra\ss e 18, 89081 Ulm, Germany.}
\email{leonie.langer@uni-ulm.de}
\author[F.~Rupp]{Fabian Rupp}
\address[F.~Rupp]{Faculty of Mathematics, University of Vienna, Oskar-Morgenstern-Platz 1, 1090 Vienna, Austria.}
\email{fabian.rupp@univie.ac.at}
\begin{document}
\maketitle
\begin{abstract}
The elastic energy of a bending-resistant interface depends both on its geometry and its material composition. We consider such a heterogeneous interface in the plane, modeled by a curve equipped with an additional density function. The resulting energy captures the complex interplay between curvature and density effects, resembling the Canham--Helfrich functional.
We describe the curve by its inclination angle, so that the equilibrium equations reduce to an elliptic system of second order.
After a brief variational discussion, we investigate the associated nonlocal $L^2$-gradient flow evolution, a coupled quasilinear parabolic problem. We analyze the (non)preservation of quantities such as convexity, positivity, and symmetry, as well as the asymptotic behavior of the system. 
The results are illustrated by numerical experiments.
\end{abstract}

\bigskip
\noindent \textbf{Keywords:} Euler--Bernoulli elastic energy,
heterogeneous material, 
elastic flow,
maximum principle, 
convexity,
symmetry,
asymptotic behavior.

\noindent \textbf{MSC(2020)}: 
35K40 (primary), 
35Q92, 
35B40, 
35B06 (secondary). 



\section{Introduction}
In shape optimization, the energy and thus the equilibrium 
configuration of a bending-resistant surface can depend both on its geometry as well as the distribution of some physical densities, representing, 
for example mass, electrostatic charge, temperature, etc. For instance,  the shape can depend on the material composition and vice versa, as it is the case for certain biomembranes \cite{JulicherLipowsky93,BHW2003,MG2005}. This phenomenon is not accounted for in the classical Canham--Helfrich model describing the characteristic biconcave shape of red blood cells \cite{Canham,Helfrich}. If the densities are discrete, a variational existence theory has been established, relying on either rotational symmetry 
\cite{CMV13,Helmers15}
or weak formulations using curvature varifolds \cite{BLS20}.

A one-dimensional model which attributes for the aforementioned interplay between curvature and density-related effects has been introduced in \cite{BJSS2020}, where closed planar curves are equipped with a (nondiscrete) density function. For the discrete setting, we refer to \cite{Helmers11} and also mention a related discrete-to-continuum $\Gamma$-limit result \cite{dondl2023gammaconvergence}. Inspired by the Canham--Helfrich model depending on a spontaneous curvature, we define a generalized Euler--Bernoulli energy for a planar heterogeneous elastic wire $\gamma$ with density function $\rho$ by
\begin{align}
\label{eq:Egamma}
    \sE_\mu(\gamma,\rho)&=\frac12\int_\gamma\left(\beta(\rho)( \kappa-c_0)^2+\mu\,(\partial_s\rho)^2\right)\intd s.
\end{align}
Here,  
$\beta$ is a smooth positive function, describing the density-modulated bending stiffness of the wire, the parameter $c_0 \in \RR$ denotes the spontaneous curvature, and 
$\mu>0$ models the diffusivity of the density. Further, $\kappa$ is the signed curvature of $\gamma$, $\intd s=\vert\partial_x\gamma(x)\vert\intd x$ is the arc-length element,
and $\partial_s=\left\vert\partial_x\gamma(x)\right\vert^{-1}\partial_x$. Consequently, the curve strives for a preferred curvature $c_0\in \RR$ (determined by the material, see \cite{MG2005}), and the defect is penalized depending on the continuous density distribution. 
In the special case where $\rho$ is constant and $c_0=0$, we essentially retrieve the classical Euler--Bernoulli elastic energy given by
\begin{align}
\label{eq:classicalE}
    \sE(\gamma)=\frac12\int_\gamma\kappa^2\intd s.
\end{align}
Critical points of \eqref{eq:classicalE} with prescribed length are called elasticae and have been classified in several previous works. In particular, the only closed elasticae are multifold coverings of the circle and of the figure eight elastica (depicted in \Cref{fig:omega 0 figure 8}), see for instance \cite{LS1984}, \cite{DHMV2008}, or \cite[Lemma 5.4]{MR2021}. Such a classification is, of course, not available for \eqref{eq:Egamma}, but, as we shall see, elasticae play an important role also in this general case.

Actually, for planar curves, the order of the energy \eqref{eq:Egamma} may be reduced as follows.
Since \eqref{eq:Egamma} is invariant under orientation preserving reparametrisations, we assume that the planar curve $\gamma$ with prescribed length $L$ is parametrized by arc-length. 
Then there exists an (inclination) angle function $\theta\colon [0,L]\to\RR$ such that $\partial_s\gamma=(\cos\theta,\sin\theta)$. 
Modulo isometries of $\RR^2$, the curve $\gamma$  together with its orientation is uniquely determined by $\theta$. More precisely, for some $\gamma(0)\in\RR^2$ we have
\begin{align}
        \gamma\colon[0,L]\to\RR^2,\quad\gamma(s)=\int_0^s\begin{pmatrix}
            \cos\theta\\ \sin\theta
        \end{pmatrix}\intd r+\gamma(0).\label{eq:gamma}
    \end{align}
With $\theta$ and the density function $\rho\colon[0,L]\to\RR$, the energy \eqref{eq:Egamma} can be expressed by
\begin{align}
\label{Eallg}
\sE_\mu(\theta,\rho)=\frac{1}{2}\int_0^L\left(\beta(\rho)(\partial_s\theta-c_0)^2+\mu\,(\partial_s\rho)^2\right)\intd s,
\end{align}
using that $\kappa=\partial_s \theta$, so that $\sE_\mu(\gamma,\rho)=\sE_\mu(\theta,\rho)$. Naturally, to achieve compactness, the variational discussion of \eqref{Eallg} involves prescribing the length $L$ and the rotation index $\omega$ of the curve, and the integral of the density, i.e.\
\begin{align}\label{eq:fixedmass}
    \int_0^L \rho\intd s &= \nu L.
\end{align}
The length constraint and the preferred curvature $c_0\in\RR$ are competing forces for minimizing $\sE_\mu$, in general, since prescribing the length may not allow for a curve with constant curvature $\kappa\equiv c_0$.
In the sequel, the constants $
    L>0$, $\nu\in\RR$, $\mu>0$, $\omega \in \ZZ$, and $c_0 \in \RR,
$
are fixed, and we will refer to them as model parameters.
For the bending stiffness $\beta$, we assume
$
    \beta\in C^\infty(\RR) $ and $ \beta>0
$.

\subsection{Previous work}
The constrained minimization problem $\min \sE_\mu(\theta,\rho)$ with zero spontaneous curvature and rotation index equal to one 
was studied in \cite{BJSS2020}. 
In \cite{DLR2022}, some of the authors followed a dynamic approach and introduced the constrained $L^2$-gradient flow associated to \eqref{Eallg}.
It can be used to describe, in the simplified quasistatic and viscous regime, the continuous deformation towards an energetically more favourable state. In addition to the constraints on length, rotation index, and the integral of the density, we need to ensure that the angle function describes a closed curve.  
Overall, this results in the initial boundary value problem
\begin{align}
\label{eq:flow equation}
\begin{split}
\qquad\begin{cases}
\begin{tabular}{l l l} 
 \multicolumn{2}{l }{$\displaystyle\partial_t\theta=\partial_s\big(\beta(\rho)(\partial_s\theta-c_0)\big)+\lambda_{\theta1}\sin\theta-\lambda_{\theta2}\cos\theta$\hphantom{-----}}  
 & in $(0,T) \times [0,L]$, \\
 \multicolumn{2}{l }{$\displaystyle\partial_t\rho=\mu\partial_s^2\rho-\frac{1}{2}\beta'(\rho)(\partial_s\theta-c_0)^2-\lambda_\rho$}
 & in $(0,T) \times [0,L]$, \\
 $\theta(\cdot, L)-\theta(\cdot,0)=2\pi \omega$,\hphantom{-----}
 & $\rho(\cdot,L)=\rho(\cdot,0)$
 & on $[0,T)$,\\
 $\partial_s \theta(\cdot,L)=\partial_s\theta(\cdot,0)$,
 & $\partial_s \rho(\cdot,L)=\partial_s \rho(\cdot,0)$
 & on $[0,T)$,\\
 $\theta(0,\cdot)=\theta_0$,
 & $\rho(0,\cdot)=\rho_0$ 
 & on $[0,L]$,
\end{tabular} 
\end{cases}
\end{split}
\end{align}
with $(\theta_0,\rho_0)\in C^1([0,L])$ satisfying the boundary conditions. Since an angle function $\theta\colon[0,L]\to\RR$ represents a 
zeroth order closed curve if and only if 
\begin{align}
\label{eq:closedcurve}
    \int_0^L\sin\theta\intd s=\int_0^L\cos\theta\intd s=0, 
\end{align} it is required that $\theta_0$ satisfies \eqref{eq:closedcurve}.
To ensure \eqref{eq:closedcurve} along the flow, the nonlocal Lagrange multipliers $\lambda_{\theta1}$ and $\lambda_{\theta2}$ are given by
\begin{align}
\label{eq:lambdatheta}
\begin{pmatrix}
\lambda_{\theta1}(t )\\ \lambda_{\theta2}(t)
\end{pmatrix}
:=\Pi^{-1}(\theta)\int_0^L\begin{pmatrix}
\cos\theta \\ \sin\theta
\end{pmatrix}
\partial_s\theta \beta(\rho)(\partial_s\theta-c_0)\intd s,
\end{align}
where $\Pi^{-1}(\theta)(t)$ denotes the inverse of the matrix
\begin{align}\label{eq:def Pi}
\Pi(\theta)(t):=\begin{pmatrix}
\int_0^L\sin^2\theta\intd s & -\int_0^L\cos\theta\sin\theta\intd s \\
-\int_0^L\cos\theta\sin\theta\intd s & \int_0^L\cos^2\theta\intd s
\end{pmatrix}.
\end{align}

The matrix $\Pi$ is invertible
as long as the angle function $\theta$ describes a closed curve, see \cite[Remark 2.1]{DLR2022}.
Moreover, the nonlocal Lagrange multiplier $\lambda_\rho$ is chosen as
\begin{align}\label{eq:lambdarho}
\lambda_\rho(t):=-\frac{1}{2L}\int_0^L\beta'(\rho)(\partial_s\theta-c_0)^2\intd s.
\end{align}
This ensures conservation of the total mass, i.e.\ \eqref{eq:fixedmass} is preserved along the evolution.
For the convenience of the reader, we recall the previous results on existence and convergence which we build upon, see \cite[Theorems 1.2--1.4, Lemma 3.5, and Remark 3.7]{DLR2022}.
\begin{thm}
\label{thm:zsf artcl1}
Suppose the initial datum $(\theta_0,\rho_0)\in C^{\infty}([0,L])$ satisfies \eqref{eq:fixedmass}, \eqref{eq:closedcurve}, and 
\begin{align}\label{eq:bcid}
\begin{array}{lllll}
    \theta_0( L)-\theta_0(0)=2\pi \omega, &  & \partial_s \theta_0(L)=\partial_s\theta_0(0), &  & \partial_s^2\theta_0(L)=\partial_s^2\theta_0(0),\\
     \rho_0(L)=\rho_0(0), &  & \partial_s \rho_0(L)=\partial_s \rho_0(0),&  &\partial_s^2 \rho_0(L)=\partial_s^2\rho_0(0).
     \end{array}
\end{align}
Then, there exists a unique global solution $(\theta, \rho)\in C^\infty((0,\infty)\times[0,L])\cap C^0([0,\infty);C^2([0,L]))$ of \eqref{eq:flow equation}, depending continuously on the initial datum $(\theta_0,\rho_0)$. For all $t>0$, $\kappa(t,\cdot)=\partial_s\theta(t,\cdot)$ and $\rho(t,\cdot)$ can be extended to smooth $L$-periodic functions on $\RR$. Moreover, the solution satisfies 
\begin{align}\label{eq:globalW32bounds}
    \limsup_{t\to\infty}\left(\Vert \theta(t)\Vert_{W^{3,2}(0,L)}+\Vert \rho(t)\Vert_{W^{3,2}(0,L)}\right) <\infty,
\end{align}
and subconverges, as $t\to\infty$, in $C^2([0,L])$ to a stationary solution, i.e.\ a solution of
\begin{align}\label{eq:stationary}
\begin{cases}
\begin{tabular}{l l l} 
 \multicolumn{2}{l }{$\displaystyle 0=\partial_s\big(\beta(\rho)(\partial_s\theta-c_0)\big)+\lambda_{\theta1}\sin\theta-\lambda_{\theta2}\cos\theta$\hphantom{-----}}  
 & in $[0,L]$, \\
 \multicolumn{2}{l }{$\displaystyle 0=\mu\partial_s^2\rho-\frac{1}{2}\beta'(\rho)(\partial_s\theta-c_0)^2-\lambda_\rho$}
 & in $[0,L]$, \\
 $\theta(L)-\theta(0)=2\pi \omega$,\hphantom{-----}
 & $\rho(L)=\rho(0)$,
 &\\
 $\partial_s \theta(L)=\partial_s\theta(0)$,
 & $\partial_s \rho(L)=\partial_s \rho(0)$,
 &\\
 $\partial^2_s \theta(L)=\partial^2_s\theta(0)$,
 & $\partial^2_s \rho(L)=\partial^2_s \rho(0)$
 &
\end{tabular} 
\end{cases}
\end{align}
for some $\lambda_{\theta1},\lambda_{\theta2},\lambda_\rho\in\RR$.
If, in addition, $\beta$ is real analytic, then we have full convergence $(\theta(t),\rho(t))\to(\theta_\infty,\rho_\infty)$ in $C^2([0,L])$ as $t\to\infty$, for some $(\theta_\infty,\rho_\infty)$ solving \eqref{eq:stationary}.
\end{thm}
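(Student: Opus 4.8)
The plan is to treat \eqref{eq:flow equation} as a coupled quasilinear parabolic system and proceed in the classical four stages: short-time existence, uniform a priori estimates, global existence, and asymptotics. First I would establish short-time existence and uniqueness. The principal parts are $\partial_s(\beta(\rho)\partial_s\theta)$ and $\mu\partial_s^2\rho$, both uniformly parabolic as long as $\beta(\rho)>0$, which holds by assumption and persists for short times by continuity. Since $\kappa=\partial_s\theta$ is genuinely $L$-periodic (only $\theta$ carries the shift $2\pi\omega$), the boundary conditions can be recast as periodic ones for $\kappa$ and $\rho$. The nonlocal multipliers $\lambda_{\theta1},\lambda_{\theta2},\lambda_\rho$ from \eqref{eq:lambdatheta}--\eqref{eq:lambdarho} are lower-order and depend smoothly on the solution, provided $\Pi(\theta)$ from \eqref{eq:def Pi} stays invertible; by \cite[Remark 2.1]{DLR2022} this holds while the curve remains closed and nondegenerate. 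I would then linearise, apply parabolic Schauder (or $L^p$) theory, and close a contraction argument in a parabolic H\"older space such as $C^{2+\alpha,1+\alpha/2}$, yielding a unique smooth local solution depending continuously on the datum.

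The heart of the argument is a priori estimates that are uniform in time. The starting point is that \eqref{eq:flow equation} is the constrained $L^2$-gradient flow of $\sE_\mu$, so that
\begin{align}
\frac{\mathrm d}{\mathrm d t}\sE_\mu(\theta,\rho)=-\int_0^L\big((\partial_t\theta)^2+(\partial_t\rho)^2\big)\intd s\le 0.
\end{align}
This yields a uniform energy bound and, crucially, $\int_0^\infty\|(\partial_t\theta,\partial_t\rho)\|_{L^2}^2\intd t<\infty$. From the energy bound together with the mass constraint \eqref{eq:fixedmass} and Poincar\'e--Wirtinger inequalities I would extract uniform control of $\rho$ in $W^{1,2}$, hence in $L^\infty$ and thus two-sided bounds on $\beta(\rho)$ and $\beta'(\rho)$, as well as a uniform $L^2$-bound on $\kappa$. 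I would then bootstrap via Gagliardo--Nirenberg interpolation coupled with the evolution equations, estimating the time derivatives of higher-order energies such as $\int(\partial_s\kappa)^2$ and $\int(\partial_s^2\rho)^2$, to upgrade these to the uniform-in-time $W^{3,2}$ bound \eqref{eq:globalW32bounds}. The lower-order and nonlocal contributions must be absorbed carefully; in particular the multipliers have to remain bounded, which follows once the evolving curves stay uniformly nondegenerate so that $\Pi(\theta)^{-1}$ is controlled.

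Global existence then follows from a standard continuation criterion: parabolic smoothing promotes the uniform Sobolev bounds to uniform bounds on all derivatives on $(\delta,\infty)\times[0,L]$ for each $\delta>0$, excluding finite-time blow-up. For the asymptotics, the uniform $W^{3,2}$ bound together with the compact embedding $W^{3,2}\hookrightarrow C^2$ gives, along any sequence $t_j\to\infty$, a subsequence converging in $C^2$; since $\int_0^\infty\|(\partial_t\theta,\partial_t\rho)\|_{L^2}^2\intd t<\infty$ forces $\|(\partial_t\theta,\partial_t\rho)\|_{L^2}\to 0$ along a further subsequence, the limit solves the stationary system \eqref{eq:stationary}. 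Finally, under the analyticity hypothesis on $\beta$, I would upgrade subconvergence to full convergence by establishing a {\L}ojasiewicz--Simon gradient inequality for $\sE_\mu$ near the subsequential limit, adapted to the length, rotation-index, and mass constraints, and then running the classical argument showing that the flow cannot leave a suitable neighbourhood of the limit.

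I expect the uniform-in-time a priori estimates to be the main obstacle. The coupling of the two equations through $\beta(\rho)$ and $\beta'(\rho)$, together with the genuinely nonlocal multipliers, prevents the interpolation estimates from decoupling, and care is needed both to keep $\beta(\rho)$ bounded away from zero uniformly in time and to keep $\Pi(\theta)$ uniformly invertible so that the multipliers stay controlled; these are exactly the points where the geometry, namely the non-degeneracy of the evolving closed curve, re-enters the analysis. Constructing the {\L}ojasiewicz--Simon inequality for this constrained, nonlocal functional is the second delicate point.
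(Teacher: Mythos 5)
This theorem is not proved in the present paper at all: it is recalled as background from \cite{DLR2022} (Theorems 1.2--1.4, Lemma 3.5, and Remark 3.7), so there is no in-paper proof to compare your proposal against. Your outline --- local well-posedness by recasting the boundary conditions periodically and applying quasilinear parabolic theory, the energy identity giving $\int_0^\infty\|(\partial_t\theta,\partial_t\rho)\|_{L^2}^2\,\mathrm{d}t<\infty$, uniform bounds on $\rho$ and $\kappa$ via the mass constraint and Poincar\'e--Wirtinger, control of $\Pi(\theta)^{-1}$ and the multipliers, bootstrapping to \eqref{eq:globalW32bounds}, subconvergence to a solution of \eqref{eq:stationary}, and a constrained {\L}ojasiewicz--Simon inequality for full convergence under analyticity of $\beta$ --- is precisely the strategy of that cited work (the paper itself points to the constrained {\L}ojasiewicz--Simon framework of \cite{ConstrLoja} in its discussion of \Cref{thm:conv special beta neu}), so your proposal is faithful to the source rather than a genuinely different route.
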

We will refer to a smooth function $(\theta_0,\rho_0)$ satisfying the assumptions of \Cref{thm:zsf artcl1} as an \emph{admissible initial datum} in the sequel.

The global existence and convergence of the system \eqref{eq:flow equation} is in accordance with previous work on the elastic flow, i.e.\  the $L^2$-gradient flow of \eqref{eq:classicalE}, both in its fourth order version for curves \cite{DKS2002,MantegazzaPozzetta21,Lin,DLP17,DPS16,DP14,MR4277362,MR4080255} and its second order flow for the angle function in the planar case \cite{W1993,NP2020,LLS15,OPW}. Due to a lack of maximum principle, along the fourth order flow properties like convexity or embeddedness of the initial datum do not need to be preserved \cite{Blatt,MMR2021}, see also \cite{L1989}.
As we shall see, for the second order system \eqref{eq:flow equation} such properties depend delicately on the choice of the model parameters.

Lastly, we mention that a similar system relating mean curvature flow and diffusion of a density on a hypersurface has been studied in \cite{MR4530425,ABG2022}.


\subsection{Main results and structure of the article}

This article extends the results obtained in \cite{DLR2022} and investigates several properties of solutions to \eqref{eq:flow equation}, a quasilinear coupled parabolic system of second order involving nonlocal Lagrange multipliers. 

Our work is inspired by the analysis of preserved quantities for curvature flows of hypersurfaces by Escher--Ito \cite{EI2005} and
by Wen's article \cite{W1993} on the gradient flow of \eqref{eq:classicalE} in terms of $\theta$.
We focus mainly on two aspects: the (non)preservation of several properties of the initial datum along the evolution and the discussion of conditions under which the limit and the rate of convergence of the system can be determined.
Since the limit configuration is a constrained critical point of \eqref{Eallg}, this also provides a partial classification of constrained critical points of $\sE_\mu$.
We complement the analysis by numerical experiments that have motivated our results. 

In \Cref{sec:minprob}, as a first step towards understanding the asymptotic behavior, we examine minimizers and constrained critical points of the functional, which are precisely the stationary solutions. As indicated by the form of the energy \eqref{Eallg}, a global minimizer has to have constant density if $\mu$ is sufficiently large, cf.\ also \cite{BJSS2020} for the case $\omega=1$, $c_0=0$. Remarkably, this is generically not true on the level of critical points, see \Cref{ex:non const rho crit point}. We provide sharp sufficient conditions for constrained critical points to be homogeneous elasticae, i.e.\ elasticae with constant density. 

In Section \ref{sec:qualprop}, we study the (non)preservation of properties of the initial datum along \eqref{eq:flow equation}.
The decisive advantage of working with the angle function $\theta$ 
is that the equation 
is
of second order. In contrast, working with the curve $\gamma$ yields a fourth order equation, like the classical elastic flow. 
Due to the reduction to second order, parabolic maximum principles are available for both evolution equations in \eqref{eq:flow equation} individually, but of course not for the full system.
A key difficulty in applying maximum principles is the structure of the relevant evolution equations, explaining the fundamentally different behavior for $c_0=0$ compared to $c_0\neq 0$.

First, we adapt the methods in \cite{A1988} to study the 
inflection points of the curve, i.e.\ the sign changes of the curvature. 
\begin{thm}\label{thm:kappa_zeros}
    Let $c_0=0$ and let $(\theta,\rho)$ be a global solution of \eqref{eq:flow equation}. Then both the number of zeros of $\kappa=\partial_s \theta$ and the number of inflection points of the associated curve are nonincreasing in time.
\end{thm}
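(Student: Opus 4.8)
The plan is to reduce the statement to the Sturmian zero-counting theory for scalar linear parabolic equations following Angenent~\cite{A1988}. The decisive observation is that, precisely when $c_0=0$, the curvature $\kappa=\partial_s\theta$ itself solves a \emph{homogeneous} linear parabolic equation, so that a maximum-principle-type argument becomes available for it even though none is available for the full system.

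First I would differentiate the first equation of \eqref{eq:flow equation} in $s$. Using $c_0=0$, the identities $\partial_s\sin\theta=\kappa\cos\theta$ and $\partial_s\cos\theta=-\kappa\sin\theta$, and commuting $\partial_t$ with $\partial_s$ (legitimate since the solution is smooth for $t>0$ by \Cref{thm:zsf artcl1}), one finds
\[
\partial_t\kappa=\beta(\rho)\,\partial_s^2\kappa+2\beta'(\rho)\,\partial_s\rho\,\partial_s\kappa+c(t,s)\,\kappa,
\]
with
\[
c(t,s)=\beta''(\rho)(\partial_s\rho)^2+\beta'(\rho)\,\partial_s^2\rho+\lambda_{\theta1}\cos\theta+\lambda_{\theta2}\sin\theta.
\]
It is exactly here that the hypothesis $c_0=0$ enters: for $c_0\neq0$ the same computation produces an additional inhomogeneity $-c_0\,\partial_s^2(\beta(\rho))$ which is not a multiple of $\kappa$, destroying the homogeneity on which the argument below rests and explaining the different behavior in the two regimes.

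Next I would place this equation on the circle $\RR/L\ZZ$ and verify the hypotheses of the zero-counting theorem. By \Cref{thm:zsf artcl1}, for $t>0$ the functions $\kappa$ and $\rho$ extend to smooth $L$-periodic functions, so the coefficients above are smooth and $L$-periodic; since $\beta>0$ and the solution is bounded in $C^2$ by \eqref{eq:globalW32bounds}, the leading coefficient $\beta(\rho)$ is bounded below away from zero, and all coefficients are bounded, on every compact interval $[t_1,t_2]\subset(0,\infty)$. Hence the equation is uniformly parabolic with smooth bounded periodic coefficients. Assuming $\kappa\not\equiv0$ (otherwise $\theta$ is constant and the curve degenerates, which is excluded for a closed curve), Angenent's theorem~\cite{A1988} then yields that for each $t>0$ the zero set of $\kappa(t,\cdot)$ is finite, that every zero has finite multiplicity, and that the number of zeros is nonincreasing in $t$, dropping strictly whenever $\kappa(t,\cdot)$ has a multiple zero. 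This proves the first assertion.

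Finally, the inflection points of the associated curve are precisely the points where $\kappa$ changes sign. Since a sign change can occur only at a zero and, by the previous step, zeros are never created as $t$ increases, the number of sign changes cannot increase either; here I would invoke the corresponding statement in Angenent's framework that the number of sign changes of $\kappa(t,\cdot)$ is nonincreasing in $t$. The main obstacle is the careful verification of the hypotheses of the zero-counting theorem in this periodic setting---in particular establishing uniform parabolicity and the requisite regularity of the coefficients from the a priori bounds of \Cref{thm:zsf artcl1}---together with the bookkeeping that distinguishes genuine sign changes (the inflection points) from tangential zeros where $\kappa$ does not change sign, so that both counts, and not merely the total number of zeros, are shown to be monotone.
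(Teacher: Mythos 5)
Your treatment of the first assertion is sound and is essentially the paper's route: for $c_0=0$ the curvature solves the linear, homogeneous, uniformly parabolic equation \eqref{eq:dtkappa} with smooth $L$-periodic coefficients (your $b$ and $c$ agree with the paper's $2\partial_s(\beta(\rho))$ and $\partial_s^2(\beta(\rho))+\lambda_{\theta1}\cos\theta+\lambda_{\theta2}\sin\theta$), and Angenent's Sturmian theory applies; the paper cites \cite{A1988} only for finiteness (\Cref{lem:zeroset discrete}) and then rebuilds the monotonicity of $z_\kappa$ from Angenent's zero-curve lemmas (Lemmas 5.2, 5.3, 5.5 of \cite{A1988}) in \Cref{prop:zeroset non-incr}, partly because that machinery is needed again for the inflection points, and partly to handle the transition at $t=0$, where the solution is only $C^2$; your argument, confined to compact subintervals of $(0,\infty)$, silently omits this last step, which the paper settles with a maximum principle argument showing that distinct zero curves cannot merge backward into the same initial point.

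The genuine gap is in the second assertion. The inference ``zeros are never created, hence sign changes cannot increase'' is a non sequitur: monotonicity of the \emph{total} number of zeros does not imply monotonicity of the number of \emph{sign-changing} zeros, since tangential zeros could in principle turn into sign-changing ones at constant total count. Concretely, nothing in your first step excludes the combinatorial scenario in which $\kappa(t_1,\cdot)\geq 0$ has two tangential zeros (two zeros, no inflection point) while $\kappa(t_2,\cdot)$, $t_2>t_1$, has two simple zeros with a negative arc between them (still two zeros, but now two inflection points). Moreover, the statement you propose to invoke is not available in \cite{A1988}: Angenent's theorems there count points of the zero set and record the strict drop at multiple zeros; they make no assertion about sign changes. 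This is precisely why the paper proves \Cref{prop:inflpoints} by hand: after replacing $\kappa$ by $\tilde\kappa=e^{-\lambda t}\kappa$ so that the zeroth-order coefficient in \eqref{eq:dtkappa} is negative, one applies the parabolic maximum principle on each region bounded by two consecutive zero curves $x_i,x_{i+1}$ and the time slices $t=t_1,t_2$; since $\kappa$ vanishes on the lateral boundary, a positive (resp.\ negative) sign of $\kappa$ on the arc $\big(x_i(t_2),x_{i+1}(t_2)\big)$ forces the same sign somewhere on $\big(x_i(t_1),x_{i+1}(t_1)\big)$. It is this backward propagation of nodal arcs, not the zero count, that makes the number of inflection points nonincreasing; it also rules out the scenario above, since $\kappa\geq0$ is preserved forward in time, but that is an additional maximum-principle argument of exactly the kind your proposal is missing. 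A correct completion must either reproduce this argument or cite a genuine Sturm-type sign-change monotonicity result for scalar linear parabolic equations; \cite{A1988} alone does not suffice.
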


Combining this result with further maximum principle arguments, we examine (strict) convexity along the evolution.

\begin{thm}[Preservation of convexity for $c_0=0$]
\label{prop:convex}
Let $(\theta,\rho)$ be the global solution of \eqref{eq:flow equation} with admissible initial datum  $(\theta_0,\rho_0)$ and $c_0=0$. Then 
\begin{enumerate}[(i)]
\item $\kappa_0\geq 0$ ($\kappa_0\leq 0$) on $[0,L]$ implies $\kappa\geq0$ ($\kappa\leq 0$) on $[0,\infty)\times[0,L]$; 
\item $\kappa_0>0$ ($\kappa_0<0$) on $[0,L]$ implies $\kappa>0$ ($\kappa<0$) on $[0,\infty)\times [0,L]$.
\end{enumerate}
\end{thm}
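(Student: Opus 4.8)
The plan is to derive a scalar parabolic equation for the curvature $\kappa=\partial_s\theta$ and to exploit its structure via the weak and strong maximum principles. By \Cref{thm:zsf artcl1} the solution is smooth on $(0,\infty)\times[0,L]$, so $\partial_t$ and $\partial_s$ commute there, and differentiating the first equation of \eqref{eq:flow equation} in $s$ (with $c_0=0$) gives
\begin{align*}
\partial_t\kappa=\beta(\rho)\,\partial_s^2\kappa+2\partial_s\big(\beta(\rho)\big)\partial_s\kappa+\Big(\partial_s^2\big(\beta(\rho)\big)+\lambda_{\theta1}\cos\theta+\lambda_{\theta2}\sin\theta\Big)\kappa,
\end{align*}
where I used $\partial_s\sin\theta=\kappa\cos\theta$ and $\partial_s\cos\theta=-\kappa\sin\theta$. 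The decisive point is that for $c_0=0$ this is a \emph{homogeneous} linear equation in $\kappa$: differentiating $\partial_s\big(\beta(\rho)(\partial_s\theta-c_0)\big)$ produces a term $-c_0\,\partial_s^2\big(\beta(\rho)\big)$ that is not proportional to $\kappa$, and only its vanishing for $c_0=0$ removes this inhomogeneity. This is precisely where the hypothesis $c_0=0$ enters, and it explains the contrast with the case $c_0\neq0$.

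Next I would check that the maximum principle applies. By \eqref{eq:globalW32bounds} and the embedding $W^{3,2}\hookrightarrow C^2$, on every time interval $[0,T]$ the functions $\theta,\rho$ and their first two $s$-derivatives are bounded; since $\beta\in C^\infty(\RR)$ with $\beta>0$, the diffusion coefficient $\beta(\rho)$ is bounded above and, by compactness, bounded below by a positive constant, so the equation is uniformly parabolic, while the drift $2\partial_s\big(\beta(\rho)\big)$ and the zeroth-order coefficient $c:=\partial_s^2\big(\beta(\rho)\big)+\lambda_{\theta1}\cos\theta+\lambda_{\theta2}\sin\theta$ are bounded on $[0,T]\times[0,L]$ (the multipliers $\lambda_{\theta1},\lambda_{\theta2}$ being bounded in $t$ by \eqref{eq:lambdatheta}). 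Since by \Cref{thm:zsf artcl1} $\kappa$ extends to a smooth $L$-periodic function, the problem may be viewed on the circle $\RR/L\ZZ$, so there is no spatial boundary to control.

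For (i), I would set $v:=e^{-\Lambda t}\kappa$ with $\Lambda>\sup_{[0,T]\times[0,L]}c$, so that $v$ solves the same equation with nonpositive zeroth-order coefficient $c-\Lambda\leq0$. Since $v(0,\cdot)=\kappa_0\geq0$ and the spatial domain is the circle $\RR/L\ZZ$, the weak parabolic minimum principle yields $v\geq0$, hence $\kappa\geq0$, on $[0,T]\times[0,L]$; letting $T\to\infty$ proves the claim (the case $\kappa_0\leq0$ follows by replacing $\kappa$ with $-\kappa$, which solves the same homogeneous equation). For (ii), knowing $\kappa\geq0$ from (i), I would invoke the strong maximum principle (Hopf): if $\kappa(t_0,s_0)=0$ for some $t_0>0$, then $\kappa\equiv0$ on $[0,t_0]\times[0,L]$, contradicting $\min_{[0,L]}\kappa_0>0$ (positive by compactness, since $\kappa_0>0$ on $[0,L]$); hence $\kappa>0$ throughout.

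The routine but essential technical obstacle is justifying the maximum principle near $t=0$, where \Cref{thm:zsf artcl1} provides only $C^0([0,\infty);C^2([0,L]))$ regularity rather than smoothness: I would apply the principle on $[\varepsilon,T]$ and pass to the limit $\varepsilon\to0^+$ using continuity in time, or equivalently treat $\kappa_0$ as admissible initial data for the linear problem. Beyond this, the only real conceptual content is the homogeneity forced by $c_0=0$; once the coefficients are seen to be bounded and the diffusion uniformly positive, both conclusions follow from the standard weak and strong maximum principles for scalar linear parabolic equations.
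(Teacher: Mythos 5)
Your proposal is correct, and it splits into two parts relative to the paper. For part (i) you follow essentially the same route as the paper: the paper derives exactly your linear equation \eqref{eq:dtkappa} for $\kappa$ (the same observation that $c_0=0$ kills the inhomogeneity $-c_0\partial_s^2(\beta(\rho))$), and then, instead of citing the weak maximum principle, proves it by hand on the periodic extension to $\RR$ -- a barrier $\kappa_\varepsilon=\kappa+\varepsilon e^{Kt}$ with $K>\sup c$, a first-touching-time $t^\ast$, and a contradiction at the interior minimum; your exponential rescaling $v=e^{-\Lambda t}\kappa$ plus the standard weak minimum principle on $\RR/L\ZZ$ is the same argument in packaged form, and your handling of the regularity at $t=0$ (the principle only needs the equation for $t>0$ plus continuity of $\kappa$ up to $t=0$, which \Cref{thm:zsf artcl1} provides) matches the paper's implicit use of $C^0([0,\infty);C^2([0,L]))$. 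For part (ii), however, you take a genuinely different route: the paper deduces it in one line from \Cref{prop:zeroset non-incr} (the number of zeros of $\kappa$ is nonincreasing), which rests on Angenent's zero-set theory \cite{A1988}, whereas you combine (i) with the strong (Nirenberg-type) maximum principle: if $\kappa\geq 0$ vanished at some $(t_0,s_0)$ with $t_0>0$, then $\kappa\equiv 0$ on $(0,t_0]\times\RR/L\ZZ$, hence $\kappa_0\equiv 0$ by continuity, contradicting $\kappa_0>0$. Your argument is more elementary and self-contained; the paper's is shorter in context only because \Cref{prop:zeroset non-incr} was already established for \Cref{thm:kappa_zeros}, and that machinery yields strictly finer information (monotone zero-counting) than the positivity dichotomy the strong maximum principle provides. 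One small caveat: your first suggested fix for the $t=0$ issue ("apply the principle on $[\varepsilon,T]$ and pass to the limit") needs the slightly more careful phrasing that the parabolic-boundary minimum over $\{\varepsilon\}\times[0,L]$ converges to $\min\kappa_0\geq 0$ as $\varepsilon\to 0^+$, since $\kappa(\varepsilon,\cdot)\geq 0$ is not known a priori; your alternative formulation (invoking the version of the maximum principle that only requires continuity up to the initial time) is the clean way and suffices.
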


Both \Cref{thm:kappa_zeros} and \Cref{prop:convex} rely heavily on the vanishing of $c_0$. Indeed, even for $\vert c_0\vert\neq0$ small,
convexity is not preserved in general (see Example \ref{ex:counterconvex}).
This behavior is not easily predicted from the energy, since e.g.\ a large positive $c_0$ clearly favors positive curvature.
Moreover, despite the nonlinear structure of the equation for the density, under appropriate sharp assumptions on $\beta$ we are still able to apply maximum principles to examine sign-preservation of the density, see \Cref{prop:rhopos}. 

For the curve shortening flow, a classical application of the maximum principle is the preservation of embeddedness, cf.\ \cite{Gage_Hamilton_86}. While the evolution of the curves obtained via \eqref{eq:gamma} does not allow for these methods, we are still able to find an embeddedness-preserving energy threshold in \Cref{prop:li_yau_emb}.

Further, we show in Section \ref{sec:symmetry} that both $k$-fold rotational symmetry and axial symmetry of the initial datum are preserved along the evolution (see Propositions \ref{prop:rotsym preserved} and \ref{prop:presaxsym}), for simplicity restricting to the case $\omega=1$. 
In the rotationally symmetric case, the Lagrange multipliers $\lambda_{\theta1}$ and $\lambda_{\theta2}$ vanish, which enables us to generalize \Cref{prop:convex} for a $k$-fold rotationally symmetric initial datum as follows.

\begin{thm}
    \label{thm:pisym kappa geq c0} 
    Let $\omega=1$, $k\geq2$ and $c_0\in\R$. Let $(\theta_0,\rho_0)$ be an admissible initial datum corresponding to a $k$-fold rotationally symmetric heterogeneous curve 
    with $\kappa_0\geq c_0$ on $[0,L]$ and let $(\theta,\rho)$ be the solution of \eqref{eq:flow equation}. Then $\kappa\geq c_0$ on $[0,\infty)\times[0,L]$. Similarly, if 
    $\kappa_0\leq c_0$ on $[0,L]$, then $\kappa\leq c_0$ on $[0,\infty)\times[0,L]$.
\end{thm}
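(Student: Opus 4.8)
The plan is to exploit the vanishing of the nonlocal Lagrange multipliers in the rotationally symmetric setting to reduce the $\theta$-equation to a purely local equation, and then to derive a \emph{homogeneous} linear parabolic equation for $w := \kappa - c_0$ to which a maximum principle applies.

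First I would invoke \Cref{prop:rotsym preserved}: since the admissible initial datum corresponds to a $k$-fold rotationally symmetric curve with $k \ge 2$, the solution stays $k$-fold rotationally symmetric for all times, and in this symmetric class $\lambda_{\theta1}(t) = \lambda_{\theta2}(t) = 0$ for all $t$. Consequently the first equation in \eqref{eq:flow equation} reduces to
\begin{equation}
\partial_t\theta = \partial_s\big(\beta(\rho)(\partial_s\theta - c_0)\big).
\end{equation}
This is the decisive simplification, and precisely the point where $c_0$ may now be nonzero. In the general, non-symmetric case, differentiating the multiplier terms $\lambda_{\theta1}\sin\theta-\lambda_{\theta2}\cos\theta$ in $s$ produces an inhomogeneous contribution proportional to $c_0(\lambda_{\theta1}\cos\theta + \lambda_{\theta2}\cos\theta)$ that does not vanish on the set $\{\kappa = c_0\}$, destroying $w \equiv 0$ as a barrier; this is the mechanism behind the failure of convexity preservation illustrated in Example \ref{ex:counterconvex}, and it is exactly what the symmetry removes here.

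Since $s$ and $t$ commute in the arc-length formulation, differentiating the reduced equation in $s$ and using that $c_0$ is constant gives, for $w = \kappa - c_0 = \partial_s\theta - c_0$,
\begin{equation}
\partial_t w = \partial_s^2\big(\beta(\rho) w\big) = \beta(\rho)\,\partial_s^2 w + 2\beta'(\rho)\,\partial_s\rho\,\partial_s w + \big(\beta''(\rho)(\partial_s\rho)^2 + \beta'(\rho)\partial_s^2\rho\big) w.
\end{equation}
By \Cref{thm:zsf artcl1} the solution is smooth for $t>0$, extends $L$-periodically in $s$, and obeys uniform bounds, so on every strip $[0,T]\times[0,L]$ the coefficients are bounded and $\beta(\rho) \ge \beta_{\min} > 0$; thus this is a homogeneous, uniformly parabolic, linear equation for $w$ on the circle $\RR/L\ZZ$, with no boundary data to worry about.

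Finally I would apply the parabolic minimum principle, exactly as in the proof of \Cref{prop:convex} but now without the multiplier terms. Because the zeroth-order coefficient $\beta''(\rho)(\partial_s\rho)^2 + \beta'(\rho)\partial_s^2\rho$ has no definite sign, I would first pass to $v := e^{-\Lambda t} w$ with $\Lambda$ larger than the supremum of this coefficient on $[0,T]\times[0,L]$, so that $v$ solves the same type of equation with nonpositive zeroth-order term. A standard comparison argument, testing the strict supersolution $v + \varepsilon(1+t)$ at a would-be interior negative minimum and letting $\varepsilon \to 0$, then shows that $w_0 = \kappa_0 - c_0 \ge 0$ forces $w \ge 0$ on $[0,T]\times[0,L]$; as $T$ is arbitrary, $\kappa \ge c_0$ for all time. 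Replacing $w$ by $-w$ yields the analogous statement for $\kappa_0 \le c_0$. The main obstacle is conceptual rather than computational: everything hinges on the vanishing of $\lambda_{\theta1},\lambda_{\theta2}$ afforded by \Cref{prop:rotsym preserved}, together with enough regularity of $w$ to run the maximum principle, which the smoothing of the flow for $t>0$ and the smoothness of the admissible datum supply.
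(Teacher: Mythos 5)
Your proof is correct and takes essentially the same route as the paper: symmetry preservation (\Cref{prop:rotsym preserved}) plus the vanishing of the Lagrange multipliers (\Cref{lem:Lag mult vanish}) reduce the evolution of $w=\kappa-c_0$ to the homogeneous linear parabolic equation \eqref{eq:dt kappa-c0}, to which the maximum-principle argument of \Cref{prop:convex} applies verbatim. One peripheral correction: the loss of convexity in \Cref{ex:counterconvex} is driven by the inhomogeneous term $-\partial_s^2(\beta(\rho))\,c_0$ in \eqref{eq:ev eq kappa} (which your shift from $\kappa$ to $w$ absorbs), not by the multiplier terms, which there vanish against $\kappa_0=0$; the symmetry is needed separately to kill the contribution $c_0(\lambda_{\theta1}\cos\theta+\lambda_{\theta2}\sin\theta)$ on $\{\kappa=c_0\}$, but this misattribution in your heuristic aside does not affect the argument.
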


The convergence result in \cite{DLR2022} naturally raises the question of a characterization of the limit, which we can answer despite the large number of selectable parameters under suitable assumptions on $\beta$, $\nu$, and $\mu$ in \Cref{sec:proplim}. To that end, we rely on the properties of constrained critical points that we discussed in \Cref{sec:minprob}.

\begin{thm}[Asymptotic behavior under growth assumptions on $\beta$
] \label{thm:conv special beta neu}
    Suppose there exists $\bar{C}\geq 0$ such that
    \begin{align} \label{eq:beta'bound}
        \beta'(x)(\nu-x)\leq \bar{C}\beta(x)(\nu-x)^2\quad \text{ for all }x\in\RR.
    \end{align}
    Let $(\theta_0, \rho_0)$ be an admissible initial datum with $\bar{C}L\sE_\mu(\theta_0, \rho_0)<\mu$. Then
    the density $\rho$ of the solution $(\theta, \rho)$ to \eqref{eq:flow equation} converges exponentially fast to $\rho_\infty\equiv \nu$ in $C^2([0,L])$ as $t\to\infty$. Moreover,
    \begin{enumerate}[(i)]
        \item if $\omega\neq 0$, then $\theta(t) \to \theta_\infty$ in $C^2([0,L])$, where $\theta_\infty$ describes a $\omega$-fold covered circle;
        \item if $\omega=0$ and $\beta$ is analytic, then $\theta(t) \to \theta_\infty$ in $C^2([0,L])$, where $\theta_\infty$ describes a multifold covered figure eight elastica.
    \end{enumerate}  
\end{thm}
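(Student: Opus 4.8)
The plan is to decouple the problem: first prove exponential convergence of the density to its conserved mean $\nu$, and only afterwards identify the limiting angle function. For the density, I would test the second equation of \eqref{eq:flow equation} against $\rho-\nu$, i.e.\ study $g(t):=\frac12\int_0^L(\rho-\nu)^2\intd s$. Differentiating, the $\lambda_\rho$-contribution drops because $\int_0^L(\rho-\nu)\intd s=0$ by mass conservation \eqref{eq:fixedmass}; integrating the diffusion term by parts (using $L$-periodicity) gives $-\mu\int_0^L(\partial_s\rho)^2\intd s$; and the reaction term becomes $\tfrac12\int_0^L\beta'(\rho)(\nu-\rho)(\partial_s\theta-c_0)^2\intd s$. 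The crucial step is to insert the growth hypothesis \eqref{eq:beta'bound} pointwise (legitimate after multiplying by $(\partial_s\theta-c_0)^2\ge0$) to bound the reaction term by $\tfrac{\bar C}{2}\int_0^L\beta(\rho)(\nu-\rho)^2(\partial_s\theta-c_0)^2\intd s$.

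To close the estimate I would use $(\nu-\rho)^2\le\|\rho-\nu\|_{L^\infty}^2\le L\int_0^L(\partial_s\rho)^2\intd s$ (valid since $\rho-\nu$ is $L$-periodic with a zero, via the fundamental theorem of calculus and Cauchy--Schwarz), together with $\tfrac12\int_0^L\beta(\rho)(\partial_s\theta-c_0)^2\intd s\le\sE_\mu(\theta,\rho)\le\sE_\mu(\theta_0,\rho_0)$ (energy dissipation along the gradient flow). This gives
\[
g'(t)\le-\bigl(\mu-\bar CL\,\sE_\mu(\theta_0,\rho_0)\bigr)\int_0^L(\partial_s\rho)^2\intd s.
\]
Under the smallness assumption the prefactor $\delta:=\mu-\bar CL\,\sE_\mu(\theta_0,\rho_0)$ is positive, and the Wirtinger inequality $\int_0^L(\rho-\nu)^2\intd s\le\tfrac{L^2}{4\pi^2}\int_0^L(\partial_s\rho)^2\intd s$ for zero-mean $L$-periodic functions turns this into $g'(t)\le-\tfrac{8\pi^2\delta}{L^2}g(t)$. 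Gr\"onwall then yields exponential decay of $\|\rho(t)-\nu\|_{L^2}$; interpolating (Gagliardo--Nirenberg) against the uniform bound $\limsup_{t}\|\rho(t)\|_{W^{3,2}}<\infty$ from \Cref{thm:zsf artcl1} upgrades this to exponential decay in $C^2$, so in particular $\rho_\infty\equiv\nu$.

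To identify the shape, I would invoke the subconvergence of \Cref{thm:zsf artcl1} to a stationary solution $(\theta_\infty,\rho_\infty)$ of \eqref{eq:stationary}; the previous step forces $\rho_\infty\equiv\nu$. Inserting this constant density into the first stationary equation makes the spontaneous-curvature term disappear (as $\partial_s(\beta(\nu)c_0)=0$), leaving $\beta(\nu)\partial_s^2\theta_\infty+\lambda_{\theta1}\sin\theta_\infty-\lambda_{\theta2}\cos\theta_\infty=0$, the Euler--Lagrange equation of the classical elastic energy under the length and closedness constraints. Thus every subsequential limit describes a closed elastica, which by the classification recalled in the introduction is a multifold circle or figure eight. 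Since the rotation index $\omega$ is preserved along \eqref{eq:flow equation}, for $\omega\neq0$ the figure eight (index $0$) is excluded and the limit is an $\omega$-fold covered circle, whereas for $\omega=0$ the circle is excluded and the limit is a multifold figure eight.

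It remains to promote subconvergence to full convergence. For $\omega=0$ the analyticity of $\beta$ lets me quote the full-convergence statement of \Cref{thm:zsf artcl1} directly, giving (ii). For $\omega\neq0$, where analyticity is not assumed, I would argue by rigidity: every subsequential limit is an $\omega$-fold circle of the \emph{same} constant curvature $\bar\kappa=\tfrac{2\pi\omega}{L}$, which coincides with the time-independent mean $\tfrac1L\int_0^L\partial_s\theta\intd s$, so $\kappa(t)\to\bar\kappa$ in $C^0$ and only the rotational phase of $\theta$ remains undetermined. This one-parameter family of equilibria is non-degenerate transverse to the rotation orbit (Morse--Bott), which yields a {\L}ojasiewicz--Simon inequality with exponent $\tfrac12$ valid already for smooth $\beta$, hence convergence to a single circle and the $C^2$-statement in (i). I expect this final point---upgrading subconvergence to convergence in case (i) without analyticity---to be the main obstacle, together with the care needed to make the energy-dissipation and interpolation estimates uniform in $t$.
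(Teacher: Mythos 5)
Your density estimate and your treatment of case (ii) are essentially the paper's own proof: testing the $\rho$-equation against $\rho-\nu$, dropping the $\lambda_\rho$-term by mass conservation, inserting \eqref{eq:beta'bound} pointwise, closing with $\sup_{[0,L]}|\rho-\nu|^2\leq L\int_0^L(\partial_s\rho)^2\intd s$, energy dissipation and Wirtinger, then Gronwall; and for $\omega=0$ quoting the full convergence of \Cref{thm:zsf artcl1} under analyticity and classifying the limit via \Cref{lem:critpointconstrho} and \Cref{lem:charelasticae}. The explicit Gagliardo--Nirenberg interpolation against the uniform $W^{3,2}$-bound to upgrade $L^2$-decay to $C^2$-decay is a step the paper leaves implicit; including it is fine.

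The genuine gap is in your case (i). Your Morse--Bott/{\L}ojasiewicz--Simon argument is both unnecessary and, as stated, false under the hypotheses of the theorem: the circle with constant density need not be non-degenerate transverse to the rotation orbit. Concretely, take $\beta(x)=(x^2-1)^2+c$ with $c>0$ small, $\nu=0$, $c_0=0$, $L=2\pi$, $\omega=1$. This $\beta$ satisfies \eqref{eq:beta'bound} with $\bar C=2/\sqrt{c}$, and the energy smallness condition admits nontrivial initial data for $c$ small. Since $\beta''(0)=-4$, the second variation of the constrained energy at $(\theta_c,\rho_c)$ in the admissible direction $(0,\sin(2s))$ equals $(4\mu-2)\pi$, which vanishes at $\mu=\tfrac12$: the kernel of the second variation is then strictly larger than the symmetry directions, the critical manifold is not Morse--Bott there, and no exponent-$\tfrac12$ {\L}ojasiewicz inequality can be extracted by that route (this is consistent with \Cref{ex:smaller E than trivial}, which shows $(\theta_c,\rho_c)$ is not even a local minimizer for small $\mu$). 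The missing idea — and what the paper actually does — is elementary: by \cite[Lemma 2.3]{DLR2022} the quantity $\int_0^L\theta(t,s)\intd s$ is conserved along \eqref{eq:flow equation}. Any subsequential limit $\theta_\infty$ satisfies $\partial_s\theta_\infty\equiv\frac{2\pi\omega}{L}$, i.e.\ $\theta_\infty$ is affine, so the conserved integral pins down the additive constant uniquely: $\theta_\infty=\phi+\frac1L\int_0^L\theta_0\intd s-\pi\omega L$. Hence all subsequential limits coincide, and full $C^2$-convergence follows from the subconvergence of \Cref{thm:zsf artcl1} by a standard subsequence argument — exactly the "rotational phase" degree of freedom you identified as the obstacle is killed by a conservation law, not by a gradient inequality. (The paper's discussion after the proof explains this: for a circle, arclength reparametrizations act by adding constants to $\theta$, and that constant is fixed by the conserved integral; this is why analyticity is dispensable precisely when $\omega\neq0$.)
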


Assumption \eqref{eq:beta'bound} is clearly satisfied if we have $\beta'(\nu)=0$, $||\beta''||_\infty<\infty$, and $\inf_\R\beta>0$.
If $\beta$ is such that $\beta^\prime(x)(x-\nu)\leq0$ for $x\in\R$, we may choose $\bar C=0$ in \eqref{eq:beta'bound}. In this case, there is no assumption on the initial energy. We highlight that, remarkably, analyticity is not needed for the case $\omega\neq 0$, see the discussion after the proof of \Cref{thm:conv special beta neu} in \Cref{sec:convassbeta}.

For rotationally symmetric initial data ($\omega=1$) and $c_0=\frac{2\pi}{L}$ we prove exponential convergence to a circle with constant density without further assumptions, see \Cref{prop:conv_symm_c02Pi/L}.
Furthermore, we can determine the limit for large values of $\mu$ and constant initial density.

\begin{thm}[Asymptotic behavior for large $\mu$]
\label{prop:convlargemu}
Let $\omega\neq0$ and suppose that $\beta$ is real analytic. Let $(\theta_0,\rho_0)$ be an admissible initial datum with $\rho_0\equiv \nu$. 
There exists $\mu_0\in (0,\infty)$ such that if $\mu\geq \mu_0$, 
then the limit $(\theta_\infty,\rho_\infty)$ of Theorem \ref{thm:zsf artcl1} describes an $\omega$-fold covered circle with constant density.
\end{thm}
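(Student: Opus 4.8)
The plan is to exploit the special structure of the initial datum. Since $\rho_0\equiv\nu$ is constant, $\partial_s\rho_0\equiv 0$, so the initial energy
\[
\sE_\mu(\theta_0,\rho_0)=\frac{\beta(\nu)}{2}\int_0^L(\partial_s\theta_0-c_0)^2\intd s=:E_0
\]
is \emph{independent of $\mu$}. As \eqref{eq:flow equation} is the $L^2$-gradient flow of $\sE_\mu$, the energy is nonincreasing, whence $\sE_\mu(\theta(t),\rho(t))\le E_0$ for all $t$. In particular $\tfrac{\mu}{2}\Vert\partial_s\rho(t)\Vert_{L^2(0,L)}^2\le E_0$, and since the mass $\int_0^L\rho(t)\intd s=\nu L$ is conserved, the Poincar\'e--Wirtinger inequality gives $\Vert\rho(t)-\nu\Vert_{L^\infty(0,L)}\le C(L,E_0)\,\mu^{-1/2}$ uniformly in $t$. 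Thus for $\mu$ large the density stays in a fixed small neighbourhood of $\nu$, on which $\beta$ is bounded above and below by positive constants and $\beta',\beta''$ are bounded.

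By \Cref{thm:zsf artcl1} (using analyticity of $\beta$) the flow converges, $(\theta(t),\rho(t))\to(\theta_\infty,\rho_\infty)$ in $C^2([0,L])$, to a solution of \eqref{eq:stationary}, which inherits $\sE_\mu(\theta_\infty,\rho_\infty)\le E_0$ and $\Vert\rho_\infty-\nu\Vert_{L^\infty}\le C\mu^{-1/2}$. Writing $w:=\beta(\rho_\infty)(\partial_s\theta_\infty-c_0)$, the stationary angle equation reads $\partial_s w=-\lambda_{\theta1}\sin\theta_\infty+\lambda_{\theta2}\cos\theta_\infty$; together with the uniform invertibility of $\Pi(\theta_\infty)$ near a closed curve this bootstraps to a bound on $\Vert\partial_s\theta_\infty\Vert_{L^\infty}$ uniform for $\mu$ large. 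The goal is then to show that, for $\mu\ge\mu_0$, necessarily $\rho_\infty\equiv\nu$: once this holds, $\partial_s(\beta(\nu)(\partial_s\theta_\infty-c_0))=\beta(\nu)\partial_s^2\theta_\infty$, so $\theta_\infty$ solves the classical elastica equation, and since the only closed elasticae of rotation index $\omega\neq0$ are the multifold circles (the figure eight having index $0$), $\theta_\infty$ describes an $\omega$-fold covered circle, as claimed.

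To force $\rho_\infty\equiv\nu$ I would test the stationary density equation with $\rho_\infty-\nu$ and integrate by parts, using $\int_0^L(\rho_\infty-\nu)\intd s=0$, to obtain
\[
\mu\Vert\partial_s\rho_\infty\Vert_{L^2}^2=-\tfrac12\int_0^L\beta'(\rho_\infty)(\partial_s\theta_\infty-c_0)^2(\rho_\infty-\nu)\intd s.
\]
Splitting $\beta'(\rho_\infty)=\beta'(\nu)+O(\vert\rho_\infty-\nu\vert)$, the $O(\vert\rho_\infty-\nu\vert)$ contribution is quadratic in $\rho_\infty-\nu$ and hence absorbed on the left for $\mu$ large (again by Poincar\'e). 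The delicate term is the $\beta'(\nu)$-contribution; replacing $(\partial_s\theta_\infty-c_0)^2$ by its oscillation about its mean (legitimate since $\int_0^L(\rho_\infty-\nu)=0$), it is controlled by $\Vert(\partial_s\theta_\infty-c_0)^2-\overline{(\partial_s\theta_\infty-c_0)^2}\Vert_{L^2}\,\Vert\partial_s\rho_\infty\Vert_{L^2}$. The key quantitative rigidity to establish is that the curvature oscillation is controlled \emph{linearly} by the density gradient, $\Vert(\partial_s\theta_\infty-c_0)^2-\overline{(\partial_s\theta_\infty-c_0)^2}\Vert_{L^2}\le C\Vert\partial_s\rho_\infty\Vert_{L^2}$ with $C$ uniform for large $\mu$, reflecting that a constant density forces a circular curve. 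Granting it, the delicate term also becomes quadratic, so $(\mu-C')\Vert\partial_s\rho_\infty\Vert_{L^2}^2\le0$, whence $\rho_\infty\equiv\nu$ for $\mu\ge\mu_0:=C'$.

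The main obstacle is exactly this rigidity estimate, equivalently a uniform bound $\vert\lambda_{\theta1}\vert+\vert\lambda_{\theta2}\vert\le C\Vert\partial_s\rho_\infty\Vert_{L^2}$ on the nonlocal multipliers. Its difficulty is that the circle is a \emph{degenerate} critical point of the angle functional — its rotation orbit lies in the kernel of the linearization — so a naive implicit-function argument does not apply directly; one must work modulo the symmetry, via a Lyapunov--Schmidt reduction or the \L ojasiewicz--Simon framework near the circle manifold, exploiting analyticity of $\beta$. Alternatively, I would deduce the rigidity from the classification of constrained critical points as homogeneous elasticae established in \Cref{sec:minprob}, applied to $(\theta_\infty,\rho_\infty)$ in the regime $\sE_\mu(\theta_\infty,\rho_\infty)\le E_0$, $\mu\ge\mu_0$. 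A preliminary compactness step — letting $\mu\to\infty$ along a sequence, extracting a $C^2$-limit that solves the elastica equation and is therefore a circle — shows that the curve does approach the circle manifold, which is the qualitative input underlying the quantitative estimate.
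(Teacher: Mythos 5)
Your skeleton matches the paper's in several places: the observation that $\sE_\mu(\theta_0,\rho_0)$ is independent of $\mu$ because $\rho_0\equiv\nu$, the resulting bound $\mu\Vert\partial_s\rho\Vert_{L^2}^2\le 2E_0$ from energy decrease, the use of \Cref{thm:zsf artcl1} (with analyticity) for full convergence, and the endgame ``$\rho_\infty\equiv\nu$ $\Rightarrow$ elastica $\Rightarrow$ $\omega$-fold circle'' via \Cref{lem:critpointconstrho}, \Cref{lem:charelasticae} and \Cref{rem:kappaelastica}. But the decisive step --- forcing $\rho_\infty\equiv\nu$ for each fixed large $\mu$ --- rests entirely on your rigidity estimate $\Vert(\partial_s\theta_\infty-c_0)^2-\overline{(\partial_s\theta_\infty-c_0)^2}\Vert_{L^2}\le C\Vert\partial_s\rho_\infty\Vert_{L^2}$ (equivalently, the multiplier bound), which you state but do not prove and yourself flag as ``the main obstacle.'' That is a genuine gap: essentially all of the difficulty of the theorem is concentrated in exactly this point, and neither of your suggested routes is carried out. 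In particular, the classification results of \Cref{sec:minprob} cannot be invoked directly for this purpose: \Cref{lem:critpointconstkappa} presupposes that $\theta_\infty$ already describes an elastica (constant curvature), which is what one is trying to prove.

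The paper closes this gap without ever establishing such a quantitative estimate, by combining two ingredients absent from your proposal. First, a normalization: one may assume $\int_0^L\theta_0\intd s=\pi\omega L$ (adding a constant to $\theta$ commutes with the flow), and this integral is preserved in time by \cite[Lemma 2.3]{DLR2022}, hence also holds for every limit. This removes precisely the rotation degeneracy that worries you --- no Lyapunov--Schmidt reduction or {\L}ojasiewicz--Simon argument near the circle manifold is needed. Second, \Cref{cor:C1neigh}: for $\mu>\mu_0$ there is a \emph{$\mu$-independent} $C^1$-neighborhood $\mathcal{O}$ of $(\theta_c,\rho_c)$ containing no constrained critical point with $\mathcal{I}(\theta)=\pi\omega L$ other than $(\theta_c,\rho_c)$ itself (a consequence of the positive definiteness of $\sE_\mu''(\theta_c,\rho_c)$, cf.\ \Cref{prop:minproblargemu}). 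The paper then argues by compactness in the parameter: for a sequence $\mu_j\to\infty$, the limits $(\theta_{\infty,j},\rho_{\infty,j})$ are stationary solutions with uniformly bounded $W^{2,2}$-norms, the Lagrange multipliers are uniformly bounded, and $\Vert\partial_s\rho_{\infty,j}\Vert_{L^2}^2\le 2K/\mu_j\to0$; along a subsequence they converge in $C^1$ to a constrained critical point with constant density, which by \Cref{lem:critpointconstrho}, \Cref{rem:kappaelastica} and the normalization must be $(\theta_c,\rho_c)$. Hence for $j$ large, $(\theta_{\infty,j},\rho_{\infty,j})\in\mathcal{O}$, and \Cref{cor:C1neigh} forces $(\theta_{\infty,j},\rho_{\infty,j})=(\theta_c,\rho_c)$. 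Your final paragraph gestures at this compactness step, but treats it only as ``qualitative input'' to the missing inequality; the point you miss is that compactness together with the local uniqueness of \Cref{cor:C1neigh} already suffices, so the quantitative rigidity estimate never needs to be proved.
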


A major difficulty here is that the two parts of the energy do not decrease individually, so that the density does not remain constant, in general.

In Section \ref{sec:numerics}, we propose a simple numerical scheme approximating solutions to \eqref{eq:flow equation} which takes advantage of the gradient flow structure of the problem and is based on De Giorgi's Minimizing Movements and finite differences, extending the idea of \cite{BJSS2020} to the more general and time-dependent system. Our analysis is substantially guided by the resulting computations, especially concerning the long-term behavior of the system. Numerical experiments allow us to explore qualitative properties of the flow beyond the scope covered in the previous sections.
We consider a number of examples, some of which are:
the loss of embeddedness in two cases, first in the case of some $c_0 > 2\pi / L$,
and second in the more subtle case of $c_0 = 0$, with a careful choice of the initial data;
the convergence to nontrivial states for $\mu$ small, which hints at the existence of nontrivial
critical points, even for rotationally symmetric initial data;
the impact of the choice of parameters on the limit configuration $(\theta_\infty,\rho_\infty)$;
and finally the instability of multiple coverings of the figure eight, in the case $\omega = 0$.

For a related problem on surfaces, we would like to point out that two finite element methods are proposed and analyzed in \cite{elliott_numerical_2022}, based on a more geometric approach.

\section{The static problem}\label{sec:minprob}

\subsection{The Euler--Lagrange equations}\label{subsec:abstract_framework}

For $k\in \NN$, we denote the Hilbert space of periodic Sobolev functions on $[0,L]$ by
\begin{align}
    W^{k,2}_{\mathrm{per}}(0,L) := \{u \in W^{k,2}(0,L) : \partial_s^\ell u (L)= \partial_s^\ell u(0) \text{ for }\ell=0,\dots,k-1\}.
\end{align}
The standard angle function of an $\omega$-fold covering of the circle 
is given by
\begin{align}\label{eq:defphi}
    \phi(s)=\frac{2\pi\omega s}{L}\quad \text{ for } s\in[0,L],
\end{align}
see \cite[Section 3.1]{DLR2022}. 
If $\theta \in W^{k,2}(0,L)$ 
is an angle function for a $C^{k}$-closed curve with rotation index $\omega\in \ZZ$, 
there exists a unique $u\in W^{k,2}_{\mathrm{per}}(0,L)$ such that
$
    \theta = \phi + u.    
$
Consequently, it is natural to study the energy $\sE_\mu$ defined in \eqref{Eallg} on the set
\begin{align}
    \sU &:= W^{1,2}_{\mathrm{per}}(0,L;\RR^2) + (\phi,0) 
    = \left\lbrace(\theta,\rho)\in W^{1,2}(0,L;\RR^2):\;\theta(L)-\theta(0)=2\pi\omega,\; \rho(L)=\rho(0)\right\rbrace.
\end{align}
By introducing the constraint functional
\begin{align}
\label{eq:defG}
    \mathcal{G}(\theta, \rho) = \Big( \int_0^L \cos\theta\intd s,\; \int_0^L \sin \theta\intd s,\; \int_0^L \rho \intd s - \nu L\Big),
\end{align}
we see that $(\theta, \rho)\in \sU$ corresponds to a $C^1$-closed curve with rotation index $\omega\in \ZZ$ and a density with total mass $\nu L$ if and only if $\sG(\theta, \rho)=0$. 
We thus define the side condition
$\sA:=\left\lbrace (\theta,\rho)\in \sU:\;\mathcal{G}(\theta,\rho)=0\right\rbrace$.
The set $\sU$ is not a vector space (unless $\omega=0$), but only an affine subspace of the Hilbert space $W^{1,2}(0,L;\R^2$). However, this causes only some minor technical difficulties 
which can be resolved by working in the periodic setting with the shifted functionals
\begin{align}
    &E_\mu: W^{1,2}_{\mathrm{per}}(0,L;\RR^2)\to\RR,\; E_{\mu}(u,\rho)=\sE_\mu(u+\phi,\rho),\\
    &G:W^{1,2}_{\mathrm{per}}(0,L;\RR^2)\to\RR^3,\; G(u,\rho)=\sG(u+\phi,\rho), \label{eq:shifted}
\end{align}
with
$\phi$ as in \eqref{eq:defphi}. 

It can be checked that $\sG^\prime(\theta,\rho)\colon W^{1,2}_{\textup{per}}(0,L;\RR^2)\to\RR^3$, for $(\theta,\rho)\in\sA$, is surjective. Hence, applying \cite[Proposition 43.21]{Z1985}, we see that 
the energy $\sE_\mu$ has a critical point subject to the constraint $\sG=0$ at some $(\theta,\rho)\in \sA$ if and only if there exist $\lambda_{\theta1},\lambda_{\theta2},\lambda_\rho\in\RR$ such that 
\begin{align}
    &0=\int_0^L\big(\beta(\rho)(\partial_s\theta-c_0)\partial_s v -\lambda_{\theta1}\sin\theta\, v +\lambda_{\theta2}\cos\theta\, v \big)\intd s,\label{eq:EL1}\\
    &0=\int_0^L\big(\mu\partial_s\rho\partial_s\sigma +\frac12\beta^\prime(\rho)(\partial_s\theta-c_0)^2\sigma +\lambda_\rho\sigma \big)\intd s\label{eq:EL2}
\end{align}
 for all $( v ,\sigma )\in W^{1,2}_{\mathrm{per}}(0,L;\RR^2)$. 
Choosing appropriate test functions shows that 
$\lambda_{\theta1}$, $\lambda_{\theta2}$ and $\lambda_\rho$ are given as in \eqref{eq:lambdatheta}, and \eqref{eq:lambdarho}.
If the constrained critical point $(\theta,\rho)$ 
is more regular, precisely $(\theta,\rho)\in  W^{2,2}_{\mathrm{per}}(0,L;\RR^2)+(\phi,0)
$, 
\eqref{eq:EL1} and \eqref{eq:EL2} yield the \textit{Euler--Lagrange equations}
\begin{align}
    0&=\partial_s\left(\beta(\rho)(\partial_s\theta-c_0)\right)+\lambda_{\theta1}\sin\theta-\lambda_{\theta2}\cos\theta,\label{eq:ELp1}\\
    0&=\mu\partial_s^2\rho-\frac12\beta'(\rho)(\partial_s\theta-c_0)^2-\lambda_\rho\label{eq:ELp2}.
\end{align}

\subsection{Existence of minimizers and smoothness of critical points}\label{sec:exmin}

The existence of a solution to the minimization problem
\begin{align}\label{eq:minprob}
    \inf_{(\theta,\rho)\in\sA}\sE_\mu(\theta,\rho)\longrightarrow \min!
\end{align}
can be shown via the direct method 
following the arguments in \cite[Prop.\ 3.2]{BJSS2020}, also in the case of
general winding number $\omega\in \ZZ$, and with spontaneous curvature $c_0\in\RR$.

\begin{prop}[Existence of a minimizer]
\label{prop:minprobex}
There exists 
$(\theta^\ast,\rho^\ast)\in\sA$ such that 
$
\sE_\mu(\theta^\ast,\rho^\ast)=\inf_{(\theta,\rho)\in\sA}\sE_\mu(\theta,\rho).
$
\end{prop}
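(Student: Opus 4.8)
The plan is to run the direct method of the calculus of variations, adapting the argument of \cite[Prop.~3.2]{BJSS2020} to the general rotation index $\omega\in\ZZ$ and spontaneous curvature $c_0\in\RR$. First I would check that the infimum $m:=\inf_{(\theta,\rho)\in\sA}\sE_\mu(\theta,\rho)$ is a well-defined real number: it is clearly bounded below by $0$ since $\beta>0$ and $\mu>0$, and $\sA$ is nonempty because for every $\omega\in\ZZ$ there is a smooth closed curve of length $L$ and rotation index $\omega$ whose angle function satisfies the closedness condition \eqref{eq:closedcurve}, which, paired with $\rho\equiv\nu$, yields a competitor in $\sA$. I then fix a minimizing sequence $(\theta_n,\rho_n)\subset\sA$ with $\sE_\mu(\theta_n,\rho_n)\to m$.

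The core of the argument is a uniform $W^{1,2}$-bound, and here the two components must be treated in the right order. For the density, the inequality $\tfrac{\mu}{2}\int_0^L(\partial_s\rho_n)^2\intd s\le\sE_\mu(\theta_n,\rho_n)$ bounds $\Vert\partial_s\rho_n\Vert_{L^2}$, and since the mass constraint fixes $\tfrac1L\int_0^L\rho_n\intd s=\nu$, the Poincar\'e--Wirtinger inequality gives a uniform bound on $\Vert\rho_n\Vert_{W^{1,2}(0,L)}$. The one-dimensional embedding $W^{1,2}(0,L)\hookrightarrow C^0([0,L])$ then furnishes a uniform bound $\Vert\rho_n\Vert_{\infty}\le M$. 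This is the decisive input for the $\theta$-component: on the compact interval $[-M,M]$ the continuous positive function $\beta$ attains a positive minimum $\beta_{\min}>0$, so that $\beta_{\min}\int_0^L(\partial_s\theta_n-c_0)^2\intd s\le 2\,\sE_\mu(\theta_n,\rho_n)$ bounds $\Vert\partial_s\theta_n\Vert_{L^2}$. To control $\theta_n$ itself I exploit the gauge freedom: both $\sE_\mu$ and the constraint set $\sA$ are invariant under $\theta\mapsto\theta+\mathrm{const}$, so writing $\theta_n=\phi+u_n$ with $u_n\in W^{1,2}_{\mathrm{per}}(0,L)$ I may normalize $\int_0^L u_n\intd s=0$ without leaving the minimizing sequence, and a second application of Poincar\'e--Wirtinger bounds $\Vert u_n\Vert_{W^{1,2}}$, hence $\Vert\theta_n\Vert_{W^{1,2}}$.

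With these bounds I pass to a subsequence along which $u_n\rightharpoonup u$ and $\rho_n\rightharpoonup\rho$ weakly in $W^{1,2}_{\mathrm{per}}(0,L)$; by the compact embedding into $C^0([0,L])$ the convergences are uniform, and I set $\theta:=\phi+u$. Uniform convergence of $\cos\theta_n,\sin\theta_n$ and $\rho_n$ passes the three constraints in $\sG$ to the limit, so $(\theta,\rho)\in\sA$. It remains to prove weak lower semicontinuity. The density term is weakly lsc by convexity, and for the bending term I split
\begin{align*}
\int_0^L \beta(\rho_n)(\partial_s\theta_n-c_0)^2\intd s &= \int_0^L \beta(\rho)(\partial_s\theta_n-c_0)^2\intd s \\
&\quad + \int_0^L\big(\beta(\rho_n)-\beta(\rho)\big)(\partial_s\theta_n-c_0)^2\intd s .
\end{align*}
The last integral tends to $0$ because $\beta(\rho_n)\to\beta(\rho)$ uniformly while $\int_0^L(\partial_s\theta_n-c_0)^2\intd s$ stays bounded, and the first integral is weakly lsc in $\theta_n$, being a convex functional of $\partial_s\theta_n$ with the fixed positive weight $\beta(\rho)$. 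Hence $\liminf_n\sE_\mu(\theta_n,\rho_n)\ge\sE_\mu(\theta,\rho)$, which together with $(\theta,\rho)\in\sA$ identifies $(\theta,\rho)$ as a minimizer.

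The hard part is the coercivity step rather than the lower semicontinuity: the weight $\beta(\rho_n)$ is only bounded below \emph{after} a uniform $L^\infty$-bound on $\rho_n$ is available, which forces the density estimate to precede and feed into the $\theta$-estimate. The only other point requiring care is the additive-constant degeneracy of the angle function, which must be quotiented out to obtain compactness of $\theta_n$ and which is harmless precisely because it preserves both the energy and the constraints.
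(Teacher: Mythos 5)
Your proof is correct and takes essentially the same approach as the paper, which simply invokes the direct method following \cite[Prop.\ 3.2]{BJSS2020} for general $\omega\in\ZZ$ and $c_0\in\RR$ without spelling out the details. Your write-up supplies exactly those details — the density-first coercivity estimate feeding the lower bound on $\beta(\rho_n)$, the normalization of the additive constant in $\theta$, and the splitting argument for weak lower semicontinuity of the bending term — all of which are the standard ingredients of that argument.
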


We prove now that constrained critical points (and in particular minimizers) are smooth.

\begin{prop}[Smoothness of critical points]
\label{prop:minprobex2} 
If $(\theta,\rho)$ is a constrained critical point, 
the $L$-periodic extension of $(\partial_s\theta,\partial_s\rho)$ to $\RR$ is smooth. 
In particular,
$(\theta,\rho)\in C^\infty([0,L])$ and $\partial_s\theta(L)=\partial_s\theta(0)$, $\partial_s\rho(L)=\partial_s\rho(0)$.
\end{prop}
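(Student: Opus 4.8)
The plan is to run a standard elliptic bootstrap on the weak Euler--Lagrange system \eqref{eq:EL1}--\eqref{eq:EL2} to obtain $C^\infty$-regularity on $[0,L]$, and then to upgrade this to smoothness of the periodic extension by reinterpreting the equations as an autonomous first-order ODE. Throughout, $\lambda_{\theta1},\lambda_{\theta2},\lambda_\rho\in\RR$ are the (fixed) multipliers of the critical point. First I would extract the structural content of testing against the \emph{full} periodic space: writing $g:=\beta(\rho)(\partial_s\theta-c_0)$ and $h:=\mu\partial_s\rho$, equation \eqref{eq:EL1} tested with $v\in C_c^\infty(0,L)$ shows $g\in W^{1,2}(0,L)$ with $\partial_s g=-(\lambda_{\theta1}\sin\theta-\lambda_{\theta2}\cos\theta)$, while testing with a general periodic $v$ and integrating by parts forces the boundary term $(g(L)-g(0))\,v(0)$ to vanish, so $g\in W^{1,2}_{\mathrm{per}}(0,L)$. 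The same reasoning applied to \eqref{eq:EL2} gives $h\in W^{1,2}_{\mathrm{per}}(0,L)$ with $\partial_s h=\tfrac12\beta'(\rho)(\partial_s\theta-c_0)^2+\lambda_\rho$.

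For the bootstrap I start from $\theta,\rho\in C^0([0,L])$, using the one-dimensional embedding $W^{1,2}(0,L)\hookrightarrow C^0([0,L])$; then $\beta(\rho)$ is continuous and bounded below by a positive constant. Since $\partial_s g\in C^0$ we get $g\in C^1$, hence $\partial_s\theta=c_0+g/\beta(\rho)\in C^0$ and $\theta\in C^1$; inserting this into $\partial_s h$ yields $h\in C^1$ and $\rho\in C^2$, so in particular $\theta,\rho\in C^1$. The inductive step is of the same form: if $\theta,\rho\in C^{j+1}$, then $\partial_s g=-(\lambda_{\theta1}\sin\theta-\lambda_{\theta2}\cos\theta)\in C^{j+1}$ gives $g\in C^{j+2}$, whence $\partial_s\theta=c_0+g/\beta(\rho)\in C^{j+1}$ (the quotient being limited by $\beta(\rho)\in C^{j+1}$) and $\theta\in C^{j+2}$; then $\partial_s h=\tfrac12\beta'(\rho)(\partial_s\theta-c_0)^2+\lambda_\rho\in C^{j+1}$ gives $\rho\in C^{j+2}$. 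Here I use repeatedly that $\beta\in C^\infty$ with $\beta>0$, so compositions and quotients inherit the regularity of $\rho$. By induction $(\theta,\rho)\in C^\infty([0,L])$. Finally, periodicity of $g$ together with $\rho(0)=\rho(L)$ (hence $\beta(\rho(0))=\beta(\rho(L))>0$) gives $\partial_s\theta(0)=\partial_s\theta(L)$, and periodicity of $h$ gives $\partial_s\rho(0)=\partial_s\rho(L)$.

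It remains to show that the $L$-periodic extension of $(\partial_s\theta,\partial_s\rho)$ is smooth, i.e.\ that \emph{all} derivatives match at the endpoints. For this I would rewrite the classical equations \eqref{eq:ELp1}--\eqref{eq:ELp2} as an autonomous system $X'=F(X)$ for $X:=(\cos\theta,\sin\theta,\partial_s\theta,\rho,\partial_s\rho)$, where $F$ is smooth since $\beta>0$. The decisive observation is that, although $\theta$ itself is not periodic, the constraint $\theta(L)-\theta(0)=2\pi\omega\in2\pi\ZZ$ makes $\cos\theta$ and $\sin\theta$ $L$-periodic; together with the endpoint identities just established and $\rho(0)=\rho(L)$ this yields $X(0)=X(L)$. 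As $F$ is locally Lipschitz and the system autonomous, comparing the maximal solution through $X(0)$ with its $L$-shift via uniqueness forces that solution to be global and $L$-periodic, hence a smooth $L$-periodic map on $\RR$ restricting to $X$ on $[0,L]$; its third and fifth components are the desired smooth periodic extensions of $\partial_s\theta$ and $\partial_s\rho$.

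The bootstrap is routine; the step I expect to require the most care is this last one, since matching all boundary jets directly would be tedious. The autonomous-ODE-plus-uniqueness device circumvents this, and it relies crucially on the quantization $\theta(L)-\theta(0)\in2\pi\ZZ$, which is exactly what turns the non-periodic $\theta$ into periodic data $(\cos\theta,\sin\theta)$.
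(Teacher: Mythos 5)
Your proof is correct, and its decisive step is genuinely different from the paper's. The first part---identifying the distributional derivatives of the fluxes $g=\beta(\rho)(\partial_s\theta-c_0)$ and $h=\mu\,\partial_s\rho$ by testing with compactly supported functions, extracting $g(L)=g(0)$ and $h(L)=h(0)$ from periodic test functions that do not vanish at the boundary, and bootstrapping---is the same mechanism as the paper's Steps 1--3, just organized on the $C^k$ scale via $g,h$ rather than on the Sobolev scale $W^{2,2}\to W^{3,2}$; your endpoint identities are exactly the paper's natural boundary conditions \eqref{eq: nat bc} after dividing by $\beta(\rho(0))=\beta(\rho(L))>0$. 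Where you depart from the paper is the smooth periodic extension: the paper evaluates the pointwise equations \eqref{eq:ELp1}--\eqref{eq:ELp2} at $s=0,L$ to match the second derivatives and then ``bootstraps'', i.e.\ differentiates the equations repeatedly to match all higher jets inductively. You instead pass to the autonomous system $X'=F(X)$ with $X=(\cos\theta,\sin\theta,\partial_s\theta,\rho,\partial_s\rho)$, note that $F$ is smooth and locally Lipschitz because $\beta>0$, verify $X(0)=X(L)$---which is precisely where the quantization $\theta(L)-\theta(0)\in2\pi\ZZ$, the constraint $\rho(L)=\rho(0)$, and the natural boundary conditions enter---and invoke uniqueness of maximal solutions to conclude that the solution through $X(0)$ is global and $L$-periodic, so all jets match automatically. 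This replaces the inductive jet-matching by a single uniqueness argument and is arguably cleaner; the paper's route is more elementary in that it never leaves the variational/bootstrap setting.

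One minor imprecision: at the initial stage you only know $\partial_s\theta\in L^2(0,L)$, so $\partial_s h=\tfrac12\beta'(\rho)(\partial_s\theta-c_0)^2+\lambda_\rho$ lies in $L^1(0,L)$, giving $h\in W^{1,1}(0,L)$ rather than $W^{1,2}_{\mathrm{per}}(0,L)$ as you wrote. This is harmless: absolute continuity of $h$ suffices both for the integration by parts yielding $h(L)=h(0)$ and for your bootstrap, since by the time you invoke $h\in C^1$ you have already upgraded $\partial_s\theta$ to $C^0$ via $g$, making $\partial_s h$ continuous.
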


\begin{proof}
Let $(\theta,\rho)\in \sU$
be a constrained critical point. 
Then there exist $\lambda_{\theta1},\lambda_{\theta2},\lambda_\rho\in\RR$
such that $(\theta,\rho)$ satisfies 
\eqref{eq:EL1} and \eqref{eq:EL2}. \\
\textit{Step 1: $(\theta,\rho)\in W^{2,2}(0,L)\subset C^1([0,L])$.}
Since $\rho\in W^{1,2}(0,L)\subset C([0,L])$, 
$\left\Vert\beta^\prime\circ\rho\right\Vert_{C([0,L])}$ and $\left\Vert\partial_s\rho\right\Vert_{L^2(0,L)}$ are bounded. Thus, \eqref{eq:EL1} implies that there is $C=C(\theta,\rho)$ such that
\begin{align}
    &\bigg\vert\int_0^L\beta(\rho)\partial_s\theta\partial_s v \intd s\bigg\vert
    =\bigg\vert\int_0^L(\lambda_{\theta1}\sin\theta-\lambda_{\theta2}\cos\theta) v \intd s-\int_0^L\beta^\prime(\rho)\partial_s\rho c_0 v \intd s\bigg\vert
    \leq C \left\Vert v \right\Vert_{L^2(0,L)}
\end{align}
for all $ v \in C_c^1((0,L))\subset W^{1,2}_{\mathrm{per}}(0,L)$.
So, $\beta(\rho)\partial_s\theta\in W^{1,2}(0,L)$.
Hence, $\left\Vert\beta(\rho)(\partial_s\theta-c_0)\right\Vert_{C([0,L])}$ and also $\left\Vert\beta'(\rho)(\partial_s\theta-c_0)\right\Vert_{C([0,L])}$ are bounded, so \eqref{eq:EL2} implies that
\begin{align}
    &2\mu\bigg\vert\int_0^L\partial_s\rho\partial_s\sigma \intd s\bigg\vert
    =\bigg\vert\int_0^L\left(-\beta^\prime(\rho)(\partial_s\theta-c_0)^2\sigma -2\lambda_\rho\sigma \right)\intd s\bigg\vert
    \leq C \left\Vert\sigma \right\Vert_{L^2(0,L)}
\end{align}
for all $\sigma \in C_c^1((0,L))$. 
It follows that $\rho\in W^{2,2}(0,L)\subset C^1([0,L])$. We thus obtain 
$\partial_s (\beta(\rho) \partial_s \theta) = \beta'(\rho)\partial_s \rho\partial_s \theta + \beta(\rho)\partial_s^2\theta$ in the sense of distributions, so $\partial_s^2\theta\in L^2(0,L)$ as $\inf_{[0,L]}\beta(\rho)>0$.\\
\textit{Step 2: $(\theta,\rho)\in W^{3,2}(0,L)\subset C^2([0,L])$.} The increased regularity
yields that $(\theta,\rho)$ satisfies 
\eqref{eq:ELp1} and \eqref{eq:ELp2}
in $L^2(0,L)$.
Using
the same ideas as in Step 1, we can deduce that $(\theta, \rho)\in W^{3,2}(0,L)$.
\\[0.1em]
\textit{Step 3:  
Smooth $L$-periodic extension.} 
Testing \eqref{eq:EL1} and \eqref{eq:EL2} with $W^{1,2}_{\mathrm{per}}$-functions 
not vanishing at the boundary results in the natural boundary conditions 
\begin{align}
    \partial_s\theta\big\vert_0^L=0 \quad\text{ and }\quad \partial_s\rho\big\vert_0^L=0.\label{eq: nat bc}
\end{align}
Since $(\theta,\rho)$ satisfies \eqref{eq:ELp1} and \eqref{eq:ELp2} 
pointwise we conclude with \eqref{eq: nat bc} that $\partial_s^2\theta(L)=\partial_s^2\theta(0)$ and $\partial_s^2\rho(L)=\partial_s^2\rho(0)$.
The claim follows by bootstrapping.
\end{proof}

\subsection{Homogeneous elastica}\label{sec:homelastica}

The structure of the energy functional $\sE_\mu$ suggests that for large values of $\mu$, minimizers favor almost constant density, cf.\ \cite{BJSS2020}. For constant density, $\sE_\mu$ is essentially the elastic energy whose critical points are called elasticae. 
For $\mu$ large, these elasticae also play an important role for the heterogeneous elastic energy \eqref{Eallg}. 

\begin{defn}
Let $(\theta,\rho)\in\sA$. We say that $\theta$ describes a \textit{(length-constrained) elastica} if the curvature $\kappa=\partial_s\theta$ is smooth and satisfies the \textit{constrained elastica equation}
\begin{align}
\label{eq:elasticae}
    \partial_s^2\kappa+\frac12\kappa^3-\lambda\kappa=0 \quad \text{ for some }\lambda\in\RR.
\end{align}
If further $\rho$ is constant, we say that $(\theta,\rho)$ describes a \textit{homogeneous elastica}.
\end{defn}

Solutions of \eqref{eq:elasticae} have been classified explicitely in several previous works, see for example \cite{LS1984}, \cite{DHMV2008}, or \cite[Lemma 5.4]{MR2021}. 
In the case of closed curves, the 
elasticae can be characterized as follows.

\begin{lem}[{\cite{LS1984}}]
\label{lem:charelasticae}
The only closed constrained elasticae are 
multifold coverings of circles and multifold coverings of the figure eight (elastica).
\end{lem}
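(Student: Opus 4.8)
The plan is to classify all closed solutions of the constrained elastica equation $\partial_s^2\kappa+\frac12\kappa^3-\lambda\kappa=0$ by reducing it to a first-order ODE and analyzing the resulting phase portrait. First I would multiply the equation by $\partial_s\kappa$ and integrate to obtain a conserved quantity of the form
\begin{align}
(\partial_s\kappa)^2+\frac14\kappa^4-\lambda\kappa^2=E\label{eq:firstint}
\end{align}
for some energy constant $E\in\RR$. This turns the problem into understanding the level sets of the ``energy'' $\tfrac14\kappa^4-\lambda\kappa^2$ in the $(\kappa,\partial_s\kappa)$-phase plane. Since we seek \emph{closed} curves, the curvature $\kappa$ must be a smooth $L$-periodic function on $\RR$ (by \Cref{prop:minprobex2} and the periodicity built into $\sU$), so the admissible trajectories in the phase plane are precisely the closed orbits (periodic solutions) together with the equilibria.

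Next I would carry out the phase-plane analysis, distinguishing cases by the sign of $\lambda$. The potential $V(\kappa)=\tfrac14\kappa^4-\lambda\kappa^2$ is a double-well (for $\lambda>0$) or single-well (for $\lambda\le 0$) quartic. The equilibria are $\kappa\equiv 0$ and, when $\lambda>0$, $\kappa\equiv\pm\sqrt{2\lambda}$; these constant-curvature solutions correspond exactly to multifold covered circles. The nonconstant periodic orbits come in two families: oscillations confined to one well (where $\kappa$ keeps a fixed sign, giving wavelike elasticae that do not close up into the figure eight) and oscillations encircling all three equilibria in the double-well (where $\kappa$ changes sign symmetrically), which correspond to the figure eight. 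The crucial point is to show that \emph{among the periodic orbits, only the one matching the figure eight actually closes up as a planar curve}, i.e.\ satisfies the closure condition \eqref{eq:closedcurve} after integrating $\partial_s\gamma=(\cos\theta,\sin\theta)$ with $\theta=\int\kappa$. I would verify this by computing the total rotation $\int_0^L\kappa\intd s$ over one period and checking against the vanishing of $\int_0^L e^{i\theta}\intd s$; the wavelike orbits generically fail closure, leaving only circles and the figure eight.

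The main obstacle will be the closure analysis rather than the ODE integration: establishing \eqref{eq:firstint} is routine, but ruling out the wavelike (single-well, nonzero-mean-curvature) periodic solutions as closed curves requires a genuine computation with elliptic integrals, or an appeal to the explicit classification in the cited references. Since \Cref{lem:charelasticae} is attributed to \cite{LS1984}, the cleanest route is to invoke Langer--Singer's classification directly: I would phase-plane-reduce to the first integral to exhibit the structure, then cite \cite{LS1984} (or \cite[Lemma 5.4]{MR2021}) for the fact that imposing the closure constraint \eqref{eq:closedcurve} on a periodic solution of \eqref{eq:firstint} forces it to be either a multiply covered circle or a multiply covered figure eight. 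I would conclude by noting that $c_0$ plays no role here, since \eqref{eq:elasticae} is the $c_0=0$ Euler--Lagrange equation for the curvature, so the classification applies verbatim.
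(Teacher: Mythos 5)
The paper gives no proof of this lemma at all: it is stated as a known classification result cited directly from Langer--Singer \cite{LS1984}, and your proposal --- after a correct reduction to the first integral $(\partial_s\kappa)^2+\tfrac14\kappa^4-\lambda\kappa^2=E$ and the phase-plane dichotomy --- likewise defers the decisive step (that the closure condition \eqref{eq:closedcurve} singles out circles and the figure eight among all periodic orbits of \eqref{eq:elasticae}) to that same citation. So your approach is essentially the paper's; the minor slips in your sketch (the equilibrium $\kappa\equiv0$ gives a straight line rather than a circle, and the standard wavelike/orbitlike terminology for sign-changing versus fixed-sign curvature is swapped) occur only in scaffolding that the appeal to \cite{LS1984} replaces anyway.
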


The figure eight is illustrated in \Cref{fig:omega 0 figure 8} on page \pageref{fig:omega 0 figure 8}.  

\begin{rem}
\label{rem:kappaelastica}
    Let $(\theta,\rho)\in\sA$.
    If $\omega\neq0$ and $\theta$ describes an 
    elastica, then $\theta$ is the angle function of an $\omega$-fold covering of a circle with curvature $\kappa=\frac{2\pi\omega}{L}$. Thus, $\theta$ is determined up to an additive constant. 
    If $(\theta,\rho)$ describes a homogeneous elastica and 
    we require that $\theta(0)=0$ or that $\int_0^L\theta(s)\intd s=\pi\omega L$, then 
    \begin{align}
\label{eq:deftrivsol}
    (\theta,\rho)=(\theta_c, \rho_c):=\left(\frac{2\pi\omega}{L}s,\nu\right)\in \sA.
\end{align}
\end{rem} 

\subsubsection{Minimizers and critical points for large $\mu$}
\label{subsec:min for large mu}

We now show that for large values of $\mu$ and $\omega\neq0$, a minimizer $(\theta,\rho)$ in \eqref{eq:minprob} describes a homogeneous elastica. 
To state a uniqueness result, we define $\mathcal{I}(\theta):=\int_0^L\theta(s)\intd s$ and fix $\mathcal{I}(\theta)=\pi\omega L$.

\begin{prop}[Unique minimizer for large $\mu$]
\label{prop:minproblargemu}
Let $\omega\neq0$. 
Then there exists $\mu_0\in (0,\infty)$ such that if $\mu\geq \mu_0$, 
$\sE_\mu(\theta_c,\rho_c)<\sE_\mu(\theta,\rho)$ for all $(\theta,\rho)\in\sA\cap\left\lbrace\mathcal{I}(\theta)=\pi\omega L\right\rbrace\setminus \left\lbrace(\theta_c,\rho_c)\right\rbrace$.
\end{prop}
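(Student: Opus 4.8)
The plan is to combine a \emph{uniform-in-$\mu$} strict local minimality of $(\theta_c,\rho_c)$ with a compactness argument forcing any putative counterexample towards $(\theta_c,\rho_c)$ as $\mu\to\infty$. Throughout I write $a:=\tfrac{2\pi\omega}{L}-c_0$, so that a direct computation gives $\sE_\mu(\theta_c,\rho_c)=\tfrac{L}{2}\beta(\nu)a^2=:C_0$, independent of $\mu$. First I would record that $(\theta_c,\rho_c)$ is the \emph{unique} minimizer of $\sE_\mu$ among admissible pairs with $\rho\equiv\nu$: for such pairs $\sE_\mu(\theta,\nu)=\tfrac12\beta(\nu)\int_0^L(\partial_s\theta-c_0)^2\intd s$, and since $\int_0^L\partial_s\theta\intd s=2\pi\omega$, the Cauchy--Schwarz (Jensen) inequality yields $\int_0^L(\partial_s\theta-c_0)^2\intd s\ge La^2$ with equality if and only if $\partial_s\theta\equiv\tfrac{2\pi\omega}{L}$; the normalization $\mathcal I(\theta)=\pi\omega L$ then pins down $\theta=\theta_c$.

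Next I would expand the energy around $(\theta_c,\rho_c)$. Writing $\theta=\theta_c+v$ with $v$ periodic, $\rho=\nu+r$, and using that on $\sA\cap\{\mathcal I=\pi\omega L\}$ one has $\int_0^Lv\intd s=\int_0^Lr\intd s=0$, the first variation vanishes and the quadratic part is
\begin{align*}
Q(v,r)=\tfrac12\beta(\nu)\int_0^L(\partial_sv)^2\intd s+a\beta'(\nu)\int_0^Lr\,\partial_sv\intd s+\tfrac14\beta''(\nu)a^2\int_0^Lr^2\intd s+\tfrac{\mu}{2}\int_0^L(\partial_sr)^2\intd s.
\end{align*}
The only possibly negative terms are the cross term and the $\beta''$ term; estimating the former by Young's inequality (absorbing $\tfrac14\beta(\nu)\Vert\partial_sv\Vert_{L^2}^2$) and the latter against the diffusion term via the periodic Poincaré inequality $\Vert r\Vert_{L^2}^2\le(L/2\pi)^2\Vert\partial_sr\Vert_{L^2}^2$, one obtains $\mu_0$ and $c_\ast>0$ with $Q(v,r)\ge c_\ast(\Vert v\Vert_{W^{1,2}}^2+\Vert r\Vert_{W^{1,2}}^2)$ for all admissible variations and all $\mu\ge\mu_0$. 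Crucially, the only $\mu$-dependent contribution to $\sE_\mu$ is the \emph{exactly} quadratic term $\tfrac\mu2\Vert\partial_sr\Vert_{L^2}^2$, so the remainder $R(v,r):=\sE_\mu(\theta,\rho)-C_0-Q(v,r)$ is $\mu$-independent and, by smoothness of $\beta$ and $W^{1,2}\hookrightarrow C^0$, satisfies $\vert R(v,r)\vert\le C\Vert(v,r)\Vert_{W^{1,2}}^3$ for $\Vert(v,r)\Vert_{W^{1,2}}\le1$ with $C$ independent of $\mu$. Hence there is $\delta>0$, uniform in $\mu\ge\mu_0$, with $\sE_\mu(\theta,\rho)>C_0$ whenever $(\theta,\rho)$ is admissible and $0<\Vert(\theta-\theta_c,\rho-\nu)\Vert_{W^{1,2}}<\delta$.

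Finally I would argue by contradiction: if the assertion failed there would be $\mu_n\to\infty$ and admissible $(\theta_n,\rho_n)\ne(\theta_c,\rho_c)$ with $\sE_{\mu_n}(\theta_n,\rho_n)\le C_0$. From $\tfrac{\mu_n}2\Vert\partial_s\rho_n\Vert_{L^2}^2\le C_0$ and $\int_0^L\rho_n\intd s=\nu L$, Poincaré gives $\rho_n\to\nu$ in $W^{1,2}$, hence in $C^0$; as $\beta$ is bounded below near $\nu$, the bending term stays bounded, so $\theta_n-\theta_c$ is bounded in $W^{1,2}_{\mathrm{per}}$ and, along a subsequence, converges weakly in $W^{1,2}$ and strongly in $C^0$ to an admissible limit $(\theta_\infty,\nu)$. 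Weak lower semicontinuity together with $\sE_{\mu_n}(\theta_n,\rho_n)\le C_0$ forces $\sE_\mu(\theta_\infty,\nu)\le C_0$, so by uniqueness of the constant-density minimizer $\theta_\infty=\theta_c$; the ensuing convergence of the (weighted) bending energies then upgrades the weak convergence to strong $W^{1,2}$-convergence $(\theta_n,\rho_n)\to(\theta_c,\rho_c)$. For $n$ large this places $(\theta_n,\rho_n)$ in the punctured $\delta$-ball of the previous step, so $\sE_{\mu_n}(\theta_n,\rho_n)>C_0$, a contradiction.

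The hard part will be the uniformity in $\mu$ of the local minimality: the destabilizing cross and $\beta''$ terms must be dominated by the diffusion term for \emph{all} large $\mu$ at once, while the neighborhood radius $\delta$ must not shrink as $\mu\to\infty$. This rests on the observation that the stabilizing contribution grows linearly in $\mu$ whereas the higher-order remainder is entirely $\mu$-independent. The second delicate point is the passage from weak to strong $W^{1,2}$-convergence of the angle functions, required to enter the fixed neighborhood and obtained from the convergence of the weighted bending energies.
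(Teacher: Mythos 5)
Your proposal is correct, and it shares the paper's two-step skeleton (which adapts \cite[Prop.\ 3.3]{BJSS2020}): first, strict local minimality of $(\theta_c,\rho_c)$ in a $W^{1,2}$-ball whose radius does \emph{not} shrink as $\mu\to\infty$; second, confinement of any competitor with energy at most $\sE_\mu(\theta_c,\rho_c)$ to that ball once $\mu$ is large. The execution differs in two substantive ways. For the uniformity of the neighborhood, the paper uses a one-line monotonicity observation --- $\sE_\mu(\theta_c,\rho_c)$ is independent of $\mu$ while $\mu\mapsto\sE_\mu(\theta,\rho)$ is nondecreasing, so a local minimality neighborhood valid at $\mu_1$ remains valid for all $\mu\geq\mu_1$ --- after obtaining local minimality from positivity of the second variation via the abstract constrained criterion \cite[Theorem 43.D]{Z1985}. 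You instead derive uniformity directly from the structure of the expansion: the $\mu$-dependent term is exactly quadratic and stabilizing, the coercivity constant of $Q$ can be fixed at $\mu_0$ (by the same monotonicity, now at the quadratic level), and the remainder is $\mu$-independent and cubic. This is longer but self-contained, and it has the pleasant feature that your $Q$ is coercive on the whole linear space $\{\int_0^L v\intd s=\int_0^L r\intd s=0\}$, so you never need second-variation theory relative to the nonlinear closedness constraint $\int_0^L\cos\theta\intd s=\int_0^L\sin\theta\intd s=0$. For the confinement step, the paper places all \emph{global minimizers} in $B_\delta(\theta_c,\rho_c)$ (using their existence, \Cref{prop:minprobex}), whereas your contradiction sequence uses only that a counterexample has energy $\leq C_0$, so existence of minimizers is never invoked; the price is the weak-to-strong $W^{1,2}$ upgrade via convergence of the weighted bending energies, which you correctly identify as necessary (weak convergence alone does not place $(\theta_n,\rho_n)$ in the norm ball) and correctly supply. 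Two details to make explicit in a full write-up: coercivity of $Q$ in the $v$-component also requires Wirtinger's inequality for the mean-zero perturbation $v$ (not only for $r$), and the cross-term absorption leaves an additional $\|r\|_{L^2}^2$-contribution that must likewise be dominated by $\tfrac{\mu}{2}\|\partial_s r\|_{L^2}^2$ via Wirtinger; both are routine.
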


The proof can essentially be done as in \cite[Prop.\ 3.3]{BJSS2020} which is why we only outline the idea here.

\begin{proof}[Idea of the proof.]
In a first step, one shows that for $\mu$ large enough, the second variation $\sE''_\mu(\theta_c,\rho_c)$ is strictly positive. It follows (for example with \cite[Theorem 43.D]{Z1985}) that 
$(\theta_c,\rho_c)$ is a strict local minimizer in the sense that there exists $\delta=\delta(\mu)>0$ such that $\sE_\mu(\theta_c,\rho_c)<\sE_\mu(\theta,\rho)$ for all $(\theta,\rho)\in\sA\cap\left\lbrace\mathcal{I}(\theta)=\pi\omega L\right\rbrace\setminus \left\lbrace(\theta_c,\rho_c)\right\rbrace$ with norm $\left\Vert(\theta,\rho)-(\theta_c,\rho_c)\right\Vert_{W^{1,2}(0,L)}<\delta$. Now, note that $\sE_\mu(\theta_c,\rho_c)$ is independent of $\mu$ whereas $\sE_\mu(\theta,\rho)$ is increasing in $\mu$. Consequently, the neighborhood in which $(\theta_c,\rho_c)$ is a strict local minimizer can be chosen only depending on a lower bound $\mu_1$ on $\mu$.
In a second step, one proves that 
there exists $\mu_2$ such that for $\mu>\mu_2$, all global minimizers are contained in $B_\delta(\theta_c,\rho_c)\subset W^{1,2}(0,L)$.
Together with the first step, for $\mu>\max\left\lbrace\mu_1,\mu_2\right\rbrace$, $(\theta_c,\rho_c)$ is the unique global minimizer with $\mathcal{I}(\theta)=\pi\omega L$. 
\end{proof}

This result does not extend to $\omega=0$, see Remark \ref{rem:omega0}. 
The next example shows that 
$(\theta_c,\rho_c)$ 
does not always need to be a global minimizer.
\begin{ex} \label{ex:smaller E than trivial}
Consider the double-well potential $\beta(x):=(x^2-1)^2+c$ with $c>0$. Let $L=2\pi$, $\omega=1$, $c_0=0$ and $\nu=0$. Consider $\theta_c=s$, $\rho_c=0$ and $\rho:=\sin(2s)$.
Then 
$(\theta_c,\rho)\in\sA$ and
$
    \sE_\mu(\theta_c,\rho_c)=\left(1+c\right)\pi>\left(\frac38+c+\frac\mu2\right)\pi=\sE_\mu(\theta_c,\rho)
$
for $\mu<\frac54$.
\end{ex}

For large $\mu$, $(\theta_c,\rho_c)$ is also locally the unique constrained critical point. This follows from
the continuity of 
$\sE''_\mu$ and $\sG''$ and the positive definiteness of $\sE''_\mu(\theta_c,\rho_c)$. 

\begin{cor}
\label{cor:C1neigh}
Let $\omega\neq0$. There is $\mu_0>0$ and a $C^1$-neighborhood $\mathcal{O}$ of $(\theta_c,\rho_c)$ 
such that if $(\theta,\rho)\in\mathcal{O}$ is a constrained critical point 
with $\mathcal{I}(\theta)=\pi\omega L$ and $\mu>\mu_0$, then $(\theta,\rho)=(\theta_c,\rho_c)$.
\end{cor}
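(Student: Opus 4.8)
The plan is to exploit the strict positive definiteness of the second variation $\sE_\mu''(\theta_c,\rho_c)$ established (for $\mu$ large) in the first step of the proof of \Cref{prop:minproblargemu}, and to upgrade this quantitative local minimality into a local \emph{uniqueness} statement for constrained critical points. The underlying principle is that a constrained critical point near $(\theta_c,\rho_c)$ must satisfy the first-order Lagrange condition; if $(\theta_c,\rho_c)$ itself is a nondegenerate critical point in the sense that $\sE_\mu''$ is coercive on the tangent space $\ker\sG'(\theta_c,\rho_c)$, then no second critical point can accumulate at it. I would phrase everything in the periodic, shifted setting using $E_\mu$ and $G$ from \eqref{eq:shifted}, so that the constraint set $G^{-1}(0)$ is a genuine $C^2$-submanifold of the Hilbert space $W^{1,2}_{\mathrm{per}}(0,L;\RR^2)$ near the (shifted) point corresponding to $(\theta_c,\rho_c)$; this removes the affine-subspace nuisance and lets me apply the implicit function theorem / Lyapunov--Schmidt machinery cleanly.

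Concretely, first I would fix $\mu_0$ large enough (from \Cref{prop:minproblargemu}) so that $\sE_\mu''(\theta_c,\rho_c)$ is strictly positive, i.e.\ there is $c_\mu>0$ with $\sE_\mu''(\theta_c,\rho_c)[h,h]\geq c_\mu\|h\|_{W^{1,2}}^2$ for all $h$ in the constraint tangent space $T:=\ker\sG'(\theta_c,\rho_c)$ intersected with $\{\mathcal{I}=\pi\omega L\}$. A constrained critical point $(\theta,\rho)$ with $\mathcal{I}(\theta)=\pi\omega L$ is characterized by the existence of multipliers $\Lambda=(\lambda_{\theta1},\lambda_{\theta2},\lambda_\rho)$ solving the coupled system $F(\theta,\rho,\Lambda):=(\sE_\mu'(\theta,\rho)-\Lambda\cdot\sG'(\theta,\rho),\,\sG(\theta,\rho))=0$. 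Since $\sG'(\theta_c,\rho_c)$ is surjective and $\sE_\mu''(\theta_c,\rho_c)$ is coercive on $T$, the linearization $DF$ at the point $(\theta_c,\rho_c,\Lambda_c)$ (with $\Lambda_c$ the multipliers of the trivial solution, which are explicit and in fact vanish for $\theta_c$ a circle by \Cref{rem:kappaelastica}) is an isomorphism: this is the standard statement that a strongly regular constrained critical point with nondegenerate bordered Hessian is isolated. The implicit function theorem then yields a $W^{1,2}$-neighborhood in which $(\theta_c,\rho_c)$ is the only solution of $F=0$.

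The one genuine subtlety is that the excerpt wants a $C^1$-\emph{neighborhood} $\mathcal O$, whereas the natural functional-analytic argument produces a $W^{1,2}$-neighborhood. I would bridge this by invoking the smoothness result \Cref{prop:minprobex2}: any constrained critical point is automatically smooth, and along the relevant bounded set one has uniform higher-order bounds, so that $C^1$-closeness to $(\theta_c,\rho_c)$ forces $W^{1,2}$-closeness (indeed $C^1$ controls $\theta,\rho,\partial_s\theta,\partial_s\rho$ pointwise, hence in $L^2$, giving the $W^{1,2}$ bound directly). Thus a $C^1$-ball of sufficiently small radius maps into the $W^{1,2}$-ball on which uniqueness already holds, and the corollary follows. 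A second, milder point is ensuring the multiplier of any nearby critical point stays close to $\Lambda_c$; this is automatic from the formulas \eqref{eq:lambdatheta}--\eqref{eq:lambdarho}, which depend continuously (in fact $C^1$) on $(\theta,\rho)$ in the $C^1$-topology, so I would simply note that the full map $(\theta,\rho)\mapsto\Lambda(\theta,\rho)$ is continuous and include $\Lambda$ in the fixed-point/IFT argument.

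The main obstacle I anticipate is purely technical rather than conceptual: verifying that $\sE_\mu$ and $\sG$ are $C^2$ as maps on $W^{1,2}_{\mathrm{per}}(0,L;\RR^2)$ with $\sE_\mu''$ and $\sG''$ continuous, so that the Lyapunov--Schmidt/IFT reduction is legitimate, and checking that the coercivity of $\sE_\mu''(\theta_c,\rho_c)$ on $T$ (which controls the $W^{1,2}$-seminorm of $\partial_s\theta,\partial_s\rho$) genuinely upgrades to coercivity in the full $W^{1,2}$-norm after quotienting out the normalization $\mathcal{I}(\theta)=\pi\omega L$ that pins down the remaining translation/constant degree of freedom. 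Since the paper already asserts in the preceding paragraph that the result \emph{follows from the continuity of $\sE_\mu''$ and $\sG''$ and the positive definiteness of $\sE_\mu''(\theta_c,\rho_c)$}, I expect the intended proof to be exactly this short IFT-based isolatedness argument, and I would present it at that level of detail, citing \cite[Theorem 43.D]{Z1985} and \Cref{prop:minproblargemu} for the nondegeneracy input.
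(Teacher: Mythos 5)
Your proposal is correct and is essentially the paper's own argument made explicit: the paper's entire proof of \Cref{cor:C1neigh} is the assertion that the claim ``follows from the continuity of $\sE''_\mu$ and $\sG''$ and the positive definiteness of $\sE''_\mu(\theta_c,\rho_c)$,'' and your bordered-Hessian/implicit-function-theorem isolatedness argument, with the normalization $\mathcal{I}(\theta)=\pi\omega L$ removing the constant-shift (rotation) degeneracy, is the standard implementation of exactly those ingredients from \Cref{prop:minproblargemu}. One minor slip worth fixing: at $(\theta_c,\rho_c)$ only $\lambda_{\theta1}=\lambda_{\theta2}=0$, while $\lambda_\rho=-\tfrac12\beta'(\nu)\bigl(\tfrac{2\pi\omega}{L}-c_0\bigr)^2$ need not vanish; this is harmless for your argument, since the mass constraint is affine, so its second derivative is zero and the Lagrangian Hessian at $(\theta_c,\rho_c)$ still coincides with $\sE''_\mu(\theta_c,\rho_c)$.
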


\subsubsection{Conditions for homogeneous elastica}
If a constrained critical point has constant density, this already implies that it describes a homogeneous elastica.

\begin{lem}
\label{lem:critpointconstrho}
If $(\theta,\rho)$ describes a constrained critical point 
and $\rho\equiv \nu$, 
then $\theta$ describes an 
elastica.
\end{lem}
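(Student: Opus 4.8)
The plan is to start from the constrained critical point equations and exploit the hypothesis $\rho \equiv \nu$ to collapse the density equation and simplify the $\theta$-equation into the constrained elastica equation \eqref{eq:elasticae}. Since $(\theta,\rho)$ is a constrained critical point, by \Cref{prop:minprobex2} it is smooth and satisfies the pointwise Euler--Lagrange equations \eqref{eq:ELp1} and \eqref{eq:ELp2}. Because $\rho \equiv \nu$ is constant, $\beta(\rho) \equiv \beta(\nu)$ is a positive constant, so $\partial_s \rho \equiv 0$ and $\partial_s^2 \rho \equiv 0$.

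First I would feed $\rho \equiv \nu$ into \eqref{eq:ELp1}. Since $\beta(\rho) = \beta(\nu)$ is constant, the equation becomes
\begin{align}
0 = \beta(\nu)\,\partial_s^2\theta + \lambda_{\theta1}\sin\theta - \lambda_{\theta2}\cos\theta.
\end{align}
Writing $\kappa = \partial_s\theta$ and differentiating once in $s$ (using $\partial_s(\sin\theta) = \kappa\cos\theta$ and $\partial_s(\cos\theta) = -\kappa\sin\theta$), I obtain
\begin{align}
0 = \beta(\nu)\,\partial_s^2\kappa + \kappa\,(\lambda_{\theta1}\cos\theta + \lambda_{\theta2}\sin\theta).
\end{align}
The remaining task is to rewrite the bracketed term as a function of $\kappa$ itself, so that this becomes the autonomous ODE \eqref{eq:elasticae}. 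The key step is to return to the undifferentiated relation $\beta(\nu)\kappa' = -\lambda_{\theta1}\sin\theta + \lambda_{\theta2}\cos\theta$ and combine it with the derivative of the bracketed quantity. Concretely, setting $\Lambda(s) := \lambda_{\theta1}\cos\theta + \lambda_{\theta2}\sin\theta$, one computes $\partial_s \Lambda = \kappa(-\lambda_{\theta1}\sin\theta + \lambda_{\theta2}\cos\theta) = \kappa\,\beta(\nu)\,\partial_s\kappa = \tfrac12\beta(\nu)\,\partial_s(\kappa^2)$, which integrates to $\Lambda = \tfrac12\beta(\nu)\kappa^2 + \text{const}$. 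Substituting this back yields $0 = \beta(\nu)\partial_s^2\kappa + \tfrac12\beta(\nu)\kappa^3 + (\text{const})\,\kappa$, which after dividing by $\beta(\nu)>0$ is exactly \eqref{eq:elasticae} with an appropriate $\lambda \in \RR$.

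Finally I would note that the density equation \eqref{eq:ELp2} is automatically consistent: with $\rho\equiv\nu$ it forces $\lambda_\rho = -\tfrac12\beta'(\nu)(\partial_s\theta - c_0)^2$ to be constant, but this is just a constraint on the multiplier and does not obstruct the conclusion about $\theta$. The smoothness of $\kappa$ needed in the definition of elastica comes for free from \Cref{prop:minprobex2}. The main obstacle, and the only genuinely nontrivial step, is recognizing and carrying out the integration that expresses $\Lambda$ as $\tfrac12\beta(\nu)\kappa^2$ plus a constant; everything else is direct substitution. Once that identity is in hand, the conclusion that $\theta$ describes an elastica is immediate.
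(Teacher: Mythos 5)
Your proof is correct and follows essentially the same route as the paper: after invoking \Cref{prop:minprobex2} for smoothness and reducing \eqref{eq:ELp1} to $0=\beta(\nu)\partial_s\kappa+\lambda_{\theta1}\sin\theta-\lambda_{\theta2}\cos\theta$, your computation of $\partial_s\Lambda$ via the undifferentiated relation is exactly the paper's ``multiply by $\kappa$ and integrate'' step, yielding $\Lambda=\tfrac12\beta(\nu)\kappa^2+\mathrm{const}$, which combined with the differentiated equation gives \eqref{eq:elasticae}.
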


\begin{proof}
By \Cref{prop:minprobex2}, 
$(\theta,\rho)$ 
is smooth. 
Since $\rho$ is constant,  
the Euler--Lagrange equation \eqref{eq:ELp1} reads
\begin{align}
\label{eq:elasticaeq1}
    0=\beta(\nu)\partial_s\kappa+\lambda_{\theta1}\sin\theta-\lambda_{\theta2}\cos\theta,
\end{align}
with $\kappa=\partial_s\theta$. As in \cite[Remark 2.2]{NP2020}, we multiply \eqref{eq:elasticaeq1} with $\kappa$. This yields
    $0=\frac12\beta(\nu)\partial_s(\kappa^2)+\partial_s\left(-\lambda_{\theta1}\cos\theta-\lambda_{\theta2}\sin\theta\right)$.
So there is $\tilde\lambda\in\RR$ such that 
\begin{align}
\label{eq:elasticaeq2}
    \tilde\lambda=\frac12\beta(\nu)\kappa^2-\lambda_{\theta1}\cos\theta-\lambda_{\theta2}\sin\theta.
\end{align}
Differentiation of \eqref{eq:elasticaeq1} and inserting \eqref{eq:elasticaeq2} leads to
\eqref{eq:elasticae} with $\lambda=\nicefrac{\tilde\lambda}{\beta(\nu)}$. 
\end{proof}

\begin{rem}
\label{rem:omega0}
In the case $\omega=0$, a constrained critical point 
with constant density exists only if $\beta'(\nu)=0$. 
Indeed, if $\rho\equiv \nu$ and $\beta^\prime(\nu)\neq0$, it follows from \eqref{eq:ELp2} that $(\partial_s\theta-c_0)^2$ is constant. For $\omega=0$, this contradicts the closedness of the curve described by $\theta$. 
\end{rem}

For $\omega\neq0$ and under suitable assumptions on $\beta$, 
the converse implication of Lemma \ref{lem:critpointconstrho} also holds.

\begin{lem}
\label{lem:critpointconstkappa}
    Let $\omega\neq0$ and $c_0\neq\frac{2\pi\omega}{L}$. 
    Let $(\theta,\rho)$ be a constrained critical point 
    and suppose $\theta$ describes an 
    elastica.
    If $\beta$ is such that 
    \begin{enumerate}[(a)]
        \item $\beta$ is convex or
        \item $\displaystyle\left\vert\beta'(x)\right\vert<2\mu\,\Big(\frac{2\pi}{2\pi\omega-Lc_0}\Big)^2\left\vert \nu-x\right\vert$ or
        \item $\displaystyle\sup\left\vert\beta''\right\vert<2\mu\,\Big(\frac{2\pi}{2\pi\omega-Lc_0}\Big)^2$,
    \end{enumerate}
    then $\rho$ is constant. In particular, $(\theta,\rho)$ describes a homogeneous elastica.
\end{lem}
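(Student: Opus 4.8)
The plan is to exploit that, by \Cref{rem:kappaelastica}, an elastica with $\omega\neq0$ must be an $\omega$-fold covered circle, so that $\kappa=\partial_s\theta\equiv\frac{2\pi\omega}{L}$ is constant and hence $(\partial_s\theta-c_0)^2\equiv A^2$ with $A:=\frac{2\pi\omega}{L}-c_0=\frac{2\pi\omega-Lc_0}{L}$; the hypothesis $c_0\neq\frac{2\pi\omega}{L}$ guarantees $A\neq0$. With the curvature term now a constant, the density equation decouples, and I would extract all the information from the weak Euler--Lagrange equation \eqref{eq:EL2} for $\rho$ by testing against the admissible variation $\sigma:=\rho-\nu\in W^{1,2}_{\mathrm{per}}(0,L)$.

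Testing \eqref{eq:EL2} with $\sigma=\rho-\nu$, using the mass constraint $\int_0^L(\rho-\nu)\intd s=0$ to annihilate the $\lambda_\rho$-term, together with $(\partial_s\theta-c_0)^2\equiv A^2$, yields the master identity
\begin{align*}
\mu\int_0^L(\partial_s\rho)^2\intd s=\frac{A^2}{2}\int_0^L\beta'(\rho)(\nu-\rho)\intd s.
\end{align*}
Since $\int_0^L(\nu-\rho)\intd s=0$, I may re-center the right-hand side by subtracting the constant $\beta'(\nu)$, rewriting it as $-\frac{A^2}{2}\int_0^L\big(\beta'(\rho)-\beta'(\nu)\big)(\rho-\nu)\intd s$. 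This is the form on which all three hypotheses act, and the goal in each case is to show $\int_0^L(\partial_s\rho)^2\intd s=0$, which forces $\rho$ to be constant and hence, by the mass constraint, $\rho\equiv\nu$; combined with the elastica assumption this gives a homogeneous elastica.

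For case (a), convexity of $\beta$ makes $\beta'$ nondecreasing, so the integrand $\big(\beta'(\rho)-\beta'(\nu)\big)(\rho-\nu)\geq0$ pointwise and the right-hand side is $\leq0$; as the left-hand side is $\geq0$, it must vanish. For cases (b) and (c) I would estimate $\beta'(\rho)(\nu-\rho)$ pointwise: in (b) directly by $|\beta'(\rho)|\,|\nu-\rho|$, and in (c) by bounding $|\beta'(\rho)-\beta'(\nu)|\leq\sup|\beta''|\,|\rho-\nu|$ via the mean value theorem. Each bound produces $\frac{A^2}{2}\cdot 2\mu\big(\frac{2\pi}{2\pi\omega-Lc_0}\big)^2\int_0^L(\rho-\nu)^2\intd s$ on the right, and here the crucial algebraic coincidence $A^2\big(\frac{2\pi}{2\pi\omega-Lc_0}\big)^2=\big(\frac{2\pi}{L}\big)^2$ turns this into $\mu\big(\frac{2\pi}{L}\big)^2\int_0^L(\rho-\nu)^2\intd s$. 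Invoking Wirtinger's (Poincar\'e) inequality $\big(\frac{2\pi}{L}\big)^2\int_0^L(\rho-\nu)^2\intd s\leq\int_0^L(\partial_s\rho)^2\intd s$ for the mean-zero periodic function $\rho-\nu$ then bounds the right-hand side by $\mu\int_0^L(\partial_s\rho)^2\intd s$, so the master identity closes into $\mu\int_0^L(\partial_s\rho)^2\intd s\leq\mu\int_0^L(\partial_s\rho)^2\intd s$; because the inequalities in (b) and (c) are strict, this is a contradiction unless $\rho\equiv\nu$.

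The main obstacle is precisely this sharp matching of constants: the argument only closes because the spectral gap $\big(\frac{2\pi}{L}\big)^2$ of $-\partial_s^2$ on $L$-periodic mean-zero functions equals $A^2\big(\frac{2\pi}{2\pi\omega-Lc_0}\big)^2$, making the thresholds in (b) and (c) optimal for this Wirtinger estimate. The delicate bookkeeping is the strict-versus-nonstrict inequality: one must note that if $\rho$ is non-constant then $\rho\neq\nu$ on a set of positive measure, where the strict bounds in (b)/(c) hold, so integration yields a genuinely strict inequality and hence the contradiction. At points where $\rho=\nu$ the integrand $\beta'(\rho)(\nu-\rho)$ vanishes regardless of the value of $\beta'(\nu)$, so the formal degeneracy of hypothesis (b) at $x=\nu$ is harmless and only its content for $x\neq\nu$ is used.
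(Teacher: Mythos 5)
Your proof is correct, and for parts (a) and (b) it is in substance identical to the paper's: the paper derives your ``master identity'' by multiplying the strong Euler--Lagrange equation \eqref{eq:intrhocrit} by $\rho-\nu$ and integrating by parts, which is exactly what your weak-form test with $\sigma=\rho-\nu$ produces, and then runs the same re-centering, the same algebraic identity $(\kappa-c_0)^2\big(\tfrac{2\pi}{2\pi\omega-Lc_0}\big)^2=\big(\tfrac{2\pi}{L}\big)^2$, and the same Wirtinger step. Where you genuinely diverge is case (c): the paper squares the pointwise equation \eqref{eq:intrhocrit} to get an identity for $\int_0^L(\partial_s^2\rho)^2\intd s$ and applies Wirtinger twice (once to the mean-zero function $\beta'(\rho)-\overline{\beta'(\rho)}$, once to $\partial_s\rho$), whereas you use the mean value theorem to bound $|\beta'(\rho)-\beta'(\nu)|\leq\sup|\beta''|\,|\rho-\nu|$ and thereby fold (c) into the same first-order identity used for (a) and (b). Your route is more unified (all three cases close from one identity and a single application of Wirtinger) and needs only the weak formulation, while the paper's second-order argument requires the pointwise equation — harmless here, since \Cref{prop:minprobex2} gives smoothness, but an extra ingredient nonetheless. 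Your bookkeeping of the strict inequalities (restricting to the positive-measure set where $\rho\neq\nu$, noting the integrand vanishes where $\rho=\nu$) is also the right way to resolve a subtlety the paper's chain of inequalities leaves implicit.
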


\begin{proof}
 Since $\theta$ describes a constrained closed elastica and $\omega\neq0$, we have 
 $\kappa= \partial_s\theta\equiv \frac{2\pi\omega}{L}$ by Remark \ref{rem:kappaelastica}. The Euler--Lagrange equation for $\rho$ (cf.\ \eqref{eq:ELp2}) simplifies to 
 \begin{align}
 \label{eq:intrhocrit}
     \partial_s^2\rho=\frac{(\kappa-c_0)^2}{2\mu}\bigg(\beta'(\rho)-\frac1L\int_0^L\beta'(\rho)\intd s\bigg).
 \end{align}
 
\textit{(a)} 
Using integration by parts, \eqref{eq:intrhocrit}, $\int_0^L\rho\intd s=\nu L$, and the convexity of $\beta$, we have
\begin{align}
     \int_0^L(\partial_s\rho)^2\intd s 
     =-\frac{(\kappa-c_0)^2}{2\mu}\int_0^L\beta'(\rho)(\rho-\nu)\intd s
     =-\frac{(\kappa-c_0)^2}{2\mu}\int_0^L\left(\beta'(\rho)-\beta'(\nu)\right)(\rho-\nu)\intd s\leq 0.
\end{align}

\textit{(b)} First, we proceed as in (a), then we obtain with $(\kappa-c_0)^2=\left(\frac{2\pi}{L}\right)^2\left(\frac{2\pi\omega-Lc_0}{2\pi}\right)^{2}$, the assumption on $\beta'$, and the Wirtinger inequality 
that
\begin{align}
     \int_0^L(\partial_s\rho)^2\intd s
     =-\frac{(\kappa-c_0)^2}{2\mu}\int_0^L\beta'(\rho)(\rho-\nu)\intd s 
     <\left(\frac{2\pi}{L}\right)^2\int_0^L(\rho-\nu)^2\intd s
     \leq \int_0^L(\partial_s\rho)^2\intd s.
\end{align}
 
 \textit{(c)} We write $\overline{\beta'(\rho)}=\frac1L\int_0^L\beta'(\rho)\intd s$ and use the Wirtinger inequality twice to get
 \begin{align}
     \int_0^L(\partial_s^2\rho)^2\intd s 
     &=\left(\frac{(\kappa-c_0)^2}{2\mu}\right)^2\int_0^L\left(\beta'(\rho)-\overline{\beta'(\rho)}\right)^2\intd s 
     \leq \frac{(\kappa-c_0)^4}{(2\mu)^2}\left(\frac{L}{2\pi}\right)^2\int_0^L\left(\beta''(\rho)\partial_s\rho\right)^2\intd s \\
     &< \left(\frac{2\pi}{L}\right)^2\int_0^L(\partial_s\rho)^2\intd s
     \leq \int_0^L(\partial_s^2\rho)^2\intd s. 
 \end{align}
In all cases, the periodic boundary conditions imply that $\rho\equiv \nu$.
\end{proof}
\begin{cor}[of \Cref{thm:conv special beta neu}]
    Let $(\theta,\rho)$ be a constrained critical point. 
    If $\beta$ is such that $\beta'(x)\leq0$ for $x<\nu$ and $\beta'(x)\geq0$ otherwise, then $(\theta,\rho)$ describes a homogeneous elastica.
\end{cor}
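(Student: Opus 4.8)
The plan is to exploit that the hypothesis on $\beta$ is exactly the borderline case $\bar C=0$ of the growth assumption \eqref{eq:beta'bound}, and then to read off the conclusion from the flow. First I would check the sign: if $\beta'(x)\leq 0$ for $x<\nu$ and $\beta'(x)\geq 0$ for $x\geq\nu$, then $\beta'(x)(\nu-x)\leq 0$ for every $x\in\RR$ (for $x<\nu$ both factors after fixing signs give a nonpositive product, and likewise for $x>\nu$). Hence \eqref{eq:beta'bound} holds with $\bar C=0$, so the initial-energy requirement $\bar C L\,\sE_\mu(\theta_0,\rho_0)<\mu$ of \Cref{thm:conv special beta neu} reduces to $0<\mu$, which is always satisfied. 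In particular the density statement of that theorem is available for \emph{every} admissible initial datum, for all $\omega\in\ZZ$, and without any analyticity assumption, since analyticity only enters the statements (i)/(ii) about $\theta$, which I do not need here.

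Next I would view the given constrained critical point $(\theta,\rho)$ as a stationary initial datum for the evolution \eqref{eq:flow equation}. By \Cref{prop:minprobex2}, $(\theta,\rho)\in C^\infty([0,L])$ and the $L$-periodic extension of $(\partial_s\theta,\partial_s\rho)$ is smooth, which supplies all the periodic boundary conditions in \eqref{eq:bcid}; combined with $(\theta,\rho)\in\sA$, which encodes \eqref{eq:fixedmass}, \eqref{eq:closedcurve}, as well as $\theta(L)-\theta(0)=2\pi\omega$ and $\rho(L)=\rho(0)$, this shows that $(\theta,\rho)$ is an admissible initial datum. Moreover, $(\theta,\rho)$ satisfies the Euler--Lagrange equations \eqref{eq:ELp1}--\eqref{eq:ELp2} with multipliers $\lambda_{\theta1},\lambda_{\theta2},\lambda_\rho$ given precisely by \eqref{eq:lambdatheta} and \eqref{eq:lambdarho}. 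Since these are the same nonlocal terms appearing in \eqref{eq:flow equation}, the constant-in-time pair $(\theta(t),\rho(t)):=(\theta,\rho)$ has vanishing right-hand side and therefore solves the flow; by the uniqueness assertion of \Cref{thm:zsf artcl1}, it is \emph{the} solution.

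Finally, applying the density part of \Cref{thm:conv special beta neu} gives $\rho(t)\to\nu$ in $C^2([0,L])$ as $t\to\infty$; but the solution is constant in time, so $\rho\equiv\nu$. Thus $(\theta,\rho)$ is a constrained critical point with constant density, and \Cref{lem:critpointconstrho} yields that $\theta$ describes an elastica, i.e.\ $(\theta,\rho)$ describes a homogeneous elastica. The main obstacle is not analytical but conceptual bookkeeping: one must justify that a purely \emph{static} critical point is a genuine stationary solution of the \emph{dynamic} problem in the exact sense required, i.e.\ that the variational Lagrange multipliers coincide with the nonlocal multipliers of the flow and that admissibility holds, so that the flow-based asymptotic theorem may legitimately be invoked. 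Once this identification is secured, the conclusion is immediate.
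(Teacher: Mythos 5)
Your proof is correct and follows exactly the route the paper intends: the hypothesis is precisely the $\bar C=0$ case of \eqref{eq:beta'bound}, the constrained critical point is an admissible stationary solution of the flow (admissibility via \Cref{prop:minprobex2}, and the variational multipliers coincide with \eqref{eq:lambdatheta}, \eqref{eq:lambdarho} as noted in Section 2.1), so the density part of \Cref{thm:conv special beta neu} forces $\rho\equiv\nu$ and \Cref{lem:critpointconstrho} concludes. The bookkeeping issues you flag (admissibility, identification of the multipliers, uniqueness of the flow) are all resolved correctly, so nothing is missing.
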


Without additional assumptions on $\beta$, the density of a constrained critical point describing an 
elastica might be nonconstant.

\begin{ex}
\label{ex:non const rho crit point}
    Let $L=2\pi$, $\omega=2$, $\nu=0$ and let $\mu>0$ and $c_0\neq2$ be chosen such that $(2-c_0)^2=2\mu$. Let $\beta(x)=-\frac{x^2}{2}+1$ for $-1\leq x\leq 1$. 
    Then both $(\theta= \theta_c, \rho\equiv \nu)\in \sA$ and $(\theta=\theta_c, \rho=\sin s)\in \sA$ are constrained critical points.
    In this case, inequalities (b) and (c) in Lemma \ref{lem:critpointconstkappa} are attained with equality, so the assumptions are sharp.
\end{ex}

\begin{rem}
    For $\omega\neq0$, $c_0=\frac{2\pi\omega}{L}$ and $(\theta,\rho)$ a constrained critical point 
    with $\theta$ describing an 
    elastica, it follows directly from \eqref{eq:intrhocrit} and the periodic boundary conditions that $\rho$ is constant.
\end{rem}

\section{Qualitative properties of solutions}\label{sec:qualprop}

\subsection{Decrease of the energy}
\label{sec:decreaseE}

The $L^2$-gradient structure of the flow equations in \eqref{eq:flow equation} ensures that the energy $\sE_\mu$ decreases along the evolution. 
On the other hand, 
the two parts of the energy, 
\begin{align}
    \sE^\theta(\theta,\rho):=\frac12\int_0^L\beta(\rho)(\partial_s\theta-c_0)^2\intd s \quad\text{ and }\quad \sE_\mu^\rho(\rho):=\frac{\mu}{2}\int_0^L(\partial_s\rho)^2\intd s
\end{align}
are not monotonically decreasing individually as the computation 
\begin{align}
    \frac{\intd }{\intd t}\sE_\mu(\theta, \rho) 
    &=\int_0^L\left(\frac12\beta'(\rho)(\partial_s\theta-c_0)^2\partial_t\rho+\beta(\rho)(\partial_s\theta-c_0)\partial_t\partial_s\theta\right)\intd s
    +\int_0^L\mu\,\partial_s\rho\,\partial_s\partial_t\rho\intd s\\
    &=-\int_0^L\partial_s\left(\beta(\rho)(\partial_s\theta-c_0)\right)\partial_t\theta\intd s+\int_0^L\left(\frac12\beta^\prime(\rho)(\partial_s\theta-c_0)^2-\mu\partial_s^2\rho\right)\partial_t\rho\intd s\\
    &=-\int_0^L\partial_t\theta\left(\partial_t\theta-\lambda_{\theta1}\sin\theta+\lambda_{\theta2}\cos\theta\right)\intd s - \int_0^L\partial_t\rho\left(\partial_t\rho+\lambda_\rho\right)\intd s\\
    &=-\int_0^L(\partial_t\theta)^2\intd s-\int_0^L(\partial_t\rho)^2\intd s
    \leq0 \label{eq:decreaseE}
\end{align}
already suggests.
This fact significantly complicates the discussion of the limit in Section \ref{sec:proplim}. 
We give concrete examples where either $\sE^\theta$ or $\sE_\mu^\rho$ grows. 
\begin{ex}
\label{ex:E2grows}
Consider the double-well potential $\beta(x):=(x^2-1)^2+c$ with 
$c>0$. 
Let $L=2\pi$, $\omega=1$, $\nu=0$, and $c_0\neq1$. 
Take
$
    \theta_0:=\theta_c=
    s$ and $
    \rho_0:=
    \sin s$.
Then, for the solution $(\theta,\rho)$ of \eqref{eq:flow equation}, an elementary computation yields $\lambda_{\theta1}(0)=\lambda_{\theta2}(0)=\lambda_\rho(0)=0$ and
\begin{align}
    \frac{\intd}{\intd t}\sE^\theta(\theta,\rho)\Big\vert_{t=0}
    &=(1-c_0)^2\int_0^{2\pi}\left(\frac{\mu}{2}\,\beta'(\rho_0)\,\partial_s^2\rho_0-\big(\partial_s(\beta(\rho_0))\big)^2\right)\intd s-\frac{(1-c_0)^4}{4}\int_0^{2\pi}(\beta'(\rho_0))^2\intd s
     \\
     &=\frac{\pi}{2}(1-c_0)^2\left(\mu-(1-c_0)^2-\frac52\right),
\end{align}
which is positive for $\mu>(1-c_0)^2+\frac52$. On the other hand, 
\begin{align}
    \frac{\intd}{\intd t}\sE_\mu^\rho(\rho)\Big\vert_{t=0}
    &=(1-c_0)^2\,\frac\mu2\int_0^{2\pi}\beta'(\rho_0)\,\partial_s^2\rho_0\intd s-\mu^2\int_0^{2\pi}(\partial_s^2\rho_0)^2\intd s=\mu\pi\left(\frac12(1-c_0)^2-\mu\right)
\end{align}
is positive for $\mu<\frac12(1-c_0)^2$. For numerical examples, see Figures \ref{fig:energy high c0}, \ref{fig:energy low curvature} and \ref{fig:energy high curvature}.
\end{ex} 

\begin{rem}\label{rem:Erho_grow}
In view of \Cref{prop:minproblargemu} and \Cref{cor:C1neigh}, for $\mu$ large, $\sE_\mu^\rho$ can be seen as the dominant term in $\sE_\mu$. However, even for arbitrary large $\mu$, $\sE_\mu^\rho$ might still not be monotonically decreasing.
This can be seen with the evolution equations in \eqref{eq:flow equation}. Indeed, taking 
$\rho_0\equiv \nu$ yields $\sE^\rho_\mu(\rho_0)=0$. On the other hand, $\partial_t\rho\vert_{t=0}\neq0$ for all $\mu>0$ as long as $\theta$ is not constant and $\beta^\prime(\nu)\neq0$. 
\end{rem}

\subsection{Zeros of the curvature}
\label{sec:zeroset}

Differentiating \eqref{eq:flow equation} we find that the curvature $\kappa=\partial_s\theta$ satisfies
\begin{align}
    \partial_t\kappa
    &=\beta(\rho)\partial_s^2\kappa+2\,\partial_s(\beta(\rho))\partial_s\kappa+\partial_s^2(\beta(\rho))\kappa+\left(\lambda_{\theta1}\cos\theta+\lambda_{\theta2}\sin\theta\right)\kappa-\partial_s^2(\beta(\rho))c_0.\label{eq:ev eq kappa}
\end{align}
The structure of this evolution equation already indicates that the behavior of $\kappa$ strongly depends on $c_0$. In case $c_0=0$, \eqref{eq:ev eq kappa} may be written as a linear second order parabolic equation for $\kappa$. Indeed, we have
\begin{align}
\label{eq:dtkappa}
    \partial_t\kappa=a\partial_s^2\kappa+b\partial_s\kappa+c\kappa,
\end{align}
where we define the nonconstant coefficients  $a:=\beta(\rho)>0$, 
    $b:=2\partial_s(\beta(\rho))$ and 
    $c:=
    \partial_s^2(\beta(\rho))
    +\lambda_{\theta1}\cos\theta+\lambda_{\theta2}\sin\theta$.
This allows us to use the techniques in \cite{A1988}
to study the evolution of 
the zeroset of the curvature. 
Sign-changing zeros of the curvature are \textit{inflection points} of the curve, i.e.\ 
points where the curve locally changes from being convex to being concave or vice versa.  Zeros at which the curvature does not change sign are called \textit{undulation points} of the curve.
First, we do not distinguish between inflection points and undulation points and show that the total number of zeros of $\kappa$ decreases.
In \cite[Remark 3.2]{W1993}, this idea was also indicated, without proof, for the $L^2$-gradient flow of the angle function of the classical elastic energy without density-modulated stiffness.

For all $t\geq 0$, we denote by $z_\kappa(t)\in\NN_0\cup\lbrace\infty\rbrace$  the number of zeros  in $[0,L)$ of the curvature $\kappa(t,\cdot)$ of the global solution of \eqref{eq:flow equation}. First we note the following. 

\begin{lem}[{\cite[Theorem C, (a)]{A1988}}]
\label{lem:zeroset discrete}
Let $c_0=0$ and $(\theta,\rho)$ be the global solution of \eqref{eq:flow equation}. Then 
$z_\kappa(t)$ 
is finite for any $t>0$.
\end{lem}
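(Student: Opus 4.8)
The plan is to invoke the general theory of Angenent for linear parabolic equations on the circle, specifically the cited Theorem C(a) from \cite{A1988}. The starting point is the observation already recorded in \eqref{eq:dtkappa}: when $c_0=0$, the curvature $\kappa=\partial_s\theta$ satisfies the linear second-order parabolic equation
\begin{align}
    \partial_t\kappa=a\partial_s^2\kappa+b\partial_s\kappa+c\kappa,
\end{align}
with coefficients $a=\beta(\rho)>0$, $b=2\partial_s(\beta(\rho))$, and $c=\partial_s^2(\beta(\rho))+\lambda_{\theta1}\cos\theta+\lambda_{\theta2}\sin\theta$. First I would verify that this equation fits into the framework of \cite{A1988}: one needs the equation to hold on the circle $\RR/L\ZZ$ (i.e.\ for an $L$-periodic solution) with sufficiently regular and uniformly parabolic coefficients. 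By \Cref{thm:zsf artcl1}, for every $t>0$ the functions $\kappa(t,\cdot)$ and $\rho(t,\cdot)$ extend to smooth $L$-periodic functions on $\RR$, and hence so do the coefficients $a$, $b$, $c$; moreover $a=\beta(\rho)$ is bounded away from zero on compact time intervals by the positivity of $\beta$ and the $W^{3,2}$-bounds \eqref{eq:globalW32bounds}, ensuring uniform parabolicity.

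The essential step is to confirm the hypotheses on the coefficients that Angenent's Theorem C requires (typically boundedness and appropriate H\"older or measurability conditions on $a$, $b$, $c$, together with $a>0$). Since $(\theta,\rho)\in C^\infty((0,\infty)\times[0,L])$ by \Cref{thm:zsf artcl1}, all three coefficients are smooth in both variables on $(0,\infty)\times\RR$ after $L$-periodic extension, so these regularity requirements are automatically met on any interval $[\tau,T]\subset(0,\infty)$. Once the setting is verified, Theorem C(a) of \cite{A1988} directly yields that the number of zeros of $\kappa(t,\cdot)$ on the circle is finite for each $t>0$; identifying the zeros in $[0,L)$ with those of the periodic extension gives precisely that $z_\kappa(t)<\infty$.

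The main obstacle I anticipate is not in the analytic core, which is essentially a citation, but in the careful matching of regularity: one must ensure that the coefficients are controlled \emph{uniformly} on relevant time intervals and that the solution genuinely lives on the circle rather than merely on $[0,L]$ with boundary conditions. The periodicity of $\kappa$ and $\rho$ and their derivatives, guaranteed by \Cref{thm:zsf artcl1}, is exactly what allows us to pass from the interval formulation \eqref{eq:flow equation} to the closed-curve formulation on $\RR/L\ZZ$ needed to apply \cite{A1988}; verifying that the periodic boundary conditions on $\theta,\partial_s\theta,\partial_s^2\theta$ translate into genuine smoothness of the periodic extension of $\kappa$ is the one point requiring care. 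With that in hand, the finiteness of $z_\kappa(t)$ for $t>0$ follows immediately, completing the proof.
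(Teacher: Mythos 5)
Your proposal is correct and matches the paper's treatment: the paper states this lemma as a direct citation of Angenent's Theorem C(a), applied to the linear parabolic equation \eqref{eq:dtkappa} satisfied by $\kappa$ when $c_0=0$, with the smooth $L$-periodic coefficients $a=\beta(\rho)>0$, $b=2\partial_s(\beta(\rho))$, $c=\partial_s^2(\beta(\rho))+\lambda_{\theta1}\cos\theta+\lambda_{\theta2}\sin\theta$ and the periodic extension guaranteed by \Cref{thm:zsf artcl1}. Your verification of uniform parabolicity and of the passage from the interval formulation to the circle is exactly the (implicit) content of the paper's setup preceding the lemma.
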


Further, 
the number of zeros of the curvature (inflection points and undulation points) does not increase along the evolution.

\begin{prop}
\label{prop:zeroset non-incr}
Let $c_0=0$ and $(\theta,\rho)$ be the global solution of \eqref{eq:flow equation}. Then $z_\kappa$  
is a nonincreasing function on $[0,\infty)$.
\end{prop}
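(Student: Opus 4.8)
The plan is to recognise \eqref{eq:dtkappa} as a scalar linear parabolic equation on the circle and to invoke the Sturmian zero-counting theory of Angenent \cite{A1988}, of which \Cref{lem:zeroset discrete} is already the first half. For $c_0=0$ the curvature $\kappa=\partial_s\theta$ solves \eqref{eq:dtkappa} with coefficients $a=\beta(\rho)$, $b=2\partial_s(\beta(\rho))$ and $c=\partial_s^2(\beta(\rho))+\lambda_{\theta1}\cos\theta+\lambda_{\theta2}\sin\theta$. By \Cref{thm:zsf artcl1} the solution is smooth on $(0,\infty)\times[0,L]$ and $\kappa,\rho$ together with their spatial derivatives extend to smooth $L$-periodic functions, so $a,b,c$ extend to smooth $L$-periodic coefficients and $\kappa$ may be regarded as a solution on $\RR/L\ZZ$. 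On any compact interval $[\tau_1,\tau_2]\subset(0,\infty)$ these coefficients and their derivatives are bounded; moreover $\rho$ ranges in a compact set (by continuity and the uniform $W^{3,2}$-bound \eqref{eq:globalW32bounds}), so $a=\beta(\rho)$ is bounded below by a positive constant, and $\lambda_{\theta1},\lambda_{\theta2}$ are continuous, hence bounded, in $t$. Thus the hypotheses of \cite{A1988} are met.

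I would then apply \cite[Theorem~C]{A1988} directly: part~(a) is \Cref{lem:zeroset discrete} and gives $z_\kappa(t)<\infty$ for $t>0$, while part~(b), applied in the periodic setting, yields that $t\mapsto z_\kappa(t)$ is nonincreasing on $(0,\infty)$. This is the heart of the argument and is essentially a citation once the framework above is in place.

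It remains to include the initial time, i.e.\ to show $z_\kappa(t)\le z_\kappa(0)$ for all $t>0$. If $z_\kappa(0)=\infty$ there is nothing to prove, so suppose $\kappa_0$ has finitely many zeros $s_1<\dots<s_N$ in $[0,L)$. Since $\kappa(\cdot,t)\to\kappa_0$ in $C^2([0,L])$ as $t\to0^+$ (by the $C^0([0,\infty);C^2)$-regularity) and $\kappa_0$ is bounded away from $0$ on every closed arc avoiding the $s_i$, for small $t$ the zeros of $\kappa(\cdot,t)$ accumulate only near $\{s_1,\dots,s_N\}$. The essential point is that \eqref{eq:dtkappa}, for which $\kappa\equiv0$ is a solution, does not create zeros there: at a transversal (odd-order) zero exactly one zero persists by $C^1$-convergence, whereas at a degenerate (even-order) zero the Sturmian comparison underlying \cite[Theorem~C]{A1988}, carried down to $t=0$ (equivalently, an approximation of $\kappa_0$ by curvatures with only transversal zeros), shows that $\kappa(\cdot,t)$ can only lose, not gain, zeros. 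Hence $\limsup_{t\to0^+}z_\kappa(t)\le N$, which together with the monotonicity on $(0,\infty)$ gives $z_\kappa(t)\le z_\kappa(0)$ for all $t>0$ and proves that $z_\kappa$ is nonincreasing on $[0,\infty)$.

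The main obstacle is precisely this last step: Angenent's theorem delivers monotonicity only on the open interval $(0,\infty)$, and ruling out the instantaneous creation of zeros at a degenerate zero of the initial curvature cannot be read off from $C^2$-convergence alone but genuinely requires the zero-counting structure of the parabolic equation. Verifying the coefficient regularity and the positive lower bound on $a$ needed to apply \cite{A1988} is routine, but should be recorded.
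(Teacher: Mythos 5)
Your setup and your treatment of positive times are sound: for $c_0=0$ the curvature solves the linear equation \eqref{eq:dtkappa}, the coefficients extend to smooth $L$-periodic functions (periodicity of $\cos\theta$, $\sin\theta$ despite $\theta$ itself not being periodic), and $a=\beta(\rho)$ is bounded below on compact time intervals, so Angenent's periodic theory applies; citing \cite[Theorem C, (b)]{A1988} for the monotonicity of $z_\kappa$ on $(0,\infty)$ is a legitimate shortcut. The paper instead re-derives this step from \cite[Lemmas 5.3 and 5.5]{A1988}, tracing the zeros of $\kappa(t_2,\cdot)$ backwards in time along continuous, order-preserving curves $x_i(t)$ in the zero set; this costs slightly more work but produces exactly the zero curves that are needed again both at $t=0$ and in the proof of \Cref{prop:inflpoints}.

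The genuine gap is precisely where you flag it: the passage down to $t=0$. Your argument for a degenerate zero of $\kappa_0$ --- ``the Sturmian comparison underlying Theorem C, carried down to $t=0$'', equivalently an approximation of $\kappa_0$ by data with only transversal zeros --- asserts the needed conclusion rather than proving it. The approximation route in particular does not obviously work: an even-order zero of $\kappa_0$ splits into two simple zeros under a generic small perturbation, so approximants $v_0$ with $z_{v_0}\le z_\kappa(0)$ must be constructed with care; more fundamentally, Angenent's theory counts zeros of a \emph{single} solution, and it provides no comparison between $z_\kappa(t)$ and $z_v(t)$ for a different solution $v$ of the same linear equation, so continuous dependence on the initial data cannot transfer a zero-count bound from the approximants to $\kappa$. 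The paper closes this gap with a maximum-principle argument on the zero curves themselves: taking $n=z_\kappa(t_2)$ and assuming without loss of generality $z_\kappa(0)<\infty$, the backward curves $x_1(t)<\dots<x_n(t)$ have limits as $t\searrow0$ (as in \cite[Lemma 5.2]{A1988}), and each limit is a zero of $\kappa_0$ by continuity; if two limits coincided, then $\kappa$ would vanish on the entire parabolic boundary of the open region $G=\{(t,s):0<t<t_2,\ x_i(t)<s<x_j(t)\}$, whence $\kappa\equiv0$ on $G$ by the parabolic maximum principle, contradicting the finiteness of zeros for $t>0$ (\Cref{lem:zeroset discrete}). Hence the limits are $n$ distinct zeros of $\kappa_0$, giving $z_\kappa(0)\ge n$. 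Two smaller inaccuracies: since $(\theta,\rho)\in C^0([0,\infty);C^2([0,L]))$, you only get $\kappa(t,\cdot)\to\kappa_0$ in $C^1$, not $C^2$; and ``transversal'' should mean \emph{simple}, as a zero of odd order at least three is degenerate and is not controlled by $C^1$-convergence either, so your case distinction would need to be organized around simple versus multiple zeros.
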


\begin{proof}
Consider the smooth $L$-periodic extension of $\kappa=\partial_s\theta$ to $\RR$, which we do not rename for simplicity. The function $z_\kappa$ still denotes the number of zeros of the curvature in the interval $[0,L)$. 

Let $t_2>0$. By Lemma \ref{lem:zeroset discrete}, there exists $n\in\NN_0$ such that 
$z_\kappa(t_2)=n$. Without loss of generality, we assume that $n\geq1$. Let $0\leq s_1<s_2< \dots <s_n<L$ be the zeros of $\kappa$ at time $t_2$.  
By \cite[Lemma 5.5]{A1988}, there are continuous curves $x_i(t)$, $i=1,\dots,n$ in the zeroset $Z_\kappa=\left\lbrace(t,s)\in(0,\infty)\times\RR: \kappa(t,s)=0\right\rbrace$ defined for $t\in(0,t_2]$ such that $x_i(t_2)=s_i$.
Moreover, define $x_{n+1}(t):=x_1(t)+L$, $t\in(0,t_2]$. Then $x_{n+1}(t)$ is also in $Z_\kappa$.

For $t_1\in(0,\infty)$ with $t_1 < t_2$, \cite[Lemma 5.3]{A1988} tells us that if $x_i(t_2)<x_j(t_2)$, then $x_i(t)<x_j(t)$ for all $t\in[t_1,t_2]$, $i,j=1,\dots n+1$.
Hence, considering $I_t:=[x_1(t),x_{n+1}(t))$ for $t\in[t_1,t_2]$, 
we find $x_i(t)\in I_t$ for $t\in[t_1,t_2]$ and $i=1,...,n$. Thus, 
there are at least 
$n$ zeros of $\kappa$ on $\lbrace t_1\rbrace\times I_t$.
Since by periodicity, the number of zeros on $\lbrace t_1\rbrace\times [0,L)$ equals the number of zeros on $\lbrace t_1\rbrace\times I_t$, it follows that $z_\kappa$ is nonincreasing on $(0,\infty)$.

It remains to consider the transition from $t=0$ to positive times. 
Due to \Cref{lem:zeroset discrete} we assume without loss of generality that $z_\kappa(0)<\infty$. Similarly as in \cite[Lemma 5.2]{A1988} it follows that $\lim_{t\searrow 0}x_i(t)$ exists for $i=1,\dots n$. Suppose that $\lim_{t\searrow 0}x_i(t)=\lim_{t\searrow 0}x_j(t)$ for $i<j$ and consider the nonempty open set $G:=\lbrace(t,s):0<t<t_2,\, x_i(t)<s<x_j(t)\rbrace$. Since $\kappa\equiv0$ on the parabolic boundary of $G$, the parabolic maximum principle implies that $\kappa\equiv0$ in $G$. This contradicts \Cref{lem:zeroset discrete}. Thus, $\lim_{t\searrow 0}x_i(t)\neq\lim_{t\searrow 0}x_j(t)$ for $i\neq j$ and it follows that $z_\kappa(0)\geq n$.
\end{proof}

In the following, we specifically consider the inflection points and show that the number of sign-changing zeros of $\kappa$ does not increase. This means geometrically that the number of `dents' of a curve like in Figure \ref{fig:dents} is not increasing along the evolution. This is supported by numerical experiments, see \Cref{fig:energy high curvature}, while for $c_0\neq 0$, \Cref{fig:energy high c0} gives an example for growing number of inflection points.\\
We denote the number of sign-changing zeros in $[0,L)$ of the $L$-periodic extension of $\kappa(t,\cdot)$ by $\bar z_\kappa(t)\in\NN_0\cup\lbrace\infty\rbrace$, $t\in[0,\infty)$.

\begin{figure}
\begin{center}
\begin{tikzpicture}

    \newcommand*\radius{1.2}
    \newcommand*\amplitude{.2}
    \newcommand*\schnittpunkte{14}

    \pgfmathsetmacro\winkel{360/\schnittpunkte}
    \pgfmathsetmacro\abschnitte{\schnittpunkte/2}

    \draw[black,very thick]
      (0:\radius-\amplitude)
      foreach \i in {1,...,\abschnitte}{
        to[out={2*\winkel*\i+90-2*\winkel},in={2*\winkel*\i-90-\winkel}]
        ({2*\winkel*\i-\winkel}:{\radius+\amplitude})
        to[out={2*\winkel*\i+90-\winkel},in={2*\winkel*\i-90}]
        ({2*\winkel*\i}:{\radius-\amplitude})
      };
\end{tikzpicture}
\end{center}
\caption{Curve with dents.}
\label{fig:dents}
\end{figure}
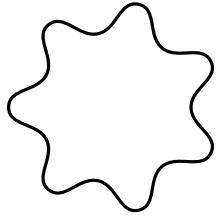

\begin{prop}
    \label{prop:inflpoints}
    Let $c_0=0$ and $(\theta,\rho)$ be the global solution of \eqref{eq:flow equation}. Then $\bar z_\kappa$ is a nonincreasing function on $[0,\infty)$.
\end{prop}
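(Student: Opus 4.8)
The plan is to prove Proposition~\ref{prop:inflpoints} by refining the argument of Proposition~\ref{prop:zeroset non-incr}, tracking only the \emph{sign-changing} zeros of $\kappa$ rather than all zeros. The key additional input is that the sign-change structure is preserved by the curve-tracking machinery of Angenent \cite{A1988}: the continuous curves $x_i(t)$ in the zeroset $Z_\kappa$ not only persist backward in time and remain ordered, but the sign of $\kappa$ on either side of each $x_i(t)$ is locally constant in $t$ away from the finitely many collision/creation times. First I would fix $t_2>0$ and, using Lemma~\ref{lem:zeroset discrete}, write $\bar z_\kappa(t_2)=m$ with sign-changing zeros $s_1<\dots<s_m$ in $[0,L)$. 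As before, \cite[Lemma 5.5]{A1988} produces continuous tracks $x_i(t)\in Z_\kappa$ on $(0,t_2]$ with $x_i(t_2)=s_i$, and \cite[Lemma 5.3]{A1988} guarantees they remain strictly ordered for earlier times $t_1<t_2$.

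The heart of the argument is to show that each $x_i(t)$ remains a \emph{sign-changing} zero as $t$ decreases, i.e.\ that a sign-changing zero cannot become non-sign-changing when running time backward. I would argue by the strong maximum principle applied to the linear parabolic equation~\eqref{eq:dtkappa} satisfied by $\kappa$ when $c_0=0$: at a zero of $\kappa$, the nature of the zero is governed by the nodal structure, and Angenent's results show that along each track the local degree of vanishing (the order of the zero) cannot increase as time decreases, since multiple zeros can only merge forward in time. In particular, a simple (sign-changing) zero at time $t_2$ must have arisen from a simple zero at any earlier time $t_1$, so the sign change is inherited backward. Thus for $t_1<t_2$ there are at least $m$ distinct points in $[x_1(t_1),x_1(t_1)+L)$ at which $\kappa$ changes sign, giving $\bar z_\kappa(t_1)\geq m=\bar z_\kappa(t_2)$, and monotonicity on $(0,\infty)$ follows.

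The transition from $t=0$ to positive times is handled exactly as in Proposition~\ref{prop:zeroset non-incr}: one shows $\lim_{t\searrow0}x_i(t)$ exists by \cite[Lemma 5.2]{A1988}, and that distinct tracks cannot converge to the same limit, using the parabolic maximum principle on the region between them together with Lemma~\ref{lem:zeroset discrete}. Since each surviving track corresponds to a sign change persisting down to $t=0$, one concludes $\bar z_\kappa(0)\geq m$, completing the proof.

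The main obstacle I anticipate is making precise and rigorous the claim that sign-changing zeros remain sign-changing backward in time, i.e.\ ruling out the degenerate scenario where a simple zero at time $t_2$ emanates from a double (non-sign-changing) zero at an earlier time. This requires carefully invoking Angenent's characterization of the local structure of the zeroset---that the multiplicity of zeros is nonincreasing in forward time, equivalently that zeros only ever lose multiplicity or annihilate as time advances---and translating the preservation of parity of the local vanishing order into the preservation of the sign-change property. I would isolate this as the crucial lemma and verify that the hypotheses of the relevant results in \cite{A1988} (smoothness of $\kappa$, the linear structure~\eqref{eq:dtkappa} with bounded coefficients valid only because $c_0=0$) are met in our setting.
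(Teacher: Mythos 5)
Your overall architecture (Angenent's ordered zero-tracks plus a backward-persistence argument, followed by the same counting and $t=0$ handling as in \Cref{prop:zeroset non-incr}) is viable, but the step you yourself flag as the crux --- that a sign-changing zero at time $t_2$ cannot trace back to a one-signed zero at an earlier time --- is left unproven, and the justification you sketch for it does not hold. ``The multiplicity of zeros is nonincreasing in forward time'' is not among the results of \cite{A1988} and is false along zero-tracks: when two simple zeros merge, the vanishing order jumps up at the collision time. Worse, your intermediate claim that a simple zero at $t_2$ ``must have arisen from a simple zero at any earlier time $t_1$'' is false even for the heat equation: $u(x,t)=x^3+6xt$ solves $u_t=u_{xx}$ and has a single simple zero for every $t>0$, emanating from a triple zero at $t=0$; this is fully consistent with \cite[Theorem C]{A1988}, since the number of zeros drops from three to one across $t=0$. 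What is actually true, and what your argument needs, is only that the \emph{sign-change property} (odd parity of the local vanishing order) persists backward along tracks --- and that does not follow from the finiteness, monotonicity, or dropping statements you cite; it requires its own proof.

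The missing step can be closed by a maximum principle argument: after replacing $\kappa$ by $e^{-\lambda t}\kappa$ with $\lambda$ large, so that the zeroth-order coefficient $c$ in \eqref{eq:dtkappa} becomes negative (a normalization you also omit, but which is needed for any of these sign conclusions), suppose $\kappa(t_1,\cdot)\geq 0$ near a landing point $x_i(t_1)$ of a track with $t_1>0$; since this zero is isolated by \Cref{lem:zeroset discrete}, the strong minimum principle on a small box $[x_i(t_1)-\delta,x_i(t_1)+\delta]\times[t_1,t_1+\varepsilon]$ whose lateral sides satisfy $\kappa>0$ gives $\kappa>0$ in the interior, so no zero-track can emanate from $(t_1,x_i(t_1))$ into later times --- a contradiction. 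The paper's own proof avoids the pointwise question entirely and runs the same mechanism one level up: it applies the maximum principle on each open region $G$ bounded by two consecutive tracks $x_i,x_{i+1}$ and the lines $t=t_1$, $t=t_2$; since $\kappa$ vanishes on the lateral (track) part of the parabolic boundary, a positive (negative) sign of $\kappa$ on $(x_i(t_2),x_{i+1}(t_2))$ forces a point at time $t_1$ between the two tracks where $\kappa$ is positive (negative), and counting the sign alternations of these witness points yields $\bar z_\kappa(t_1)\geq \bar z_\kappa(t_2)$. Besides being self-contained, that interval-based variant is also more robust at $t_1=0$, where zeros of $\kappa_0$ need not be isolated and your claim that the landing points themselves are sign-changing zeros becomes delicate.
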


\begin{proof}
    Consider the smooth $L$-periodic extension of $(\kappa,\rho)$ to $\RR$.
    First we observe, that the zeros of $\tilde\kappa(t,s):=e^{-\lambda t}\kappa(t,s)$, $\lambda\in\RR$, $(t,s)\in[0,\infty)\times\RR$, coincide with the zeros of $\kappa(t,\cdot)$. By choosing $\lambda$ sufficiently large, we can thus assume that $\kappa$ satisfies \eqref{eq:dtkappa} with $c(t,s)<0$ on $[0,\infty)\times\RR$.\\
    Let $0\leq t_1< t_2<\infty$. By \Cref{lem:zeroset discrete}, there is $n\in\NN_0$ such that $z_\kappa(t_2)=n$. In view of \Cref{prop:zeroset non-incr} and due to periodicity of $\kappa$, we may assume without loss of generality that $n>1$. As in the proof of \Cref{prop:zeroset non-incr}, let $0\leq s_1<\dots<s_n<L$ be such that $\kappa(t_2,s_i)=0$, $i=1,\dots,n$. By the arguments in the 
    proof of \Cref{prop:zeroset non-incr}, there exist continuous curves $x_i(t)$ in the zeroset of $\kappa$ defined for $t\in[0,t_2]$ such that $x_i(t_2)=s_i$, $i=1,\dots,n$, and $x_i(t)<x_{i+1}(t)$ for $t\in[0,t_2]$, where $x_{n+1}:=x_1+L$.\\
    Let $i\in\lbrace1,\dots,n\rbrace$. Either $\kappa(t_2,\cdot)<0$ or $\kappa(t_2,\cdot)>0$ on $(x_i(t_2),x_{i+1}(t_2))$. We assume the latter. 
    Consider the open set $G:=\lbrace(t,s):t_1<t<t_2,\, x_i(t)<s<x_{i+1}(t)\rbrace$, whose boundary is composed of the curves $x_i, x_{i+1}$ and two vertical lines at $t=t_1, t_2$.
    By assumption and continuity, $\kappa$ attains a positive maximum on $\bar{G}$, more precisely at the parabolic boundary by the maximum principle. But since $\kappa(t,x_i(t))=0$ for $t\in[t_1,t_2]$ and similarly for $x_{i+1}$, the maximum is attained at the vertical line $t=t_1$. Consequently, there exists an interval $I\subset (x_i(t_1),x_{i+1}(t_1))$ with $\kappa(t_1,s)>0$ for $s\in I$.
    Thus, for each of the disjoint intervals $(x_i(t_2),x_{i+1}(t_2))$, $i=1,\dots,n$, with $\kappa>0$ (or $\kappa<0$), there is an interval in $(x_i(t_1),x_{i+1}(t_1))$ with $\kappa>0$ (or $\kappa<0$). Since $x_i(t)<x_{i+1}(t)$, $t\in[t_1,t_2]$, there are at least as many sign changes of $\kappa$ in $[x_1(t_1),x_1(t_1)+L)$ as in $[x_1(t_2),x_1(t_2)+L)$. By periodicity, it follows that $\bar z_\kappa(t_1)\geq \bar z_\kappa(t_2)$.    
\end{proof}

\begin{proof}[Proof of \Cref{thm:kappa_zeros}]
    The result directly follows from \Cref{prop:zeroset non-incr} and \Cref{prop:inflpoints}.
\end{proof}


\subsection{Convexity}
\label{sec:presconvex}

In the following, we will use \eqref{eq:dtkappa} to examine whether nonnegativity (or nonpositivity) of the curvature 
of the initial curve is preserved along the evolution. This is closely related to convexity of the associated curve, see Remark \ref{rem:convex in general} below. 

\begin{proof}[Proof of Theorem \ref{prop:convex}]
Case (ii) follows from \Cref{prop:zeroset non-incr}.
For case (i), we now show that if $\partial_s\theta_0\geq0$, then $\partial_s\theta\geq0$ for all $t\in(0,\infty)$. The proof for the nonpositive case works analogously.
We consider the $L$-periodic extension of the global solution $(\theta,\rho)$ to all of $\RR$, which we do not rename for simplicity.
Note that $\partial_s\theta\in C^\infty((0,\infty)\times\RR)$ and $\rho\in C^\infty((0,\infty)\times\RR)$ by \Cref{thm:zsf artcl1}. 
Since $c_0=0$, $\kappa$ satisfies \eqref{eq:dtkappa} on $(0,\infty)\times\RR$.
Even though $\theta$ itself is not even continuous on $\RR$, due to the boundary condition $\theta(L)-\theta(0)=2\pi\omega$, the coefficients $a$, $b$ and $c$ in \eqref{eq:dtkappa} are smooth on $\RR$ for all $t\geq0$. 
Moreover, since the initial datum is attained in the $C^2([0,L])$-norm 
(see Theorem \ref{thm:zsf artcl1}) and $c$ is bounded globally in 
$(0,\infty)\times\RR$ (see \cite[Section 4]{DLR2022}),
there is $K>0$ such that 
$
    \sup_{{t\in[0,\infty),\,s\in\RR}}c(t,s)<K.
$
Defining $\kappa_\varepsilon:=\kappa+\varepsilon \exp(Kt)$ for $\varepsilon>0$ it follows that 
\begin{align}
\label{eq:dtkappa2}
    \partial_t\kappa_\varepsilon>a(t,s)\partial_s^2\kappa_\varepsilon+b(t,s)\partial_s\kappa_\varepsilon+c(t,s)\kappa_\varepsilon\quad \text{ on }(0,\infty)\times\RR
\end{align}
and $\min_{s\in\RR}\kappa_\varepsilon(0,s)=\min_{s\in[0,L]}\partial_s\theta_0(s)+\varepsilon>0$. 
Now we set
\begin{align}
\label{eq:tstar2}
    t^{\ast}:=\sup\big\lbrace t\geq0\colon \min_{s\in\RR}\kappa_{\varepsilon}(\tau,s)> 0 \text{ for all } \tau\in(0,t)\big\rbrace.
\end{align}
Clearly, $t^\ast>0$. 
We assume that $t^\ast<\infty$. This implies that there is $s^\ast\in\RR$ such that 
\begin{align}
\label{eq:keps(tstar,sstar)}
0=\min_{s\in\RR}\kappa_{\varepsilon}(t^\ast,s)=\kappa_{\varepsilon}(t^\ast,s^\ast).
\end{align}
The necessary conditions for a local minimum yield $\partial_s^2\kappa_\varepsilon(t^\ast,s^\ast)\geq0$, $\partial_s\kappa_\varepsilon(t^\ast,s^\ast)=0$ and $\partial_t\kappa_\varepsilon(t^\ast,s^\ast)\leq0$. Together with \eqref{eq:keps(tstar,sstar)} this is a contradiction to \eqref{eq:dtkappa2}. Hence, 
$\kappa_\varepsilon>0$ on $(0,\infty)\times\RR$.
Sending $\varepsilon\to0$ yields $\partial_s\theta=\kappa\geq0$ on $(0,\infty)\times\RR$. 
\end{proof}

\begin{rem}
\label{rem:convex in general}
A planar closed curve $\gamma$ is called convex if it is \emph{simple} and its curvature $\kappa =\partial_s \theta$ does not change sign. This is equivalent to $\gamma$ parametrizing the boundary of a convex subset of $\R^2$. 
If the initial datum $\theta_0$ of the flow \eqref{eq:flow equation} describes a convex curve and if $c_0=0$, then the corresponding curve remains convex for all times. Indeed, by \Cref{prop:convex}, the sign of $\kappa_0=\partial_s \theta_0$ is preserved. Since the initial curve is simple 
and the winding number is preserved along the flow, we have $\omega=\pm1$ for all $t\geq0$ by Hopf's Umlaufsatz.  
Thus, $\int_0^L |\kappa(t,s)|\intd s = 2\pi$ for any $t\geq 0$. Fenchel's Theorem yields that the evolving curve remains convex for all $t\geq 0$, in particular it remains simple.  \\
For $c_0=0$, this is a distinctive advantage of the second order system \eqref{eq:flow equation} over the classical fourth order elastic flow, as simple curves generically can become nonsimple, see \cite{MMR2021}. 
\end{rem}

Remarkably, the statement of \Cref{prop:convex} can, in general, not be extended to the case $c_0\neq 0$. 
We give an example with $\left\vert c_0\right\vert>0$ arbitrarily small. 
For a numerical example, see \Cref{fig:energy high c0}.

\begin{ex}
\label{ex:counterconvex}
Let $c_0\neq0$. If $c_0>0$, let $\beta''>0$ on $[r,R]$ for some $r,R\in\RR$. Otherwise, let $\beta''<0$ on $[r,R]$.
We consider an initial curve consisting of a straight line smoothly connected to the rest of the curve in such a way that $\partial_s\theta_0\geq0$ on $[0,L]$. For example, a cigar-shaped curve as shown in \Cref{fig:cigarre:A}. 
Let $[a,b]\subset[0,L]$, $a<b$, such that
\begin{align}
    \kappa_0(s)=\partial_s\theta_0(s)=0,\quad \rho_0(s)\in(r,R), \quad \partial_s\rho_0(s)\neq0,\quad \partial_s^2\rho_0(s)=0 \quad\text{ for } s\in[a,b],
\end{align} 
cf.\ \Cref{fig:cigarre:B}. Then \eqref{eq:ev eq kappa} gives 
    $\partial_t\kappa(t,s)\big\vert_{
    t=0}=-\beta''(\rho_0(s))(\partial_s\rho_0(s))^2c_0
    <0$ for $s\in[a,b]$.
Thus, $\kappa$ becomes negative on $[a,b]$ for $t>0$ instantaneously and for any $c_0$.
\end{ex}

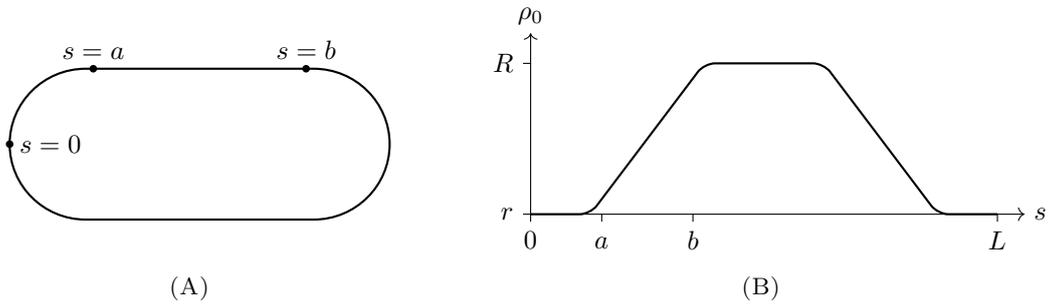
\begin{figure}[h]
  \centering
  \subfloat[][]{\label{fig:cigarre:A}
  \begin{tikzpicture}[color=black,style=thick]
\draw (1,0) -- (4,0)
(4,0) arc (-90:90:1)
(4,2) -- (1,2)
(1,0) arc (90:-90:-1) 
;
\draw[color=white](1,-0.5) -- (1.001,-0.5);
\fill (0,1) circle (0.05) node[right]{$s=0$};
\fill[color=black] (1.1,2) circle (0.05) node[above]{\textcolor{black}{$s=a$}};
\fill[color=black] (3.9,2) circle (0.05) node[above]{\textcolor{black}{$s=b$}};
\end{tikzpicture}
    }
  \qquad\quad
  \subfloat[][]{\label{fig:cigarre:B}
  \begin{tikzpicture}
\draw [->] (0,0) -- (6.5,0) node[right]{$s$};
\draw [->] (0,0) -- (0,2.4) node[above]{$\rho_0$};
\draw [rounded corners, style=thick]
(0,0) -- (pi/4,0)
 -- (pi/4+1.5,2)
 -- (3*pi/4+1.5,2)
 -- (3*pi/4+3,0)
 -- (pi+3,0);
 \draw (0,0) -- (0,-0.1) node[below]{$0$};
 \draw (pi+3,0) -- (pi+3,-0.1) node[below]{$L$};
 \draw (0,0) -- (-0.1,0) node[left]{$r$};
 \draw (0,2) -- (-0.1,2) node[left]{$R$};
 \draw[color=black] (pi/4+0.15,0) -- (pi/4+0.15,-0.1);
 \draw[color=white] (pi/4+0.15,-0.1) -- (pi/4+0.15,-0.18) node[below]{\textcolor{black}{$a$}};
 \draw[color=black] (pi/4+1.35,0) -- (pi/4+1.35,-0.1) node[below]{\textcolor{black}{$b$}};
\end{tikzpicture}}
  \caption{Cigar-shaped curve \subref{fig:cigarre:A} with linear density on $[a,b]$ \subref{fig:cigarre:B}.}
  \label{fig:cigarre}
\end{figure}

\subsection{Nonnegativity of the density}
\label{sec:densnonneg}

For a physical mass density $\rho$, 
only a positive sign is meaningful. We state a condition on $\beta$ which gives a lower bound on $\rho$ and particularly enables us to control its sign. 
Compared to the situation of \Cref{prop:convex}, the evolution equation for $\rho$ in \eqref{eq:flow equation} is not linear in $\rho$. This asks for stricter assumptions on $\beta$, which we again show to be sharp, see \Cref{ex:rhobound}.
\begin{prop}
\label{prop:rhopos}
Let $x_0\in\RR$.
Let $\beta\in C^\infty(\RR)$ be such that $\beta^\prime\geq0$ on $[x_0,\infty)$ and $\beta^\prime(x_0)=0$.
Let $(\theta,\rho)$ be the global solution of \eqref{eq:flow equation} with admissible initial datum $(\theta_0,\rho_0)$. 
If $\rho_0\geq x_0$ on $[0,L]$, then $\rho\geq x_0$ on $(0,\infty)\times[0,L]$.
\end{prop}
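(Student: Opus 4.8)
The plan is to run a parabolic maximum principle on the evolution equation for $\rho$, mirroring the structure of the proof of \Cref{prop:convex} but accounting for two genuinely new difficulties compared to the curvature equation: the nonlinear dependence on $\rho$ through $\beta'(\rho)$, and the nonlocal Lagrange multiplier $\lambda_\rho$. First I would pass to the smooth $L$-periodic extensions of $\rho$ and $\kappa=\partial_s\theta$ on $\RR$ (not renamed), so that by \eqref{eq:flow equation} and \eqref{eq:lambdarho},
\begin{align}
\partial_t\rho=\mu\partial_s^2\rho-\tfrac12\beta'(\rho)(\partial_s\theta-c_0)^2+\tfrac{1}{2L}\int_0^L\beta'(\rho)(\partial_s\theta-c_0)^2\intd s \quad\text{on }(0,\infty)\times\RR.
\end{align}
By \eqref{eq:globalW32bounds} together with smoothness and the embedding $W^{3,2}\hookrightarrow C^2$, both $M:=\sup_{[0,\infty)\times\RR}|\rho|$ and $Q:=\sup_{[0,\infty)\times\RR}(\partial_s\theta-c_0)^2$ are finite. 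Using $\beta'(x_0)=0$, I would linearize the local reaction term around $x_0$ by writing $\beta'(\rho)=(\rho-x_0)\,g$, where $g(t,s):=\int_0^1\beta''(x_0+\tau(\rho-x_0))\intd\tau$ is continuous and bounded by some $K_0<\infty$ on the relevant compact range of values; the equation then reads $\partial_t\rho=\mu\partial_s^2\rho+c\,(\rho-x_0)-\lambda_\rho$ with $c:=-\tfrac12 g\,(\partial_s\theta-c_0)^2$ and $|c|\leq\tfrac12 K_0Q=:C_1$.

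Next I would introduce the barrier. For $\varepsilon>0$ and a constant $K>2C_1$, set $w_\varepsilon:=\rho-x_0+\varepsilon e^{Kt}$, so that $w_\varepsilon(0,\cdot)\geq\varepsilon>0$ and
\begin{align}
\partial_t w_\varepsilon-\mu\partial_s^2 w_\varepsilon-c\,w_\varepsilon=\varepsilon e^{Kt}(K-c)-\lambda_\rho.
\end{align}
I would define $t^\ast:=\sup\{t\geq0:\ w_\varepsilon>0\text{ on }[0,t]\times\RR\}>0$ and argue by contradiction, assuming $t^\ast<\infty$. By $L$-periodicity and continuity there is $s^\ast$ with $w_\varepsilon(t^\ast,s^\ast)=0=\min_s w_\varepsilon(t^\ast,\cdot)$, and $t^\ast>0$ ensures smoothness there.

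The crucial step is to control the sign of $\lambda_\rho$ up to time $t^\ast$. On $[0,t^\ast]$ we have $\rho\geq x_0-\varepsilon e^{Kt^\ast}$, and the hypotheses $\beta'\geq0$ on $[x_0,\infty)$ and $\beta'(x_0)=0$ give $\beta'(\rho)\geq-K_0\varepsilon e^{Kt^\ast}$ pointwise (nonnegative where $\rho\geq x_0$, and bounded below via the mean value theorem where $x_0-\varepsilon e^{Kt^\ast}\leq\rho<x_0$). Hence $-\lambda_\rho(t)\geq-\tfrac{1}{2L}K_0\varepsilon e^{Kt^\ast}\int_0^L(\partial_s\theta-c_0)^2\intd s\geq-C_1\varepsilon e^{Kt^\ast}$ for all $t\leq t^\ast$. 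At the minimum $(t^\ast,s^\ast)$ the usual sign conditions give $\partial_t w_\varepsilon\leq0$, $\partial_s^2 w_\varepsilon\geq0$, and $c\,w_\varepsilon=0$, so the left-hand side above is $\leq0$, whereas the right-hand side is $\geq\varepsilon e^{Kt^\ast}(K-C_1)-C_1\varepsilon e^{Kt^\ast}=\varepsilon e^{Kt^\ast}(K-2C_1)>0$ by the choice of $K$, a contradiction. Therefore $t^\ast=\infty$, so $w_\varepsilon>0$ on $(0,\infty)\times\RR$, and letting $\varepsilon\to0$ yields $\rho\geq x_0$ on $(0,\infty)\times[0,L]$.

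The main obstacle — and precisely the step that forces the sharp hypotheses $\beta'\geq0$ on $[x_0,\infty)$ and $\beta'(x_0)=0$ — is taming the nonlocal multiplier $\lambda_\rho$. The condition $\beta'(x_0)=0$ makes the \emph{local} reaction term vanish at the critical level, while $\beta'\geq0$ on $[x_0,\infty)$ makes the \emph{nonlocal} source $-\lambda_\rho$ favorably signed as long as $\rho$ stays above $x_0$; the residual error from $\rho$ dipping below $x_0$ by at most the barrier height $\varepsilon e^{Kt^\ast}$ is exactly what the choice $K>2C_1$ absorbs. This is the sharp threshold, as \Cref{ex:rhobound} shows.
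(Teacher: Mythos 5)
Your proof is correct and follows essentially the same strategy as the paper's: pass to the periodic extension, factor $\beta'(\rho)=(\rho-x_0)\,g$ with $g$ the (bounded) divided difference of $\beta'$ at $x_0$, use the barrier $\rho-x_0+\varepsilon e^{Kt}$, and derive a contradiction at the first touching time, where the hypotheses $\beta'(x_0)=0$ and $\beta'\geq 0$ on $[x_0,\infty)$ make the local reaction term vanish at the critical level and force the nonlocal term $-\lambda_\rho$ to be nonnegative up to an error of order $\varepsilon e^{Kt^*}$, which the exponential growth of the barrier absorbs. The only (minor) technical difference is that you bound $\beta'(\rho)$ from below directly via the mean value theorem using that $\rho$ can dip below $x_0$ by at most the barrier height, whereas the paper compares $\beta'(\rho)$ with the shifted quantity $\beta'(\rho+\varepsilon e^{Kt})$ and therefore works on a finite horizon $[0,T]$ with $\varepsilon$ small; your variant dispenses with both restrictions.
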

\begin{proof}
As in the proof of \Cref{prop:convex}, consider the $L$-periodic extension of the global solution $(\theta,\rho)$ to all of $\RR$. Since $\beta'(x_0)=0$, the function 
\begin{align}
    f(x):=\begin{cases}
    \frac{\beta^\prime(x)}{x-x_0}\quad &\text{ for } x\neq x_0,\\
    \beta^{\prime\prime}(x)\quad &\text{ for } x= x_0
    \end{cases}
\end{align}
is continuous. Choose $R>0$ such that $\sup_{t\in[0,\infty),s\in\RR}\left\vert\rho\right\vert<R$ and $K>0$ such that
\begin{align}
\label{eq:defK}
    \sup_{t\in[0,\infty),\,s\in\RR}\Big(\Big(\sup_{x\in[-R,R]}\left\vert\beta''(x)\right\vert-f(\rho(t,s))\Big)(\partial_s\theta(t,s)-c_0)^2\Big)<2K.
\end{align} 
Let $\varepsilon, T>0$ be arbitrary. Define $\rho_\varepsilon:=\rho-x_0+\varepsilon\exp(Kt)$ 
and
\begin{align}
\label{eq:tstar3}
    t^{\ast}:=\sup\big\lbrace t\in[0,T]\colon \min_{s\in\RR}\rho_{\varepsilon}(\tau,s)> 0 \text{ for all } \tau\in(0,t)\big\rbrace.
\end{align}
We assume that $t^\ast<T$. 
Observe that with \eqref{eq:flow equation} and \eqref{eq:lambdarho},
\begin{align}
    \partial_t\rho_\varepsilon
    =&\;\mu\partial_s^2\rho_\varepsilon-\frac12f(\rho)(\partial_s\theta-c_0)^2\rho_\varepsilon+\frac{1}{2}f(\rho)(\partial_s\theta-c_0)^2\varepsilon e^{Kt}
    +K\varepsilon e^{Kt}\\
    &+\frac{1}{2L}\int_0^L\beta'(\rho_\varepsilon+x_0)(\partial_s\theta-c_0)^2\intd s+\frac{1}{2L}\int_0^L\left(\beta'(\rho)-\beta'(\rho_\varepsilon+x_0)\right)(\partial_s\theta-c_0)^2\intd s.\label{eq:dtrhoepsilon}
\end{align}
For $t\in[0,t^\ast]$, $\rho_\varepsilon(t,s)+x_0\geq x_0$ on $\RR$ and hence $\beta'(\rho_\varepsilon(t,s)+x_0)\geq0$. Thus, the first integral in \eqref{eq:dtrhoepsilon} has positive sign. Choosing $\varepsilon$ so small that $\sup_{t\in[0,T],s\in\RR}(\rho+\varepsilon e^{Kt})\leq R$, we estimate the second integral in \eqref{eq:dtrhoepsilon} for $t\in[0,t^\ast]$ by 
\begin{align}
    \frac{1}{2L}\int_0^L\left(\beta'(\rho)-\beta'(\rho_\varepsilon+x_0)\right)(\partial_s\theta-c_0)^2\intd s
    \geq -\frac{1}{2}\sup_{[-R,R]}\left\vert\beta''\right\vert\varepsilon e^{Kt}\sup_{t\in[0,\infty),s\in\RR}(\partial_s\theta-c_0)^2.
\end{align}
Thus it follows with the definition of $K$ in \eqref{eq:defK} that for $t\in[0,t^\ast]$,
\begin{align}
    \partial_t\rho_\varepsilon>\mu\partial_s^2\rho_\varepsilon-\frac{1}{2}f(\rho)(\partial_s\theta-c_0)^2\rho_\varepsilon.
\end{align}
Arguing as in the proof of \Cref{prop:convex} we find 
$t^\ast=T$, which implies that $\rho_\varepsilon\geq0$ on $[0,T]\times\RR$. The limit $\varepsilon\to0$ yields $\rho\geq x_0$ on $[0,T]\times\RR$. Since $T$ was chosen arbitrarily, the claim follows.
\end{proof}

\begin{rem}
    With the same arguments it can be shown that for $\beta\in C^\infty(\RR)$ such that $\beta'\leq0$ on $(-\infty,x_0]$ and $\beta'(x_0)=0$ for some $x_0\in\RR$, $\rho_0\leq x_0$ on $[0,L]$ implies that $\rho\leq x_0$ on $(0,\infty)\times[0,L]$.
\end{rem}

The following example shows that it is necessary to impose some conditions on $\beta$ in order to control the sign of $\rho$.

\begin{ex} \label{ex:rhobound}
Let $x_0, R\in \RR$, $x_0>R$ and suppose 
that $\beta'(x_0)\geq \beta'(x)$ for all $x\in [x_0,R]$. Assume $\omega\neq 0$ and $c_0\neq \frac{2\pi\omega}{L}$. Consider an admissible initial datum with
\begin{align}
    \kappa_0\equiv \frac{2\pi\omega}{L}, \quad x_0\leq\rho_0\leq R, \quad \rho_0\equiv x_0 \text{ on } [a,b],\quad \rho\not\equiv x_0 \text{ on } [0,L]
\end{align}
for some nontrivial $[a,b]\subset [0,L]$. For $s\in [a,b]$, \eqref{eq:flow equation} and \eqref{eq:lambdarho} imply
\begin{align}
    \partial_t\rho(t,s)\big\vert_{t=0} 
    =-\frac12\left(\frac{2\pi\omega}{L}-c_0\right)^2\bigg(\beta^\prime(x_0)-\frac1L\int_0^L\beta^\prime(\rho_0(s))\intd s\bigg)
    <0.
\end{align}
Thus, on $[a,b]$ the density is not bounded from below by $x_0$ for $t>0$.
\end{ex}


\subsection{Embeddedness}
\label{sec:embedded}

We now discuss the (non)preservation of embeddedness. 
The curve $\gamma$ described by the angle function $\theta$ will no longer satisfy a second order parabolic equation (not even one with Lagrange multipliers), but a nonstandard integro-differential equation instead. Indeed, if $\theta$ evolves according to \eqref{eq:flow equation}, we may integrate \eqref{eq:gamma}
to describe the evolution of $\gamma$. Denoting by 
$n=(-\sin\theta,\cos\theta)$
the usual normal vector field along $\gamma$, by integration by parts we find
\begin{align}
    \partial_t \gamma &= \beta(\rho)(\partial_s^2 \gamma- c_0 n) + \int_0^s \partial_s \gamma\,\beta(\rho)\kappa(\kappa-c_0)\intd r + \int_0^s 
        n (\lambda_{\theta1}\sin\theta-\lambda_{\theta2}\cos\theta)
    \intd{r} + v(t).
\end{align}
Here $v(t)\in \R^2$ attributes for $\partial_t\gamma(t,0)$ and the boundary term arising at $s=0$. Note that the second term on the right hand side still contains $\kappa$, a term of order two, in a nonlocal way. While it is possible to express $\theta$ and thus the right hand side entirely in terms of $\gamma$ and its derivatives, the resulting evolution equation is rather complicated and of nonstandard structure. 

In particular, in contrast to the curve shortening flow (cf.\ \cite{Gage_Hamilton_86}), we cannot rely on classical maximum principles to show the preservation of embeddedness. Instead, we use a recent energy-based argument \cite{MMR2021} to conclude preservation of embeddedness for explicitly small initial energy. Note that $\omega=\pm 1$ for any embedded curve by Hopf's Umlaufsatz.

\begin{prop}\label{prop:li_yau_emb}
    Let $C_{2T}\approx 146.628$ be as in \cite[Theorem 1.1]{MMR2021}. Let $(\theta,\rho)$ be the global solution of \eqref{eq:flow equation} with admissible initial datum $(\theta_0,\rho_0)$. Suppose that $\theta_0$ describes an embedded curve with rotation index $\omega=1$ and $(\theta_0,\rho_0)$ satisfies
    \begin{align}\label{eq:energy_bound_emb}
        \sE_\mu(\theta_0,\rho_0) \leq \frac{\inf_\RR\beta}{2}\Big(\frac{C_{2T}}{L}-4\pi c_0 + Lc_0^2\Big).
    \end{align}
    Then $\theta(t,\cdot)$ describes an embedded curve for all $t> 0$.
\end{prop}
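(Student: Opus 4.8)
The plan is to control embeddedness via a length-normalized energy quantity that remains below an explicit threshold along the flow, then invoke the result from \cite{MMR2021}. First I would recall precisely what \cite[Theorem 1.1]{MMR2021} provides: there the constant $C_{2T}$ appears as a threshold for the \emph{scale-invariant} bending energy $L\,\sE(\gamma)=L\cdot\frac12\int_\gamma\kappa^2\intd s$ (equivalently $\int_\gamma\kappa^2\intd s$ after normalizing length), below which an initially embedded curve of rotation index $\omega=1$ stays embedded along the (classical homogeneous) elastic flow. Since our flow preserves the length $L$ and the rotation index (by \Cref{thm:zsf artcl1} and the closedness constraint), the strategy is to show that the relevant classical elastic energy $\sE(\theta(t))=\frac12\int_0^L\kappa(t,\cdot)^2\intd s$ stays below the corresponding threshold $\frac{C_{2T}}{2L}$ for all $t\geq 0$, provided it does so initially. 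The energy bound \eqref{eq:energy_bound_emb} is engineered precisely so that this holds.

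The key computation is to relate $\sE(\theta)=\frac12\int_0^L\kappa^2\intd s$ to the heterogeneous energy $\sE_\mu(\theta,\rho)$. Expanding the square,
\begin{align}
\label{eq:emb_expand}
    \frac12\int_0^L\kappa^2\intd s
    &=\frac12\int_0^L(\kappa-c_0)^2\intd s+c_0\int_0^L\kappa\intd s-\frac{c_0^2}{2}L.
\end{align}
For a closed curve of rotation index $\omega=1$ one has $\int_0^L\kappa\intd s=\int_0^L\partial_s\theta\intd s=\theta(L)-\theta(0)=2\pi$, which makes the middle term equal to $2\pi c_0$. Next I would bound the remaining term using $\beta\geq\inf_\RR\beta>0$, namely
\begin{align}
\label{eq:emb_lowerbeta}
    \inf_\RR\beta\cdot\frac12\int_0^L(\kappa-c_0)^2\intd s
    &\leq\frac12\int_0^L\beta(\rho)(\kappa-c_0)^2\intd s
    =\sE^\theta(\theta,\rho)\leq\sE_\mu(\theta,\rho),
\end{align}
the last inequality since the density term in $\sE_\mu$ is nonnegative. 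Combining \eqref{eq:emb_expand} and \eqref{eq:emb_lowerbeta} and using that $\sE_\mu$ is nonincreasing along the flow by \eqref{eq:decreaseE}, I obtain for every $t\geq0$
\begin{align}
\label{eq:emb_chain}
    \frac12\int_0^L\kappa(t)^2\intd s
    &\leq\frac{\sE_\mu(\theta_0,\rho_0)}{\inf_\RR\beta}+2\pi c_0-\frac{c_0^2}{2}L.
\end{align}
Substituting the hypothesis \eqref{eq:energy_bound_emb} into the right-hand side of \eqref{eq:emb_chain} yields exactly $\frac{C_{2T}}{2L}$, so the scale-invariant energy $L\int_0^L\kappa(t)^2\intd s$ stays below $C_{2T}$ for all $t$.

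With the uniform energy bound in hand, the conclusion follows by applying \cite[Theorem 1.1]{MMR2021}: an embedded curve whose bending energy lies below the threshold $C_{2T}$ cannot develop a self-intersection, and since this bound is preserved along our evolution and $\theta_0$ describes an embedded curve, $\theta(t,\cdot)$ remains embedded for all $t>0$. The main obstacle I anticipate is the careful bookkeeping of which energy the constant $C_{2T}$ actually controls in \cite{MMR2021} and matching the scaling conventions (length $L$ versus unit length, and whether their threshold is stated for $\int\kappa^2\intd s$ or for a reparametrization-invariant version $L\int\kappa^2\intd s$); the algebraic manipulation in \eqref{eq:emb_expand}--\eqref{eq:emb_chain} is routine once the spontaneous-curvature shift $(\kappa-c_0)^2$ has been correctly unfolded. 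A subtler point, deserving a remark, is that the cited theorem governs the homogeneous elastic flow rather than our nonlocal heterogeneous system, so I would emphasize that we do not use their flow dynamics but only their \emph{static} geometric conclusion—that embeddedness is guaranteed by the pointwise-in-time energy threshold—which is why preservation of the bound along \eqref{eq:flow equation}, established via \eqref{eq:decreaseE}, is the decisive ingredient.
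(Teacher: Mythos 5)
Your overall strategy---expand $(\kappa-c_0)^2$, use $\int_0^L\kappa\intd s=2\pi\omega=2\pi$, bound $\int_0^L(\kappa-c_0)^2\intd s$ by $\frac{2}{\inf_\RR\beta}\sE_\mu$, invoke energy monotonicity, and then apply the static Li--Yau-type threshold from \cite{MMR2021}---is exactly the paper's approach, and your algebra lands on the correct normalized quantity $L\int_0^L\kappa(t,\cdot)^2\intd s$ versus $C_{2T}$. However, there is a genuine gap at the final step: your chain \eqref{eq:emb_chain} uses only the \emph{non-strict} monotonicity $\sE_\mu(\theta(t),\rho(t))\leq\sE_\mu(\theta_0,\rho_0)$ together with the \emph{non-strict} hypothesis \eqref{eq:energy_bound_emb}, so you only obtain $L\int_0^L\kappa(t,\cdot)^2\intd s\leq C_{2T}$. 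This is not enough. The threshold $C_{2T}$ in \cite{MMR2021} is sharp and \emph{attained}: there is a non-embedded closed curve of rotation index $\pm1$ (the ``two-teardrop'') whose normalized bending energy equals exactly $C_{2T}$. Consequently the static result you need (which is \cite[Theorem 1.4]{MMR2021}, not Theorem 1.1---the latter is the statement about the classical elastic flow, and Theorem 1.1 only supplies the value of the constant) concludes embeddedness only under the \emph{strict} inequality $L\int_0^L\kappa^2\intd s<C_{2T}$. Since the hypothesis of \Cref{prop:li_yau_emb} permits equality, your argument breaks down precisely in the borderline case.

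The paper closes this gap with a short but essential dichotomy. If $(\theta_0,\rho_0)$ is stationary, the solution coincides with the initial datum for all times and there is nothing to prove. Otherwise, \eqref{eq:decreaseE} forces the energy to be \emph{strictly} decreasing: if $\sE_\mu$ were constant on some time interval, then $\partial_t\theta\equiv\partial_t\rho\equiv0$ there, making the solution stationary. Hence $\sE_\mu(\theta(t),\rho(t))<\sE_\mu(\theta_0,\rho_0)$ for every $t>0$, and feeding this strict inequality into your own computation upgrades the conclusion to $\int_0^L\kappa(t,\cdot)^2\intd s<\frac{C_{2T}}{L}$, to which \cite[Theorem 1.4]{MMR2021} applies. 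Your remark that only the static geometric conclusion of \cite{MMR2021} is used (not their flow dynamics) is correct and matches the paper; what is missing is the observation that the gradient-flow structure gives you strictness for free, which is exactly what makes the non-strict hypothesis \eqref{eq:energy_bound_emb} admissible.
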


\begin{rem}
    If $\inf_{\R} \beta>0$, then the above threshold is nontrivial, i.e.\ there exist an admissible initial datum $(\theta_0,\rho_0)$ satisfying \eqref{eq:energy_bound_emb}. Indeed let $\omega=1$, $\varepsilon>0$ and let $\nu\in \R$ such that $\beta(\nu)\leq \inf_{\R}\beta + \varepsilon$. Consider $\rho_0\equiv \nu$ and let $\theta_0(s)=\frac{2\pi s}{L}$. Then, we have
    \begin{align}
        \sE_\mu(\theta_0,\rho_0) = \frac{\beta(\nu)}{2} \Big( \frac{4\pi^2}{L} -4\pi c_0+ Lc_0^2\Big).
    \end{align}
    Using that $4\pi^2<C_{2T}$ it follows that the assumptions of \Cref{prop:li_yau_emb} are satisfied for $\varepsilon>0$ small enough. Moreover, $4\pi^2<C_{2T}$ also yields that the right hand side of \eqref{eq:energy_bound_emb} is positive if $\inf_\RR \beta>0$.
\end{rem}

\begin{proof}[Proof of \Cref{prop:li_yau_emb}]
    Let $t> 0$. If $(\theta_0, \rho_0)$ is stationary, then there is nothing to show. Thus, by \eqref{eq:decreaseE}, we may assume that the energy is strictly decreasing, i.e.\  $\sE_\mu(\theta(t,\cdot), \rho(t,\cdot))<\sE_\mu(\theta_0,\rho_0)$. Moreover, \eqref{eq:decreaseE} and the assumption yield that 
    \begin{align}
        \int_0^L \kappa(t,s)^2 \intd{s} &\leq \frac{2}{\inf_\RR \beta} \sE_\mu(\theta(t,\cdot), \rho(t,\cdot))+ 4\pi c_0 -Lc_0^2< \frac{2}{\inf_\RR \beta} \sE_\mu(\theta_0, \rho_0)+ 4\pi c_0 -Lc_0^2
        \leq\frac{C_{2T}}{L}.
    \end{align}
    Now, $\theta(t,\cdot)$ describes a closed curve with rotation index one, and hence this curve must be embedded by \cite[Theorem 1.4]{MMR2021}.
\end{proof}
    

\subsection{Symmetry}
\label{sec:symmetry}

The following section is inspired by \cite{L1989}. We restrict ourselves here to $\omega=1$.

\subsubsection{Rotational symmetry}
\label{sec:rotsym}
Without further comment, in this section, we frequently identify $(\theta-\phi,\rho)$ with its $L$-periodic extension to $\RR$. Here, $\phi$ as in \eqref{eq:defphi}. 

\begin{defn}
\label{def:rotsym}
    Let $\omega=1$,  $(\theta,\rho)\in C^\infty([0,L])$ 
    and $k\in\NN$ with $k\geq2$. 
    If $(\theta-\phi,\rho)$ 
    is $\frac{L}{k}$-periodic,  
    we call the heterogeneous curve described by $(\theta,\rho)$ \textit{$k$-fold rotationally symmetric}.
\end{defn} 

\begin{rem}\label{rem:rotsym_all} We gather some immediate consequences of rotational symmetry.
    \begin{enumerate}[(i)]
        \item\label{rem:rotsym} The $\frac{L}{k}$-periodicity of $\theta-\phi$ is equivalent to demand that
    \begin{align}
        \theta(s)
        =\theta\left(s-\tfrac{nL}{k}\right)+\tfrac{2\pi n}{k}
        \label{eq:rotsym theta}
            \quad\text{ for } s\in\big[\tfrac{nL}{k},\tfrac{(n+1)L}{k}\big] \text{ and } n=0,1,\dots, k-1.
    \end{align}
    \item\label{rem:rotsym closed curve}
        If $\theta-\phi$ 
        is $\frac{L}{k}$-periodic, then $\theta$ describes a closed curve. Indeed, using \eqref{eq:rotsym theta} we have
        \begin{align}
            \int_0^L\sin\theta(s)\intd s
            &= \sum_{n=0}^{k-1}\int_\frac{nL}{k}^\frac{(n+1)L}{k}\sin\theta(s)\intd s
            = \sum_{n=0}^{k-1}\int_\frac{nL}{k}^\frac{(n+1)L}{k}\sin\big(\theta\big(s-\tfrac{nL}{k}\big)+\tfrac{2\pi n}{k}\big)\intd s\\
            &= \int_0^\frac{L}{k}\sum_{n=0}^{k-1}\sin\big(\theta(s)+\tfrac{2\pi n}{k}\big),
        \end{align}
        which is zero for $k\geq2$ since 
        \begin{align}
        \label{eq:sum sin zero}
            \sum_{n=0}^{k-1}\sin\!\left(\theta+\tfrac{2\pi n}{k}\right)
            &=\textup{Im}\Big(\exp(i\theta)\sum_{n=0}^{k-1}\exp(i\tfrac{2\pi n}{k})\Big)
            =\textup{Im}\Big(\exp(i\theta)\frac{1-\exp\left(2\pi i\right)}{1-\exp\left(\tfrac{2\pi i}{k}\right)}\Big)
            =0.
        \end{align}
        Analogously, we obtain $\int_0^L\cos\theta(s)\intd s=0$.
        \item If $(\theta,\rho)\in C^\infty([0,L])$ describes a $k$-fold rotationally symmetric heterogeneous curve and we choose 
    \begin{align}\label{eq:gamma(0)}
        \gamma(0):=\frac1L\int_0^Ls\begin{pmatrix}
            \cos\theta(s)\\ \sin\theta(s)
        \end{pmatrix}\intd s
    \end{align}
    (which ensures that $\int_0^L\gamma(s)\intd s=0$), a computation shows that
    for $s\in\big[\nicefrac{nL}{k},\nicefrac{(n+1)L}{k}\big]$ with $n=0,1,\dots, k-1$, we have
    \begin{align}
        \textup{Rot}\big(\tfrac{2\pi n}{k}\big)\gamma\big(s-\tfrac{nL}{k}\big)
        =\gamma(s).
    \end{align}
    Here, $\textup{Rot}(\alpha)$ is the 
    counterclockwise rotation by the angle $\alpha$. 
    Since additionally $\rho\big(s-\tfrac{nL}{k}\big)=\rho(s)$, the graph of the heterogeneous curve described by $(\theta,\rho)$ indeed possesses a $k$-fold rotational symmetry, see \Cref{fig:ex rot sym}.
    \end{enumerate}
    \end{rem}
    
\pgfplotsset{samples=200}
\begin{figure}[h]
  \centering
  \raisebox{8mm}{\includegraphics[width=0.4\textwidth]{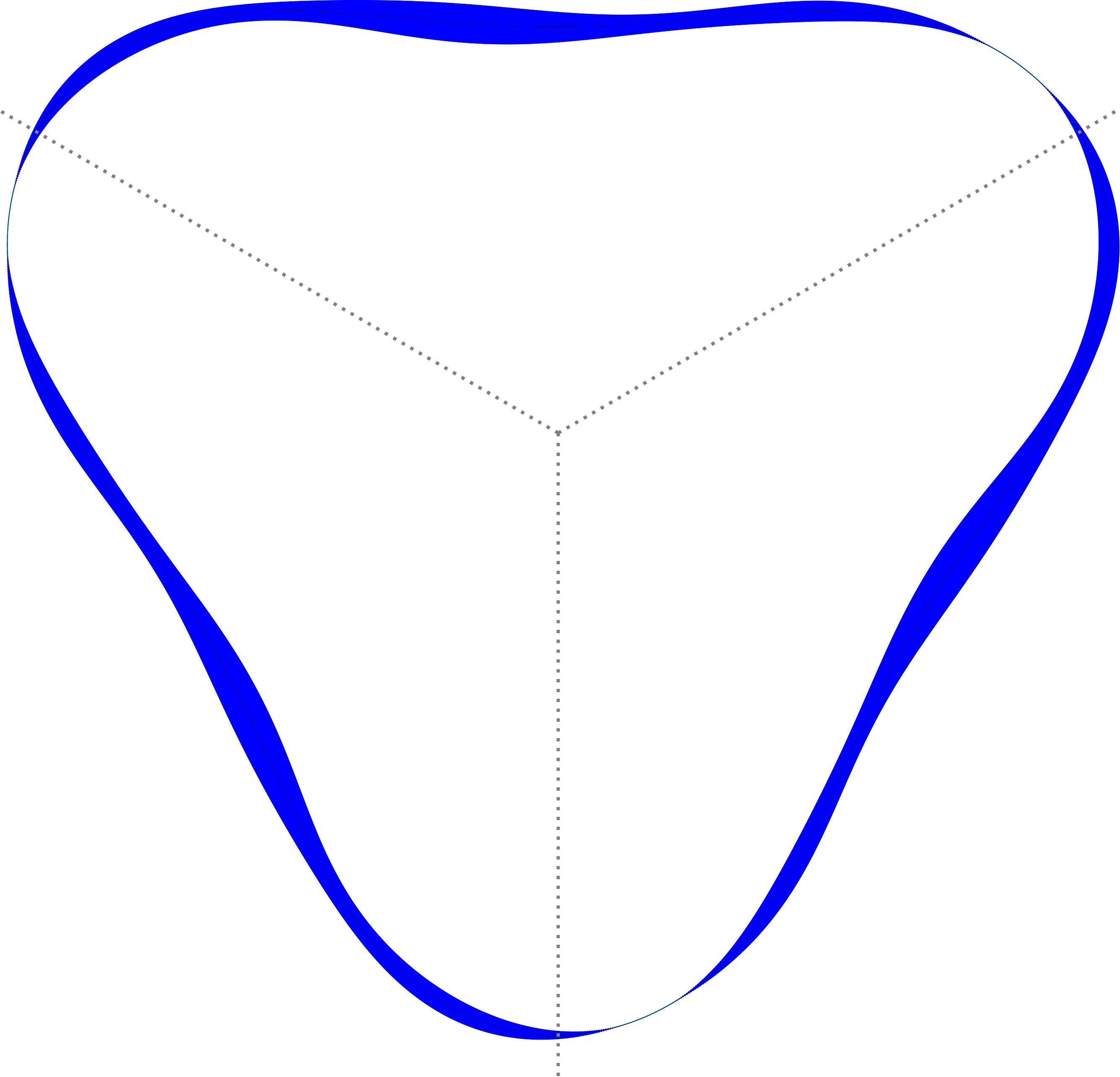}}
\hfill
\begin{tikzpicture}[scale=0.9]
    \begin{axis}[
        xlabel = $s$,
        xtick = {0, 2*pi/3, 4*pi/3, 2*pi},
        xticklabels = {$0$, $\frac{2\pi}{3}$, $\frac{4\pi}{3}$, $2\pi$},
        ytick = {0, 2*pi/3, 4*pi/3, 2*pi, 8*pi/3},
        yticklabels = {$0$, $\frac{2\pi}{3}$, $\frac{4\pi}{3}$, $2\pi$, $\frac{8\pi}{3}$},
        grid = both,
        enlarge x limits=false,
        clip = false,
        ]
        \addplot[domain=0:2*pi, color=gray] {0.5*sin(3*x * 180/pi)+x} node [right] {$\theta$};
        \addplot[domain=0:2*pi, color=red] {0.5*sin(3*x * 180/pi)} node [right] {$\theta-\phi$};
        \addplot[domain=0:2*pi, color=orange] {3*(cos(6*(x+7*pi/9) *180/pi)-0.4)*(sin(3*(x+7*pi/9) *180/pi))+4*pi/3} node [right] {$\rho$};
        \addplot[domain=0:2*pi, color=teal] {1.5*cos(3*x * 180/pi) + 1} node [right] {$\kappa$};
    \end{axis}
\end{tikzpicture}

  \caption{Example of a $3$-fold rotationally symmetric configuration $(\theta,\rho)$.}
  \label{fig:ex rot sym}
\end{figure}

We show that the flow \eqref{eq:flow equation} preserves $k$-fold rotational symmetry for all $k\in\NN$, $k\geq2$.

\begin{prop}
    \label{prop:rotsym preserved}
    Let $\omega=1$ and $k\geq2$. Let $(\theta,\rho)$ be the solution of \eqref{eq:flow equation} with admissible initial datum $(\theta_0,\rho_0)$. 
    If $(\theta_0,\rho_0)$ describes a $k$-fold rotationally symmetric heterogeneous curve, then so does $(\theta,\rho)$ for all $t\in(0,\infty)$.
\end{prop}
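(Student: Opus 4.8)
The plan is to exploit uniqueness of solutions to the flow \eqref{eq:flow equation}, as guaranteed by \Cref{thm:zsf artcl1}, together with the fact that $k$-fold rotational symmetry is encoded by an invariance under a discrete group action that commutes with the flow. Concretely, I would define the shift-and-rotate operator $S$ acting on pairs $(\theta,\rho)$ by
\begin{align}
(S\theta)(s):=\theta\left(s-\tfrac{L}{k}\right)+\tfrac{2\pi}{k},\qquad (S\rho)(s):=\rho\left(s-\tfrac{L}{k}\right),
\end{align}
working with the $L$-periodic extensions as announced in the section. By \Cref{rem:rotsym_all}\,\eqref{rem:rotsym}, the statement that $(\theta,\rho)$ describes a $k$-fold rotationally symmetric heterogeneous curve is exactly the fixed-point condition $S(\theta,\rho)=(\theta,\rho)$. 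The hypothesis gives $S(\theta_0,\rho_0)=(\theta_0,\rho_0)$, and the goal is to propagate this identity to all positive times.

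The key step is to show that $S$ maps solutions of \eqref{eq:flow equation} to solutions of \eqref{eq:flow equation}. First I would check that if $(\theta,\rho)$ satisfies the two evolution equations, then so does $(\tilde\theta,\tilde\rho):=S(\theta,\rho)$. Since $\partial_s$ and $\partial_s^2$ are translation invariant and $\beta$, $\beta'$ depend only on $\rho$ (which is merely shifted), while $\partial_s\tilde\theta(s)=\partial_s\theta(s-\tfrac{L}{k})$ is unaffected by the additive constant $\tfrac{2\pi}{k}$, the local parts of both equations transform correctly; one only needs $\sin\tilde\theta$, $\cos\tilde\theta$ to combine with the correct Lagrange multipliers. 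Here I would verify the crucial compatibility of the nonlocal terms: using the $L$-periodicity of the integrands in \eqref{eq:lambdatheta}, \eqref{eq:def Pi}, \eqref{eq:lambdarho} and the change of variables $s\mapsto s-\tfrac{L}{k}$ together with the addition formulas for $\sin\left(\cdot+\tfrac{2\pi}{k}\right)$, $\cos\left(\cdot+\tfrac{2\pi}{k}\right)$, one finds that $\Pi(\tilde\theta)$ is the rotation-conjugate of $\Pi(\theta)$ and that the multiplier vector transforms by the same rotation $\mathrm{Rot}\left(\tfrac{2\pi}{k}\right)$, so that the combination $\lambda_{\theta1}\sin\tilde\theta-\lambda_{\theta2}\cos\tilde\theta$ reproduces exactly the shifted right-hand side; similarly $\lambda_\rho$ is invariant. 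I would also confirm that $S$ preserves the boundary and periodicity conditions and maps admissible data to admissible data.

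Granting this, $(\tilde\theta,\tilde\rho)$ is a global solution of \eqref{eq:flow equation} with initial datum $S(\theta_0,\rho_0)=(\theta_0,\rho_0)$, which is the same initial datum as that of $(\theta,\rho)$. By the uniqueness assertion in \Cref{thm:zsf artcl1}, we conclude $(\tilde\theta,\tilde\rho)=(\theta,\rho)$ for all $t\in(0,\infty)$, i.e.\ $S(\theta(t),\rho(t))=(\theta(t),\rho(t))$, which by \Cref{rem:rotsym_all}\,\eqref{rem:rotsym} is precisely the $k$-fold rotational symmetry of $(\theta(t),\rho(t))$.

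I expect the main obstacle to be the bookkeeping in the nonlocal terms, namely establishing that the Lagrange multipliers of the shifted solution equal the rotated multipliers of the original solution so that the full nonlocal right-hand side is genuinely $S$-equivariant. This requires carefully tracking how $\Pi^{-1}$ and the vector-valued integral in \eqref{eq:lambdatheta} transform under the rotation by $\tfrac{2\pi}{k}$; once the equivariance $\begin{pmatrix}\lambda_{\theta1}\\ \lambda_{\theta2}\end{pmatrix}(\tilde\theta)=\mathrm{Rot}\left(\tfrac{2\pi}{k}\right)\begin{pmatrix}\lambda_{\theta1}\\ \lambda_{\theta2}\end{pmatrix}(\theta)$ is verified, the rest reduces to translation invariance and the uniqueness statement. (As a side remark, since $k$-fold symmetry forces $\int_0^{L}\sin\theta\intd s=\int_0^L\cos\theta\intd s=0$ by \Cref{rem:rotsym_all}\,\eqref{rem:rotsym closed curve}, one expects the multipliers $\lambda_{\theta1},\lambda_{\theta2}$ to vanish outright for symmetric configurations, which—as already foreshadowed before \Cref{thm:pisym kappa geq c0}—would simplify the equivariance check considerably; I would note this but still phrase the argument via $S$-equivariance so that it applies throughout the evolution.)
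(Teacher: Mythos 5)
Your proposal is correct and takes essentially the same approach as the paper's proof: both define the shift-by-$\tfrac{L}{k}$, rotate-by-$\tfrac{2\pi}{k}$ transform of the solution, verify that it solves the same initial value problem (the key point being the equivariance of the nonlocal Lagrange multipliers, which the paper identifies indirectly via the closedness constraints while you propose the equivalent direct computation of the rotation-conjugation of $\Pi$ in \eqref{eq:def Pi}), and conclude by the uniqueness assertion of \Cref{thm:zsf artcl1}.
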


\begin{proof} 
    We consider the $L$-periodic extension of $(u,\rho):=(\theta-\phi,\rho)$ to $\RR$. Notice that $(u,\rho)\in C^\infty((0,\infty)\times\RR)$. 
    We define $(\tilde u,\tilde\rho)\in C^\infty((0,\infty)\times\RR)$ by 
    $
        (\tilde u,\tilde\rho)(t,s):=(u,\rho)\big(t,s-\tfrac{L}{k}\big)
    $
    and $(\tilde\theta,\tilde\rho)\in C^\infty((0,\infty)\times\RR)$ by $(\tilde\theta,\tilde\rho):=(\tilde u+\phi,\tilde\rho)$. Notice, that $\tilde\theta\in C^\infty((0,\infty)\times\RR)$ is not periodic. Our intention now is to show that the restriction of $(\tilde\theta,\tilde\rho)$ to $(0,\infty)\times[0,L]$ solves \eqref{eq:flow equation} with initial datum $(\theta_0,\rho_0)$. By uniqueness of the solution (see \Cref{thm:zsf artcl1}), it then follows that $(\tilde\theta-\phi,\tilde\rho)=(\theta-\phi,\rho)$ 
    and thus
    \begin{align}
        (\theta-\phi,\rho)(t,s)
        =(\tilde u,\tilde\rho)(t,s)
        &=(u,\rho)\big(t,s-\tfrac{L}{k}\big)
        =(\theta-\phi,\rho)\big(t,s-\tfrac{L}{k}\big)
    \end{align}
    for $s\in\RR$.
    Thus, the heterogeneous curve described by $(\theta,\rho)$ is $k$-fold rotationally symmetric. 
    \\
    It is clear that $(\tilde\theta,\tilde\rho)\in C^\infty((0,\infty)\times[0,L])$. With \eqref{eq:defphi} and the $L$-periodicity of $u$, we have
    \begin{align}\label{eq:thetatildetheta}
    \begin{split}
    \tilde\theta(t,s)
    &=\tilde u(t,s)+\phi(s)
    =u\big(t,s-\tfrac{L}{k}\big)+\phi\big(s-\tfrac{L}{k}\big)+\phi\big(\tfrac{L}{k}\big)
    =\theta\big(t,s-\tfrac{L}{k}\big)+\tfrac{2\pi }{k}
        \\ 
    & \text{ for } (t,s)\in(0,\infty)\times\big(\tfrac{L}{k},L\big],\\
    \tilde\theta(t,s)&=\theta\big(t,s-\tfrac{L}{k}+L\big)+\tfrac{2\pi }{k}-2\pi
        \quad \text{ for } (t,s)\in(0,\infty)\times\big[0,\tfrac{L}{k}\big].
    \end{split}
    \end{align}
    Using this and trigonometric identities, we see that for $t\geq0$ and $s\in[0,L]$, $(\tilde\theta,\tilde\rho)$ solves
    \begin{align}
    \label{eq:eveqtilde}
        \partial_t\tilde\theta
        =
        \partial_s\big(\beta(\tilde\rho)(\partial_s\tilde\theta-c_0)\big)+\alpha_1\sin\tilde\theta-\alpha_2\cos\tilde\theta
    \end{align}
    with
    $
        \alpha_1:=\lambda_{\theta1}(\theta,\rho)\cos\big(\tfrac{2\pi }{k}\big)-\lambda_{\theta2}(\theta,\rho)\sin\big(\tfrac{2\pi }{k}\big)
    $, $\alpha_2:=\lambda_{\theta1}(\theta,\rho)\sin\big(\tfrac{2\pi }{k}\big)+\lambda_{\theta2}(\theta,\rho)\cos\big(\tfrac{2\pi }{k}\big)$.
    We use \eqref{eq:thetatildetheta} and the fact that $\theta$ describes a closed curve to obtain
    \begin{align}
        \int_0^L\sin\tilde\theta\intd s
        &=\int_0^{\frac{L}{k}}\sin\big(\theta\big(s-\tfrac{L}{k}+L\big)+\tfrac{2\pi }{k}\big)\intd s
        +\int_\frac{{L}}{k}^L\sin\big(\theta\big(s-\tfrac{L}{k}\big)+\tfrac{2\pi}{k}\big)\intd s\\
        &=\int_0^L\sin\big(\theta+\tfrac{2\pi}{k}\big)\intd s
        =\cos\big(\tfrac{2\pi}{k}\big)\int_0^L\sin\theta\intd s+\sin\big(\tfrac{2\pi}{k}\big)\int_0^L\cos\theta\intd s
        =0 
    \end{align}
    (cf.\ \eqref{eq:closedcurve}) and $\int_0^L\cos\tilde\theta\intd s=0$ for all $t\geq0$. In particular, this implies
    \begin{align}
        \int_0^L\sin\tilde\theta\,\partial_t\tilde\theta\intd s=\int_0^L\cos\tilde\theta\,\partial_t\tilde\theta\intd s
        &=0.\label{eq:above}
    \end{align}
    Inserting \eqref{eq:eveqtilde} into \eqref{eq:above} and comparing to \eqref{eq:lambdatheta}, a short computation yields $\alpha_1=\lambda_{\theta1}(\tilde\theta,\tilde\rho)$ and $\alpha_2=\lambda_{\theta2}(\tilde\theta,\tilde\rho)$. Moreover, we check that $\lambda_\rho(\tilde\theta,\tilde\rho)=\lambda_\rho(\theta,\rho)$ and 
    \begin{align}
        \partial_t\tilde\rho=\mu\partial_s^2\tilde\rho-\frac12\beta'(\tilde\rho)(\partial_s\tilde\theta-c_0)^2+\lambda_\rho(\tilde\theta,\tilde\rho).
    \end{align}
    Next, we notice that $(\tilde\theta,\tilde\rho)$ satisfies the boundary conditions 
    $$\tilde\theta(t,L)-\tilde\theta(t,0)
    =\tilde u(t,L)+\phi(L)-\tilde u (t,0)
    =u\big(t,L-\tfrac{L}{k}\big)+2\pi-u\big(t,-\tfrac{L}{k}\big)
    =2\pi,$$
    $\tilde\rho(t,L)=\tilde\rho(t,0)$, and $\partial_s(\tilde\theta,\tilde\rho)(t,L)=\partial_s(\tilde\theta,\tilde\rho)(t,0)$ for all $t>0$.
    Since $(\theta_0-\phi,\rho_0)$ describes a $k$-fold rotationally symmetric heterogeneous curve, we know with \eqref{eq:thetatildetheta} and \Cref{rem:rotsym_all}  \eqref{rem:rotsym} that
    \begin{align}
        \lim_{t\to0}(\tilde\theta,\tilde\rho)(t,s)
        =\lim_{t\to0}\big(\theta\big(t,s-\tfrac{L}{k}\big)+\tfrac{2\pi}{k},\rho\big(t,s-\tfrac{L}{k}\big)\big)=(\theta_0,\rho_0)(s)
    \end{align}
    (in $C^{2+\alpha}([0,L])$ for all $\alpha\in(0,1)$) for $s\in\big[\frac{L}{k},L\big]$ and analogously for $s\in\big[0,\tfrac{L}{k}\big]$.
    Hence, $(\tilde\theta,\tilde\rho)$ solves \eqref{eq:flow equation} with initial datum $(\theta_0,\rho_0)$. 
\end{proof}

A distinctive feature of rotationally symmetric configurations is that the Lagrange multipliers $\lambda_{\theta1}$ and $\lambda_{\theta2}$ vanish. 

\begin{lem}
    \label{lem:Lag mult vanish}
    Let 
    $k\geq2$ and let 
    $(\theta,\rho)\in C^\infty([0,L])$
    describe a $k$-fold rotationally symmetric heterogeneous curve. Then $\lambda_{\theta1}(\theta,\rho)=\lambda_{\theta2}(\theta,\rho)=0$.
\end{lem}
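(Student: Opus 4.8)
The plan is to exploit the invertibility of the matrix $\Pi(\theta)$ to reduce the claim to the vanishing of a single vector-valued integral, and then to split this integral over the $k$ symmetric arcs. Since $\Pi(\theta)$ is invertible whenever $\theta$ describes a closed curve (which holds here by \Cref{rem:rotsym_all} \eqref{rem:rotsym closed curve}), the definition \eqref{eq:lambdatheta} shows that $\lambda_{\theta1}=\lambda_{\theta2}=0$ is equivalent to
\begin{align}
    \int_0^L\begin{pmatrix}\cos\theta\\\sin\theta\end{pmatrix}\partial_s\theta\,\beta(\rho)(\partial_s\theta-c_0)\intd s=0.
\end{align}

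Set $g(s):=\partial_s\theta(s)\,\beta(\rho(s))(\partial_s\theta(s)-c_0)$. First I would observe that $g$ is $\tfrac{L}{k}$-periodic: differentiating the symmetry relation \eqref{eq:rotsym theta} gives $\partial_s\theta(s)=\partial_s\theta(s-\tfrac{nL}{k})$ on the $n$-th arc, and $\rho$ is $\tfrac{L}{k}$-periodic by definition of rotational symmetry, so $g(s)=g(s-\tfrac{nL}{k})$ throughout. Next I would split the integral into the $k$ arcs $[\tfrac{nL}{k},\tfrac{(n+1)L}{k}]$, substitute $s=r+\tfrac{nL}{k}$, and use \eqref{eq:rotsym theta} together with the periodicity of $g$ to factor everything over the fundamental domain $[0,\tfrac{L}{k}]$. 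For the first component this gives
\begin{align}
    \int_0^L\cos\theta\, g\intd s
    =\sum_{n=0}^{k-1}\int_0^{L/k}\cos\!\big(\theta(r)+\tfrac{2\pi n}{k}\big)g(r)\intd r
    =\int_0^{L/k}\Big(\sum_{n=0}^{k-1}\cos\!\big(\theta(r)+\tfrac{2\pi n}{k}\big)\Big)g(r)\intd r,
\end{align}
and analogously with $\sin$ in place of $\cos$ for the second component.

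Finally, the inner sum vanishes exactly as in \eqref{eq:sum sin zero}: writing $\sum_{n=0}^{k-1}\cos(\theta+\tfrac{2\pi n}{k})=\textup{Re}\big(e^{i\theta}\sum_{n=0}^{k-1}e^{2\pi i n/k}\big)$ and using that the $k$-th roots of unity sum to zero for $k\geq2$, the cosine integral is zero; taking imaginary parts instead handles the sine integral. Hence the vector integral vanishes and $\lambda_{\theta1}=\lambda_{\theta2}=0$. I expect no genuine obstacle here: the only point requiring care is the bookkeeping of the arc decomposition and the substitution, which is entirely analogous to the closedness computation already carried out in \Cref{rem:rotsym_all} \eqref{rem:rotsym closed curve}.
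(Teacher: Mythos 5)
Your proposal is correct and takes essentially the same approach as the paper: decompose the relevant integral over the $k$ symmetric arcs using \eqref{eq:rotsym theta} and the $\frac{L}{k}$-periodicity of $(\partial_s\theta,\rho)$, then cancel via the roots-of-unity sum exactly as in \Cref{rem:rotsym_all}. The only (cosmetic) difference is that the paper runs this argument on $\int_0^L(-\sin\theta,\cos\theta)^\top\partial_s\big(\beta(\rho)(\partial_s\theta-c_0)\big)\intd s$, which coincides with the integral in \eqref{eq:lambdatheta} after an integration by parts using the periodic boundary conditions, whereas you apply it directly to the integral in \eqref{eq:lambdatheta}, making that implicit identification unnecessary.
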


\begin{proof}
    With 
    the $\frac{L}{k}$-periodicity of $\rho$ and with \eqref{eq:rotsym theta} 
    we obtain that 
    \begin{align}
        \int_0^L
        \begin{pmatrix}
            -\sin\theta \\ \cos\theta
        \end{pmatrix}
        \partial_s\big(\beta(\rho)(\partial_s\theta-c_0)\big)\intd s
        =\sum_{n=0}^{k-1}\int_0^\frac{L}{k}
        \begin{pmatrix}
            -\sin\!\left(\theta+\tfrac{2\pi n}{k}\right) \\ \cos\!\left(\theta+\tfrac{2\pi n}{k}\right)
        \end{pmatrix}
        \partial_s\big(\beta(\rho)(\partial_s\theta-c_0)\big)\intd s.
    \end{align}
    This sum is zero as we have seen in \Cref{rem:rotsym_all} \eqref{rem:rotsym closed curve}.
    With 
    \eqref{eq:lambdatheta}, this yields $\lambda_{\theta1}=\lambda_{\theta2}=0$.
\end{proof}

The vanishing of $\lambda_{\theta1}$ and $\lambda_{\theta2}$ 
gives the following extension
of \Cref{lem:critpointconstkappa}. 
If $\omega=1$, $c_0\neq\frac{2\pi}{L}$ and $(\theta,\rho)$ is a constrained critical point with constant curvature, which describes a $k$-fold rotationally symmetric configuration, then $\mathcal{L}^1(\{x : \beta'(x)=0\})=0$ implies $\rho\equiv\nu$.

Furthermore, \Cref{lem:Lag mult vanish} together with \Cref{prop:rotsym preserved} enables us to prove \Cref{thm:pisym kappa geq c0}.

\begin{proof}[Proof of \Cref{thm:pisym kappa geq c0}.]
    With \Cref{prop:rotsym preserved} we know that the solution $(\theta-\phi,\rho)$ remains $\frac{L}{k}$-periodic for all $t\geq0$. 
    Hence, \Cref{lem:Lag mult vanish} implies that $\lambda_{\theta1}(t)=\lambda_{\theta2}(t)=0$ for all $t\geq0$. 
    This is why \eqref{eq:ev eq kappa} can be written as
    \begin{align}
    \label{eq:dt kappa-c0}
        \partial_t(\kappa-c_0)=\beta(\rho)\partial_s^2(\kappa-c_0)+2\partial_s(\beta(\rho))\partial_s(\kappa-c_0)+\partial_s^2(\beta(\rho))(\kappa-c_0).
    \end{align}
    Since \eqref{eq:dt kappa-c0} has the same structure as \eqref{eq:dtkappa}, we can proceed as in the proof of \Cref{prop:convex} to show the claim. 
\end{proof}
In \Cref{ex:counterconvex}, we saw that convexity may not be preserved for $c_0\neq 0$. With a small modification of the density, the example can be made rotationally symmetric, so that convexity ($\kappa_0\geq 0$) is still lost, while the bound $\kappa_0\geq c_0$ is preserved by \Cref{thm:pisym kappa geq c0}.

\subsubsection{Axial symmetry}
\label{sec:axsym}

We characterize axial symmetry of heterogeneous curves (after a possible shift in the $s$-argument) as follows.

\begin{defn}
    \label{def:axsym}
    Let $(\theta,\rho)\in C^\infty([0,L])$. 
    We call the heterogeneous curve described by $(\theta,\rho)$ \textit{axially symmetric}, if
    \begin{align}\label{eq:axsymkappa}
        (\partial_s\theta,\rho)(s)=(\partial_s\theta,\rho)(L-s) \quad \text{ for } s\in[0,L].
    \end{align}
\end{defn}

\begin{rem}
    Axial symmetry implies the following properties.
    \begin{enumerate}[(i)]
        \item Integrating \eqref{eq:axsymkappa} we obtain the equivalent condition
    \begin{align}
        \theta(s)=\theta(L)+\theta(0)-\theta(L-s) \quad \text{ for } s\in[0,L].\label{eq:axsymtheta}
    \end{align}
    \item Let $(\theta,\rho)\in C^\infty([0,L])$ satisfy \eqref{eq:axsymkappa}  
    and $\gamma(0)\in\RR^2$ be 
    as in \eqref{eq:gamma(0)}. 
    A computation shows that
    \begin{align}\label{eq:axsym}
        \textup{Ref}\big(\theta(0)-\tfrac{\pi}{2}\big)\,\gamma(s)
        =\gamma(L-s),
    \end{align}
    where $\textup{Ref}(\alpha)$ 
    represents the reflection about an axis through the origin 
    with angle $\alpha$ to the $x$-axis.
    Since additionally $\rho(s)=\rho(L-s)$, \eqref{eq:axsymkappa} indeed implies that the corresponding heterogeneous curve is axially symmetric, see \Cref{fig:axsym}.
    \item A short computation, relying on \eqref{eq:axsymtheta} and the preservation of the integral of the inclination angle \cite[Lemma 2.3]{DLR2022}, implies that for a solution $(\theta, \rho)$ describing an axially symmetric curve for all times $t\geq 0$, we have $\theta(t,0)=\theta_0(0)$.
    \end{enumerate}
\end{rem}

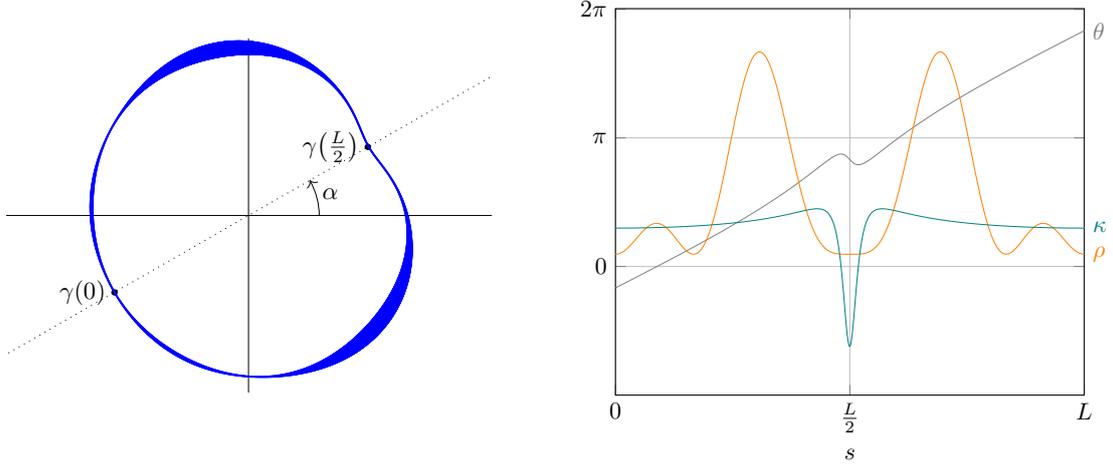
\begin{figure}[h]
  \centering

  \raisebox{1cm}{
  \begin{tikzpicture}[scale=0.93]
      \begin{axis}[
          xmin=-1.5,
          xmax=1.5,
          ymin=-1.1,
          ymax=1.1,
          hide axis,
          unit vector ratio*={1 1 1},  
          ]
          \addplot[samples=50, smooth] coordinates {(0,-1.5)(0,2)};
          \addplot[samples=50, smooth] coordinates {(-2,0)(3,0)};
          \addplot[samples=50, smooth, dotted] coordinates {(-5.196,-3)(5.196,3)};
          \addplot+[
              scatter,
              mark=*,
              color=blue,
              point meta=explicit symbolic,
              scatter/@pre marker code/.style={/tikz/mark size=0.2+\pgfplotspointmeta/5},
              scatter/@post marker code/.style={}
              ]
              table[header=false, meta index=2]
              {imgs/symmetry/symmetry_picture2.csv};
          \fill (-0.83,-0.48) circle (0.02) node[left]{$\gamma(0)$};
          \fill (0.739,0.428) circle (0.02) node[left]{$\gamma\big(\frac{L}{2}\big)$};
          \coordinate (xaxis) at (\pgfkeysvalueof{/pgfplots/xmax},0);
          \coordinate (origin) at (0,0);
          \addplot [blue,opacity=0] coordinates {(0, 0) (0.86, 0.5)}
              coordinate [at end] (A)
              ;
          \path (xaxis) -- (origin) -- (A)
              pic [
              draw,
              ->,
              angle radius=10mm,
              angle eccentricity=1.2,
              "$\alpha$",
              ] {angle = xaxis--origin--A}
              ;
      \end{axis}
\end{tikzpicture}}
\hfill
\begin{tikzpicture}[scale=0.9]
    \begin{axis}[
        ymin = -3.1415,
        ymax = 6.3,
        xlabel = $s$,
        xtick = {0, pi, 2*pi},
        xticklabels = {$0$, $\frac{L}{2}$, $L$},
        ytick = {0, pi, 2*pi},
        yticklabels = {$0$, $\pi$, $2\pi$},
        grid = both,
        enlarge x limits=false,
        enlarge y limits=false,
        clip = false,
        ]
        \pgfplotstableread{imgs/symmetry/symmetry_picture2.csv}\symmetrydata;
        \addplot[domain=0:2*pi, color=gray] table[x index=4, y index=3] \symmetrydata node [right] {$\theta$};
        \addplot[domain=0:2*pi, color=orange] {0.3+3*(cos(3*(x) * 180/pi)+1)*sin((x) * 180/pi)*sin((x) * 180/pi)} node [right] {$\rho$};
        \addplot[domain=0:2*pi, color=teal] table[x index=4, y index=5] \symmetrydata node [right] {$\kappa$};
    \end{axis}
\end{tikzpicture}
  \caption{Example of an axially symmetric configuration $(\theta,\rho)$.}
  \label{fig:axsym}
\end{figure}

We show that if the initial datum describes an axially symmetric heterogeneous curve, then this also applies to the solution for all $t>0$.

\begin{prop}
    \label{prop:presaxsym}
    Let $\omega=1$ and $(\theta,\rho)$ be the solution of \eqref{eq:flow equation} with admissible initial datum $(\theta_0,\rho_0)$. 
    If $(\theta_0,\rho_0)$ describes a heterogeneous curve which
    is axially symmetric, then so does $(\theta,\rho)$ 
    for all $t\in(0,\infty)$.
\end{prop}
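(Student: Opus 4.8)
The plan is to exploit the uniqueness of the solution to the flow \eqref{eq:flow equation}, exactly as in the proof of \Cref{prop:rotsym preserved}. Given the axially symmetric initial datum $(\theta_0,\rho_0)$, I would construct from the solution $(\theta,\rho)$ a new candidate $(\hat\theta,\hat\rho)$ obtained by the symmetry operation $s\mapsto L-s$, show that it again solves \eqref{eq:flow equation} with the \emph{same} initial datum, and then conclude $(\hat\theta,\hat\rho)=(\theta,\rho)$, which is precisely the axial symmetry \eqref{eq:axsymkappa} for all $t>0$.

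First I would define the reflected configuration. Motivated by \eqref{eq:axsymtheta}, set
\begin{align}
    \hat\theta(t,s):=\theta(t,L)+\theta(t,0)-\theta(t,L-s),\qquad \hat\rho(t,s):=\rho(t,L-s)
\end{align}
for $(t,s)\in(0,\infty)\times[0,L]$. Then $\partial_s\hat\theta(t,s)=\partial_s\theta(t,L-s)$ and $\partial_s^2\hat\theta(t,s)=-\partial_s^2\theta(t,L-s)$, so the curvature and its derivatives pick up the expected sign changes under $s\mapsto L-s$. A direct computation then shows that $(\hat\theta,\hat\rho)$ satisfies the periodic and winding-number boundary conditions in \eqref{eq:flow equation}: indeed $\hat\theta(t,L)-\hat\theta(t,0)=\theta(t,L)-\theta(t,0)=2\pi$ since $\omega=1$, and the first- and second-derivative periodicity of $(\theta,\rho)$ transfers to $(\hat\theta,\hat\rho)$. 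One also verifies that $\hat\theta$ describes a closed curve, i.e.\ $\int_0^L\sin\hat\theta\intd s=\int_0^L\cos\hat\theta\intd s=0$, by substituting $s\mapsto L-s$ and using that $\theta$ itself describes a closed curve together with the trigonometric identities for $\sin$ and $\cos$ of an angle of the form $\theta(t,L)+\theta(t,0)-\theta(t,L-s)$.

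The crucial step is to check that $(\hat\theta,\hat\rho)$ satisfies the two evolution equations in \eqref{eq:flow equation}. Here I would compute $\partial_t\hat\theta(t,s)=\partial_t\theta(t,L)+\partial_t\theta(t,0)-\partial_t\theta(t,L-s)$ and compare with the right-hand side of the $\theta$-equation evaluated at $(\hat\theta,\hat\rho)$. The spatial differential operator $\partial_s(\beta(\cdot)(\partial_s\,\cdot\,-c_0))$ is invariant under the reflection because the two sign changes (one from each $\partial_s$) cancel, while the nonlocal term transforms via a reflected Lagrange multiplier. Concretely, as in \eqref{eq:eveqtilde}, one obtains the $\theta$-equation for $\hat\theta$ with multipliers $\hat\lambda_{\theta1},\hat\lambda_{\theta2}$ expressed through $\lambda_{\theta1}(\theta,\rho),\lambda_{\theta2}(\theta,\rho)$; inserting the closedness relations $\int_0^L\sin\hat\theta\,\partial_t\hat\theta\intd s=\int_0^L\cos\hat\theta\,\partial_t\hat\theta\intd s=0$ and comparing with \eqref{eq:lambdatheta} identifies them as $\lambda_{\theta1}(\hat\theta,\hat\rho),\lambda_{\theta2}(\hat\theta,\hat\rho)$. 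Since $(\partial_s\hat\theta-c_0)^2=(\partial_s\theta-c_0)^2|_{L-s}$ is even under the reflection, the density equation and $\lambda_\rho(\hat\theta,\hat\rho)=\lambda_\rho(\theta,\rho)$ follow by the same substitution. The main obstacle I anticipate is the nonlocal term: one must handle the time-dependence of the boundary values $\theta(t,0)$ and $\theta(t,L)$ carefully (using the preservation of $\int_0^L\theta\intd s$ noted in the remark, so that $\theta(t,0)=\theta_0(0)$ along an axially symmetric solution) and verify that the reflected multipliers indeed coincide with the intrinsic ones at $(\hat\theta,\hat\rho)$ rather than merely resembling them. Finally, by axial symmetry of the initial datum and \eqref{eq:axsymtheta}, $(\hat\theta,\hat\rho)\to(\theta_0,\rho_0)$ as $t\to0$ in $C^2([0,L])$, so uniqueness in \Cref{thm:zsf artcl1} forces $(\hat\theta,\hat\rho)=(\theta,\rho)$, which is exactly \eqref{eq:axsymkappa} for all $t>0$.
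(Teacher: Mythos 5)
Your overall strategy --- reflect the solution, show the reflection solves \eqref{eq:flow equation} with the same initial datum, and conclude by uniqueness --- is exactly the paper's. However, your definition of the reflected configuration contains a genuine gap. You set $\hat\theta(t,s):=\theta(t,L)+\theta(t,0)-\theta(t,L-s)$ with the \emph{time-dependent} boundary values, whereas the correct choice (and the paper's) is to freeze the constant at its initial value, $\tilde\theta(t,s):=\theta_0(L)+\theta_0(0)-\theta(t,L-s)$. With your choice, using $\theta(t,L)=\theta(t,0)+2\pi$,
\begin{align}
\partial_t\hat\theta(t,s)=\partial_t\theta(t,L)+\partial_t\theta(t,0)-\partial_t\theta(t,L-s)
=2\,\partial_t\theta(t,0)-\partial_t\theta(t,L-s),
\end{align}
and substituting the equation for $\theta$ at the point $(t,L-s)$ shows that $\hat\theta$ satisfies the $\theta$-equation only up to the extra, spatially constant term $2\,\partial_t\theta(t,0)$. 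This term cannot be absorbed into the Lagrange multiplier structure: if $(\lambda_1-\alpha_1)\sin\hat\theta-(\lambda_2-\alpha_2)\cos\hat\theta$ were equal to a constant, integrating over $[0,L]$ and using $\int_0^L\sin\hat\theta\intd s=\int_0^L\cos\hat\theta\intd s=0$ forces that constant to vanish. So $\hat\theta$ solves \eqref{eq:flow equation} only if $\partial_t\theta(t,0)=0$, which is not known a priori.

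You noticed this obstacle, but your proposed resolution is circular: the identity $\theta(t,0)=\theta_0(0)$ (equivalently, constancy of $\theta(t,L)+\theta(t,0)$) is obtained by combining the preservation of $\int_0^L\theta\intd s$ with axial symmetry of the solution \emph{at positive times} --- and that symmetry is precisely the assertion of the proposition. Indeed, the paper records $\theta(t,0)=\theta_0(0)$ as a \emph{consequence} of the proof, in the remark following it, not as an ingredient. The repair is a one-line change: define the reflection with the fixed constant $\theta_0(L)+\theta_0(0)$. Then $\partial_t\tilde\theta(t,s)=-\partial_t\theta(t,L-s)$ with no extra term, all of your remaining steps (transformation of the multipliers via trigonometric identities, closedness of the reflected curve, invariance of $\lambda_\rho$, the boundary conditions, and attainment of the initial datum using axial symmetry of $(\theta_0,\rho_0)$) go through verbatim, and uniqueness from \Cref{thm:zsf artcl1} gives $\tilde\theta=\theta$, $\tilde\rho=\rho$, i.e.\ \eqref{eq:axsymkappa} for all $t>0$.
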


\begin{proof}
   Let $(\theta,\rho)$ be the solution of \eqref{eq:flow equation} with initial datum $(\theta_0,\rho_0)$. We define
    \begin{align}
        (\tilde\theta,\tilde\rho)(t,s):=\big(
        \theta_0(L)+\theta_0(0)-\theta(t,L-s),\, 
        \rho(t,L-s)
        \big)
        \label{eq:tildeaxsym}
    \end{align}
    for $(t,s)\in(0,\infty)\times[0,L]$ and show that $(\tilde\theta,\tilde\rho)$ solves \eqref{eq:flow equation} with the same initial datum $(\theta_0,\rho_0)$. By uniqueness of the solution, it follows that $(\theta,\rho)=(\tilde\theta,\tilde\rho)$ in $[0,\infty)\times[0,L]$. 
    This yields $(\partial_s\theta,\rho)(t,s)=(\partial_s\theta,\rho)(t,L-s)$in $[0,\infty)\times[0,L]$. Thus, $(\theta,\rho)$ describes an axially symmetric heterogeneous curve for all $t>0$.
    \\
    It is clear that $(\tilde\theta,\tilde\rho)\in C^\infty((0,\infty)\times[0,L])$. Since $\partial_s\tilde\theta(s)=(\partial_s\theta)(L-s)$, $\partial_s^2\tilde\theta=-(\partial_s^2\theta)(L-s)$ and $\partial_s\tilde\rho(s)=-(\partial_s\rho)(L-s)$, we see that 
    \begin{align}
        \label{eq:eveqtilde2}
            \partial_t\tilde\theta
            =\beta(\tilde\rho)\partial_s^2\tilde\theta+\beta'(\tilde\rho)\partial_s\tilde\rho(\partial_s\tilde\theta-c_0)+\alpha_1\sin\tilde\theta-\alpha_2\cos\tilde\theta.
        \end{align}
    Here, we have $\alpha_1(t)=\lambda_{\theta1}(\theta,\rho)\cos\big(\theta_0(L)+\theta_0(0)\big)-\lambda_{\theta2}(\theta,\rho)\sin\big(\theta_0(L)+\theta_0(0)\big)$ and 
    $\alpha_2(t)=\lambda_{\theta1}(\theta,\rho)\sin\big(\theta_0(L)+\theta_0(0)\big)+\lambda_{\theta2}(\theta,\rho)\cos\big(\theta_0(L)+\theta_0(0)\big)$ (by using trignometric identities).
    Similarly,
    \begin{align}
        \int_0^L\sin\tilde\theta\intd s
        =\sin\big(\theta_0(L)-\theta_0(0)\big)\int_0^L\cos\theta\intd s-\cos\big(\theta_0(L)-\theta_0(0)\big)\int_0^L\sin\theta\intd s=0.\label{eq:axsymclosed}
    \end{align}
    Analogously, we obtain $\int_0^L\cos\tilde\theta\intd s=0$.
    Thus, $\tilde\theta$ describes a closed curve.
    As in the proof of \Cref{prop:rotsym preserved} one shows that this 
    implies that $\alpha_1=\lambda_{\theta1}(\tilde\theta,\tilde\rho)$ and $\alpha_2=\lambda_{\theta2}(\tilde\theta,\tilde\rho)$. 
    Moreover, 
    $\lambda_\rho(\tilde\theta,\tilde\rho)=\lambda_\rho(\theta,\rho)$ and 
    $$
        \partial_t\tilde\rho=\mu\partial_s^2\tilde\rho-\frac12\beta'(\tilde\rho)(\partial_s\tilde\theta-c_0)^2+\lambda_\rho(\tilde\theta,\tilde\rho).
    $$
    It is readily checked that $(\tilde\theta,\tilde\rho)$ satisfies the boundary conditions in \eqref{eq:flow equation}. 
    Using that $(\theta_0,\rho_0)$ describes an axially symmetric configuration, we further have
    \begin{align}
        \lim_{t\to0}\big(\tilde\theta,\tilde\rho\big)(t,s)=\big(\theta_0(L)+\theta_0(0)-\theta_0(L-s),\rho_0(L-s)\big)
        =(\theta_0,\rho_0)(s)
    \end{align}
    (in $C^{2+\alpha}([0,L])$ for all $\alpha\in(0,1)$). 
    Hence, $(\tilde\theta,\tilde\rho)$ is a solution of \eqref{eq:flow equation} with initial datum $(\theta_0,\rho_0)$ and the claim follows.
\end{proof}

\begin{rem}
    The proof of \Cref{prop:presaxsym} shows that for an axially symmetric initial datum $(\theta_0,\rho_0)$, the solution $(\theta,\rho)$ keeps $\theta(t,0)=\theta_0(0)$ for all $t\in[0,\infty)$.
\end{rem}

\section{Asymptotic behavior}
\label{sec:proplim}

Since stationary solutions of \eqref{eq:flow equation} are precisely the constrained critical points 
(compare 
\eqref{eq:ELp1}--\eqref{eq:ELp2} with 
\eqref{eq:stationary}), we can already derive some properties of the limit $(\theta_\infty,\rho_\infty)$ in Theorem \ref{thm:zsf artcl1} by using the classification of constrained critical points in Sections \ref{sec:exmin} and \ref{sec:homelastica}.

\subsection{Convergence to a homogeneous elastica under growth assumptions on \texorpdfstring{$\beta$}{β}}
\label{sec:convassbeta}

In this section, we impose some additional assumptions on the initial datum and the model parameters, under which the limit of \eqref{eq:flow equation} is a homogeneous elastica.

\begin{proof}[Proof of \Cref{thm:conv special beta neu}.]
    With \eqref{eq:flow equation}, \eqref{eq:lambdarho}, and integration by parts we have
    \begin{align}
        \frac{\intd}{\intd t}\int_0^L(\rho-\nu)^2\intd s
        &=2\int_0^L\rho\,\partial_t\rho\intd s
        =-2\mu\int_0^L(\partial_s\rho)^2\intd s+\int_0^L(\nu-\rho)\beta^\prime(\rho)(\partial_s\theta-c_0)^2\intd s.\label{eq:variancerho}
    \end{align}
    We use \eqref{eq:beta'bound} to estimate the second term on the right hand side of \eqref{eq:variancerho} and obtain
    \begin{align}
        \frac{\intd}{\intd t}\int_0^L(\rho-\nu)^2\intd s
        &\leq -2\mu\int_0^L(\partial_s\rho)^2\intd s+\bar{C}\int_0^L(\nu-\rho)^2\beta(\rho)(\partial_s\theta-c_0)^2\intd s \nonumber\\
        &\leq -2\mu\int_0^L(\partial_s\rho)^2\intd s+ \bar{C}\sup_{s\in[0,L]}(\nu-\rho)^2 \int_0^L\beta(\rho)(\partial_s\theta-c_0)^2\intd s\nonumber\\
        &\leq -2\mu\int_0^L(\partial_s\rho)^2\intd s+ 2L\bar{C} \sE_\mu(\theta_0, \rho_0) \int_0^L (\partial_s \rho)^2\intd s,\nonumber
    \end{align}
    using \eqref{eq:decreaseE}, $\sup_{s\in[0,L]}\vert\nu-\rho\vert\leq \int_0^L |\partial_s \rho|\intd s$ and Cauchy--Schwarz.
    This yields
    \begin{align}
    \frac{\intd}{\intd t}\int_0^L(\rho-\nu)^2\intd s
    \leq -2\bigg(\frac{2\pi}{L}\bigg)^2\big(\mu-L\bar C\sE_\mu(\theta_0,\rho_0)\big) \int_0^L(\rho-\nu)^2\intd s.
    \end{align}
    By Gronwall's inequality we conclude that
    \begin{align}
    \label{eq:rateofconv}
        \int_0^L\left(\rho-\nu\right)^2\intd s
        \leq \Big(\int_0^L\rho_0^2\intd s-\nu^2L\Big)\; \exp\Big(-\frac{8\pi^2}{L^2}\big(\mu-L\bar C\sE_\mu(\theta_0,\rho_0)\big)t\Big).
    \end{align}
    
    Since $\bar{C}L\sE_\mu(\theta_0, \rho_0)<\mu$, it follows that $\rho\to\nu$ in $L^2(0,L)$ exponentially fast. By the subconvergence result in \Cref{thm:zsf artcl1}, there is a sequence $t_n\to\infty$ and $\theta_\infty\in C^\infty([0,L])$ such that $(\theta_\infty,\rho_\infty=\nu)$ is a solution of \eqref{eq:stationary} and $\theta(t_n)\to \theta_\infty$ in $C^2([0,L])$. By \Cref{lem:critpointconstrho}, we find that $(\theta_\infty,\rho_\infty)$ describes a homogeneous elastica.\\
    If $\omega\neq 0$, \Cref{lem:charelasticae} yields that $\theta_\infty$ describes an $\omega$-fold covering of a circle, so that necessarily $\partial_s \theta_\infty = \frac{2\pi\omega}{L}$. From \cite[Lemma 2.3]{DLR2022}, we conclude that $\int_0^L \theta_\infty \intd s = \int_0^L\theta_0\intd s$, which implies $\theta_\infty(s) = \phi(s)+ \frac{1}{L}\int_0^L\theta_0\intd s - \pi\omega L$, $s\in [0,L]$. In particular, $\theta_\infty$ does not depend on the sequence $(t_n)_{n\in\NN}$, and statement (i) follows from a subsequence argument. 
    \\
    In the case $\omega=0$, the analyticity assumption on $\beta$ and \Cref{thm:zsf artcl1} imply that $(\theta(t),\rho(t)) \to (\theta_\infty,\rho_\infty=\nu)$ in $C^2([0,L])$ as $t\to\infty$, where $(\theta_\infty,\rho_\infty)$ satisfies \eqref{eq:stationary}. Again, \Cref{lem:critpointconstrho} implies that $\theta_\infty$ describes an elastica, so necessarily a multifold covered figure eight elastica by \Cref{lem:charelasticae}. Statement (ii) follows.
\end{proof}

The necessity for stronger assumptions in the case $\omega=0$ arises from the parametrization invariance of the energy, a general issue for geometric flows which occurs here despite working only with arclength parametrizations.
Suppose that $(\theta,\rho)$ is a solution to \eqref{eq:flow equation} and $(\theta_\infty,\rho_\infty)$ is a solution to \eqref{eq:stationary} originating from the subconvergence result in \Cref{thm:zsf artcl1}, i.e.\ $(\theta_\infty,\rho_\infty) = \lim_{n\to\infty} (\theta(t_n), \rho(t_n))$ for some sequence $t_n\to\infty$.  With $\phi$ as in \eqref{eq:defphi},  we write $\theta_\infty= u_\infty + \phi$. Identifying $u_\infty, \rho_\infty$ with their smooth $L$-periodic extensions to $\RR$, we find that for any $s_0\in \RR$, the pair
\begin{align}\label{eq:reparam_s0}
	(\hat{\theta}_\infty,\hat{\rho}_\infty)(s) = (\phi(s) +u_\infty(s-s_0),\rho_\infty(s-s_0)), \quad s\in [0,L],
\end{align}
is also stationary for any $s_0\in \RR$. 
In fact, any other arclength parametrization of the corresponding curve leads to an angle function of this form.
In particular, the set of possible limits (i.e.\ solutions to \eqref{eq:stationary}) is nondiscrete, so that {\L}ojasiewicz--Simon gradient inequalities are generically needed for deducing convergence from subconvergence. 
Hence, it is somehow surprising that this argument is not needed in case (i) of \Cref{thm:conv special beta neu}. The reason for this is that if $(\theta_\infty,\rho_\infty)$ describes a circle, then any reparametrization of the form \eqref{eq:reparam_s0} with $s_0\neq 0$ will result in adding a constant to the original angle function since $\theta_\infty$ is affine. Since by \cite[Lemma 2.3]{DLR2022}, $\int_0^L\theta_\infty \intd s$ is determined by the initial datum, this degree of freedom is not present in the case $\omega\neq 0$, resulting in full convergence.
On the curve level, adding a constant to $\theta$ corresponds to a rotation of the associated curve about a fixed angle, i.e.\ for a circle there is a one-to-one correspondence between arclength reparametrizations and rotations. 
\\
On the other hand, in the case $\omega=0$, the classification of solutions to the elastica equation in \cite[Proposition 3.3]{Linner1996} allows us to determine all the parameters, except for the invariance due to \eqref{eq:reparam_s0}, see also \cite[Proposition B.8]{MR2021}. Hence the {\L}ojasiewicz inequality (and consequently analyticity of $\beta$, cf.\ \cite[Corollary 6.3]{ConstrLoja}) is necessary to ensure
convergence. 

For $\omega=1$, we have dealt with rotational symmetry of solutions. In this case we can prove exponential convergence of the curvature to a constant if the length allows for a circle with curvature $\kappa\equiv c_0$.

\begin{prop}\label{prop:conv_symm_c02Pi/L}
    Let $\omega=1$, $ c_0=\frac{2\pi}{L}$, and let $(\theta_0, \rho_0)\in C^\infty([0,L])$ be an admissible initial datum describing a $k$-fold rotationally symmetric heterogeneous curve for some $k\geq 2$. Then, as $t\to\infty$, the solution $(\theta,\rho)$ to \eqref{eq:flow equation} converges exponentially fast to $(\theta_\infty, \rho_\infty)$ with $\partial_s \theta_\infty\equiv c_0$, $\rho_\infty\equiv \nu$. In particular, the limit describes a circle with constant density.
\end{prop}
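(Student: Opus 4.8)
The plan is to exploit the resonance $c_0=\tfrac{2\pi}{L}=\tfrac{2\pi\omega}{L}$ together with the symmetry. First I would note that by \Cref{prop:rotsym preserved} the solution stays $k$-fold rotationally symmetric for all $t$, so \Cref{lem:Lag mult vanish} gives $\lambda_{\theta1}\equiv\lambda_{\theta2}\equiv 0$. Consequently \eqref{eq:ev eq kappa} collapses to the divergence-form equation $\partial_t(\kappa-c_0)=\partial_s^2\big(\beta(\rho)(\kappa-c_0)\big)$. Moreover both $\kappa-c_0=\partial_s(\theta-\phi)$ and $\rho-\nu$ are $\tfrac{L}{k}$-periodic with vanishing mean over a period — the first because it is the derivative of the periodic function $\theta-\phi$, the second because $\int_0^L\rho\intd s=\nu L$ splits evenly across the $k$ periods. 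Hence Wirtinger's inequality applies to both (the standard constant $(\tfrac{2\pi}{L})^2$ already suffices; symmetry improves it to $(\tfrac{2\pi k}{L})^2$).

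The first substantive step is to identify the limit. Any $k$-fold symmetric stationary solution satisfies $\partial_s\big(\beta(\rho)(\kappa-c_0)\big)=0$, so $\beta(\rho)(\kappa-c_0)\equiv A$ is constant; integrating $\kappa-c_0=A/\beta(\rho)$ and using $\int_0^L(\kappa-c_0)\intd s=2\pi\omega-c_0L=0$ together with $\beta>0$ forces $A=0$, i.e.\ $\kappa\equiv c_0$. The density equation \eqref{eq:ELp2} then reduces to $\mu\partial_s^2\rho=\lambda_\rho$, and periodicity gives $\lambda_\rho=0$, hence $\rho\equiv\nu$. Thus the only symmetric stationary solution is the circle with density $\nu$. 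Combined with the subconvergence from \Cref{thm:zsf artcl1}, the closedness of the symmetry constraints under $C^2$-convergence, and the conservation of $\int_0^L\theta\intd s$ (which pins the additive constant in $\theta_\infty=\phi+\mathrm{const}$ via \cite[Lemma 2.3]{DLR2022}), all subsequential limits coincide. This yields full convergence $(\theta,\rho)\to(\theta_\infty,\rho_\infty)$ in $C^2$ with no analyticity assumption.

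To upgrade this to an exponential rate I would set $A(t):=\int_0^L(\kappa-c_0)^2\intd s$ and $B(t):=\int_0^L(\rho-\nu)^2\intd s$. Testing the curvature equation against $\kappa-c_0$ and integrating by parts twice gives $\dot A=-2\int_0^L\beta(\rho)(\partial_s\kappa)^2\intd s+\int_0^L\partial_s^2(\beta(\rho))(\kappa-c_0)^2\intd s$. Using the uniform lower bound $\beta(\rho)\geq\beta_{\min}>0$ — valid since $\rho$ is uniformly bounded by \eqref{eq:globalW32bounds} and $\beta>0$ is continuous — and Wirtinger on the dissipative term yields $\dot A\leq\big(-2\beta_{\min}(\tfrac{2\pi}{L})^2+\Vert\partial_s^2(\beta(\rho(t)))\Vert_{L^\infty}\big)A$. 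By the qualitative convergence $\rho\to\nu$ in $C^2$ established above, the coefficient $\Vert\partial_s^2(\beta(\rho(t)))\Vert_{L^\infty}\to 0$, so for $t$ large the bracket is bounded above by a negative constant and $A$ decays exponentially. Feeding this into \eqref{eq:variancerho}, bounding $\vert(\nu-\rho)\beta'(\rho)\vert\leq C$ and using Wirtinger on the dissipative term gives $\dot B\leq-2\mu(\tfrac{2\pi}{L})^2 B+C\,A(t)$ with exponentially decaying forcing, so $B$ also decays exponentially by Gronwall. Finally I would interpolate these $L^2$-decays against the uniform higher-order bounds of \Cref{thm:zsf artcl1} to obtain exponential convergence in $C^2$.

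The main obstacle is the sign-indefinite term $\int_0^L\partial_s^2(\beta(\rho))(\kappa-c_0)^2\intd s$ in $\dot A$, which a priori could overwhelm the dissipation. The resolution lies in the order of operations: one must first secure the qualitative $C^2$-convergence $\rho\to\nu$ through the stationary-solution classification, after which this term's coefficient is small and the Wirtinger spectral gap takes over. The resonance $c_0=\tfrac{2\pi\omega}{L}$ is essential throughout — it is what makes $\kappa-c_0$ have zero mean and what makes the zero-energy circle an admissible limit — while the symmetry hypothesis is precisely what removes $\lambda_{\theta1},\lambda_{\theta2}$ and thereby renders the curvature equation linear and in divergence form.
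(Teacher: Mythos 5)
Your proposal is correct, and it shares the paper's overall skeleton --- symmetry preservation (\Cref{prop:rotsym preserved}) plus \Cref{lem:Lag mult vanish} to remove $\lambda_{\theta1},\lambda_{\theta2}$, a Wirtinger/Gronwall estimate for the curvature deviation, then \eqref{eq:variancerho} with exponentially decaying forcing for $\rho$, and interpolation up to $C^2$ --- but the execution differs in two genuine ways. First, the Lyapunov quantity: the paper differentiates the weighted energy $\sE^\theta=\frac12\int_0^L\beta(\rho)(\kappa-c_0)^2\intd s$, whose dissipation $-\int_0^L\big(\partial_s(\beta(\rho)(\kappa-c_0))\big)^2\intd s$ comes directly from the divergence-form equation, applies Wirtinger to $\beta(\rho)(\kappa-c_0)$ (which has a zero because $\int_0^L(\kappa-c_0)\intd s=2\pi-c_0L=0$), and neutralizes the sign-indefinite term $\frac12\int_0^L\beta'(\rho)\,\partial_t\rho\,(\kappa-c_0)^2\intd s$ via $\Vert\partial_t\rho\Vert_{L^\infty}\to0$; you instead differentiate the unweighted $A(t)=\int_0^L(\kappa-c_0)^2\intd s$ (your formula for $\dot A$ is correct), apply the mean-zero Wirtinger inequality to $\kappa-c_0$ itself, and neutralize $\int_0^L\partial_s^2(\beta(\rho))(\kappa-c_0)^2\intd s$ via $\Vert\partial_s^2(\beta(\rho))\Vert_{L^\infty}\to0$. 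Second, and more substantively, the qualitative convergence that both perturbation bounds require: the paper simply asserts $(\theta(t),\rho(t))\to(\theta_\infty,\rho_\infty)$ in $C^2([0,L])$, which \Cref{thm:zsf artcl1} strictly guarantees only under analyticity of $\beta$ (not assumed here), whereas you derive it from your classification of $k$-fold symmetric stationary states ($\beta(\rho)(\kappa-c_0)\equiv A$ together with $\int_0^L(\kappa-c_0)\intd s=0$ and $\beta>0$ forces $\kappa\equiv c_0$, then $\lambda_\rho=0$ and $\rho\equiv\nu$), the conservation of $\int_0^L\theta\intd s$, and uniqueness of subsequential limits --- so your route actually supplies a justification the paper's proof leaves implicit. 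Be aware that both arguments still tacitly use that \emph{every} subsequential limit is stationary (equivalently $\Vert\partial_t\theta\Vert_{L^2}+\Vert\partial_t\rho\Vert_{L^2}\to0$ as $t\to\infty$), which follows from the energy identity and the uniform parabolic bounds behind \Cref{thm:zsf artcl1} but is not literally contained in its statement; your proof is no weaker than the paper's on this point. A final small bonus of your version: the constant $4\pi^2/L^2$ is the correct Wirtinger constant for the genuinely mean-zero function $\kappa-c_0$, whereas the paper applies Wirtinger to $\beta(\rho)(\kappa-c_0)$, which merely has a zero (so only $\pi^2/L^2$ is available) --- harmless in either case, since any positive rate suffices for the conclusion.
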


\begin{proof}
    Let $(\theta,\rho)$ be the solution to \eqref{eq:flow equation} and recall $\sE^\theta(t)= \frac{1}{2}\int_0^L \beta(\rho)(\kappa-c_0)^2\intd s$ with $\kappa=\partial_s\theta$. 
     Due to \Cref{prop:rotsym preserved} and \Cref{lem:Lag mult vanish}, $\lambda_{\theta1}(t)=\lambda_{\theta2}(t)=0$ for all $t\geq0$. Thus
     \begin{align}
         \frac{\intd}{\intd t}&\sE^\theta = \frac{1}{2}\int_0^L \beta'(\rho)\partial_t \rho (\kappa-c_0)^2\intd s + \int_0^L \beta(\rho)(\kappa-c_0)\partial_t \kappa \intd s \\
         &\leq \frac{\sup_{(t,s)}\left\vert\beta'(\rho)\right\vert}{2\inf_{(t,s)}(\beta(\rho))^2}\Vert \partial_t \rho\Vert_{L^\infty(0,L)}\int_0^L (\beta(\rho))^2(\kappa-c_0)^2\intd s - \int_0^L \big( \partial_s \big(\beta(\rho)(\kappa-c_0)\big)\big)^2\intd s \label{eq:conv_symm_1}
     \end{align}
     Note that $\sup_{(t,s)\in[0,\infty)\times[0,L]}\vert\beta'(\rho)\vert<\infty$ and $\inf_{(t,s)\in[0,\infty)\times[0,L]}\beta(\rho)>0$ since by convergence of the flow, $\rho(t,s)$ lies in a compact set for all $(t,s)$. 
     By the assumptions on $\omega$ and $c_0$, the function $\kappa(t)-c_0$ 
     has a zero in $[0,L]$ for all $t\geq 0$. 
     Therefore,
     \eqref{eq:conv_symm_1} and Wirtinger's inequality imply
     \begin{align}
         \frac{\intd}{\intd t}\sE^\theta &\leq \left(C\Vert \partial_t \rho\Vert_{L^\infty(0,L)}- \frac{4\pi^2}{L^2}\right)\int_0^L (\beta(\rho))^2(\kappa-c_0)^2\intd s,
     \end{align}
    where $C=C(\beta, \theta,\rho)\in(0,\infty)$ is a constant independent of $t\geq 0$.
     Now, $(\theta(t),\rho(t))\to (\theta_\infty,\rho_\infty)$ in $C^{2}([0,L])$ and \eqref{eq:flow equation} imply that $\Vert \partial_t \rho\Vert_{L^\infty(0,L)}\to 0$ as $t\to\infty$. Consenquently, we have
     \begin{align}
         \frac{\intd}{\intd t}\sE^\theta(t) \leq -\frac{4 \pi^2 \inf_{(t,s)}\beta(\rho)}{2L^2} \sE^\theta(t),
     \end{align}
     for $t\geq T$ large enough, whence Gronwall's lemma yields $\sE^\theta(t)\leq C e^{-\alpha t}$ for some appropriate $C,\alpha>0$.  
     It follows that $\kappa=\partial_s \theta\to c_0$ in $L^2(0,L)$ for $t\to\infty$ exponentially fast. For the exponential convergence of $\rho$, we use \eqref{eq:variancerho} to conclude
     \begin{align}
         \frac{\intd}{\intd t} \int_0^L(\rho-\nu)^2\intd s 
         \leq -2\mu\frac{4\pi^2}{L^2}\int_0^L(\rho-\nu)^2\intd s+ \sup_{(t,s)} \vert\nu-\rho\vert\frac{\sup_{(t,s)}\vert\beta'(\rho)\vert}{\inf_{(t,s)} \beta(\rho)} \sE^\theta(t).
     \end{align}
     Using $\sE^\theta(t)\leq C e^{-\alpha t}$, the exponential convergence $\rho\to\nu$ in $L^2(0,L)$ follows with a Gronwall argument. Since 
     $\int_0^L\theta\intd s$ is preserved (cf. \cite[Lemma 2.3]{DLR2022}), we have $\theta \to \phi+\frac{1}{L}\int_0^L\theta_0\intd s - \pi$ as $t\to\infty$ exponentially fast by the Poincar\'e--Wirtinger inequality. \Cref{thm:zsf artcl1} and an interpolation argument imply that $(\theta, \rho)\to(\phi+\frac{1}{L}\int_0^L\theta_0\intd s - \pi, \nu)$ exponentially fast in $C^{2+\tilde\alpha}([0,L])$ for all $\tilde\alpha\in (0,\frac{1}{2}).$
\end{proof}
\Cref{prop:conv_symm_c02Pi/L} implies that if $\omega=1$ and $c_0=\frac{2\pi}{L}$, there exists no nontrivial constrained critical point 
which is $k$-fold rotationally symmetric.
Moreover, in this setting, \Cref{prop:conv_symm_c02Pi/L} implies that for $t$ large enough, $\sE^\theta(\theta,\rho)$ is eventually monotonically decreasing (compare to \Cref{sec:decreaseE}).

\subsection{Convergence to a homogeneous elastica for large \texorpdfstring{$\mu$}{μ}}
\label{sec:convlargemu}

In Proposition \ref{prop:minproblargemu}, we have seen that for $\omega\neq 0$ and large $\mu$, the $\omega$-fold covering of the circle with constant density is the unique global minimizer. In \Cref{prop:convlargemu}, we present a time-dependent version of this result 
if $\rho_0\equiv\nu$. We point out that a constant initial density does not necessarily remain constant, see \Cref{rem:Erho_grow}, unless $\beta'(\nu)=0$ or $\partial_s\theta_0\equiv c_0$. 

\begin{proof}[Proof of \Cref{prop:convlargemu}] 
We assume that $\int_0^L\theta_0(s)\intd s=\pi\omega L$. This is no loss of generality because if $(\theta, \rho)$ is the solution to \eqref{eq:flow equation} with initial datum $(\theta_0,\rho_0)\in C^\infty([0,L])$, then by a direct computation using trigonometric identities, it is readily checked that $(\theta+r,\rho)$ is the solution to \eqref{eq:flow equation} with initial datum $(\theta_0+r,\rho)$ for $r\in\RR$. Consider 
$(\mu_j)_{j\in\mathbb{N}}$ such that $\mu_j\to\infty$ for $j\to\infty$. 
By assumption, $\sE_{\mu_j}(\theta_{0},\rho_{0})=:K$ is independent of $j\in\NN$.
For any $\mu_j$, 
there exists a unique global solution $(\theta_j,\rho_j)$ 
with initial datum $(\theta_{0},\rho_{0})$ and this solution converges to some $(\theta_{\infty,j},\rho_{\infty,j})\in C^\infty([0,L])$ in $C^2([0,L])$ for $t\to\infty$ (see Theorem \ref{thm:zsf artcl1} and Proposition \ref{prop:minprobex2}).
Since the integral of the angle is preserved (cf.\ \cite[Lemma 2.3]{DLR2022}), we have
\begin{align} \label{eq:int theta const}
    \int_0^L\theta_{\infty,j}(s)\intd s=\int_0^L\theta_j(t,s)\intd s=\int_0^L\theta_0(s)\intd s=\pi\omega L \quad\text{ for all } t\in(0,\infty),\; j\in\NN.
\end{align}
Thus, 
we want to show that 
\begin{align}
\label{eq:C1 conv j to infty}
    (\theta_{\infty,j},\rho_{\infty,j})\rightarrow(\theta_c,\rho_c) \quad\text{ in }C^1([0,L]) \quad\text{ for }j\to\infty,
\end{align}
cf.\ \Cref{rem:kappaelastica}.
\Cref{cor:C1neigh} and \eqref{eq:int theta const} then allow us to conclude that for $j$ large enough, $(\theta_{\infty,j},\rho_{\infty,j})=(\theta_c,\rho_c)$ and the statement follows. 
\\
\textit{Step 1: Uniform boundedness of $\left\Vert(\theta_{\infty,j},\rho_{\infty,j})\right\Vert_{W^{1,2}(0,L)}$, $\lambda_{\theta1}(\theta_{\infty,j},\rho_{\infty,j})$, $\lambda_{\theta2}(\theta_{\infty,j},\rho_{\infty,j})$.}
For this, we first observe that
\begin{align}
\label{eq:star}
    \int_0^L(\partial_s\rho_{\infty,j})^2\intd s\leq\frac{2}{\mu_j}K\to0, \quad j\to\infty. 
\end{align}
Since the integral of the density is fixed (see \eqref{eq:fixedmass}), 
this yields $\rho_{\infty,j}\to \nu$ in $W^{1,2}(0,L)$ and in particular uniform boundedness of $\left\Vert\rho_{\infty,j}\right\Vert_{W^{1,2}(0,L)}$ and $\Vert\rho_{\infty,j}\Vert_{C([0,L])}$. Thus, 
there is $M\in\RR$ (not depending on $j$) such that 
\begin{align}
\label{eq:mularge1}
    \int_0^L(\partial_s\theta_{\infty,j})^2\intd s\leq\frac{K}{\inf_{[-M,M]}\beta}+4\pi c_0\omega.
\end{align}
With \eqref{eq:int theta const}, we conclude 
that also
$\left\Vert\theta_{\infty,j}\right\Vert_{W^{1,2}(0,L)}$ is uniformly bounded.
By \cite[Lemma 4.1]{DLR2022} and \eqref{eq:mularge1}, the matrix $\Pi^{-1}(\theta_{\infty,j})$ is bounded uniformly in $j$. Hence the bounds on $\left\Vert\theta_{\infty,j}\right\Vert_{W^{1,2}(0,L)}$ and $\Vert\rho_{\infty,j}\Vert_{C([0,L])}$ imply that
\begin{align}
\label{eq:lambda rho part int}
\begin{pmatrix}
\lambda_{\theta1}\\ \lambda_{\theta2}
\end{pmatrix}(\theta_{\infty,j},\rho_{\infty,j})
=\Pi^{-1}(\theta_{\infty,j})\int_0^L\begin{pmatrix}
\cos\theta_{\infty,j} \\ \sin\theta_{\infty,j}
\end{pmatrix}
\partial_s\theta_{\infty,j} \beta(\rho_{\infty,j})(\partial_s\theta_{\infty,j}-c_0)\intd s
\end{align}
is bounded uniformly in $j\in \NN$.\\
\textit{Step 2: Uniform boundedness of $\left\Vert(\theta_{\infty,j},\rho_{\infty,j})\right\Vert_{W^{2,2}(0,L)}$.}
To show boundedness of the $L^2$-norm of the second derivatives, we use that for all $j\in\NN$,
$(\theta_{\infty,j},\rho_{\infty,j})$ is a stationary solution, i.e.\ a solution of \eqref{eq:stationary}. This allows to use similar arguments as in the proof of Proposition \ref{prop:minprobex2}. First, we observe that
\begin{align}
    &\left\Vert\partial_s\big(\beta(\rho_{\infty,j})(\partial_s\theta_{\infty,j}-c_0)\big)\right\Vert_{L^2(0,L)}
    =\left\Vert\lambda_{\theta1}
    \sin\theta_{\infty,j}-\lambda_{\theta2}
    \cos\theta_{\infty,j}\right\Vert_{L^2(0,L)}.
\end{align}
From Step 1, it follows that $\beta(\rho_{\infty,j})(\partial_s\theta_{\infty,j}-c_0)$ is bounded in $W^{1,2}(0,L)$ uniformly in $j\in\NN$ and hence also in $C([0,L])$. 
By \eqref{eq:lambdarho}, this implies boundedness of $\lambda_\rho(\theta_{\infty,j},\rho_{\infty,j})$ and with that, \eqref{eq:stationary} implies that $\partial_s^2 \rho_{\infty,j}$ is uniformly bounded in $L^2(0,L)$. Now, we know that $\rho_{\infty,j}$ is uniformly bounded in $C^1([0,L])$ and since $\beta(\rho_{\infty,j})\geq \inf_{[-M,M]}\beta$, this implies that $\partial_s^2 \theta_{\infty,j}$ is uniformly bounded in $L^2(0,L)$. It follows that $(\theta_{\infty,j}, \rho_{\infty,j})$ is bounded in $W^{2,2}(0,L)$, independently in $j\in \NN$.\\
\textit{Step 3: $(\theta_{\infty,j},\rho_{\infty,j})\to(\theta_c,\rho_c)$ in $C^1([0,L])$. 
}
Due to Step 2 and \eqref{eq:star}, there exists a (not relabeled) subsequence such that $(\theta_{\infty,j},\rho_{\infty,j})\rightharpoonup(\theta_\infty,\nu)$ in $W^{2,2}(0,L)$ and $(\theta_{\infty,j},\rho_{\infty,j})\rightarrow(\theta_\infty,\nu)$ in $C^1([0,L])$. 
The limit $(\theta_\infty,\nu)$ satisfies the Euler--Lagrage equations \eqref{eq:EL1} and \eqref{eq:EL2}.
Hence, $(\theta_\infty,\nu)$ is a constrained critical point. 
With \Cref{prop:minprobex2} it follows that $\theta_\infty\in C^\infty([0,L])$. 
Further, \Cref{lem:critpointconstrho} implies that $\theta_\infty$ describes an 
elastica. More precisely, \eqref{eq:int theta const} together with \Cref{rem:kappaelastica} yields $\theta_\infty=\frac{2\pi\omega}{L}s=\theta_c$. 
Finally, a standard subsequence argument yields \eqref{eq:C1 conv j to infty}.
\end{proof}

\section{Numerical experiments}
\label{sec:numerics}

\subsection{Newton's method for the gradient flow}
In the case $\omega = 1$ and $c_0 = 0$, a numerical scheme to solve the static minimization problem \eqref{eq:minprob} is proposed in \cite{BJSS2020}. We start by recalling the underlying idea and then explain how this can be extended to approximate solutions to \eqref{eq:flow equation}.

\medskip
\emph{Numerical approximation of the static minimization problem.}
The idea is to approximate the Euler--Lagrange equations \eqref{eq:EL1}-\eqref{eq:EL2} using finite differences, and to solve the resulting system with Newton's method.
We start by explaining the process formally: assuming that we have discretized space, we consider $\hat{\eta} = (\hat\theta, \hat\rho) \in \mathbb{R}^{2N}$,
the piecewise constant approximation of $\eta = (\theta,\rho)$, as well as the corresponding energy $\hat{\mathcal{E}}_\mu$, along with $\hat{\mathcal{E}}^\theta$, $\hat{\mathcal{E}}_\mu^\rho$ and $\hat{\mathcal{G}}$.
More generally, in what follows, a hat marks a space discrete quantity.
We denote the
set of admissible solutions by $\{ \hat{\eta} : \hat{\mathcal{G}}[\hat{\eta}] = 0 \in \mathbb{R}^d\}$, where $d$ is the number of constraints, so that the approximated minimization problem \eqref{eq:minprob} can by written as
\begin{equation}
    \min_{\hat{\mathcal{G}}[\hat{\eta}] = 0} \hat{\mathcal{E}}_\mu[\hat{\eta}]\,.
    \label{eq:discrete minprob}
\end{equation}
The first order optimality conditions are given by
\begin{equation}
\left\{
    \begin{aligned}
        \nabla \hat{\mathcal{E}}_\mu[\hat{\eta}] +  D\hat{\mathcal{G}}[\hat{\eta}]^{\top} \Lambda &= 0\,,
        \\
        \hat{\mathcal{G}}[\hat{\eta}] &= 0\,,
    \end{aligned}
\right.
\end{equation}
where $\Lambda \in \mathbb{R}^d$ are the corresponding Lagrange multipliers.
We can solve this system iteratively:
assuming that the tuple $(\hat{\eta}^j, \Lambda^j)$ is known, we linearize $\hat{\mathcal{E}}_\mu$ and $\hat{\mathcal{G}}$ around $(\hat{\eta}^j, \Lambda^j)$ and get the following system, which is linear in $(\hat{\eta}^{j+1} - \hat{\eta}^j, \Lambda^{j+1} - \Lambda^{j})$:
\begin{equation}
\left\{
    \begin{aligned}
        \left(\nabla^2 \hat{\mathcal{E}}_\mu[\hat{\eta}^{j}] + \Lambda^j D^2 \hat{\mathcal{G}}[\hat{\eta}^{j}]\right)(\hat{\eta}^{j+1} - \hat{\eta}^{j}) &
        \\
        + D \hat{\mathcal{G}}[\hat{\eta}^{j}]^\top(\Lambda^{j+1} - \Lambda^{j}) &=
        - \nabla \hat{\mathcal{E}}_\mu[\hat{\eta}^{j}]
        - D \hat{\mathcal{G}}[\hat{\eta}^{j}]^\top\Lambda^{j}  
        \\
         D \hat{\mathcal{G}}[\hat{\eta}^j] \, (\hat{\eta}^{j+1} - \hat{\eta}^{j}) &= 0\,.
    \end{aligned}
\right. .
\end{equation}

\emph{Extension to the time-dependent problem.}
Here, we use the same underlying idea and De Giorgi's minimizing movements to solve the corresponding $L^2$-gradient flow \eqref{eq:flow equation} numerically.
To do this, we perform a time discretization with time step $\tau$, and consider the corresponding time discrete solution $\hat{\eta}_\tau^{n} = \hat{\eta}_\tau(n\tau)$, which is updated as follows:
\begin{equation}
    \hat{\eta}_\tau^{n+1} \in \argmin\limits_{\hat{\mathcal{G}}(\hat{\eta}_\tau) = 0} \frac{1}{2\tau} \|\hat{\eta}_\tau - \hat{\eta}_\tau^{n}\|_{L^2}^2 + \hat{\mathcal{E}}_\mu[\hat{\eta}_\tau]\,.
    \label{eq:minimov}
\end{equation}

This new minimization problem has the same structure as that of \eqref{eq:discrete minprob}, we can solve it with the method sketched above.
\hl{In terms of time integration, this approach amounts to a one-step implicit Euler method, which in general has good stability properties compared to explicit schemes for parabolic problems. The corresponding nonlinear system is solved using Newton's method, which is a generic approach.}
For $\tau$ and $n$ given, this \hl{yields the following system}, where the index $n$ corresponds to the discretization in time and the index $j$ to the discretization in space:
\begin{equation}
    \small
\left\{
    \begin{aligned}
        \left(\frac{I_{2N}}{\tau}+\nabla^2 \hat{\mathcal{E}}_\mu+ \Lambda_\tau^{n+1,j} D^2 \hat{\mathcal{G}}[\hat{\eta}_\tau^{n+1,j}]\right)(\hat{\eta}_\tau^{n+1,j+1} - \hat{\eta}_\tau^{n+1,j})&
        \\
        + D \hat{\mathcal{G}}[\hat{\eta}_\tau^{n+1,j}]^\top (\Lambda_\tau^{n+1,j+1} - \Lambda_\tau^{n+1,j}) &=
        \\
        - \Big(\frac{\hat{\eta}_\tau^{n+1,j} - \hat{\eta}_\tau^{n}}{\tau}
            + \nabla \hat{\mathcal{E}}_\mu[\hat{\eta}_\tau^{n+1,j}]
                                                                                               &+ D \hat{\mathcal{G}}[\hat{\eta}_\tau^{n+1,j}]^\top \Lambda_\tau^{n+1,j} \Big)
        \\
        D \hat{\mathcal{G}}[\hat{\eta}_\tau^{n+1,j}] \, (\hat{\eta}_\tau^{n+1,j+1} - \hat{\eta}_\tau^{n+1,j})   &= 0\,,
    \end{aligned}
\right.
\end{equation}
where $I_{2N}$ is the identity matrix of size $2N$.
This is a linear system with unknown $(\hat\eta_\tau^{n+1,j+1} - \hat\eta_\tau^{n+1,j}, \Lambda_\tau^{n+1,j+1} - \Lambda_\tau^{n+1,j})$.
The inner loop (i.e.\ the loop in $j$) is initialized by setting $\hat{\eta}_\tau^{n+1,0} = \hat{\eta}_\tau^n$ and finalized with $\hat{\eta}_\tau^{n+1} = \hat{\eta}_\tau^{n+1,j_\infty}$ (and similarly for $\Lambda_\tau^{n+1}$). The iteration is stopped at $j_\infty$, corresponding to $(\hat{\eta}_\tau^{n+1,j_\infty}, \Lambda_\tau^{n+1,j_\infty})$ fulfilling a convergence criterion, typically based on the $L^2$-norm of the residual, i.e.\ the right-hand side.
This system has the form
\begin{multline}
    \small
\left(
\begin{bmatrix}
    I_{2N}  & 0 \\ 0 & 0
\end{bmatrix}
+ \tau
\begin{bmatrix}
    \nabla^2 \hat{\mathcal{E}}_\mu[\hat{\eta}_\tau^{n+1,j}] + \Lambda_\tau^{n+1,j} D^2 \hat{\mathcal{G}}[\hat{\eta}_\tau^{n+1,j}] & D \hat{\mathcal{G}}[\hat{\eta}_\tau^{n+1,j}]^\top \\ D \hat{\mathcal{G}}[\hat{\eta}_\tau^{n+1,j}] & 0
\end{bmatrix}
\right)
\begin{bmatrix}
    \hat{\eta}_\tau^{n+1,j+1} - \hat{\eta}_\tau^{n+1,j}
    \\
    \Lambda_\tau^{n+1,j+1} - \Lambda_\tau^{n+1,j}
\end{bmatrix}
\\
=
-\begin{bmatrix}
    \hat{\eta}_\tau^{n+1,j} - \hat{\eta}_\tau^{n}
    \\
    0
\end{bmatrix}
- \tau
\begin{bmatrix}
    \nabla \hat{\mathcal{E}}_\mu[\hat{\eta}_\tau^{n+1,j}]
    + D \hat{\mathcal{G}}[\hat{\eta}_\tau^{n+1,j}]^\top \Lambda_\tau^{n+1,j}
    \\
    0
\end{bmatrix}.
\nonumber
\end{multline}
\hl{The number of Newton iterations is not fixed beforehand. Rather, the loop is halted when either the norm of the residual (the second term on the right-hand side) or the change in the residual becomes small enough.
In practice, the number of iterations ranges from one to a couple of tens.}

\hl{The time-step $\tau$ is adapted during the (time) iteration. More precisely, $\tau$ is multiplied (resp.\ divided) by a given larger-than-one factor if the difference between two successive iterates (i.e.\ $\| \hat\eta_\tau^{n+1} - \hat\eta_\tau^{n} \|_\infty$) falls below (resp.\ exceeds) a given threshold.}

\subsection{Discretization}

As alluded to earlier, we consider a homogeneous discretization of $[0,L]$ of size $N$, with $s_i = i L/N = i \Delta s$ for $0 \le i < N$. 
We can then define the space discrete functions $\hat{\eta}_{\tau,i} = (\hat{\theta}_{\tau,i},\hat{\rho}_{\tau,i}) = (\hat{\theta}_{\tau}(s_i), \hat{\rho}_\tau(s_i))$.
From the periodicity conditions, we extend the definition to $-1 \le i \le N$ by defining
$\hat{\theta}_{\tau,-1} = \hat{\theta}_{\tau,N-1} - 2\omega\pi$, $\hat{\theta}_{\tau,N} = \hat{\theta}_{\tau,0} + 2\omega\pi$, as well as
$\hat{\rho}_{\tau,-1} = \hat{\rho}_{\tau,N-1}$, $\hat{\rho}_{\tau,N} = \hat{\rho}_{\tau,0}$.
Because of the discontinuity in $\theta$, we also need to define the forward finite difference operator $D_+\hat{\theta}_{\tau}$ with
\begin{equation}
    (D_+\hat{\theta}_\tau)_i =
    \begin{cases}
        \hat{\theta}_{\tau,i+1} - \hat{\theta}_{\tau,i} & 0 \le i < N-1
        \\
        \hat{\theta}_{\tau,0} + 2\pi\omega - \hat{\theta}_{\tau,N-1} & \text{otherwise}\,.
    \end{cases}
\end{equation}
We define the backward (resp. centered) finite difference operator $D_-\hat{\theta}_\tau$ (resp.\ $D_c \hat{\theta}_\tau$) in the same fashion.
The corresponding energy
is
$\hat{\mathcal{E}}_\mu[\hat{\theta}_{\tau},\hat{\rho}_{\tau}]$:
\begin{equation}
    \hat{\mathcal{E}}_\mu[\hat{\theta}_{\tau}, \hat{\rho}_{\tau}]
    = \frac{\Delta s}{2}\sum_{0 \le i < N} \beta(\hat{\rho}_{\tau,i}) \left(\frac{(D_c\hat{\theta}_{\tau})_i}{2 \Delta s}-c_0\right)^2 + \mu \left(\frac{\hat{\rho}_{\tau,i+1} - \hat{\rho}_{\tau,i}}{\Delta s}\right)^2\,,
    \label{eq:discrete energy}
\end{equation}
and its
gradient $\nabla \hat{\mathcal{E}}_\mu = [\nabla_\theta \hat{\mathcal{E}}_\mu\;\; \nabla_\rho \hat{\mathcal{E}}_\mu]^T$ is approximated by:
\begin{equation}
    \small
\begin{aligned}
    \left(\nabla_\theta \hat{\mathcal{E}}_\mu[\hat{\theta}_{\tau}, \hat{\rho}_{\tau}]\right)_i
    &= \Delta s\left[\frac{\beta(\hat{\rho}_{\tau,i-1})+\beta(\hat{\rho}_{\tau,i})}{2} \left(\frac{(D_-\hat{\theta}_\tau)}{\Delta s}-c_0\right) - \frac{\beta(\hat{\rho}_{\tau,i})+\beta(\hat{\rho}_{\tau,i+1})}{2} \left(\frac{(D_+\hat{\theta}_\tau)}{\Delta s}-c_0\right)\right] / \Delta s
    \\
    \left(\nabla_\rho \hat{\mathcal{E}}_\mu[\hat{\theta}_{\tau}, \hat{\rho}_{\tau}]\right)_i
    & =  \Delta s\left[\frac{\beta'(\hat{\rho}_{\tau,i})}{2} \left(\frac{(D_c\hat{\theta}_{\tau})_i}{2 \Delta s}-c_0\right)^2 - \mu \frac{\hat{\rho}_{\tau,i-1} - 2\hat{\rho}_{\tau,i} + \hat{\rho}_{\tau,i+1}}{(\Delta s)^2}\right]\,,
    \label{eq:discrete energy gradient}
\end{aligned}
\end{equation}
where the expression for $\nabla_\theta \hat{\mathcal{E}}_\mu$ is itself a finite difference, so that the divergence structure of the system is preserved at the discrete level.
The constraints are written as:
\begin{equation}
    \hat{\mathcal{G}}(\hat{\theta}_{\tau}, \hat{\rho}_{\tau}) =
    \Delta s 
    \begin{bmatrix}
        \sum_{0 \le i < N} \hat{\rho}_{\tau,i} - \nu L
        \\
        \sum_{0 \le i < N} \sin \hat{\theta}_{\tau,i}
        \\
        \sum_{0 \le i < N} \cos \hat{\theta}_{\tau,i}
    \end{bmatrix}\,,
    \qquad
    \nabla \hat{\mathcal{G}}(\hat{\theta}_{\tau}, \hat{\rho}_{\tau}) =
    \Delta s 
    \begin{bmatrix}
        \hat{\rho}_{\tau}
        &
        \cos \hat{\theta}_{\tau}
        &
        - \sin \hat{\theta}_{\tau}
    \end{bmatrix}\,.
    \label{eq:discrete constraints}
\end{equation}

For the sake of readability, we do not write the Hessian matrices of $\hat{\mathcal{E}}_\mu$ and $\hat{\mathcal{G}}$.

\subsection{Stabilization of \texorpdfstring{$k$}{k}-fold rotationally symmetric solutions}
As shown in Section~\ref{sec:rotsym}, in the case $\omega=1$, ${k}$-fold rotational symmetry is preserved along the flow.
For $k > 1$, such solutions are not numerically stable in general, and roundoff errors might lead to an incorrect asymptotic profile.

More precisely, let us define the real Fourier coefficients for $u = \theta - \phi$ (which we identify with its periodic $L$-extension to $\mathbb{R}$):
\[ a_0^u = \frac{1}{L} \int_0^L u(s) \; \mathrm{d}s\,, \quad a_i^u = \frac{2}{L} \int_0^L u(s) \cos\left(\frac{2 \pi}{L} i s\right) \; \mathrm{d}s \,, \quad b_i^u = \frac{2}{L} \int_0^L u(s) \sin\left(\frac{2 \pi}{L} is\right) \; \mathrm{d}s \,,\]
where $i \ge 1$. The real Fourier coefficients $a_0^\rho$, $a_i^\rho$ and $b_i^\rho$ are defined similarly.

A solution is $k$-fold rotationally symmetric if $u$ and $\rho$ are $L/k$-periodic, i.e.\ if 
\begin{equation}
a_i^u = b_i^u = a_i^\rho = b_i^\rho = 0 \text{ if } i \ge 1 \text{ is not a multiple of } k\,.
\label{eq:kfold fourier condition}
\end{equation}

In these terms, for the solution associated with $k$-fold rotationally symmetric initial datum, it can happen
that the coefficient for some mode $\ell < k$, which is zero initially, becomes nonzero because of roundoff errors.
If this mode is numerically unstable for the choice of parameters considered (esp.\ $\mu$), this mode will grow and break the solution's symmetry.

To address this issue, we define the spaces which satisfy condition \eqref{eq:kfold fourier condition}:
\[ \tilde{V}_{{k}} := \Bigl\{ \vspan\Bigl((\cos(2\ell \pi s_i / L))_i, (\sin(2\ell \pi s_i /L))_i\Bigr) : \exists q \in \mathbb{N} \text{ s.t. } \ell = q \, k \text{ and } \ell \le \lfloor N/2 \rfloor \Bigr\}^2\,, \]
\[ V_{{k}} := \tilde{V}_{{k}} + \text{constants.} \]
Then, in the Newton iteration, instead of setting $\hat{\eta}_\tau^{n+1} = \hat{\eta}_\tau^{n,j_\infty}$, we set
\[ \hat{\eta}_\tau^{n+1} = \hat{\eta}_{\tau,p}^{n,j_\infty} := \Pi_{\tilde{V}_{k}} \hat{\eta}_{\tau}^{n,j_\infty}\,, \]
where the discrete $L^2$-orthogonal projection is done using the Fast Fourier Transform.
Note that we exclude the space spanned by constants
from the projection space, i.e.\ we do not project on $V_{k}$ but on $\tilde{V}_{k}$ (which does not contain constants), since the integrals of both $\theta$ and $\rho$ are preserved along the flow, so that the first Fourier
coefficient of the increment $\hat{\eta}_{\tau}^{n+1} - \hat{\eta}_{\tau}^{n}$ is always zero.

\subsection{Results}
\label{eq:numerical results}

\begin{table}[h]
    \footnotesize
    \begin{center}
    \begin{tabular}{lccccccccc}
        \toprule
        Description & Figure & $N$ & $L$ & $\mu$ & $c_0$ & $\beta(x)$ & $\nu$ & $\omega$ & $k$\\
        \midrule
        Loss of convexity   & \ref{fig:loss of convexity} & 1440 & $2\pi$ &$10^{-1}$ & 1 & $e^{-x}$ & $0$  & $1$ & $2$ \\
        Loss of embeddedness for $c_0 > 2\pi/L$ & \ref{fig:energy high c0}         & 720& $2\pi$ &  $10^{-3}$ & 3 & $e^x$ & 0 & $1$ & $2$ \\
        Loss of embeddedness for $c_0 = 0$& \ref{fig:loss embeddedness} & 1440 & $2\pi$&  $1$ & $0$ & $0.03 + b x^2$ & $1/\pi$  & $1$ & $2$ \\
        $\hat{\mathcal{E}}_\mu(t)$, $c_0 = 0$, low $\|\hat\kappa_0 - 1\|_{\infty}$ & \ref{fig:energy low curvature}   & 720& $2\pi$ &  $10^{-3}$ & 0 & $e^x$ & 0 & $1$ & $5$ \\
        $\hat{\mathcal{E}}_\mu(t)$, $c_0 = 0$, high $\|\hat\kappa_0 - 1\|_{\infty}$ & \ref{fig:energy high curvature} & 1440& $2\pi$ &  $10^{-3}$ & 0 & $e^x$ & 0 & $1$ & $5$ \\
        $\hat\theta_0$ $\frac{L}{10}$-periodic, $\hat\rho_0$ $\frac{L}{20}$-periodic & \ref{fig:10-fold wave number rho 20} & 1440& $2\pi$ &  $10^{-3}$ & 0 & $e^x$ & 0 & $1$ & $10$ \\
        $(\hat\theta_0, \hat\rho_0)$ as $\mu$ increases & \ref{fig:omega 2} & 420& $2\pi$ &  $10^{-2}$ to $5$ & 0 & $e^{x}$ & $0$ & $2$ & - \\
        $(\hat\theta_0, \hat\rho_0)$ as $\beta'$ decreases & \ref{fig:omega 2} & 420& $2\pi$ &  $10^{-2}$ & 0 & $e^{a x}$ & $0$ & $2$ & - \\
        Figure eight & \ref{fig:omega 0 figure 8} & 720& $2\pi$ &  $10^{-1}$ & 2 & $0.1 + x^2$ & $0$ & $0$ & - \\
        $2$-fold figure eight & \ref{fig:omega 0 figure 8 double} & 1440 & $2\pi$&  $10^{-1}$ & 0 & $0.1 + x^2$ & $0$ & $0$ & - \\
        \bottomrule
    \end{tabular}
    \end{center}
    \caption{Parameters used in the figures below. In the last column, $k$ is given when the initial datum $(\rho_0, c_0)$ is $k$-fold rotational symmetric.}
    \label{tab:parameters}
\end{table}

Here we give some example behavior of the solutions,
\hl{the implementation and configuration files used for most figures are available \href{https://github.com/gjankowiak/dAJLR.2024}{online}\footnote{\texttt{https://github.com/gjankowiak/dAJLR.2024} (code licensed under the GPLv3)}}.
First, in the case $\omega = 1$, we look at the possible loss of convexity and simplicity of the corresponding curve.
Then, we give examples of the time evolution of the energies $\hat{\mathcal{E}}$, $\hat{\mathcal{E}}^\theta$ and $\hat{\mathcal{E}}_\mu^\rho$ for small $\mu$, where we observe metastable energy plateaus.
In this case, the bending energy $\hat{\mathcal{E}}^\theta$ makes up most of the total energy $\hat{\mathcal{E}}_\mu$.
Then, in the case $\omega = 2$, we illustrate how the choice of $\beta$ and $\mu$ can impact the limiting profile $(\hat{\theta}_\infty, \hat{\rho}_\infty)$.
Finally, for $\omega = 0$ and nonzero $c_0$, we look at the convergence of two curves, the first to the figure eight and the second to the $2$-fold covering of the figure eight.
All examples here correspond to $L = 2\pi$. A quick overview of the corresponding figures is given in \Cref{tab:parameters}.
\subsubsection{Loss of convexity}
\label{sec:loss convexity}

\medskip

In this section, we present a simple example illustrating the loss of convexity discussed in \Cref{ex:counterconvex}.
As initial datum, we consider $\hat\theta_0$ to be the discretization of a stadium of aspect ratio roughly equal to 1:5. $\hat{\rho}_0$ is (the discretization of) a cosine function of amplitude $1$.
We take $\beta : x \mapsto e^{-x}$, and fix the parameters $c_0 = 1$ and $\mu = 10^{-1}$.
This situation is illustrated in \Cref{fig:loss of convexity}, where the loss of convexity is visible at time $t \approx 0.05$.
We must note that, here, unlike in \Cref{ex:counterconvex},
the initial datum $\hat{\rho}_0$ is not linear on the flat sides of the stadium.

\begin{figure}[h]
    \begin{center}
        \begin{tabular}{lc}
            \raisebox{0.5cm}{$t = 0$} & \includegraphics[width=6cm]{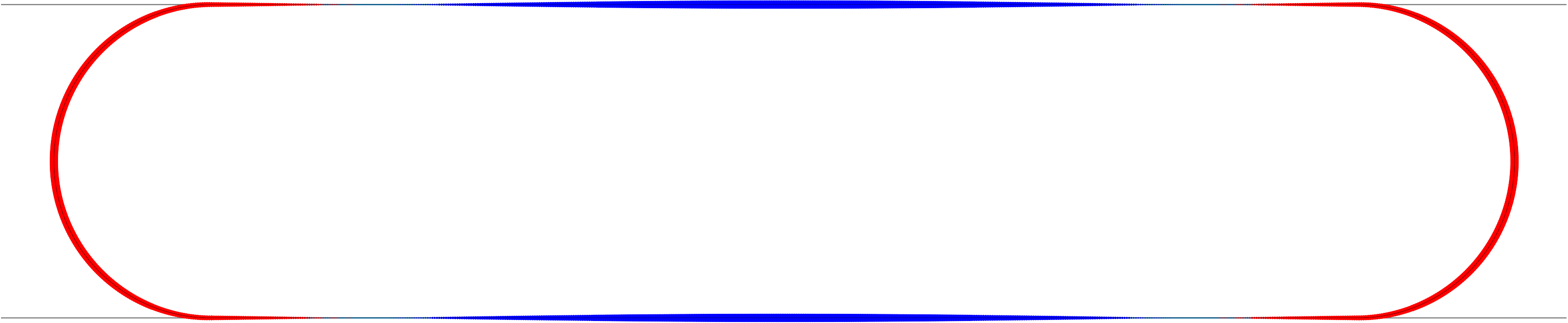} \\
            \raisebox{2.1cm}{$t \approx 0.05$} & \includegraphics[width=6cm]{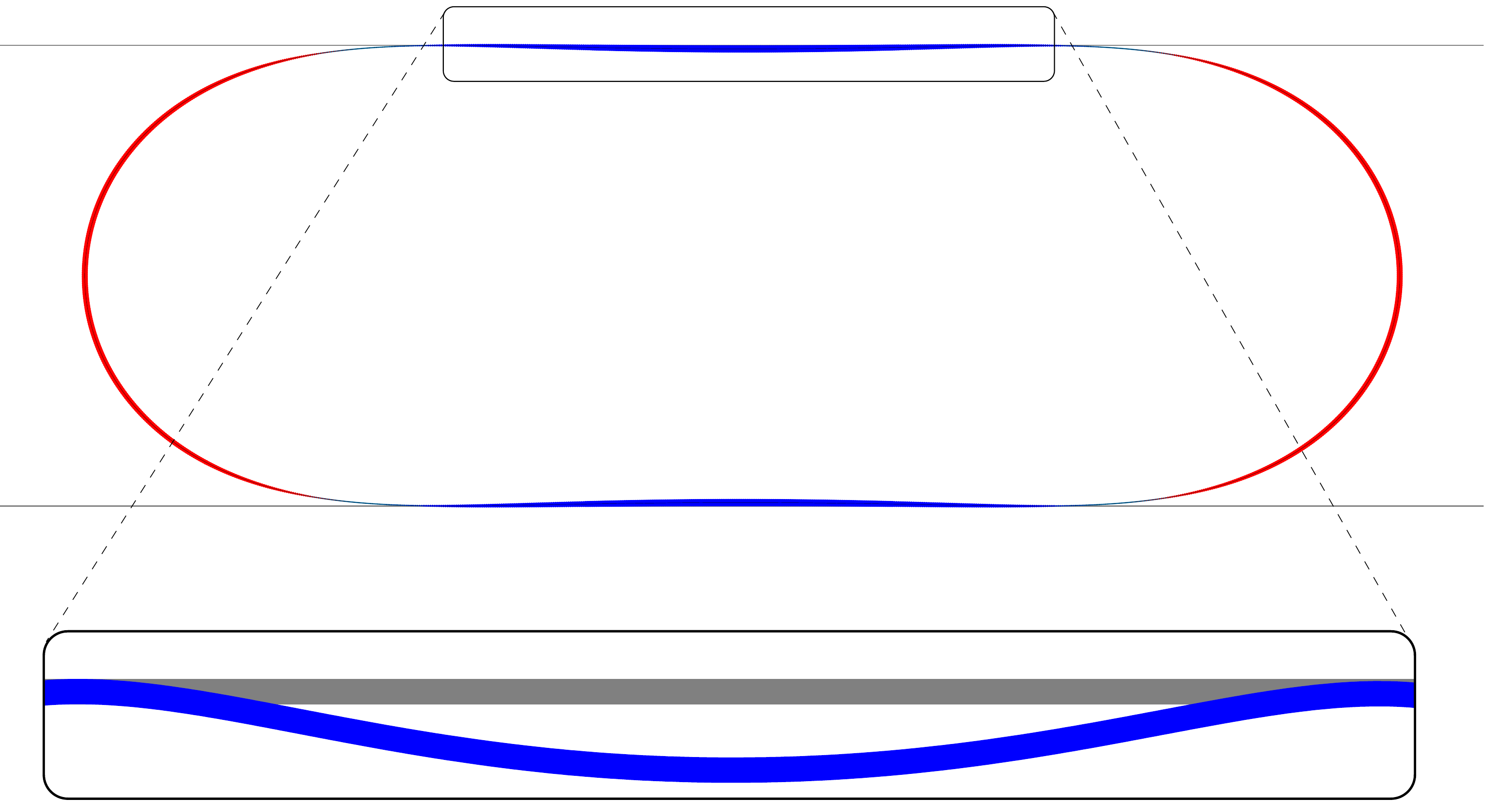} \\
        \end{tabular}
    \end{center}
    \caption{Loss of convexity of a stadium with sides parallel to the $x$-axis. Here and in subsequent figures, the width of the stroke increases with $|\hat\rho|$. For the sake of readability, the y-scale is amplified 10 times and the curve is shown with constant width in the inset. The gray lines are the tangents parallel to the $x$-axis, for reference.
    Here and in all the following figures, positive values of $\rho$ are shown in blue, negative values in red.
    It is not shown here, but the curve becomes convex again at later times.}
    \label{fig:loss of convexity}
\end{figure}

By taking a very elongated stadium, it is reasonable to believe that the corresponding curve will not only lose convexity but also simplicity. In practice, it is difficult to show this behavior because of our choice of discretization, which enforces a homogeneous distribution of the nodes.
An elongated stadium would require a very large $N$ to resolve the rounded ends in a satisfying way. Instead, in the following, we choose a different initial condition, which also leads to loss of simplicity.

\begin{figure}[h]
    \begin{minipage}{0.19\textwidth}
    \begin{center}
        \includegraphics[width=\textwidth]{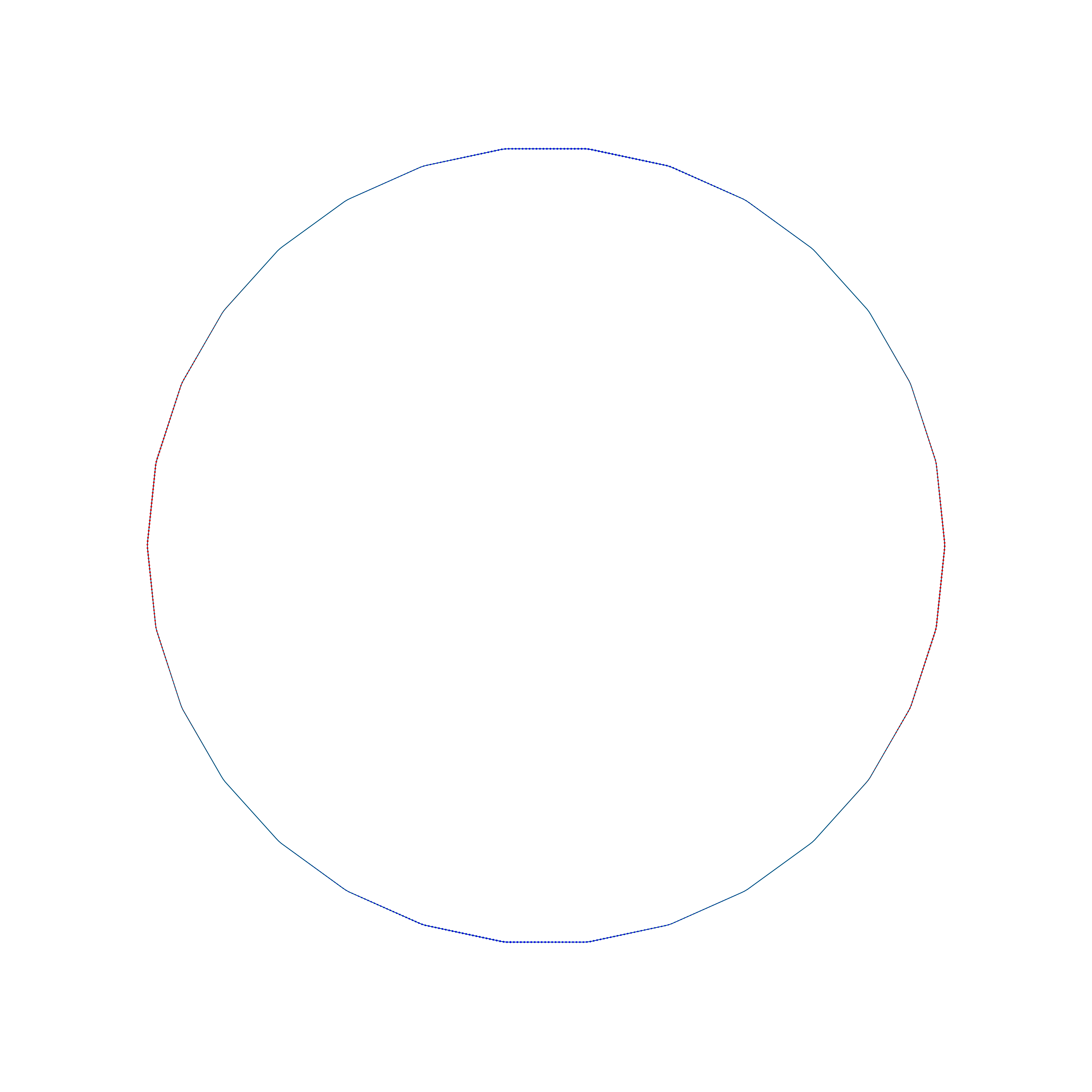}
        $t = 0$
    \end{center}
    \end{minipage}
    \begin{minipage}{0.19\textwidth}
    \begin{center}
        \includegraphics[width=\textwidth]{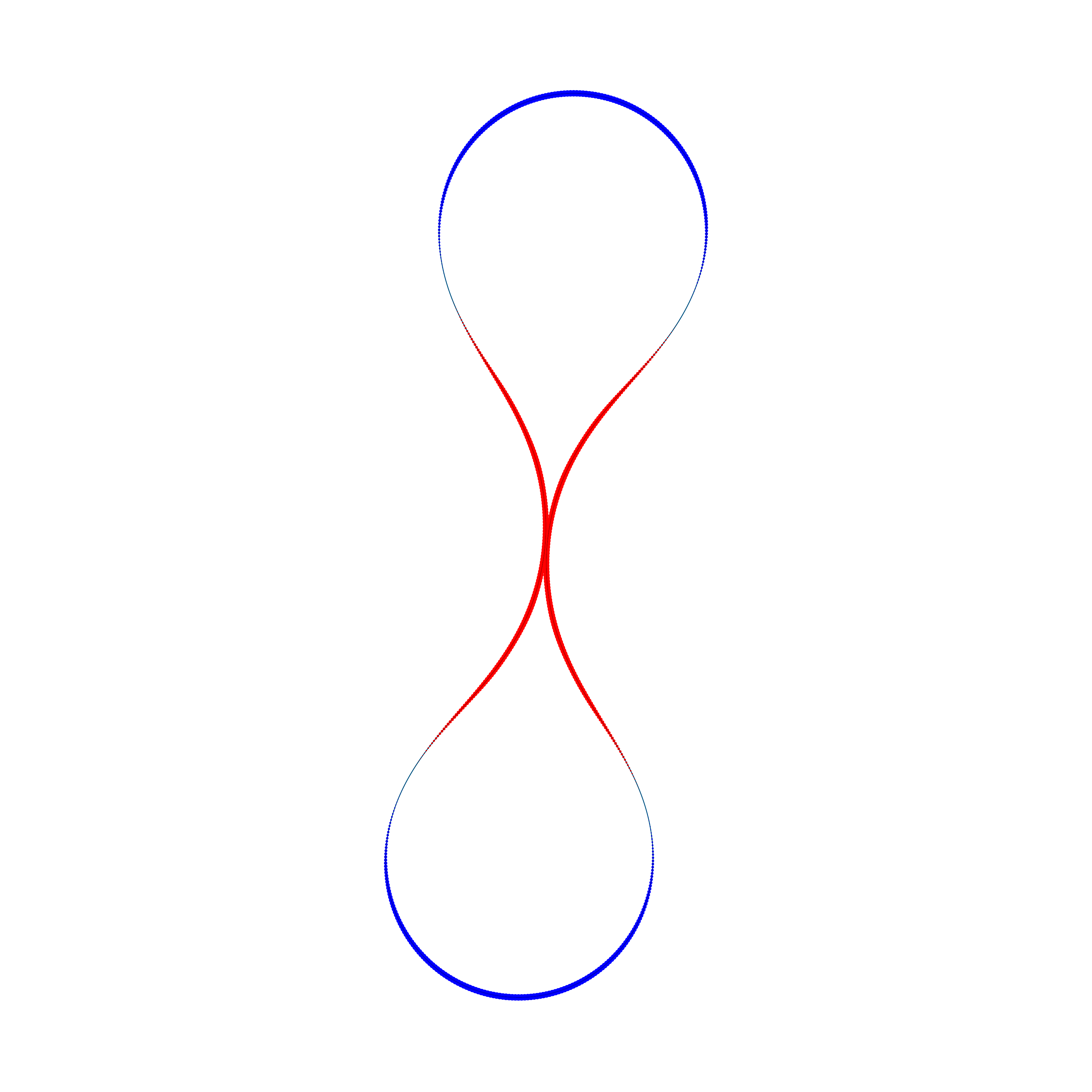}
        $t \approx 1.82$
    \end{center}
    \end{minipage}
    \begin{minipage}{0.19\textwidth}
    \begin{center}
        \includegraphics[width=\textwidth]{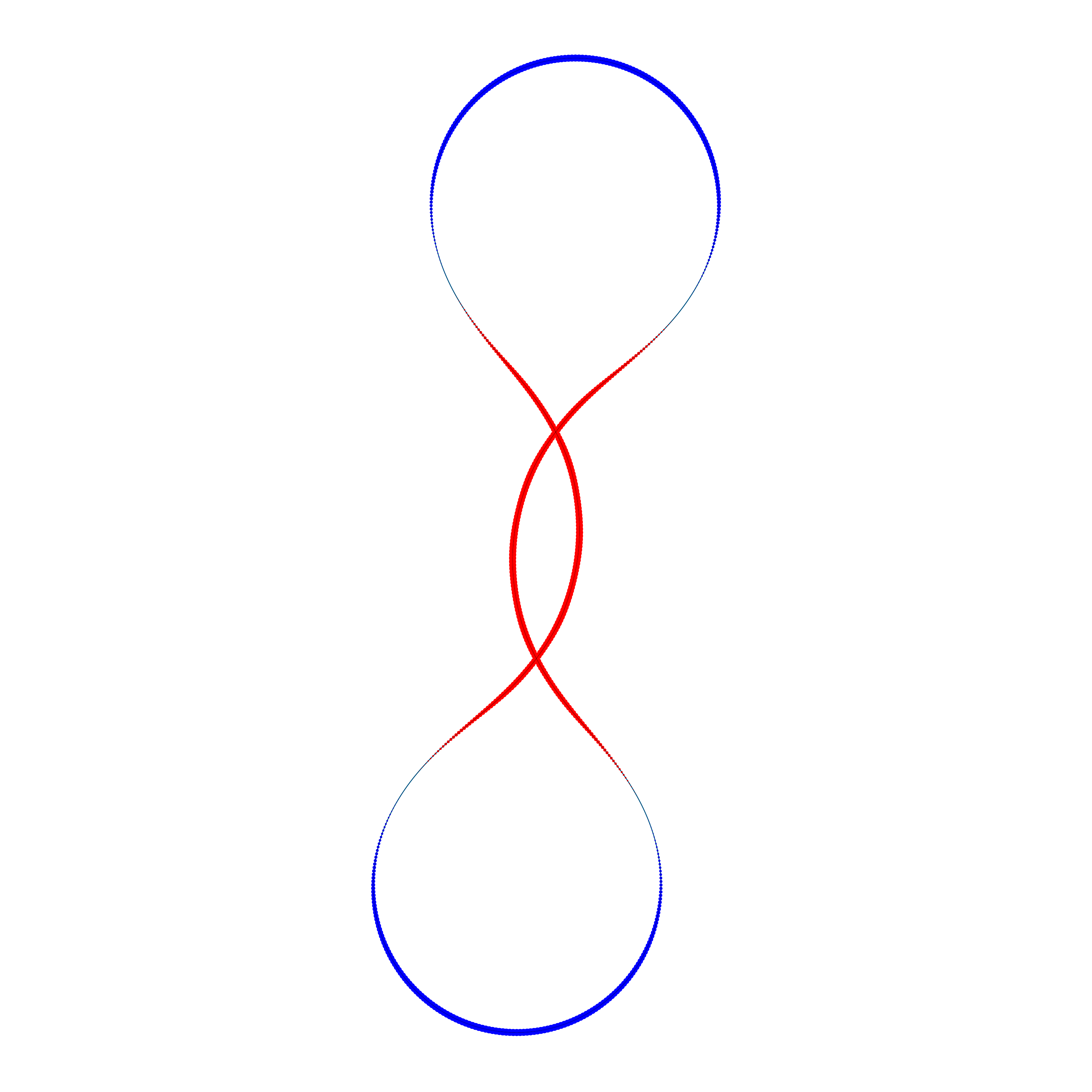}
        $t \approx 2$
    \end{center}
    \end{minipage}
    \begin{minipage}{0.19\textwidth}
    \begin{center}
        \includegraphics[width=\textwidth]{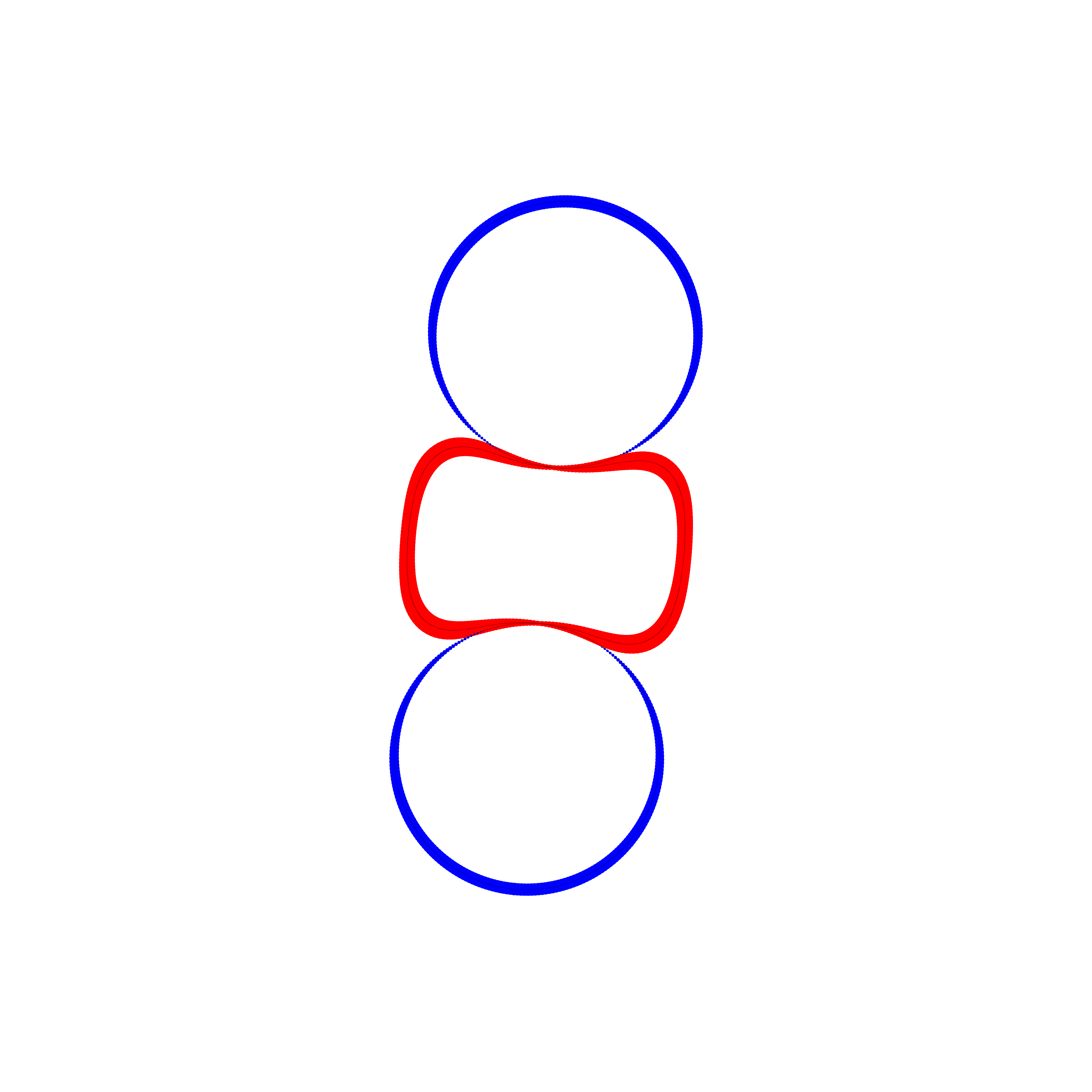}
        $t \approx 8$
    \end{center}
    \end{minipage}
    \begin{minipage}{0.19\textwidth}
    \begin{center}
        \includegraphics[width=\textwidth]{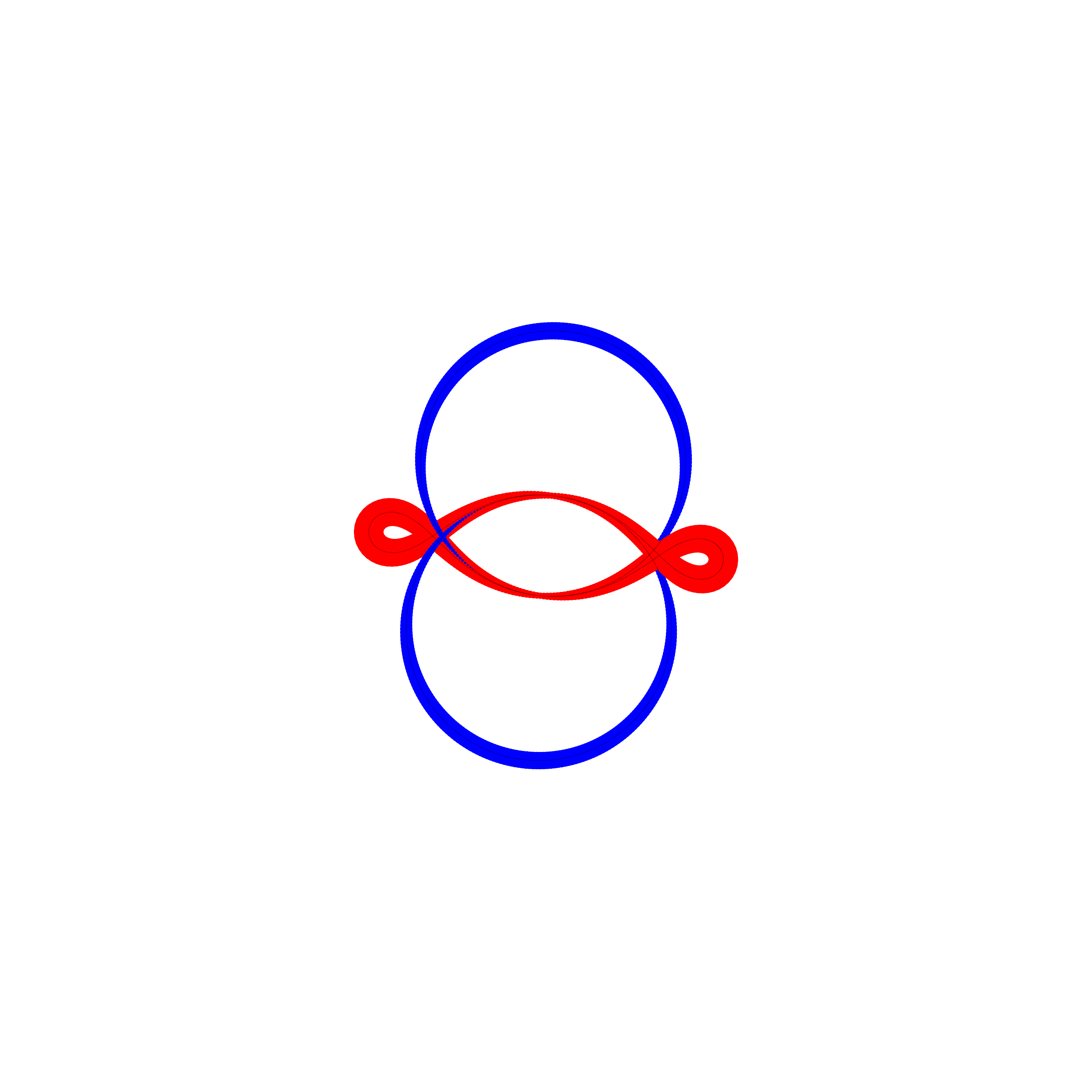}
        $t \approx 6500$
    \end{center}
    \end{minipage}
    \caption{Snapshots of the solution showing the loss of embeddedness for $c_0 = 3$, starting from a convex initial datum.}
    \label{fig:energy high c0}
\end{figure}

\bigskip

\subsubsection{Simplicity (or embeddedness) along the flow}
\label{sec:loss embeddedness}

In the case $\omega=1$, we now investigate the possible loss of embeddedness of the curve along the evolution. We look at two different situations: first, starting with an initial datum corresponding to a convex curve, with $c_0 > 2\pi / L$.
Second, for $c_0 = 0$, we carefully choose the initial datum such that $\hat\theta_0$ corresponds to an embedded curve with a narrow neck. As the curve evolves, the sides of this neck come closer together and eventually cross.

\medskip
\indent\emph{Loss of embeddedness, first case.} We start with $c_0 = 3 > 2\pi / L$, $\hat\theta_0$ close, but not equal, to $\hat\theta_c$, and
$\hat\rho_0$ corresponding to $\cos (4\pi / L s)$, so that the initial datum is the discretization of a $2$-fold rotationally symmetric curve. The solution at different times is drawn in \Cref{fig:energy high c0}. The associated curve loses convexity and then embeddedness at $t \approx 1.82$, and the solution stays $2$-fold rotationally symmetric, which is expected from the results of \Cref{sec:rotsym}.

\medskip

\indent\emph{Loss of embeddedness, second case.} In \Cref{prop:li_yau_emb}, the preservation of embeddedness of the curve described by $\theta$ is proven for $\omega = 1$, provided the initial energy is small enough.
Here, we provide a numerical example for which the discrete energy $\hat{\mathcal{E}}_{\mu}(\hat\theta_0, \rho_0)$ is above the threshold given by \Cref{prop:li_yau_emb}, and for which embeddedness is lost along the flow.

To do so, we consider the choice of parameters $\beta : x\mapsto 0.03 + b x^2, b > 0$ and $c_0=0$. The energy threshold in \Cref{prop:li_yau_emb} is then
\[ \varepsilon := \frac{\inf \beta}{2} \left(\frac{C_{2T}}{L} - 4\pi c_0 + L c_0^{2}\right) \approx 0.35\,, \]
where we recall that we chose $L = 2\pi$, and that it holds $C_{2T} \approx 146.628$.

As initial datum, we pick a curve which can be described as consisting of two lateral drop-shaped lobes which are connected by a long, narrow neck.
The inital datum $\hat\rho_0$ is chosen positive and distributed in the concave parts
of the lobes. Heuristically, the concave parts of the lobes concentrate a large part of the energy, and will quickly be ``flattened'' by the flow,
making the two sides of the middle channel cross. We show numerically that this crossing does occur for $b = 8$.

A representation of the corresponding curve and initial distribution $\hat{\rho}_0$ is shown in Figure~\ref{fig:simplicity init} and~\ref{fig:simplicity init zoom}.
For $b=8$, we have $\hat{\mathcal{E}}_\mu(\hat\theta_0, \hat\rho_0) \approx 16 > \varepsilon$. Loss of embeddedness occurs at $t_1 \approx 4.1 \times 10^{-3}$ with $\hat{\mathcal{E}}_\mu(\hat\theta(t_1), \hat\rho(t_1)) \approx 10$.
      The curve becomes simple again at $t_2 \approx 2.91 \times 10^{-2}$ with $\hat{\mathcal{E}}_\mu(\hat\theta(t_2), \hat\rho(t_2)) \approx 4.9 > \varepsilon$. See Figure~\ref{fig:simplicity uncross zoom}.
        Note that at time $t_2$ the energy is still one order of magnitude larger than the threshold given by Proposition~\ref{prop:li_yau_emb}.
        Simplicity is then kept for $t > t_2$.

\begin{figure}[h!]
    \centering
    \begin{minipage}{0.43\textwidth}
    \begin{subfigure}[l]{\textwidth}
        \subcaption{$t=0$.}
        \vspace{0.2cm}
        \includegraphics[width=\textwidth]{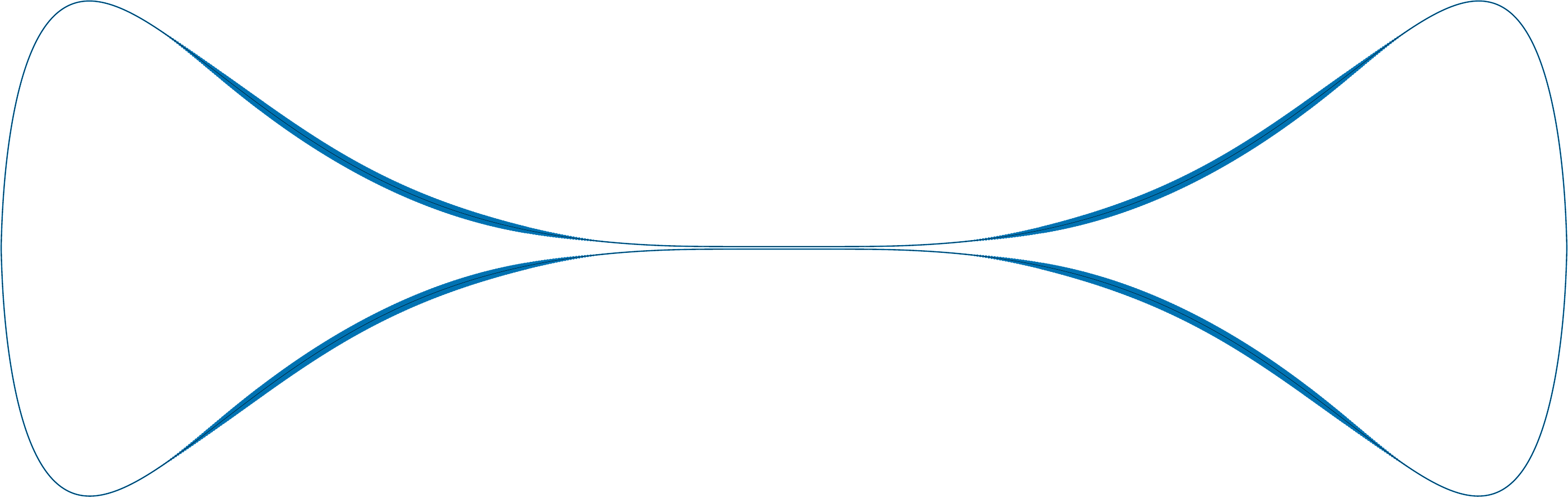}
        \label{fig:simplicity init}
    \end{subfigure}
    \begin{subfigure}{\textwidth}
        \subcaption{$t=0$, detail, $y$-scale amplified, $\hat{\rho}$ not shown. In the middle, the vertical size of the gap is $2p = 10^{-2}$.}
        \vspace{0.2cm}
        \includegraphics[width=\textwidth, height=0.68cm]{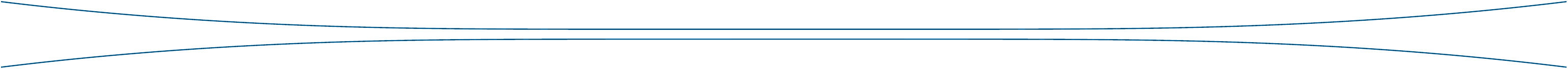}
        \label{fig:simplicity init zoom}
    \end{subfigure}
    \end{minipage}
    \hfill
    \begin{minipage}{0.43\textwidth}
    \begin{subfigure}{\textwidth}
        \subcaption{$t\approx0.01 > t_1$, same scale as (B).}
        \vspace{0.2cm}
        \includegraphics[width=\textwidth,height=1cm]{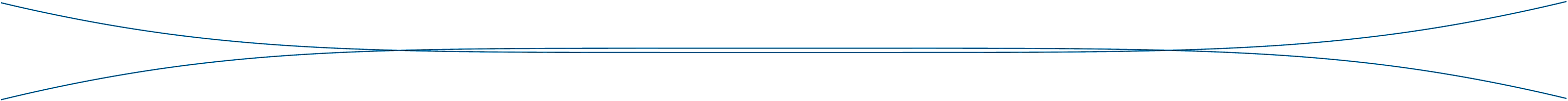}
        \label{fig:simplicity cross zoom}
    \end{subfigure}
    \begin{subfigure}{\textwidth}
        \subcaption{$t\approx0.04 > t_2$, same scale as (B).}
        \vspace{0.2cm}
        \includegraphics[width=\textwidth,height=2.6cm]{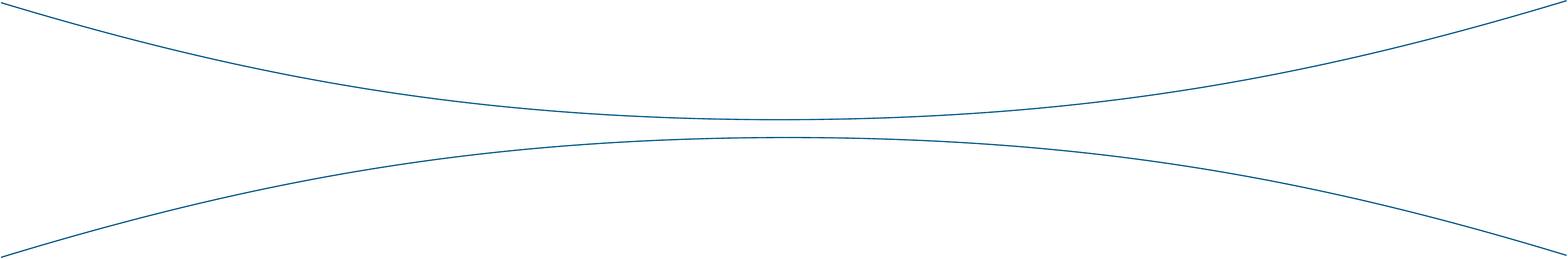}
        \label{fig:simplicity uncross zoom}
    \end{subfigure}
    \end{minipage}
    \caption{Snapshots showing that a simple curve does not need to stay simple along the flow, even for $c_0 = 0$.
    The choice of parameters is that of Section~\ref{sec:loss embeddedness}, here with $b=8$.}
    \label{fig:loss embeddedness}
\end{figure}


\subsubsection{Evolution of the energy for $\omega=1$, conservation of symmetry}
\label{sec:evolution energy}

Here, we provide some examples of the time evolution of the different components of the energy, along with characteristic shapes of the solution as well as the associated density distribution $\hat\rho$.
We are interested in cases where the solutions display a relatively rich behavior, so we choose $\mu$ small, namely $\mu = 10^{-3}$. The bending stiffness $\beta$ is chosen as $\beta(x) = e^x$, and we take zero total mass $\nu L$.

We consider two choices for the initial datum:
\begin{itemize}
    \item The first with $c_0 = 0$, with low initial energy $\hat{\mathcal{E}}_\mu$, with $\hat\kappa_0$ close to $2\pi/L = 1$ and $\hat\rho_0$ almost constant, see \Cref{fig:energy low curvature}.
  \item The second with $c_0 = 0$, with high initial energy, with $\hat\kappa_0$ and $\hat\rho_0$ oscillating significantly, see \Cref{fig:energy high curvature}.
\end{itemize}

In the two cases, both $\hat\theta_0$ and $\hat\rho_0$ are $L/5$-periodic, so that the initial datum is $5$-fold rotationally symmetric, and the symmetry preserving results of~\Cref{sec:rotsym} apply.
In the second case, $\hat\theta_0$ is not only $L/5$-periodic, but also $L/10$-periodic.
The corresponding results are shown in~\Cref{fig:energy low curvature} and \Cref{fig:energy high curvature}, respectively.
Because of the metastable nature of the evolution, both the time and the energy scales are logarithmic.
Since $\mu$ is small, with our choice of parameters, the main contribution to the initial energy $\hat{\mathcal{E}}_\mu(\hat\theta_0, \hat\rho_0)$  comes from the bending energy $\hat{\mathcal{E}}^\theta$.

\begin{figure}
    \includegraphics[
    width=0.9\textwidth]{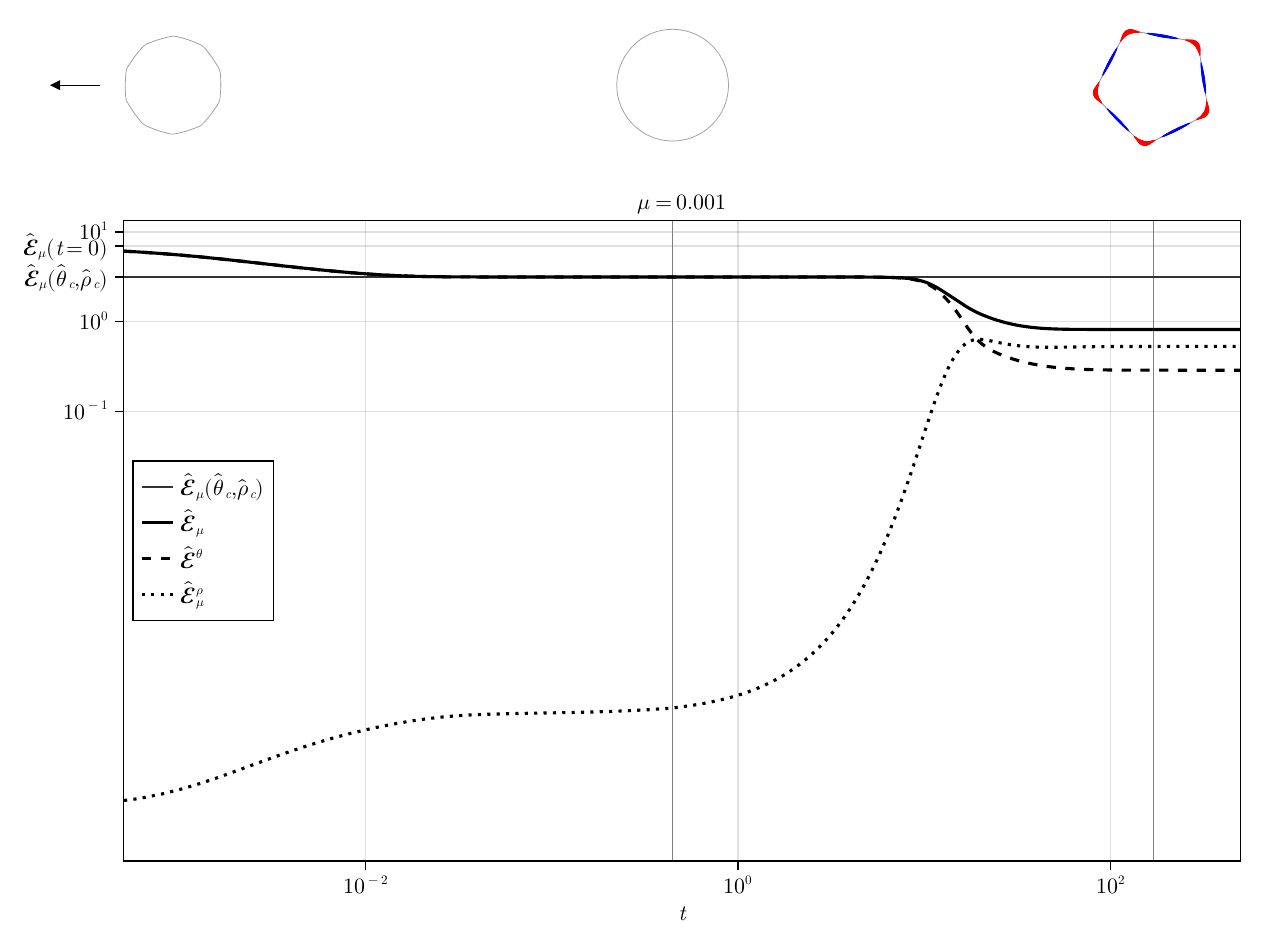}
    \caption{Energy evolution for $c_0 = 0$, starting with relatively low $\|\hat\kappa - 1\|_\infty$ and $5$-fold rotational symmetry.}
    \label{fig:energy low curvature}
\end{figure}

\begin{figure}
    \includegraphics[
    width=0.9\textwidth]{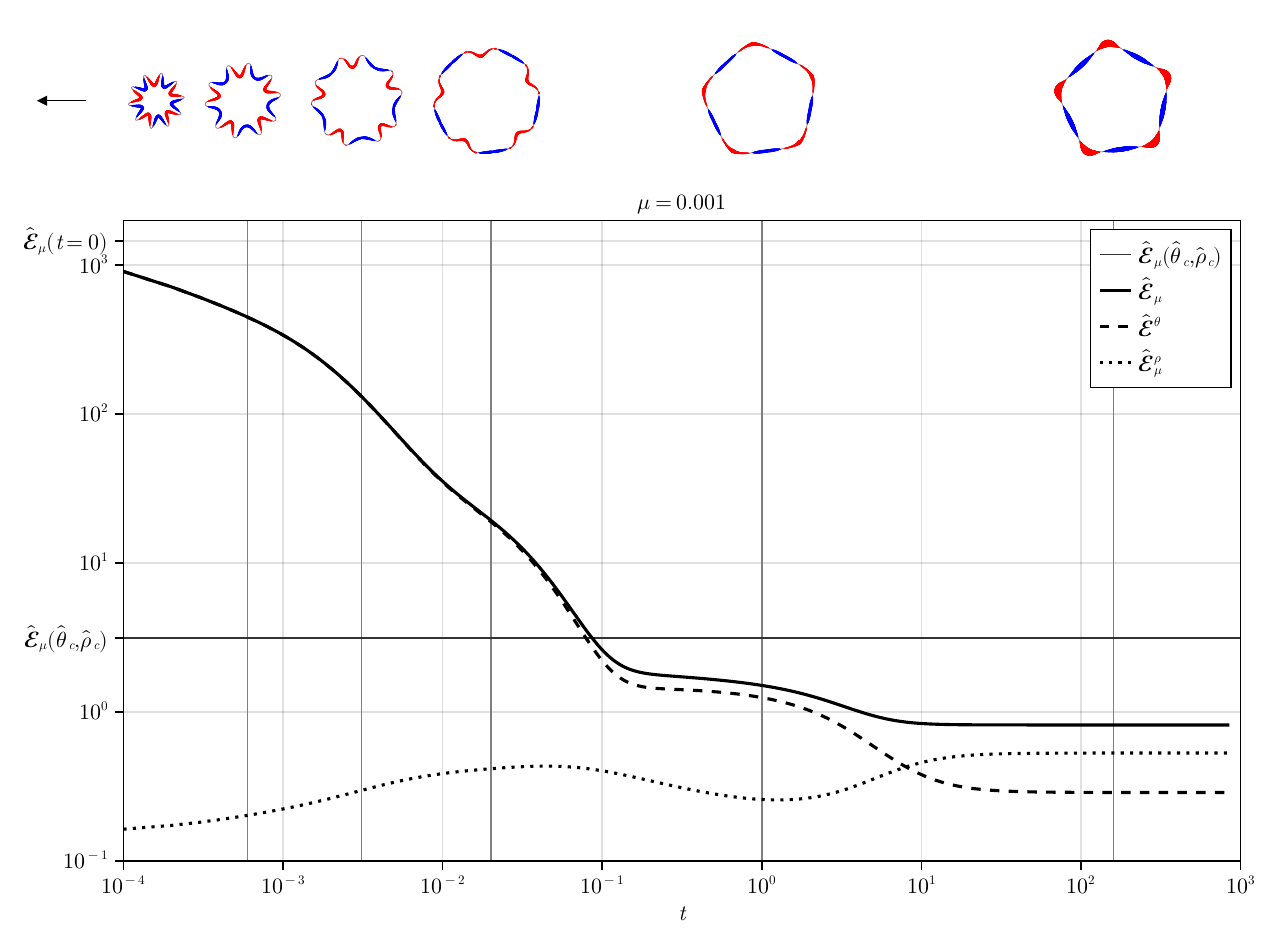}
    \caption{Energy evolution for $c_0 = 0$, starting with relatively large $\|\hat\kappa - 1\|_\infty$ and $5$-fold/$10$-fold rotational symmetry.}
    \label{fig:energy high curvature}
\end{figure}

The shapes of the corresponding curves are naturally rather different: in the first case, the solution goes close to the trivial state $(\hat\theta_c, \hat\rho_c)$
and spends some time there before changing to a pentagon-like curve, with positive values of $\hat\rho$ on the flat ``sides'' and negative values on the rounded ``corners'',
which is in accordance with $\nu L = 0$ and $\beta$ monotone increasing. 

In the second case, the solution does not come close to the trivial state, and the ``dents'' of the initial conditions coarsen, so that $\hat\theta$ goes from being $L/10$-periodic to $L/5$-periodic.
Eventually, $(\hat\theta_\infty, \hat\rho_\infty)$ is identical to the first case, up to rotation.

This seems to suggests that, for this choice of parameters, the periodicity of the limiting profile is dictated by the choice of $\hat\rho_0$. However, one can take $\hat\rho_0$ to be $L/20$-fold periodic by keeping all other parameters as in \Cref{fig:energy high curvature}, so that the initial datum is then $10$-fold rotationally symmetric. The final profile is observed to be also $L/10$-symmetric, as shown in \Cref{fig:10-fold wave number rho 20}, matching the periodicity of $\hat\theta_0$ and not that of $\hat\rho_0$.

\begin{figure}
    \centering
    \includegraphics[width=0.2\textwidth]{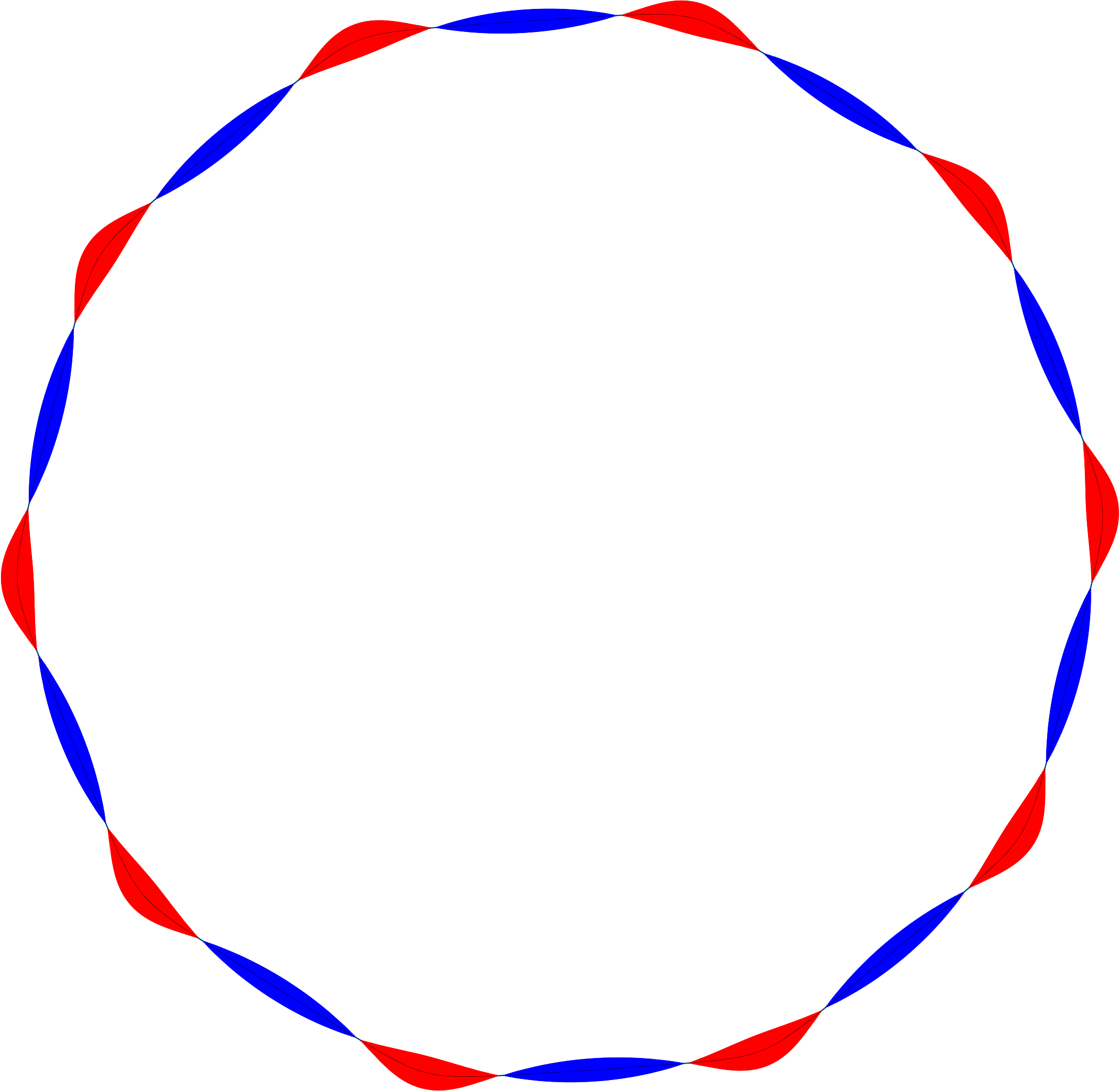}
    \caption{Representation of the $L/10$-periodic limiting profile of the solution with parameters identical to those of \Cref{fig:energy high curvature}, except that $\hat\rho_0$ is $L/20$-periodic.
    As in \Cref{fig:energy high curvature}, $\hat\theta_0$ is $L/10$-periodic.}
    \label{fig:10-fold wave number rho 20}
\end{figure}

This shows numerically that for $\mu$ small enough, there are $k$-fold rotationally symmetric critical points different from the homogeneous elastica, extending the picture drawn by \Cref{prop:convlargemu}.

\subsubsection{Influence of the model parameters for $\omega = 2$}

In~\Cref{fig:omega 2}, starting from the same initial datum, we illustrate how the shapes assumed by the limit $(\hat\theta_\infty, \hat\rho_\infty)$
change as the model parameters $\mu$ and $\beta$ change. The spontaneous curvature $c_0$ is taken to be zero, and $\beta$ is of the form $\beta(x) = e^{ax}$.

For relatively small values of $\mu$ and large values of $a$, the solution is cigar- (or stadium-) shaped, with a small additional loop at one end, which accounts for $\omega = 2$, see~\Cref{fig:omega 2}A and~\ref{fig:omega 2}D.
In those cases, $\hat{\mathcal{E}}^\theta$ can be made small for a curve with flat sections corresponding to large values of $\beta(\hat\rho)$ (and highly curved sections corresponding to small values) without making $\hat{\mathcal{E}}_\mu^\rho$ large thanks to the small value of $\mu$.

As $\mu$ increases, large values in the gradient of $\hat\rho$ are penalized and the geometric part dominates, so that the curve becomes rounder as a result, cf.\ \Cref{fig:omega 2}B.

Increasing $\mu$ further, in view of \Cref{prop:convlargemu}, it seems plausible that the solution should converge to $(\hat\theta_c, \hat\rho_c)$, although here
the density $\hat\rho_0$ is not constant. This is what can be observed in~\Cref{fig:omega 2}C.

For what concerns $a$, i.e.\ $\beta'$, as it gets smaller (with $\mu$ kept small), the gain in $\hat{\mathcal{E}}^\theta$ coming from a given oscillation of density distribution diminishes, 
so the oscillation increases, see \Cref{fig:omega 2}E. Eventually, as $a$ becomes very small, this is balanced by the increase in $\hat{\mathcal{E}}_\mu^\rho$, and the solution converges to the trivial state $(\hat\theta_c, \hat\rho_c)$, see \Cref{fig:omega 2}F.

\begin{figure}

    \begin{tikzpicture}[scale=0.9]

        \coordinate (center) at (0,0);

        \coordinate (a) at (120:4cm);
        \coordinate (b) at (180:4cm);
        \coordinate (c) at (240:4cm);

        \coordinate (d) at (60:4cm);
        \coordinate (e) at (0:4cm);
        \coordinate (f) at (300:4cm);

        \node[draw=none,fill=none] at (center){\includegraphics[angle=30,width=5cm]{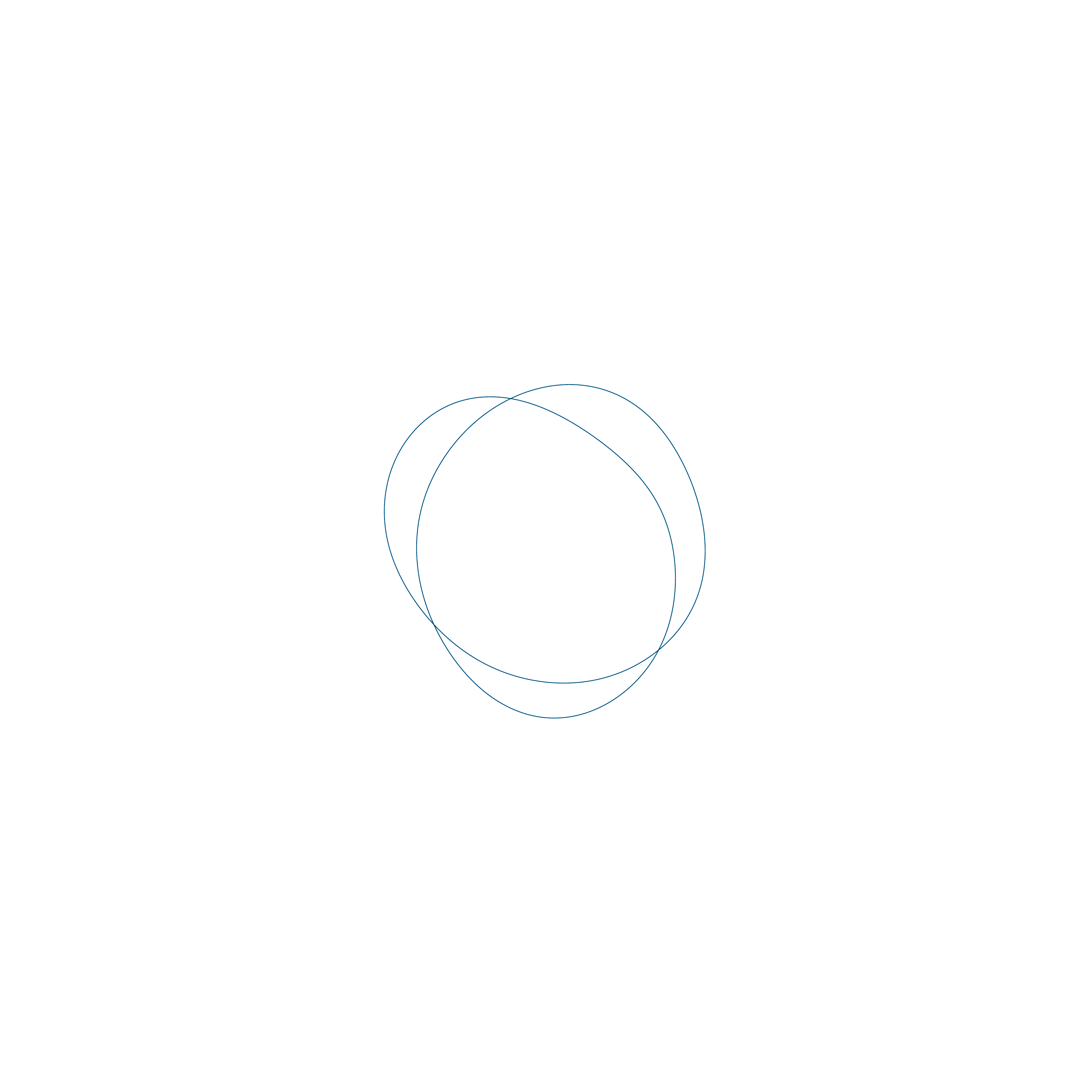}};
        \node[draw=none,fill=none] at (a){\includegraphics[angle=30,width=5cm]{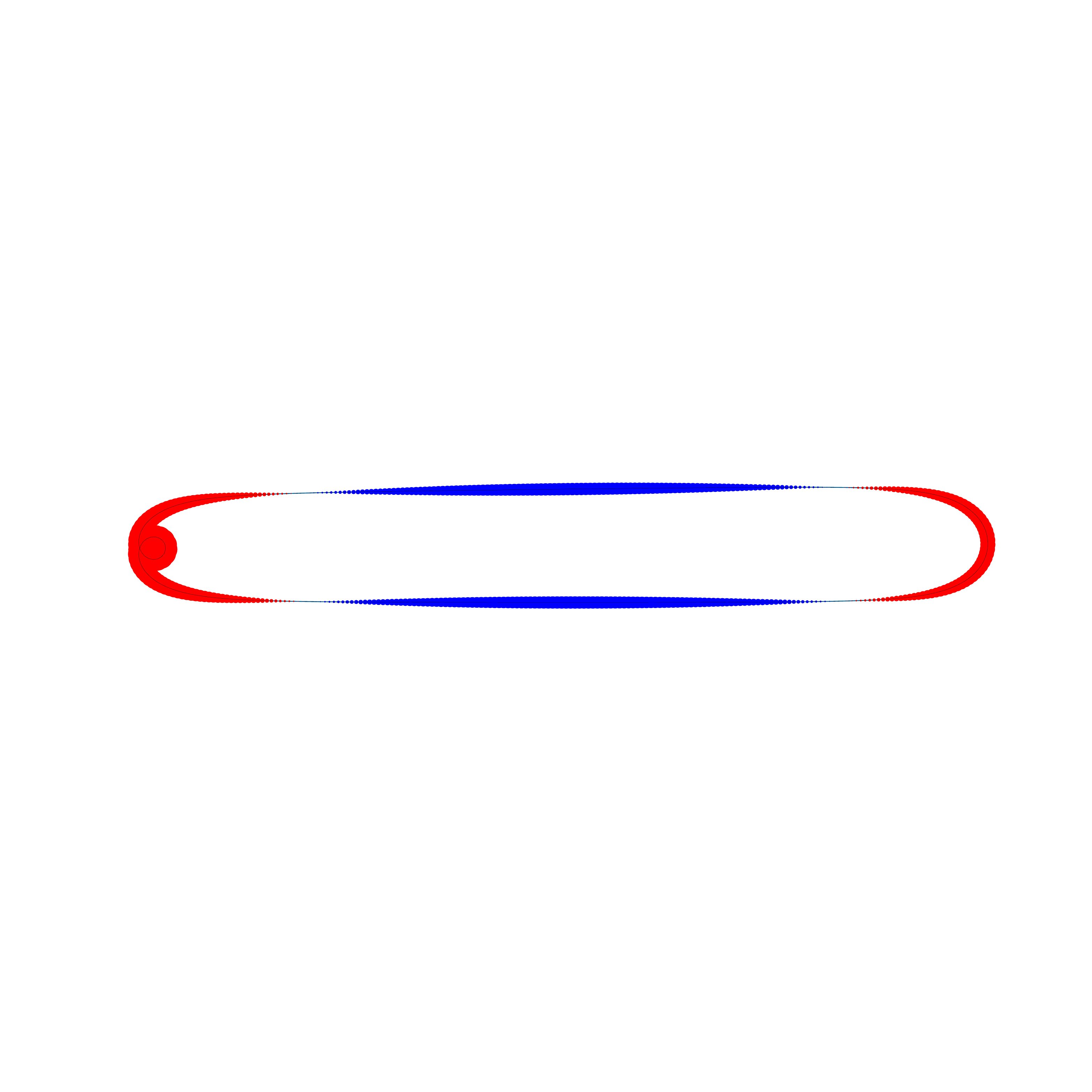}};
        \node[draw=none,fill=none] at (b){\includegraphics[angle=30,width=5cm]{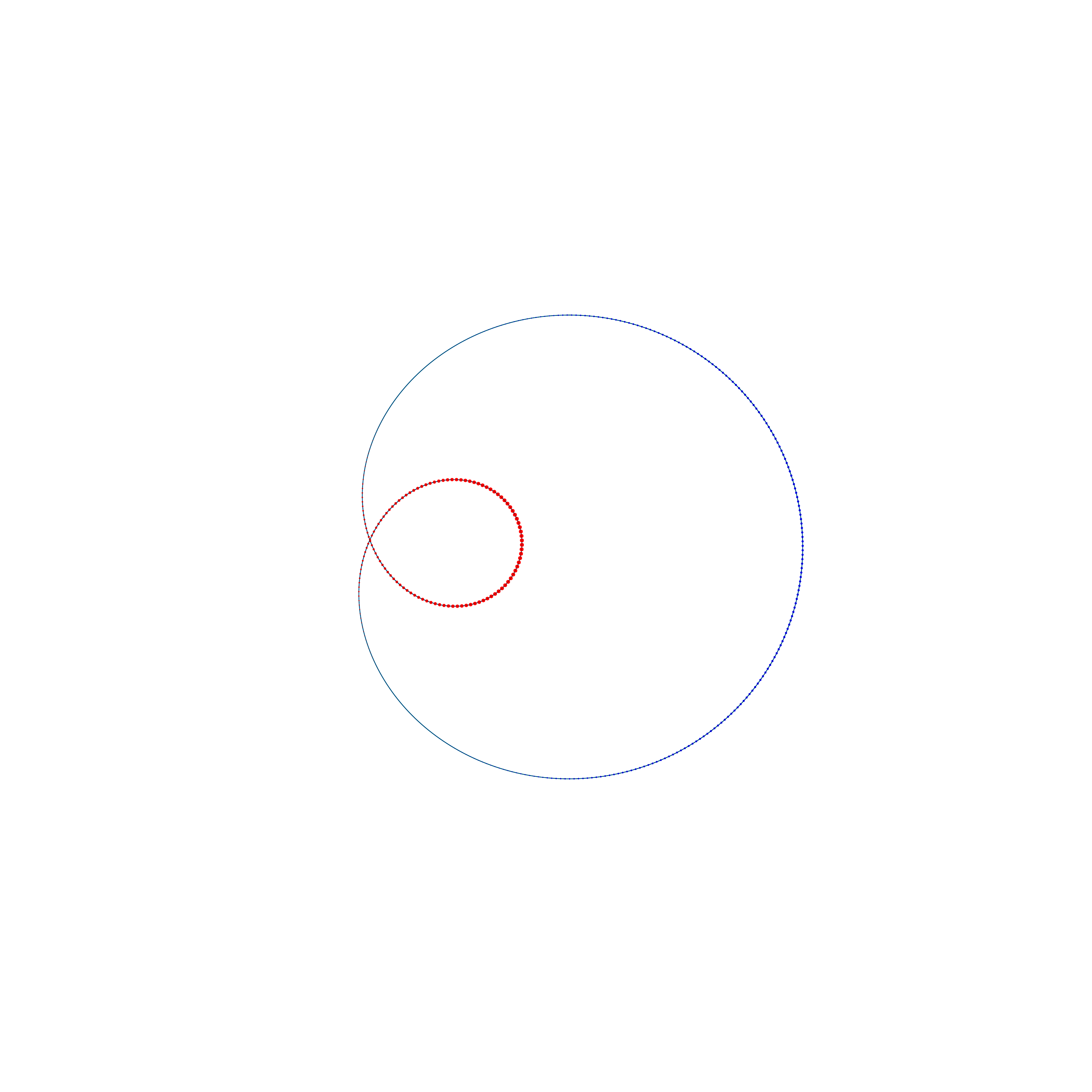}};
        \node[draw=none,fill=none] at (c){\includegraphics[angle=30,width=5cm]{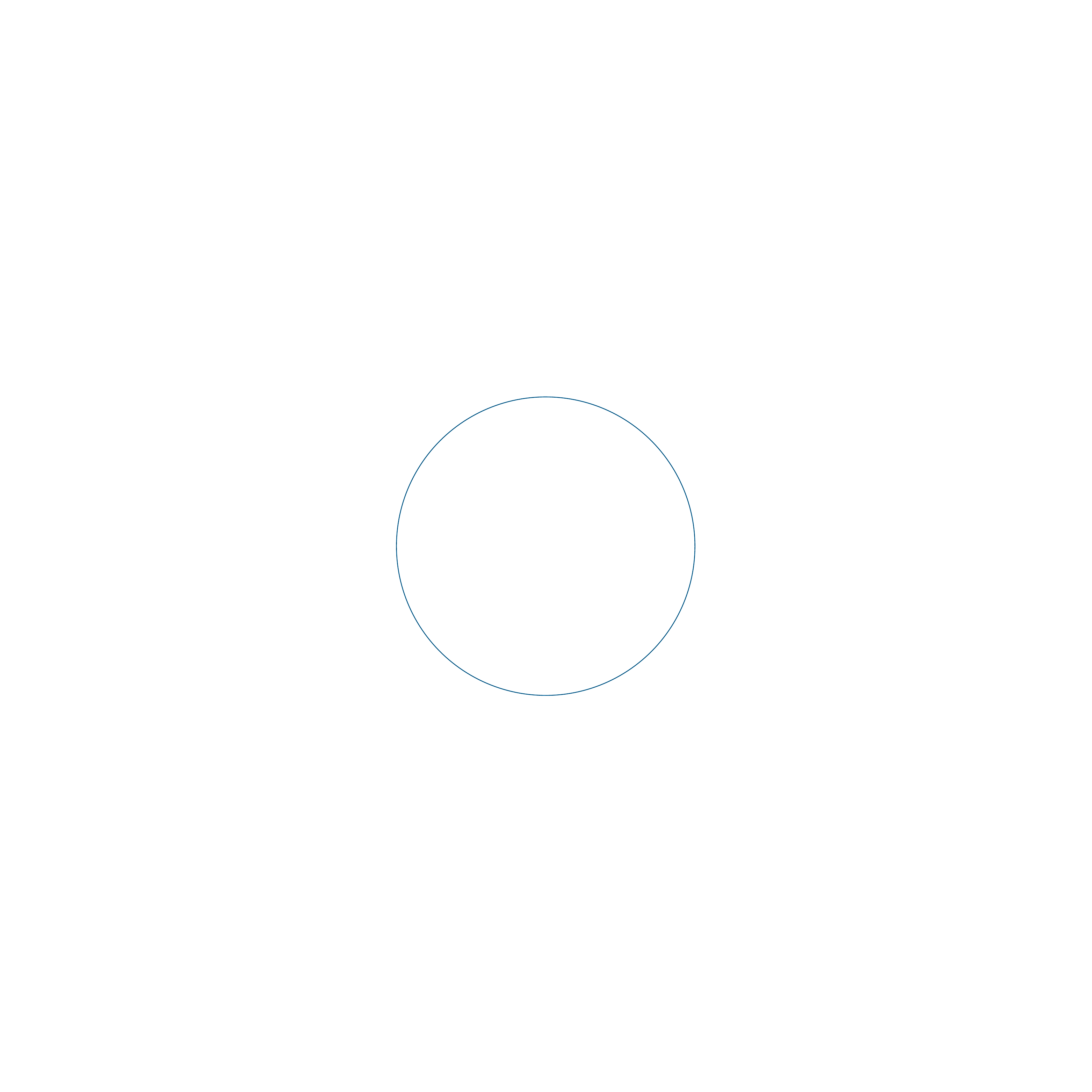}};
        \node[draw=none,fill=none] at (d){\includegraphics[angle=30,width=5cm]{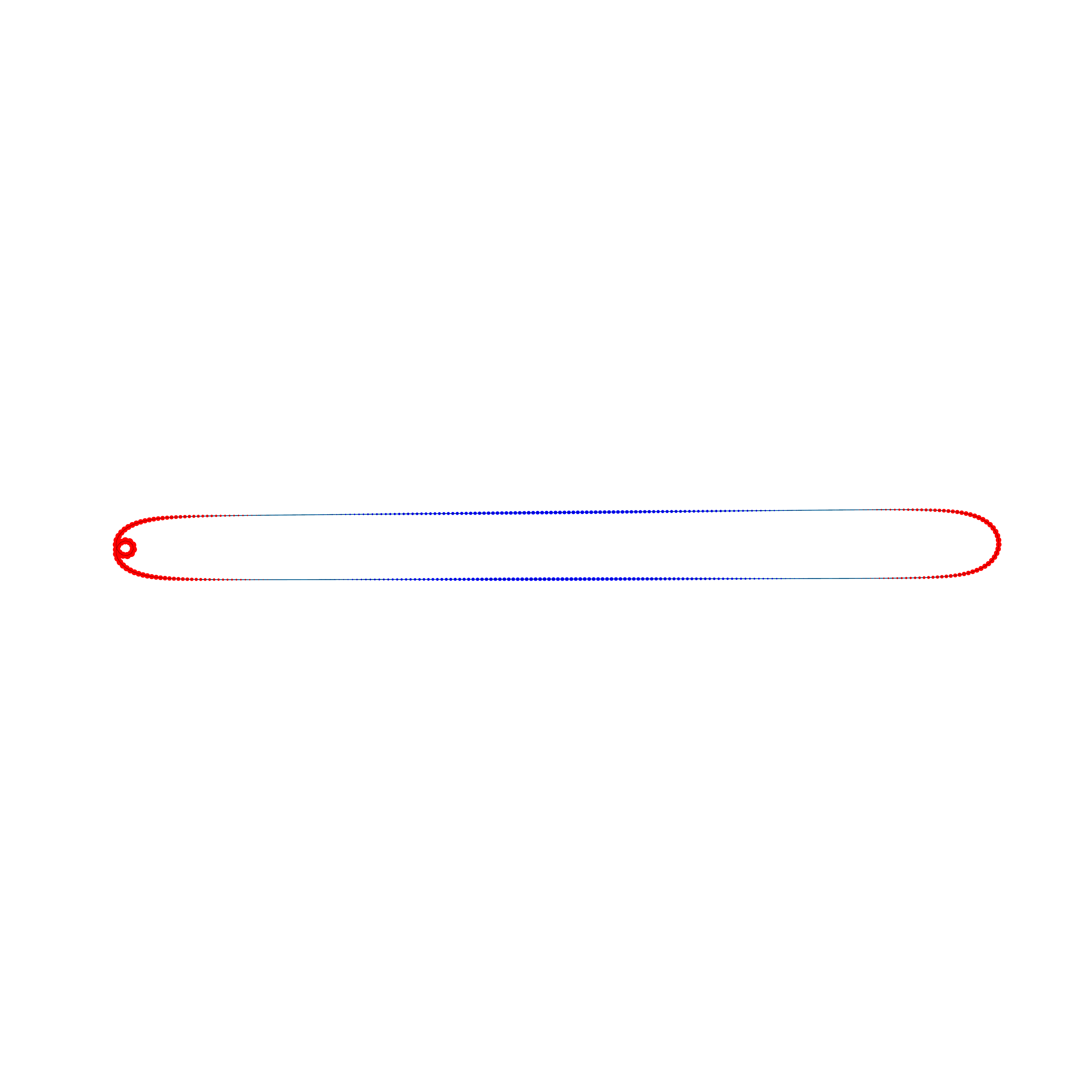}};
        \node[draw=none,fill=none] at (e){\includegraphics[angle=30,width=5cm]{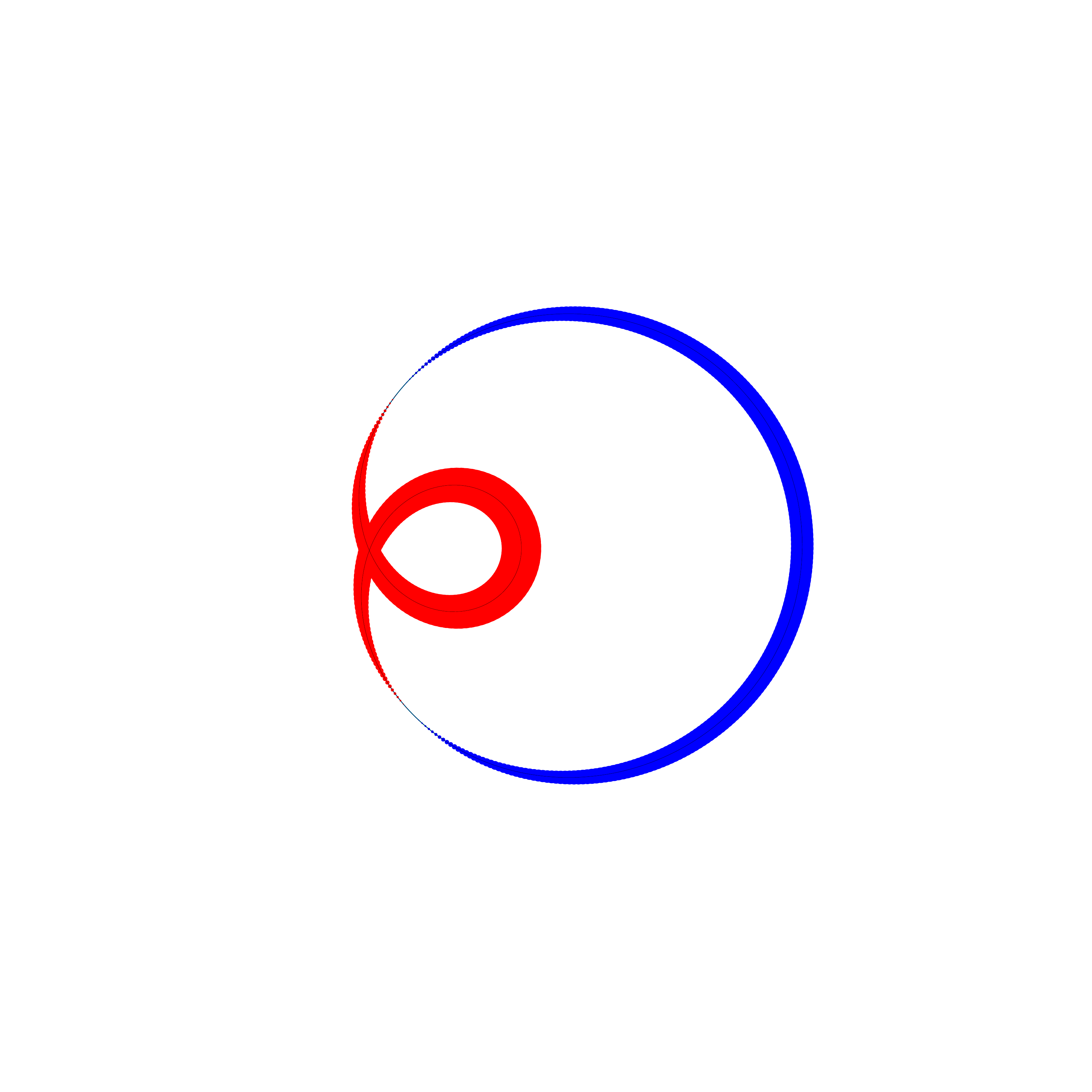}};
        \node[draw=none,fill=none] at (f){\includegraphics[angle=30,width=5cm]{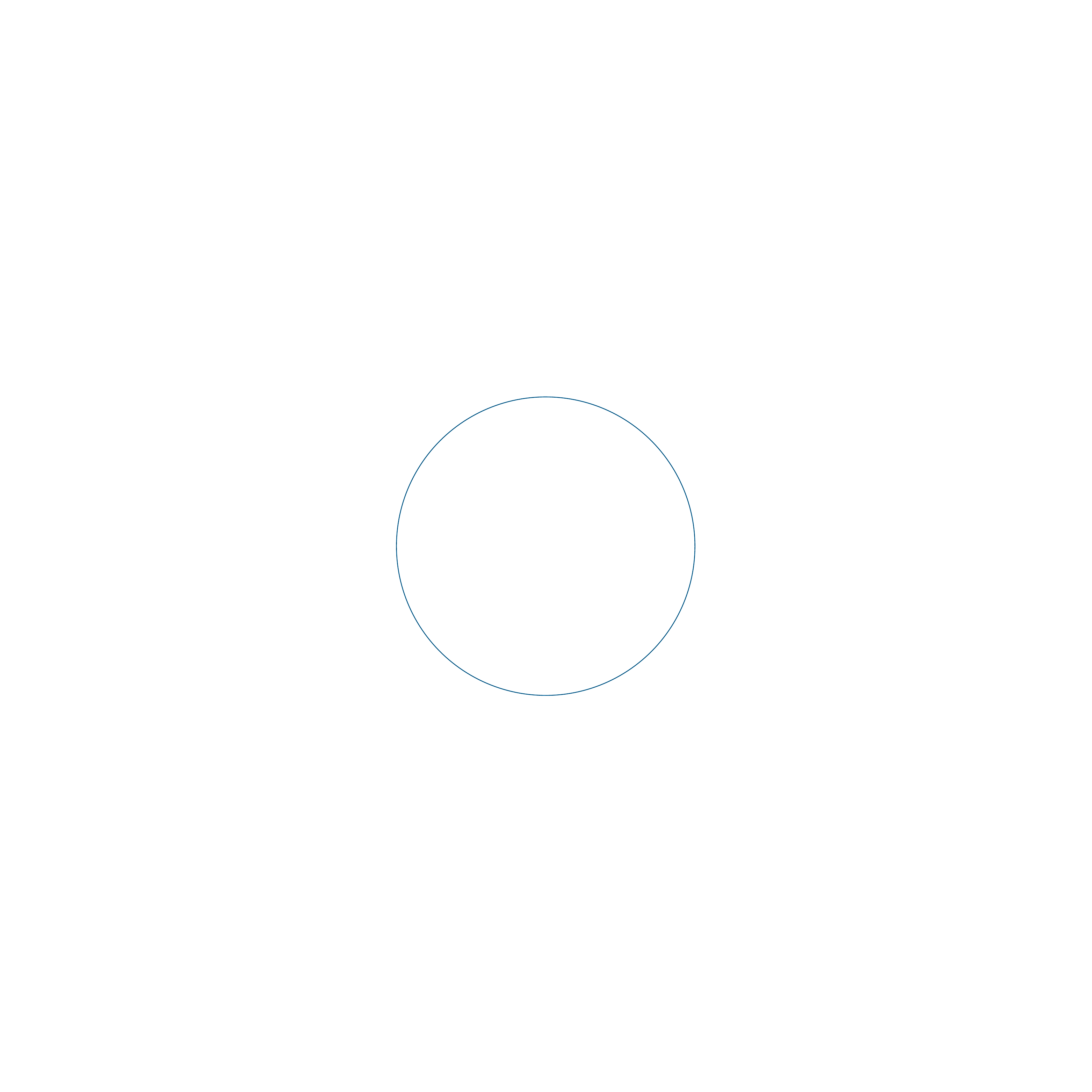}};

        \node[align=center] at ($(center) - (0,1.2)$) {$(\hat\theta_0, \hat\rho_0)$};

        \node[align=center] at ($ (a) - (2.5,0) $) {(A) \\ $\mu = 10^{-2}$ \\ $a = 1$};
        \node[align=center] at ($ (b) - (2.0,0) $) {(B) \\ $\mu = 1$ \\ $a = 1$};
        \node[align=center] at ($ (c) - (2.0,0) $) {(C) \\ $\mu = 5$ \\ $a = 1$};
        \node[align=center] at ($ (d) + (2.5,0) $) {(D) \\ $\mu = 10^{-2}$ \\ $a = 5$};
        \node[align=center] at ($ (e) + (2.0,0) $) {(E) \\ $\mu = 10^{-2}$ \\ $a = 10^{-1}$};
        \node[align=center] at ($ (f) + (2.0,0) $) {(F) \\ $\mu = 10^{-2}$ \\ $a = 10^{-2}$};

        \draw[color=black] (center) circle (2.1);

        \centerarc[thick, ->](0,0)(150:220:7.5cm)(increasing $\mu$);
        \centerarc[thick, ->](0,0)(30:-40:7.5cm)(decreasing $a$);

        \foreach \p in {a,...,f}{
           \draw[arrows = {-Stealth[]}] ($(center)!0.525!(\p)$) -- ($(center)!0.6!(\p)$);
        }

    \end{tikzpicture}
    \caption{Illustration of the dependency of $(\hat\theta_\infty, \hat\rho_\infty)$ on $\mu$ and $\beta$.
      In both cases, $\omega = 2$, $c_0 = 0$ and $\beta(x) = e^{a x}$.
      Left: $(\hat\theta_\infty, \hat\rho_\infty)$ as $\mu$ increases in $\{10^{-2}, 1, 5\}$ with $a = 1$.
      Right: $(\hat\theta_\infty, \hat\rho_\infty)$ as $a$ decreases in $\{5, 10^{-1}, 10^{-2}\}$ with $\mu = 10^{-2}$.
      The initial datum is represented in the center.
    \label{fig:omega 2}
    }
\end{figure}

\subsubsection{Convergence to the figure eight for $\omega = 0$}

To conclude this numerical overview, we look at the case $\omega = 0$, for which we recall that the only closed elasticae are multiple coverings of the figure eight, see \Cref{lem:charelasticae}.

More specifically, we consider two cases:
\begin{itemize}
    \item First, $\hat\theta_0$ is given by a hand-drawn curve, with nonconstant $\hat\rho_0$. Here, $c_0 = 2$. See~\Cref{fig:omega 0 figure 8}.
    \item Second, in a very rough attempt to look at the stability of the $2$-fold covering of the figure eight, we consider $\hat\theta_0$ given by two slightly offset figure eights. We pick $c_0 = 0$.
    See~\Cref{fig:omega 0 figure 8 double}.
        
\end{itemize}

In both situations we take $\nu = 0$ and $\beta(x) = 0.1 + x^2$, a choice which fits the assumptions of~\Cref{thm:conv special beta neu} with $\bar{C} = 0$, so that the limit is necessarily a (potentially multiple) covering of the figure eight. We observe convergence to the $1$-fold covering of the homogeneous figure eight in both situations, which is expected in the first case. In the second case, this suggests that the multiple coverings of the figure eight are not stable under the flow, at least for this choice of $\beta$.

\begin{figure}
    \begin{minipage}{0.19\textwidth}
    \begin{center}
        \includegraphics[width=\textwidth]{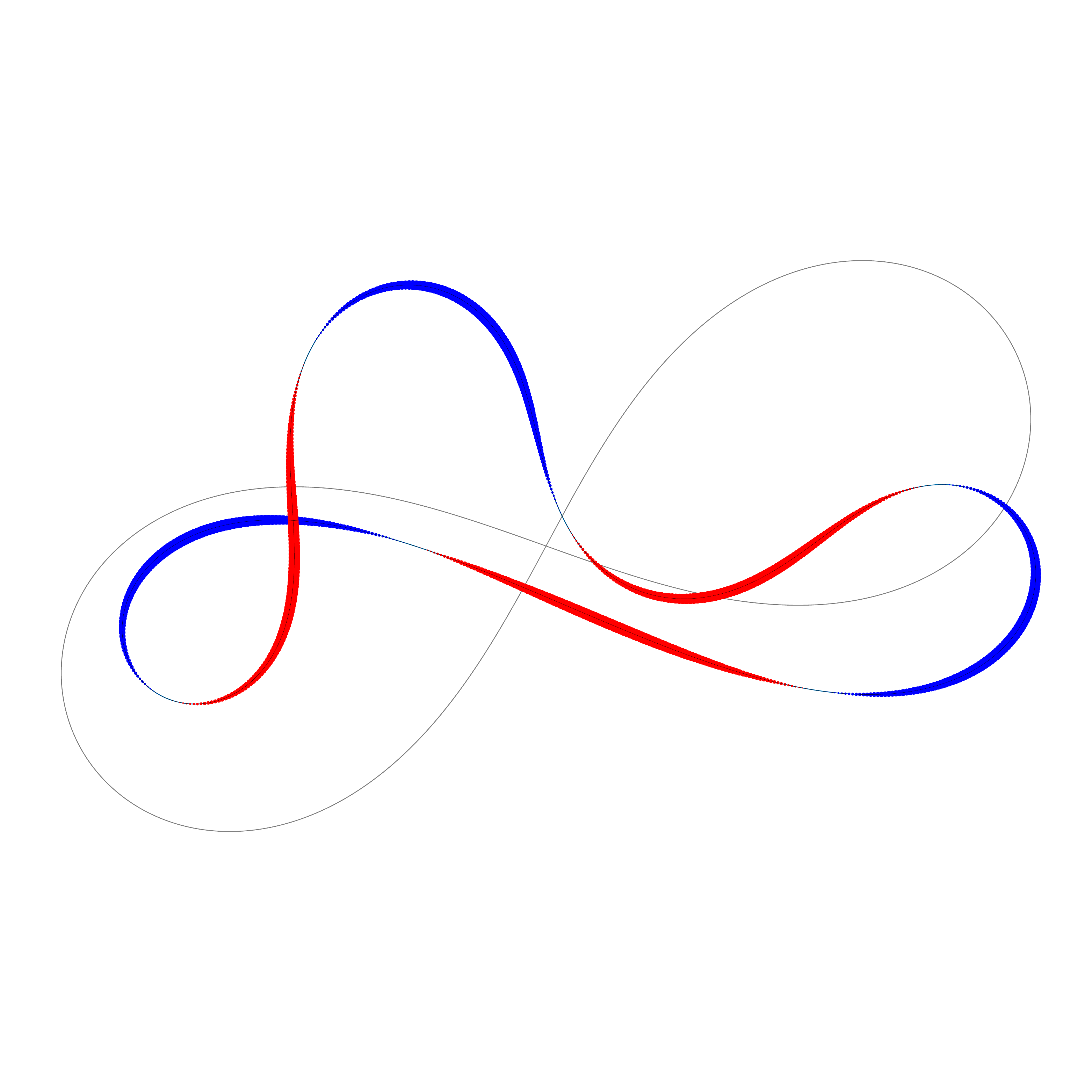}
        $t = 0$
    \end{center}
    \end{minipage}
    \begin{minipage}{0.19\textwidth}
    \begin{center}
        \includegraphics[width=\textwidth]{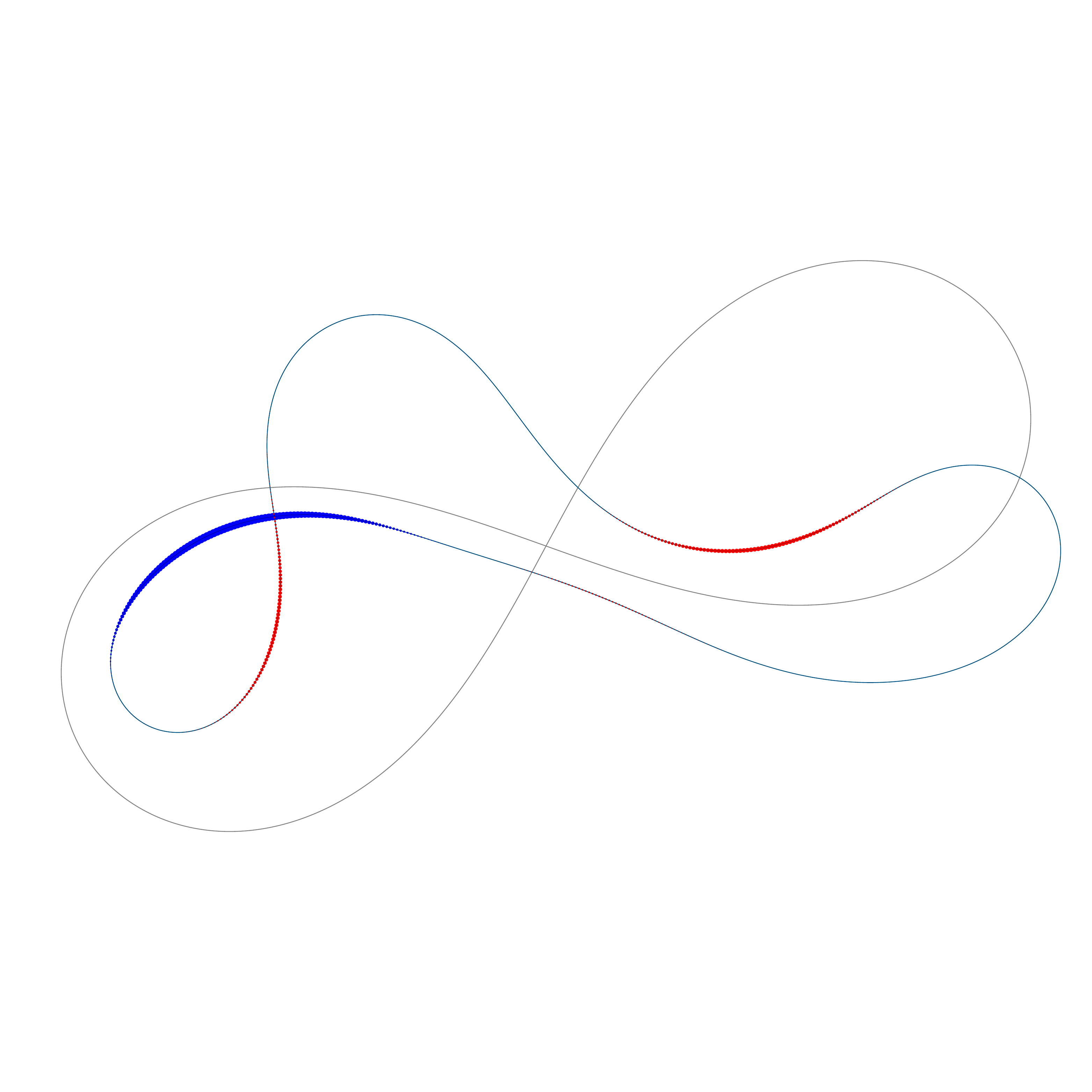}
        $t\approx0.5$
    \end{center}
    \end{minipage}
    \begin{minipage}{0.19\textwidth}
    \begin{center}
        \includegraphics[width=\textwidth]{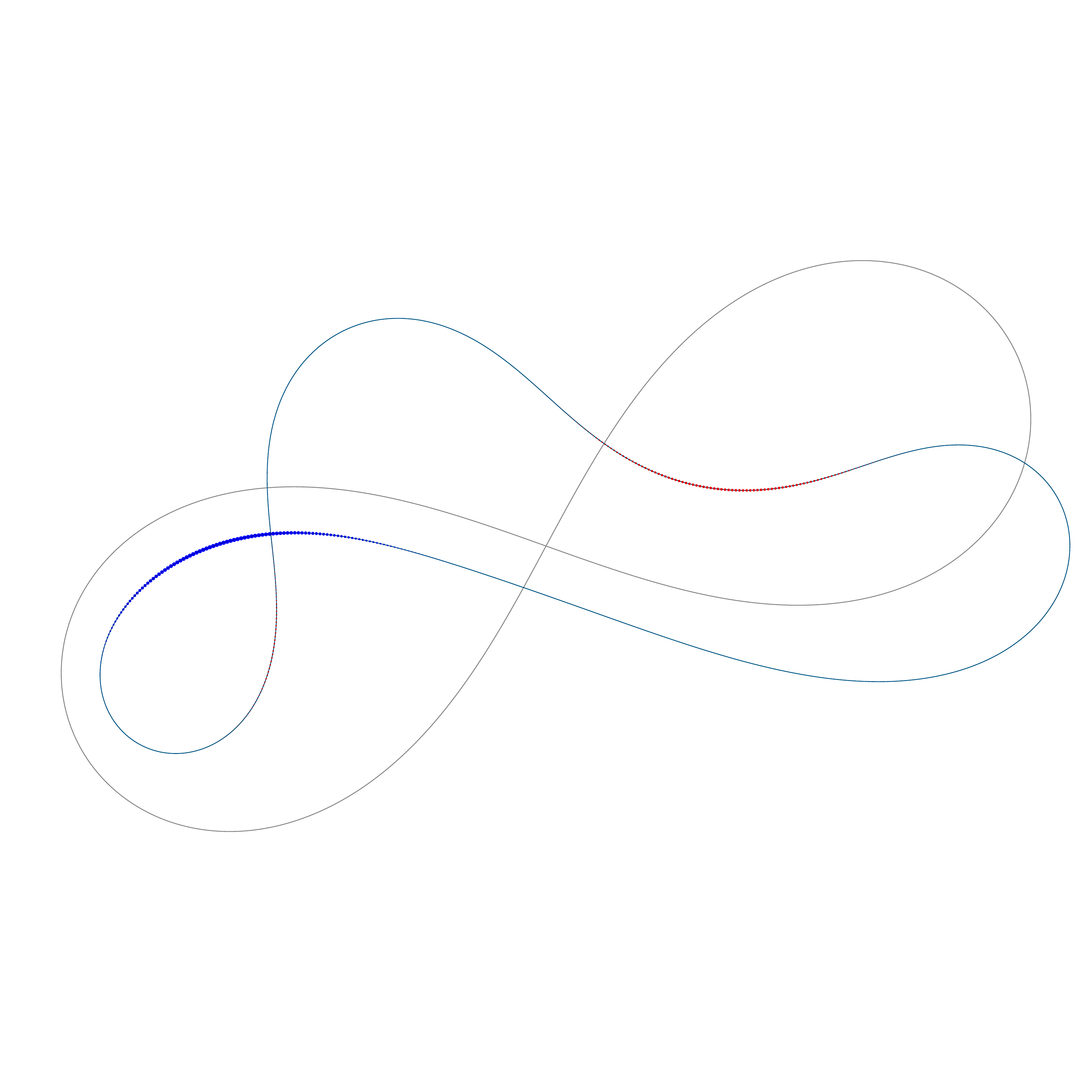}
        $t\approx1$
    \end{center}
    \end{minipage}
    \begin{minipage}{0.19\textwidth}
    \begin{center}
        \includegraphics[width=\textwidth]{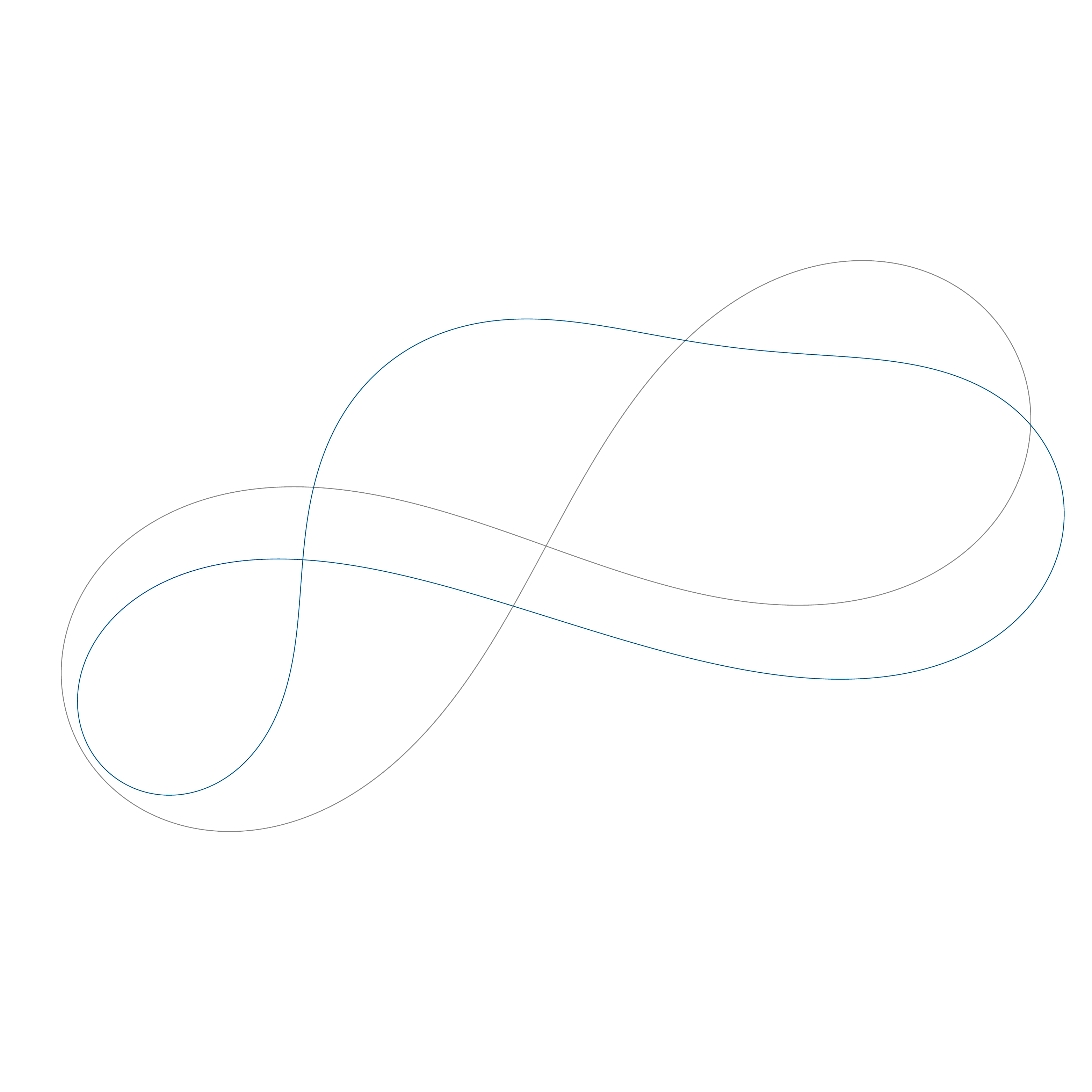}
        $t\approx2$
    \end{center}
    \end{minipage}
    \begin{minipage}{0.19\textwidth}
    \begin{center}
        \includegraphics[width=\textwidth]{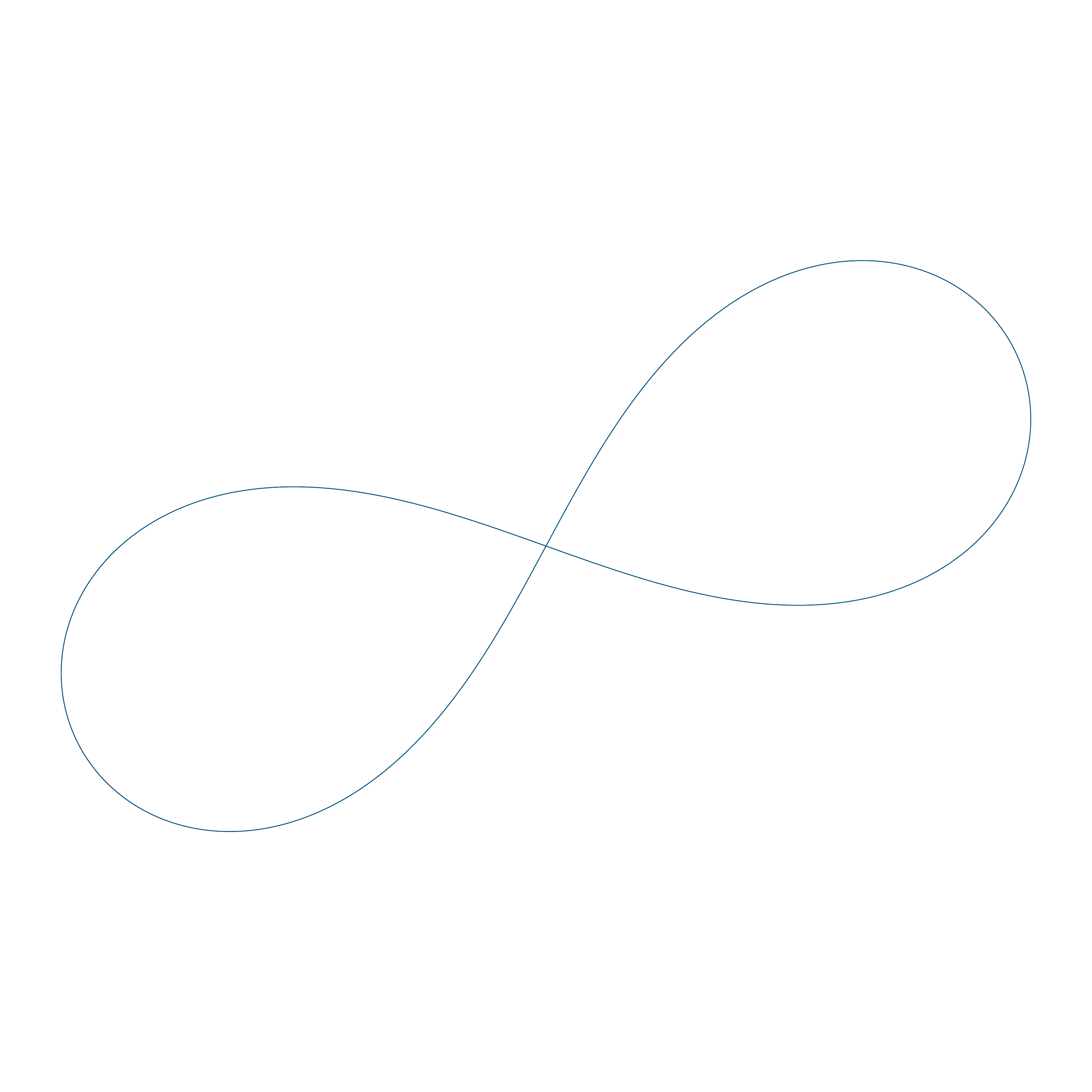}
        $t\approx135$
    \end{center}
    \end{minipage}
    \caption{Convergence of an hand-drawn curve (black) to the figure eight (gray).
    $\mu = 10^{-1}$, $c_0 = 2$ and $\beta(x) = 0.1 + x^2$ is quadratic and positive. We observe convergence to a homogeneous elastica.
    The figure eight in the background is the $1$-fold covering of the figure eight whose integral of the corresponding tangential angle $\theta$ matches that of the initial curve.}
    \label{fig:omega 0 figure 8}
\end{figure}

\begin{figure}
    \begin{minipage}{0.20\textwidth}
    \begin{center}
        \includegraphics[width=\textwidth]{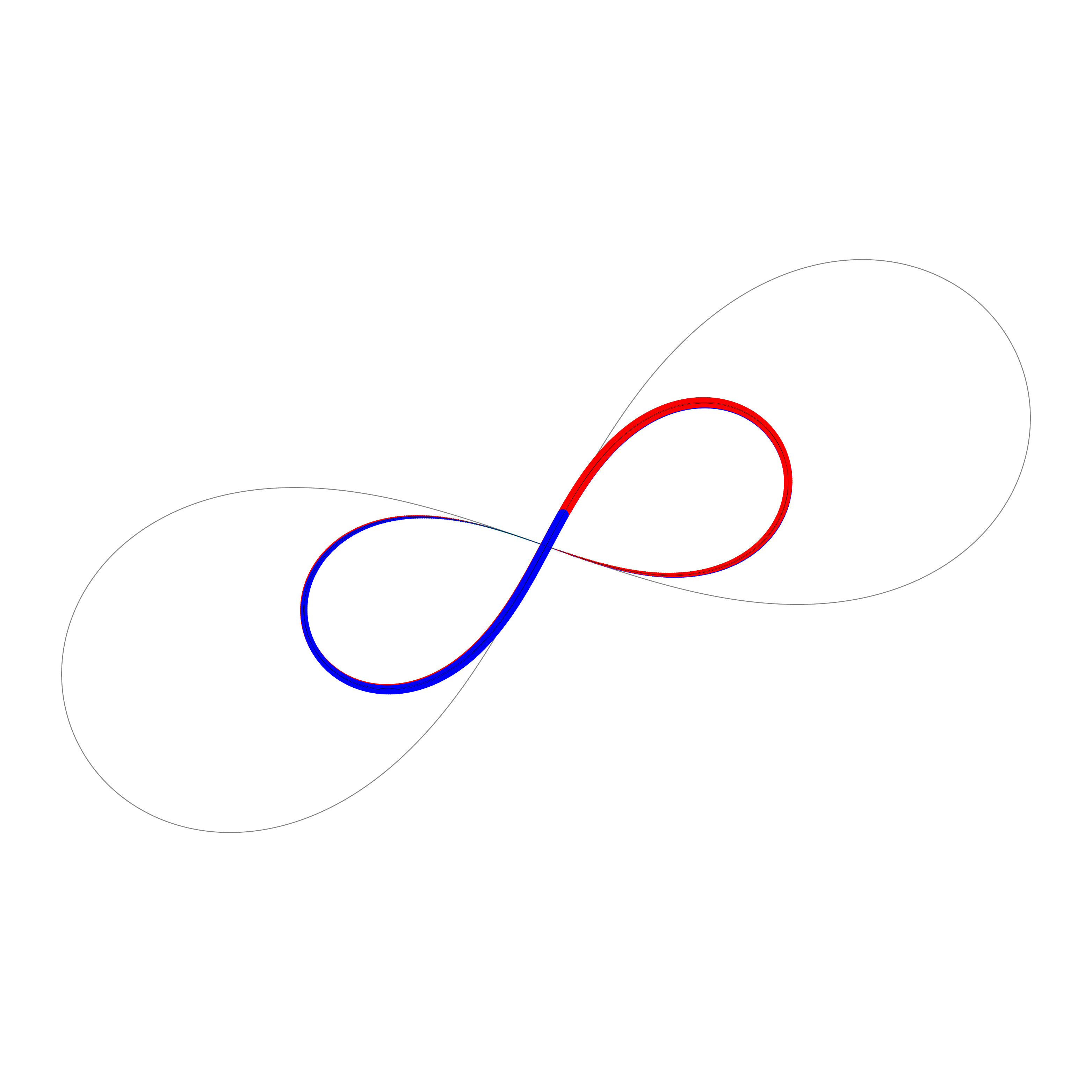}
        $t = 0$
    \end{center}
    \end{minipage}
    \hspace{-0.025\textwidth}
    \begin{minipage}{0.20\textwidth}
    \begin{center}
        \includegraphics[width=\textwidth]{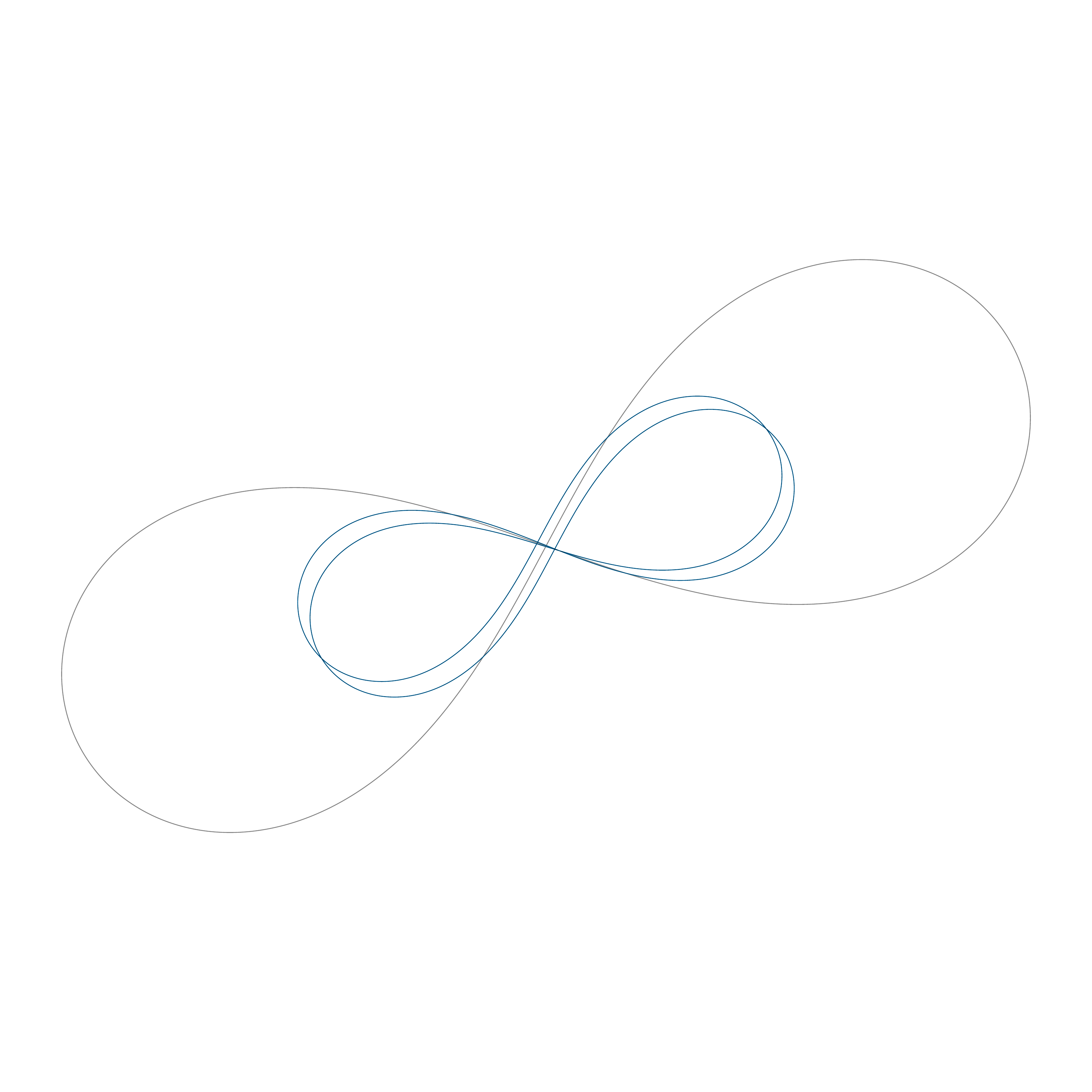}
        $t \approx 36$
    \end{center}
    \end{minipage}
    \hspace{-0.025\textwidth}
    \begin{minipage}{0.20\textwidth}
    \begin{center}
        \includegraphics[width=\textwidth]{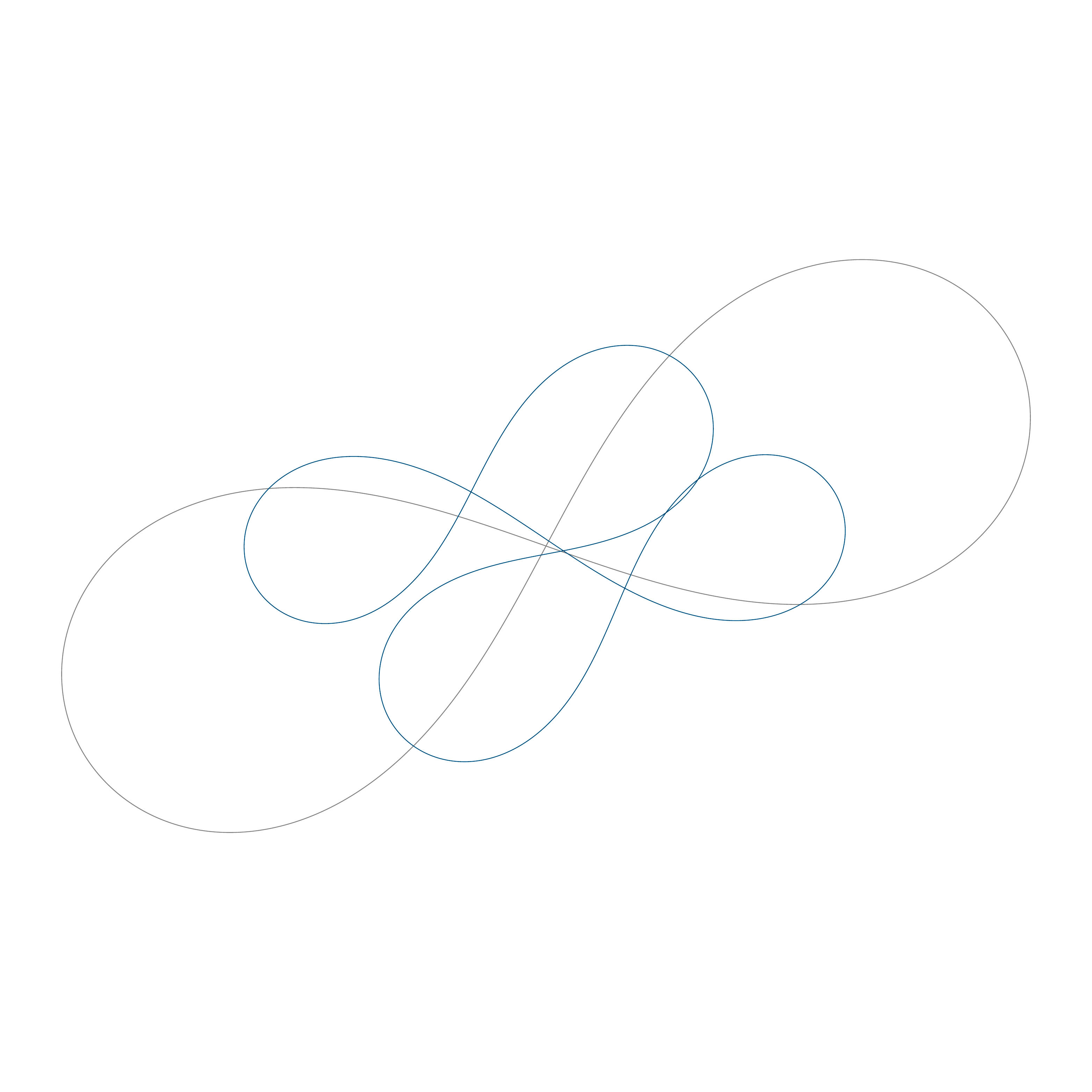}
        $t \approx 60$
    \end{center}
    \end{minipage}
    \hspace{-0.025\textwidth}
    \begin{minipage}{0.20\textwidth}
    \begin{center}
        \includegraphics[width=\textwidth]{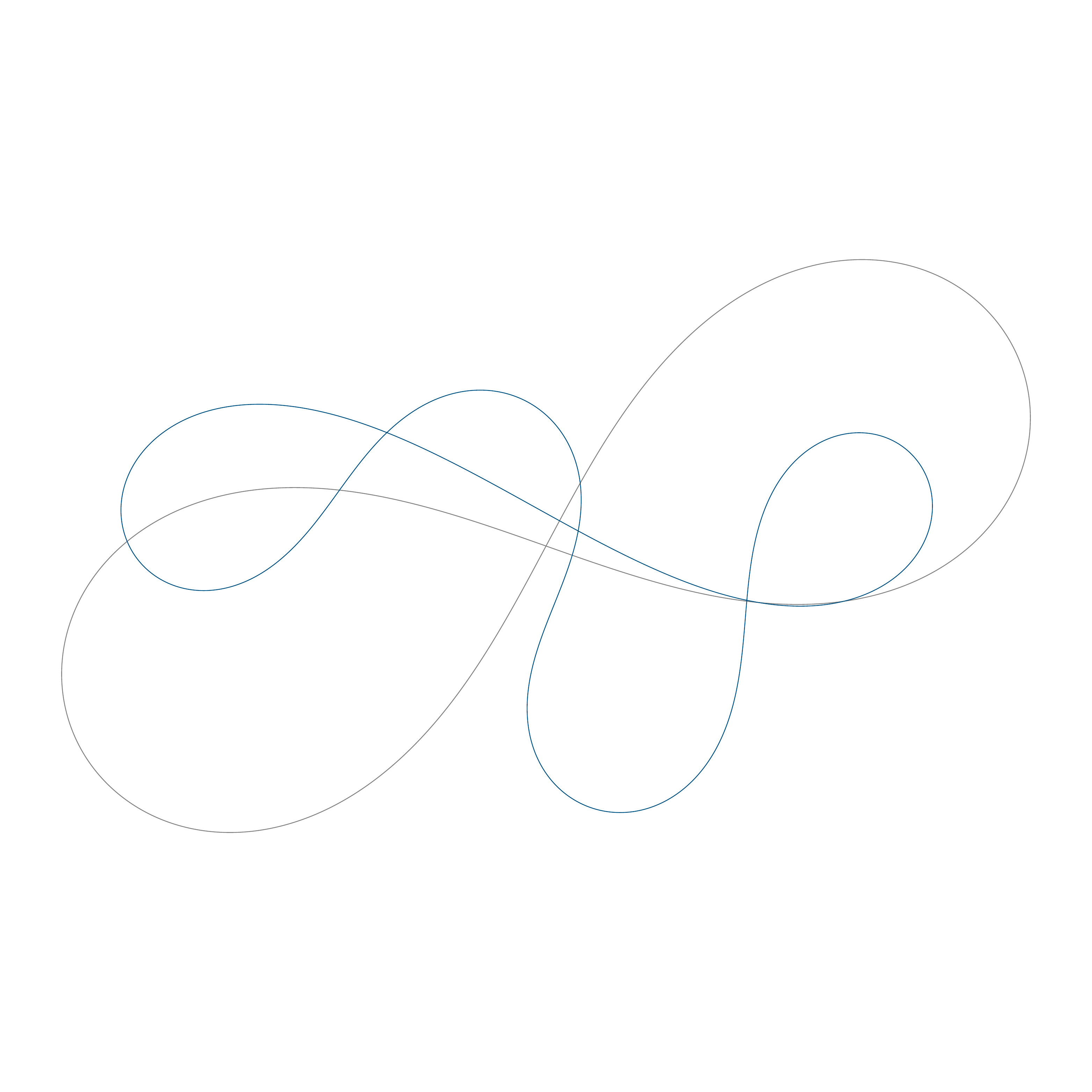}
        $t \approx 70$
    \end{center}
    \end{minipage}
    \hspace{-0.025\textwidth}
    \begin{minipage}{0.20\textwidth}
    \begin{center}
        \includegraphics[width=\textwidth]{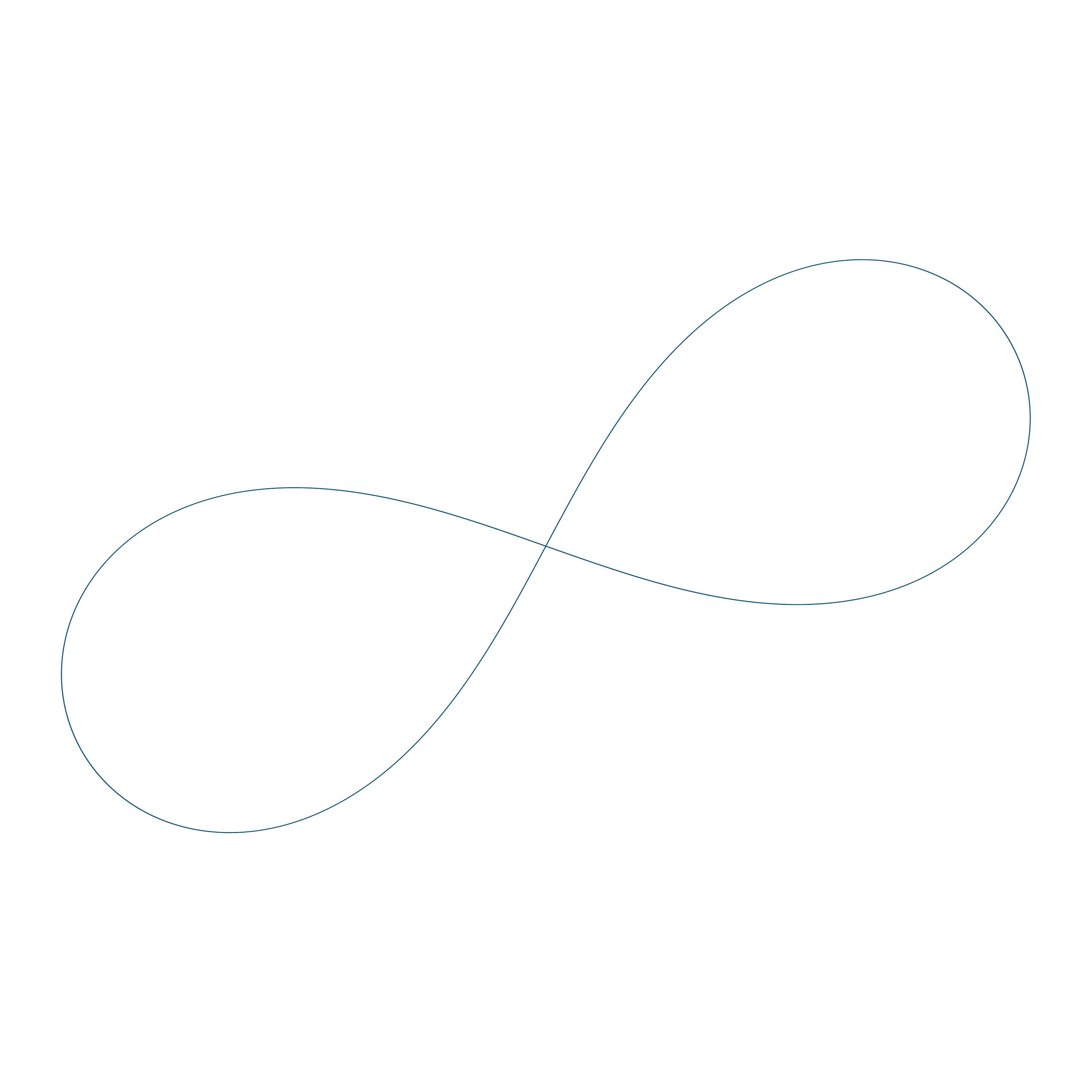}
        $t \approx 100$
    \end{center}
    \end{minipage}
    \caption{Evolution of the perturbation of a $2$-fold covering of the figure eight, where the second is rotated by an angle of $2\pi / 1000$ with respect to the first one.
    Other parameters are $\mu = 10^{-1}$, $c_0 = 0$ and $\beta(x) = 0.1 + x^2$.
The figure eight in the background is the $1$-fold covering of the figure eight whose integral of the corresponding tangential angle $\theta$ matches that of the initial curve.}
    \label{fig:omega 0 figure 8 double}
\end{figure}

\section*{Acknowledgements} The authors acknowledge support by the DFG (German Research Foundation), project no.\ 404870139. The fourth author is additionally supported by the Austrian Science Fund (FWF) project/grant P 32788-N.
\bibliographystyle{abbrv}
\bibliography{biblio}

\begin{thebibliography}{10}

\bibitem{ABG2022}
H.~Abels, F.~B\"{u}rger, and H.~Garcke.
\newblock Qualitative properties for a system coupling scaled mean curvature
  flow and diffusion.
\newblock {\em J. Differential Equations}, 349:236--268, 2023.

\bibitem{MR4530425}
H.~Abels, F.~B\"{u}rger, and H.~Garcke.
\newblock Short time existence for coupling of scaled mean curvature flow and
  diffusion.
\newblock {\em J. Evol. Equ.}, 23(1):Paper No. 14, 46, 2023.

\bibitem{A1988}
S.~B. Angenent.
\newblock The zero set of a solution of a parabolic equation.
\newblock {\em Journal f{\"u}r die reine und angewandte Mathematik (Crelles
  Journal)}, 1988:79 -- 96, 1988.

\bibitem{BHW2003}
T.~Baumgart, S.~T. Hess, and W.~W. Webb.
\newblock Imaging coexisting fluid domains in biomembrane models coupling
  curvature and line tension.
\newblock {\em Nature}, 425:821--824, 2003.

\bibitem{Blatt}
S.~Blatt.
\newblock Loss of convexity and embeddedness for geometric evolution equations
  of higher order.
\newblock {\em J. Evol. Equ.}, 10(1):21--27, 2010.

\bibitem{BJSS2020}
K.~Brazda, G.~Jankowiak, C.~Schmeiser, and U.~Stefanelli.
\newblock Bifurcation of elastic curves with modulated stiffness.
\newblock {\em European Journal of Applied Mathematics}, page 1–27, 2022.

\bibitem{BLS20}
K.~Brazda, L.~Lussardi, and U.~Stefanelli.
\newblock Existence of varifold minimizers for the multiphase
  {C}anham-{H}elfrich functional.
\newblock {\em Calc. Var. Partial Differential Equations}, 59(3):Paper No. 93,
  26, 2020.

\bibitem{Canham}
P.~Canham.
\newblock The minimum energy of bending as a possible explanation of the
  biconcave shape of the human red blood cell.
\newblock {\em J. Theor. Biol.}, 26(1):61--81, 1970.

\bibitem{CMV13}
R.~Choksi, M.~Morandotti, and M.~Veneroni.
\newblock Global minimizers for axisymmetric multiphase membranes.
\newblock {\em ESAIM Control Optim. Calc. Var.}, 19(4):1014--1029, 2013.

\bibitem{DLR2022}
A.~Dall'Acqua, L.~Langer, and F.~Rupp.
\newblock A dynamic approach to heterogeneous elastic wires.
\newblock {\em Journal of Differential Equations}, 392:1--42, 2024.

\bibitem{DLP17}
A.~Dall'Acqua, C.-C. Lin, and P.~Pozzi.
\newblock A gradient flow for open elastic curves with fixed length and clamped
  ends.
\newblock {\em Ann. Sc. Norm. Super. Pisa Cl. Sci. (5)}, 17(3):1031--1066,
  2017.

\bibitem{DP14}
A.~Dall'Acqua and P.~Pozzi.
\newblock A {W}illmore-{H}elfrich {$L^2$}-flow of curves with natural boundary
  conditions.
\newblock {\em Comm. Anal. Geom.}, 22(4):617--669, 2014.

\bibitem{DPS16}
A.~Dall'Acqua, P.~Pozzi, and A.~Spener.
\newblock The {{\L}}ojasiewicz-{S}imon gradient inequality for open elastic
  curves.
\newblock {\em J. Differential Equations}, 261(3):2168--2209, 2016.

\bibitem{DHMV2008}
P.~A. Djondjorov, M.~T. Hadzhilazova, I.~M. Mladenov, and V.~M. Vassilev.
\newblock Explicit parameterization of {Euler’s} elastica.
\newblock {\em Geometry, integrability and quantization}, pages 175 -- 186,
  2008.

\bibitem{dondl2023gammaconvergence}
P.~Dondl, C.~A. Hounkpe, and M.~Jesenko.
\newblock {$\Gamma$}-convergence of a discrete {K}irchhoff rod energy.
\newblock {\em arXiv}, 2306.10936, 2023.

\bibitem{DKS2002}
G.~Dziuk, E.~Kuwert, and R.~Sch\"{a}tzle.
\newblock Evolution of elastic curves in {$\Bbb R^n$}: existence and
  computation.
\newblock {\em SIAM J. Math. Anal.}, 33(5):1228--1245, 2002.

\bibitem{elliott_numerical_2022}
C.~M. Elliott, H.~Garcke, and B.~Kovács.
\newblock Numerical analysis for the interaction of mean curvature flow and
  diffusion on closed surfaces.
\newblock {\em Numerische Mathematik}, 151(4):873--925, Aug. 2022.

\bibitem{EI2005}
J.~Escher and K.~Ito.
\newblock Some dynamic properties of volume preserving curvature driven flows.
\newblock {\em Math. Ann.}, 333(1):213--230, 2005.

\bibitem{Gage_Hamilton_86}
M.~Gage and R.~S. Hamilton.
\newblock The heat equation shrinking convex plane curves.
\newblock {\em J. Differential Geom.}, 23(1):69--96, 1986.

\bibitem{Helfrich}
W.~Helfrich.
\newblock Elastic properties of lipid bilayers: Theory and possible
  experiments.
\newblock {\em Zeitschrift für Naturforschung C}, 28(11):693--703, 1973.

\bibitem{Helmers11}
M.~Helmers.
\newblock Snapping elastic curves as a one-dimensional analogue of
  two-component lipid bilayers.
\newblock {\em Math. Models Methods Appl. Sci.}, 21(5):1027--1042, 2011.

\bibitem{Helmers15}
M.~Helmers.
\newblock Convergence of an approximation for rotationally symmetric two-phase
  lipid bilayer membranes.
\newblock {\em Q. J. Math.}, 66(1):143--170, 2015.

\bibitem{JulicherLipowsky93}
F.~J\"ulicher and R.~Lipowsky.
\newblock Domain-induced budding of vesicles.
\newblock {\em Phys. Rev. Lett.}, 70:2964--2967, May 1993.

\bibitem{LS1984}
J.~Langer and D.~A. Singer.
\newblock {The total squared curvature of closed curves}.
\newblock {\em Journal of Differential Geometry}, 20(1):1 -- 22, 1984.

\bibitem{Lin}
C.-C. Lin.
\newblock {$L^2$}-flow of elastic curves with clamped boundary conditions.
\newblock {\em J. Differential Equations}, 252(12):6414--6428, 2012.

\bibitem{LLS15}
C.-C. Lin, Y.-K. Lue, and H.~R. Schwetlick.
\newblock The second-order {$L^2$}-flow of inextensible elastic curves with
  hinged ends in the plane.
\newblock {\em J. Elasticity}, 119(1-2):263--291, 2015.

\bibitem{L1989}
A.~Linn\'{e}r.
\newblock Some properties of the curve straightening flow in the plane.
\newblock {\em Trans. Amer. Math. Soc.}, 314(2):605--618, 1989.

\bibitem{Linner1996}
A.~Linn\'{e}r.
\newblock Unified representations of nonlinear splines.
\newblock {\em J. Approx. Theory}, 84(3):315--350, 1996.

\bibitem{MR4277362}
C.~Mantegazza, A.~Pluda, and M.~Pozzetta.
\newblock A survey of the elastic flow of curves and networks.
\newblock {\em Milan J. Math.}, 89(1):59--121, 2021.

\bibitem{MantegazzaPozzetta21}
C.~Mantegazza and M.~Pozzetta.
\newblock The {{\L}}ojasiewicz-{S}imon inequality for the elastic flow.
\newblock {\em Calc. Var. Partial Differential Equations}, 60(1):Paper No. 56,
  17, 2021.

\bibitem{MG2005}
H.~T. McMahon and J.~L. Gallop.
\newblock Membrane curvature and mechanisms of dynamic cell membrane
  remodelling.
\newblock {\em Nature}, 438:590--596, 2005.

\bibitem{MMR2021}
T.~Miura, M.~Müller, and F.~Rupp.
\newblock Optimal thresholds for preserving embeddedness of elastic flows.
\newblock {\em To appear in Amer. J. Math.}, arXiv:2106.09549, 2021.

\bibitem{MR2021}
M.~M\"{u}ller and F.~Rupp.
\newblock A {L}i-{Y}au inequality for the 1-dimensional {W}illmore energy.
\newblock {\em Adv. Calc. Var.}, 16(2):337--362, 2023.

\bibitem{MR4080255}
M.~M\"{u}ller and A.~Spener.
\newblock On the convergence of the elastic flow in the hyperbolic plane.
\newblock {\em Geom. Flows}, 5(1):40--77, 2020.

\bibitem{NP2020}
M.~Novaga and P.~Pozzi.
\newblock A second order gradient flow of {$p$}-elastic planar networks.
\newblock {\em SIAM J. Math. Anal.}, 52(1):682--708, 2020.

\bibitem{OPW}
S.~Okabe, P.~Pozzi, and G.~Wheeler.
\newblock A gradient flow for the {$p$}-elastic energy defined on closed planar
  curves.
\newblock {\em Math. Ann.}, 378(1-2):777--828, 2020.

\bibitem{ConstrLoja}
F.~Rupp.
\newblock On the {{\L}}ojasiewicz-{S}imon gradient inequality on submanifolds.
\newblock {\em J. Funct. Anal.}, 279(8):108708, 33, 2020.

\bibitem{W1993}
Y.~Wen.
\newblock {$L^2$ flow of curve straightening in the plane}.
\newblock {\em Duke Mathematical Journal}, 70(3):683 -- 698, 1993.

\bibitem{Z1985}
E.~Zeidler.
\newblock {\em Nonlinear Functional Analysis and its Applications - III:
  Variational Methods and Optimization. Translated by L. F. Boron}.
\newblock Springer-Verlag, New York, 1985.

\end{thebibliography}

\end{document}